\NeedsTeXFormat{LaTeX2e}
\documentclass[11pt,leqno]{amsart}
\usepackage{a4wide}
\usepackage{amssymb}
\usepackage{amsmath}
\usepackage{amsthm}
\usepackage{verbatim}
\usepackage{txfonts}
\usepackage{fullpage}
\usepackage[all,cmtip]{xy} 
\usepackage{yfonts}

\usepackage[T1]{fontenc}
\usepackage[utf8]{inputenc}
\usepackage{mathtools}   % loads �amsmath�
\usepackage{amsfonts}
\usepackage{mathrsfs, todonotes}
\usepackage[mathscr]{euscript}

\usepackage{hyperref}
\addtolength{\topmargin}{-1mm} \addtolength{\textheight}{2mm}

\numberwithin{equation}{section}

\theoremstyle{plain}

\newcommand{\yh}[1]{{
%\color{blue} \sf 
#1}}
\newcommand{\yhn}[1]{}%{{\color{red} \sf #1}}
\renewcommand{\todo}[1]{}%{{\color{blue} \sf TODO: [#1]}}
\newcommand{\old}[1]{}%{{\color{green} \sf Alt-Choice: [#1]}}

\newcommand{\A}{\ensuremath{{\mathbb{A}}}}

\newcommand{\Z}{\ensuremath{{\mathbb{Z}}}}

\newcommand{\Q}{\ensuremath{{\mathbb{Q}}}}
\newcommand{\R}{\ensuremath{{\mathbb{R}}}}
\newcommand{\F}{\ensuremath{{\mathbb{F}}}}

\newcommand{\D}{\ensuremath{{M}}}

\newcommand{\KLv}[1]{\text{KL}_v \lb #1 \rb }
\newcommand{\KL}{\text{KL}}
\newcommand{\el}{a}

\renewcommand{\L}{\ensuremath{{\mathbb{L}}}}
\renewcommand{\mod}{\ensuremath{{\text{\ mod\ }}}}

\renewcommand{\O}{\ensuremath{O}}
  \newcommand{\fA}{{\mathfrak{A}}}
     \newcommand{\CO}{{ {O}}} \newcommand{\CB}{{\mathcal {B}}}
% Swap notation for E and L

\newcommand{\Vol}{\text{Vol}}
\newcommand{\GL}{\ensuremath{{\text{GL}}}}
\newcommand{\PGL}{\ensuremath{{\text{PGL}}}}
\newcommand{\Char}{\ensuremath{{\text{char}}}}

% \newcommand{\det}{\ensuremath{{\text{det}}}}
% This numbers everything by section
\newtheorem{theo}{Theorem}[section]
\newtheorem{lem}[theo]{Lemma}
\newtheorem{prop}[theo]{Proposition}
\newtheorem{cor}[theo]{Corollary}

\theoremstyle{remark}
\newtheorem{rem}[theo]{Remark}
\newtheorem{example}[theo]{Example}
\theoremstyle{definition}
\newtheorem{defn}[theo]{Definition}

% This will keep definitions and notations unnumbered and non-italicized.

\newcommand{\zxz}[4]{\begin{pmatrix} #1 & #2 \\ #3 & #4 \end{pmatrix}}

\renewcommand{\Im}{\operatorname{Im}}
\newcommand{\Hom}{\operatorname{Hom}}
\newcommand{\Ind}{\operatorname{Ind}}

\newcommand{\Tr}{\text{Tr}}
\newcommand{\Nm}{\text{Nm}}
\newcommand{\Supp}{\text{Supp }}

%%%%%%%%% uniformizer for division algebra

%%%%%%%% nonsquare unit for division algebra

%Deligne-Weil group
 %a general field
 %Change notion
\newcommand{\lb}{\left(}
\newcommand{\rb}{\right) }
\newcommand{\lB}{\left\{}
\newcommand{\rB}{\right\}}
\newcommand{\cc}{\mathfrak{c}}%%%%%%%%%%level
\renewcommand{\aa}{d}
\begin{document}
\bibliographystyle{plain}
\title{The Petersson/Kuznetsov trace formula with prescribed local ramifications}
\author{Yueke Hu}
\address{Yau Mathematical Sciences Center\\ Tsinghua University\\
	Beijing 100084\\
	China}

\bibliographystyle{plain}

\begin{abstract}
	In this paper we derive  refined Petersson/Kuznetsov trace formulae with prescribed local ramifications.  The spectral side of these formulae 
\yh{picks out newforms whose associated local components come from specific sub-families of representations of given level, and are much shorter compared with the classical versions.} We use them to study the first moment and the subconvexity bound of certain Rankin-Selberg $L-$function in a hybrid setting, \yh{ obtaining Weyl bound in a wider range compared to previous works}. 
\end{abstract}

\maketitle
\tableofcontents

\section{Introduction}
The Petersson and the Kuznetsov trace formulae are very close in nature, and they can be both derived from a relative trace formula as in \cite{knightly_kuznetsovs_2013,KL06}, by integrating pretrace formula against characters over unipotent subgroups, with the  difference coming only from the Archimedean component. They
have been  important tools in analytic number theory to study various types of problems like the vertical Sato-Tate conjecture, the moments of $L-$functions and their subconvexity bounds. See for example \cite{Blo19} for a survey.

	In this paper we derive  refined Petersson/Kuznetsov trace formulae with prescribed local ramifications.  
More precisely,  the spectral side of these formulae consists of newforms which are  associated to automorphic representations whose local component at a given place $p$ belongs to a small family of supercuspidal  representations or principal series representations. \yh{These formulae have shorter spectral sums or integrals, at the cost of encountering generalized Kloosterman sum and longer geometric side. They can be useful to balance the contributions from the main term and error terms of the geometric side for applications.}

 As a first application, we shall use them to study the first moment of the Rankin-Selberg $L-$function \yh{ for different families of local representations}. In the special case where we know the positivity of the $L-$functions, we further obtain hybrid subconvexity bounds, which is as strong as the Weyl bound \yh{in a much wider  range compared with the previous work in the square-free case \cite{feigon_averages_2009} and the most recent work in the depth aspect \cite{HN18}.} We expect many other applications in the near future. 
\subsection{The classical  trace formulae}

 Consider for simplicity the classical Petersson trace formula, which relates the Fourier coefficients of holomorphic modular forms to the Kloosterman sums as follows:
\begin{align}\label{Eq1.1:Pettracefull}
\frac{\Gamma \lb \kappa-1 \rb }{ \lb 4\pi \rb ^{\kappa-1}}\sum\limits_{\varphi}\frac{\lambda_{m_1} \lb \varphi \rb \overline{\lambda_{m_2} \lb \varphi \rb }}{||\varphi||^2}= N\lb\delta_{m_1=m_2} +2\pi i^{-\kappa} \sum\limits_{c\equiv 0 \mod{N}, c>0} \frac{\KL \lb m_1, m_2, c \rb }{c} J_{\kappa-1} \lb \frac{4{\pi}\sqrt{m_1 m_2}}{c} \rb\rb.	
\end{align}
Here the sum of $\varphi$ is over an orthonormal basis (with respect to \eqref{Eq2.1:AdelicL2}) of holomorphic automorphic forms of weight $\kappa$, level $N$ and trivial nebentypus. 
$\lambda_m(\varphi)$ is the $m-$th normalized Fourier coefficient.
$\KL\lb m_1,m_2,c \rb $ is the classical Kloosterman sum with conductor $c$:
\begin{equation}
\KL\lb m_1,m_2,c \rb =\sum\limits_{x\in \lb\Z/c\Z\rb^\times}e\lb \frac{m_1x+m_2\overline{x}}{c}\rb,
\end{equation}
where $\overline{x}$ is the inverse of $x$ in $\lb\Z/c\Z\rb^\times$.
$J_{\kappa-1}$ is the J-Bessel function.
The Kloosterman sum can be written as a product of local Kloosterman sums.

 The formula \eqref{Eq1.1:Pettracefull} can be obtained from the relative trace formula where the test function $f_p$ at $p|N$ is chosen to be essentially the characteristic function of a congruence subgroup. The $\delta_{m_1=m_2}$ term comes from the first-cell terms in the Bruhat decomposition, and the Kloosterman sum parts come from second-cell terms. See Section \ref{Sec4} or \cite{knightly_kuznetsovs_2013,KL06} for general settings.

\begin{rem}\label{Rem1.1:Issues}
In applications to depth aspect problems, there are however two issues  with  \eqref{Eq1.1:Pettracefull}:
\begin{enumerate}
\item \eqref{Eq1.1:Pettracefull} picks out newforms as well as old forms on the spectral side. So it is not convenient to use when aiming only for newforms. Contributions from old forms have to be subtracted, which usually make computations more complicated, and  also trickier for depth-aspect problems. For this reason, many results using the classical formula deal with square-free or even prime levels only.
\yh{In some recent works \cite{BBDDM, Petrow}, this issue has been addressed at the cost of additional complications for the sum of Kloosterman sums on the geometric side. See for example \cite[Theorem 3.1]{Petrow}.}

	\item There is an asymmetry between the Archimedean aspect and the level aspect. More precisely, in the Archimedean aspect, the analytic conductor of $\varphi$ is roughly $k^2$, whereas the length of the sum in $\varphi$ is roughly $k$. On the other hand in the level aspect, the finite conductor of $\varphi$ is $N$, whereas the length of the spectral sum is also roughly $N$. Thus the spectral sum is much longer in the level aspect in terms of the relation with the conductor.
	
%	This is the major reason that \cite{KMV} need to use amplification to obtain a small saving against the convexity bound of Rankin--Selberg $L-$function by the second moment method, whereas \cite{LLY06} can further do a spectral average and obtain a Weyl bound in the Archimedean aspect.
	
\end{enumerate}
\end{rem}

\subsection{Main results}
For simplicity, we shall be interested in automorphic representation $\pi$ over $\Q$ with trivial central character and level $N=C(\pi)=p^{\cc}$ for some integer $\cc\rightarrow \infty$ and $p\neq 2$. All the results can be directly extended to composite levels \yh{and general number fields due to the nature of the method}. 

The local component of $\pi$ at $p$ will then be either a supercuspidal representation or a principal series representation, associated to a character $\theta$ over some \'{e}tale quadratic algebra $\L/\Q_p$ by compact induction  or parabolic induction as in Section \ref{Sec3}. \yh{(Being an \'{e}tale quadratic algebra means that $\L$ is either a quadratic field extension over $\Q_p$, or $\L=\Q_p\times \Q_p$.)}
\subsubsection{The refined Petersson trace formula}
%Consider the Petersson trace formula as a example.
For fixed even weight $\kappa\geq 4$, let $\mathcal{F}_\theta[l]$ be the subset of holomorphic newforms of weight $\kappa$, level $N=p^\cc$ with $\cc\geq 3$, and trivial nebentypus, whose associated local representation $\pi_p$ belongs to a `neighboring' family  $\pi_\theta[l]$. 
\yh{This means that $\pi_p$ is associated to some character from $ \theta[l]=\left\{\theta', \theta'|_{\Q_p^\times}=\theta|_{\Q_p^\times}\text{\ and }\cc \lb \theta^{-1}\theta' \rb \leq e_\L l\ \right\}$,  where $e_\L$ is the ramification index of $\L$.}  See
Definition \ref{defn:FamilythetaN}, \ref{Def3.1:pithetan}. \yh{The relative size of $\theta[l]$ is discussed in Lemma \ref{Lem:Indexoffamily} and the following remark. These families arise naturally when we pick the test function to be truncated matrix coefficients in the relative trace formula.}
%for precise meanings of $\theta[n]$ and $\pi_\theta[n]$.

\yh{We first take $$
l=l_0=\begin{cases}
1, &\text{\ if $\L/\F$ is an inert quadratic field extension, \old{and $i_0$ is odd;}}\\
0, &\text{\ otherwise}.
\end{cases}
$$
as in \eqref{Eq4.5:l0}. 
Denote
\begin{equation}\label{Eq:introi0}
i_0=\cc \lb \theta \rb /e_\L
\end{equation}
as in Definition \ref{defn:FamilythetaN}, and
$$c_0=\begin{cases}
p^{i_0+1}, &\text{if $\pi_\theta$ is supercuspidal,}\\
p^{i_0}, &\text{otherwise}
\end{cases}  $$
as in Definition \ref{Def4.5:c0}, which is roughly $p^{\cc/2}=\sqrt{N}$.
}

Using the relative trace formula approach as in Section \ref{Sec4.1:testfun}, we show the following main result:

%\begin{align}\label{Eq4.0:family}
%\mathcal{F}_\theta[n]=\{&\text{holomorphic newforms $F$ of weight $k$, level $N=p^c$, and trivial nebentypus} \\
%& \text{ s.t. $\pi_p\in \pi_\theta[n]$ where $\pi_p$ is the local representation  associated to $F$}\} \notag
%\end{align}
\begin{theo}[Theorem \ref{Theo4.5:smallfamilyPTF}]\label{Theo1.2:PTF}
\yh{For fixed even weight $\kappa \geq 4$ and the family of newforms $\mathcal{F}_\theta[l_0]$ as above,}   we have
\begin{align}
\sum\limits_{\varphi\in \mathcal{F}_\theta[l_0]}\frac{1}{||\varphi||^2}\lambda_{m_1} \lb \varphi \rb \overline{\lambda}_{m_2} \lb \varphi \rb 
= 		C_\mathcal{F}[l_0]\frac{ \lb 4\pi \rb ^{\kappa-1}}{ \lb \kappa-2 \rb !}		\lb\delta_{m_1=m_2}+2\pi i^\kappa\sum\limits_{\yh{c\equiv 0 \mod{c_0}, c>0}}\frac{G \lb m_1,m_2,\theta,c^{-2} \rb }{c}J_{\kappa-1} \lb \frac{4\pi\sqrt{m_1m_2}}{c} \rb \rb.	\notag
\end{align}
\end{theo}
%\yh{Need to reorganize all the notations better}

Here $C_\mathcal{F}[l_0]\asymp (1+p^{-1})c_0$ is given before \eqref{Eq4.5:CF}, \yh{and it replaces the role of $N$ in the classical formula \eqref{Eq1.1:Pettracefull}.}

 $G\lb m_1,m_2,\theta,c^{-2}\rb$ is the generalized Kloosterman sum which is a product of local factors as in Definition \ref{Def4.5:GlobalGKloosterman}, where the local factors at $v\neq p$ are the same as the standard Kloosterman, while the local factor $G_p\lb m_1,m_2,\theta,c^{-2}\rb$ given in  Definition \ref{Def4.4:KloostermanSC}/Definition \ref{Def4.4:GpPS} involves the character $\theta$ and an integration inside $\L^\times$.
\begin{rem}\label{Rem1.2:Squarecancel}
The square-root-cancellation-type upper bounds for $G(m_1,m_2,\theta,c^{-2})$ are proven in Lemma \ref{Lem4.4:GeneralKLWholerange}, \ref{Lem4.4:StationaryPS}. The implied constants can depend on some fixed powers of $p$. But it should be possible to remove this dependence by a more careful study of character sums over residue fields.

We will also explain in Remark \ref{Rem:Gpbecomeskloosterman}, \ref{Rem4.4.2:isstandard} that $G_p\lb m_1,m_2,\theta,c^{-2}\rb$ recovers the standard Kloosterman sum when $v_p(c)\geq \cc$.
\end{rem}

\yh{
\begin{rem}
We do not take $l_0=0$ in the case where $\L$ is inert, for the following technical reasons:
\begin{enumerate}
\item[(1)]The computations for the geometric side, while possible, would be slightly more complicated, and the resulting formula for the generalized Kloosterman sum is not as uniform;
\item[(2)]For applications, considering the family $\mathcal{F}_\theta[1]$ instead of $\mathcal{F}_\theta[0]$ would affect asymptotic bounds by a fixed power of $p$, which is negligible for depth aspect problem considered in this paper.
\end{enumerate}
Of course pushing for the case $l=0$ is necessary for horizontal aspect problems. This will be addressed in future works.

We also avoid the case $\kappa=2$ for our main results, due to the reason that the conjugate of the test function at infinity $\overline{f_\infty}$ is chosen to be  the matrix coefficient for the lowest weight element, which is not $L^1$ when $\kappa=2$. For applications it is possible to circumvent this issue by taking $\overline{f_\infty}$ to be a smooth cut-off for the matrix coefficient, which would somewhat complicate both the spectral side and the geometric side of the formula. As we are mainly interested in the depth aspect in this paper, we just assume $\kappa\geq 4$ for simplicity.
\end{rem}
\begin{rem}
There is an independent work \cite{PWZ} which derives a similar formula  essentially in the special case $N=p^3$ by using the known formula for the matrix coefficient of newforms in this case. The  matrix coefficients for newforms in general are however more complicated to utilize directly, and one technical novelty of this paper is to understand the matrix coefficients by exploiting the relations between newforms and minimal vectors/microlocal lifts in Section \ref{Sec3}. 

We also note that Definition \ref{Def4.4:KloostermanSC} only involves the value of $\theta$ on $ZU_\L(1)$. In the special case where $\cc(\pi_p)=3$ and $\cc(\theta)=2$ (see \eqref{Eq:cpictheta}), the behavior of $\theta$ on $ZU_\L(1)$ can be reduced to an additive character in the sense of  \eqref{Eq:multitoadditive}, which may explain why
the generalized Kloosterman sum does not seem to appear in their work.
\end{rem}
}

\begin{rem}
The main advantage of Theorem \ref{Theo1.2:PTF} is that it addresses both issues mentioned in Remark \ref{Rem1.1:Issues}: it picks out only newforms; the length of the spectral side and the first-cell term have size $C_\mathcal{F}[l_0]\asymp N^{1/2}$ compared to $N$ in \eqref{Eq1.1:Pettracefull}. There are also two trade-offs:
\begin{enumerate}
\item[(1)]The generalized Kloosterman sums are more complicated than the standard Kloosterman sum to analyze;
\item[(2)]The length of the sum of  Kloosterman sums  is longer, in the sense that in Theorem \ref{Theo1.2:PTF} $v_p(c)\geq v_p(c_0)$  which is roughly $\frac{\cc}{2}$, while in \eqref{Eq1.1:Pettracefull} $v_p(c)\geq \cc$.
\end{enumerate}
 We shall develop tools and tricks to mitigate these disadvantages. For example, we already discussed the square-root cancellation for the generalized Kloosterman sum in Remark \ref{Rem1.2:Squarecancel}; In Theorem \ref{Theo1.2:PTFaverage} we shall develop a formula picking out a larger family with shorter  sum of Kloosterman sums, helping us to reach a balance between the first-cell term and the second-cell terms; In Section \ref{Sec1.3.2} we shall discuss alternative perspective for the generalized Kloosterman sum, and how to deal with the character sum after applying the Voronoi summation formula, which is commonly used after the Petersson/Kuznetsov trace formula in dealing with many analytic number theory problems.
\end{rem}
\subsubsection{Trace formulae for larger families}
%For applications, we also derive the following formula which picks out larger family on the spectral side.

Let $l$ be an integer such that $l_0\leq l<i_0$, where $i_0$ is as in \eqref{Eq:introi0} and is roughly $\frac{\cc}{2}$.
Let $c_l=c_0p^{l-l_0}$, and $\mathcal{F}_\theta[l]$ be as above. $\mathcal{F}_\theta[l]$ is a larger family of newforms compared to $\mathcal{F}_\theta[l_0]$. Then we have a relative trace formula for $\mathcal{F}_{\theta}[l]$ as below.
\begin{theo}[Theorem \ref{Theo4:spectralaverg}]\label{Theo1.2:PTFaverage}
\yh{For fixed even weight $\kappa \geq 4$ and the family of newforms $\mathcal{F}_\theta[l]$, we have}
\begin{align*}
\sum\limits_{\varphi\in \mathcal{F}_{\theta}[l]}\frac{1}{||\varphi||^2}\lambda_{m_1} \lb \varphi \rb \overline{\lambda}_{m_2} \lb \varphi \rb 
= 		C_\mathcal{F}[l]\frac{ \lb 4\pi \rb ^{\kappa-1}}{ \lb \kappa-2 \rb !}	\lb\delta_{m_1=m_2}+2\pi i^\kappa\sum\limits_{c\equiv 0 \mod{c_l}, c>0}\frac{G \lb m_1,m_2,\theta,c^{-2} \rb }{c}J_{\kappa-1} \lb \frac{4\pi\sqrt{m_1m_2}}{c} \rb \rb	
\end{align*}
Here $C_\mathcal{F}[l]\asymp  C_\mathcal{F}[l_0] p^{l-l_0}$ is as in Definition \ref{Eq4.6:CFl}.
\end{theo}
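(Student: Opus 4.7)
The plan is to repeat the derivation of Theorem \ref{Theo1.2:PTF} verbatim, swapping only the local test function $f_p$ at $p$ for one adapted to the enlarged family $\mathcal{F}_\theta[l]$. A natural construction is to average the depth-$l_0$ test function of Section \ref{Sec4.1:testfun} over the coset representatives of $\theta[l_0]$ inside $\theta[l]$, so that the resulting $f_p$ is essentially a matrix coefficient of a newvector attached to the enlarged neighborhood. This automatically makes $\pi(f_p)$ vanish on all local representations outside $\pi_\theta[l]$ while acting by a scalar on newvectors inside; the scalar grows by the index $|\theta[l]/\theta[l_0]| \asymp p^{l-l_0}$, accounting for $C_\mathcal{F}[l] \asymp C_\mathcal{F}[l_0] p^{l-l_0}$ in \eqref{Eq4.6:CFl}.

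On the spectral side, inserting this $f_p$ into the pretrace formula selects exactly those newforms with $\pi_p \in \pi_\theta[l]$, producing the left-hand side weighted by $C_\mathcal{F}[l]$. On the geometric side, the first-cell term contributes $C_\mathcal{F}[l]\delta_{m_1=m_2}$ by the standard computation. For the second-cell terms, the crucial task is to recompute the local $p$-adic orbital integral of the averaged $f_p$ against the unipotent character. Because the averaging concentrates $f_p$ near the identity of a larger compact subgroup, its Bruhat support in $c$ is narrower: the divisibility condition tightens from $c_0 \mid c$ to $c_l \mid c$ where $c_l = c_0 p^{l-l_0}$. Within this stronger range, the character sum over $\theta[l]/\theta[l_0]$ collapses by orthogonality to recover the same generalized Kloosterman factor $G_p(m_1,m_2,\theta,c^{-2})$ of Lemma \ref{Lem4.4:GeneralKLWholerange}.

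The main obstacle is the $p$-adic orbital integral computation: one must verify simultaneously that (i) the averaging forces the orbital integral to vanish for $v_p(c) < v_p(c_l)$ through cancellation of the characters in $\theta[l]/\theta[l_0]$, and (ii) for $v_p(c) \geq v_p(c_l)$ the average collapses cleanly to the depth-$l_0$ generalized Kloosterman sum without altering $\theta$ itself. This is the depth-aspect manifestation of the trade-off that enlarging the family shortens the geometric side proportionally. Once this local identity is in place, combining with the unchanged local factors at $v \neq p$ and the Archimedean Bessel factor $J_{\kappa-1}$ assembles into the stated formula exactly as in the proof of Theorem \ref{Theo1.2:PTF}.
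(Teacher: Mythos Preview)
Your proposal is correct and matches the paper's approach: the paper sums the small-family formula (Theorem \ref{Theo4.5:smallfamilyPTF}) over $\theta'\in\theta[l]/\!\sim_{l_0}$, which is equivalent to your averaging of the test function, and the entire content reduces to exactly the local cancellation you identify as the ``main obstacle''---proved in the paper as Lemma \ref{Lem:AverGeneralKL}. One small correction: the factor $[\theta[l]:\theta[l_0]]$ in $C_\mathcal{F}[l]$ does not arise because $\pi(f_p)$ acts by a larger scalar (each summand already projects with scalar $1$ on its own $\pi_{\theta'}[l_0]$ and by $0$ elsewhere), but simply because the first-cell term is the same for every $\theta'$ and so gets multiplied by the number of cosets when you sum.
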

\begin{rem}
One can obtain Theorem \ref{Theo1.2:PTFaverage} from Theorem \ref{Theo1.2:PTF} by taking a sum. The nontrivial part is however to show that the length of the sum of  Kloosterman sums  becomes shorter, which comes from a local cancellation. Theorem \ref{Theo1.2:PTFaverage} displays a nice transition from Theorem \ref{Theo1.2:PTF} to the classical formula \eqref{Eq1.1:Pettracefull}.
\end{rem}
\subsubsection{Refined Kuznetsov trace formula}
One can similarly consider the case of Maass forms.
In this case, let $\mathcal{F}^0_\theta[l]$  be  the set of cuspidal Maass newforms of level $N=p^\cc$, trivial nebentypus, whose associated local component $\pi_p$ is related to characters from $\theta[l]$. 

The residue spectrum will not be picked out by our choice of  test function.
\yh{The Eisenstein series are constructed from parabolic inductions, }thus the
contribution from the continuous spectrum will be nontrivial only when the targeted family $\pi_\theta[l]$ consists of principal series representations. For this reason,
define \begin{equation*}
\epsilon_\L=\begin{cases}
1, &\text{\ if }\L\simeq \Q_p\times\Q_p;\\
0, &\text{\ otherwise.}
\end{cases}
\end{equation*}
When $\epsilon_\L=1$,   for each finite order Hecke character $\chi$ and  $\theta'=(\chi,\chi^{-1})$, such that $\theta'_v$ is unramified when $v\neq p$, and $\theta'_p\in \theta[l]$, define as usual  $\varphi_s\in \pi(\chi|\cdot|^s,\chi^{-1}|\cdot|^{-s})$ to be a flat section associated to an $L^2-$normalized newform, and define $$E_{\theta',s}(g)=\sum\limits_{\gamma\in B(\Q)\backslash \GL_2(\Q)}\varphi_s(\gamma g).$$

Then using the general setup from \cite{knightly_kuznetsovs_2013}, together with the test function at $p$ and the relevant computations for the refined Petersson trace formula above, one can get the following:

%\begin{assume}
%In this paper, we further use the following settings. Assume that  $2\nmid p$. We shall limit ourselves to the case $\cc \lb \theta \rb =2n$ and $\L_p/\F_p$ is inert. This case corresponds to that $C \lb \pi \rb =p^{4n}$ for $\pi\in \Pi_{\mathcal{F}}$.
%We also assume the central character to be trivial.  Finally for explicit results, we assume that $\left \lb  \frac{-1}{p}\right \rb =1$. 
%\end{assume}
\begin{theo}[Kuznetsov for prescribed local component]\label{Theo:Kuz1}
\yh{Let $h$ be an even function such that $h(t)$ is holomorphic and controlled by $(1+|t|)^{-B}$ in the region $|\Im (t)|<A$ for some  sufficiently large constants $A$ and $B$ as in \cite[(8.1)]{knightly_kuznetsovs_2013}. }
For $l_0\leq l< i_0$, 
 we have
	\begin{align*}
%\label{Eq:Kuztraceaverg2}
&\sum\limits_{\varphi\in \mathcal{F}^0_{\theta}[l]}\frac{\lambda_{m_1}(\varphi )\overline{\lambda_{m_2}(\varphi )}}{||\varphi ||^2}\frac{h(t_{\varphi })}{\cosh({\pi} t_{\varphi })}+
\frac{\epsilon_\L}{\pi}\sum\limits_{\theta', \theta'_p\in\theta[l]}\, \int\limits_{-\infty}^\infty 
\lambda_{m_1}\left( E_{\theta',1/2+it}\right) \overline{\lambda_{m_2}\left( E_{\theta',1/2+it}\right)}h(t)dt
%\frac{\sigma_{it}(\theta', m_1)\overline{
%\sigma_{it}(\theta',m_2)}\lb\frac{m_1}{m_2}\rb^{it}h(t)}{|L(1+2it, \chi^{-2})|^2} dt	
\\
=&C_\mathcal{F}[l]\left[\frac{\delta(m_1=m_2)}{\pi^2}\int\limits_{-\infty}^{\infty}h(t)\tanh({\pi} t) tdt+\frac{2{i}}{\pi } \sum\limits_{c\equiv 0 \mod{c_l}, c>0} \frac{G\lb m_1, m_2, \theta,c^{-2}\rb}{c}		\int\limits_{-\infty}^{\infty} J_{2it}\lb\frac{4{\pi}\sqrt{m_1 m_2}}{c}\rb\frac{h(t)t}{\cosh({\pi}t)}dt	\right].	\notag 
\end{align*}
Here $t_\varphi$ is the spectral parameter of $\varphi$ such that $\Delta\varphi=(1/4+t_\varphi^2)\varphi$ for the Laplace operator $\Delta$.
% $\lambda_{m_i} \lb \varphi \rb $ is the $m_i-$th Fourier coefficient of $\varphi$. $\delta \lb * \rb $ is the delta function for the condition $*$.
\end{theo}
\begin{rem}
Note that it is possible to compute $\lambda_{m}\lb E_{\theta',s}\rb$ more explicitly in terms of the twisted divisor functions and the $L-$functions for Hecke characters. We skip the details here.
\end{rem}

\subsubsection{Application to the first moment and the hybrid subconvexity bound for the Rankin--Selberg $L-$function}
We expect  several  possible applications for the above theorems.
One of them is to the vertical Sato-Tate law. Using Theorem \ref{Theo1.2:PTF}, \ref{Theo1.2:PTFaverage} or \ref{Theo:Kuz1}, the bound for the generalized Kloosterman sum discussed in Remark \ref{Rem1.2:Squarecancel}, and the recipe in \cite{BBR-14}, one should be able to get some variants of the vertical Sato-Tate law for  small families of newforms in the depth aspect.
We also expect applications in future works to the large sieve inequality, cubic moment of $L-$functions, etc,. 

In this paper we focus on the first moment of the Rankin--Selberg $L-$functions.
This problem is relatively easier to study, and our tools can already give interesting new result.
 For simplicity, we restrict ourselves to the holomorphic newforms whose levels are prime powers.
\begin{theo}\label{Theo:subconv}
Let  $\mathcal{F}_\theta[l]$ be the set of holomorphic newforms of weight $\kappa\geq 4$, level $N=p^\cc$ and trivial nebentypus, whose associated local component $\pi_p$ belongs to a small family $\pi_\theta[l]$ as above.
%	Let $f\in \pi$ be a holomorphic cuspidal newform of trivial central character, fixed weight $\kappa\geq 2$, and level $N=p^{\cc}$. 
 Let $g$ be a  holomorphic cuspidal newform with square-free level $M$ which is coprime to $N$,  fixed weight $\kappa_g\geq 4$, and trivial nebentypus.  Then we have
\begin{equation*}
	\sum\limits_{f\in \mathcal{F}_\theta[l]}\frac{L(f\times g, 1/2)}{||f||^2}\ll_{p,\epsilon} (MN)^\epsilon\lb N^{1/2}p^l+N^{1/4}M^{1/2}p^{-l/2}\rb.
\end{equation*}

Furthermore suppose that $L(f\times g,1/2)\geq 0$ for all $f\in \mathcal{F}_\theta[l]$. 
Suppose that $N=M^\delta$ for $0<\delta<\infty$, so that the finite conductor $C \lb f\times g \rb =M^{2+2\delta}$ for any $f\in \mathcal{F}_\theta[l]$.
By picking $l$ to be the closest integer to $\log_p\lb M^{1/3}N^{-1/6} \rb$ while $1\leq l<i_0$, we get that
	\begin{equation*}
L \lb f\times g,1/2 \rb \ll_{\epsilon, p}  M^{\max\{\frac{1}{2}, \frac{  1+\delta  }{3}, \frac{\delta}{2}\}+\epsilon}.
	\end{equation*}
	In particular we obtain a hybrid subconvexity bound for $\delta$ in any compact subset of $(0,\infty)$, which is furthermore a Weyl bound in the range $1/2\leq \delta\leq 2$.
\end{theo}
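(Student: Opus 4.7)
The plan is to combine an approximate functional equation for $L(f\times g, 1/2)$ with the refined Petersson trace formula of Theorem \ref{Theo1.2:PTFaverage}. Since $\gcd(M,N)=1$, $f$ has trivial central character, and $g$ has central character $\chi$ at square-free level $M$, the Dirichlet series for $L(f\times g, s)$ factors (up to controlled local corrections) as
\[
L(f\times g, s) \;=\; \sum_{d,n\geq 1}\frac{\chi(d)\lambda_f(n)\lambda_g(n)}{d^{2s}n^s},
\]
with analytic conductor $C(f\times g)\asymp (MN)^2$. The AFE then expresses
\[
L(f\times g, 1/2) \;\approx\; \sum_{d^2 n \ll MN}\frac{\chi(d)\lambda_f(n)\lambda_g(n)}{d\sqrt{n}}\,V\!\left(\frac{d^2 n}{MN}\right) \;+\; (\text{dual})
\]
for a smooth rapidly-decaying cutoff $V$. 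Exchanging the order of summation and applying Theorem \ref{Theo1.2:PTFaverage} with $m_1 = n$, $m_2=1$, the inner sum over $f\in\mathcal{F}_\theta[l]$ becomes $C_\mathcal{F}[l]$ times a diagonal $\delta_{n=1}$ plus the generalized-Kloosterman/Bessel series.

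The diagonal $n=1$ contribution is $C_\mathcal{F}[l]\sum_{d^2\ll MN}\chi(d)/d\cdot V(\cdots) \ll N^{1/2}p^{l}\,(MN)^\epsilon$, which accounts for the first term in the asserted first-moment bound. For the off-diagonal, we split dyadically in $c$ (restricted to multiples of $c_l\asymp N^{1/2}p^{l}$) and apply Vorono\"{\i} summation to the $n$-aspect along the lines discussed in Section \ref{Sec1.3.2}. The dual character sum combines with the local factor $G_p(n,1,\theta,c^{-2})$ so as to preserve square-root cancellation (Remark \ref{Rem1.2:Squarecancel}, Lemma \ref{Lem4.4:GeneralKLWholerange}); weighting against standard bounds for $J_{\kappa-1}$ in its transition and oscillatory ranges then yields the claimed contribution $\ll N^{1/4}M^{1/2}p^{-l/2}(MN)^\epsilon$.

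For the pointwise bound, we invoke positivity (available here since $f\times g$ is self-dual) to drop all but one term in the spectral average, absorbing the benign norm factor $||f||^2$ into an $(MN)^\epsilon$ under the adelic normalization. Balancing $N^{1/2}p^{l}=N^{1/4}M^{1/2}p^{-l/2}$ picks the optimal choice $p^{l}=N^{-1/6}M^{1/3}$, with common value $N^{1/3}M^{1/3}=M^{(1+\delta)/3}$. This choice is admissible precisely when $1\leq N^{-1/6}M^{1/3}< p^{i_0}\asymp N^{1/2}$, i.e.~when $1/2\leq \delta \leq 2$, producing the Weyl bound. Outside that range one saturates $l$ at the boundary: for $\delta<1/2$ take $l$ near $i_0$ to obtain $M^{1/2}$, and for $\delta>2$ take $l$ near $0$ to obtain $M^{\delta/2}$, recovering convexity in each of the two extremal regimes and explaining the three arguments of the maximum.

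The main obstacle is the off-diagonal Step: although the square-root bound for $G_p(n,1,\theta,c^{-2})$ is provided by Lemma \ref{Lem4.4:GeneralKLWholerange}, carrying out Vorono\"{\i} summation so that the resulting twisted character sum aligns cleanly with the generalized Kloosterman sum, and then controlling the oscillatory interaction with the Bessel kernel uniformly in $c$, is the nontrivial analytic core. The critical observation that makes the argument close is that the $c$-sum already starts at $c_l\asymp N^{1/2}p^{l}$, so that when $p^{l}\gtrsim M^{1/2}$ the Bessel argument $\sqrt{n}/c$ is small and exponential decay handles the sum, whereas for $p^{l}\lesssim M^{1/2}$ a stationary-phase analysis paired with the arithmetic cancellation delivers the clean $p^{-l/2}$ saving that drives the whole optimization.
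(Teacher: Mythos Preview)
Your overall architecture matches the paper's: approximate functional equation, then the refined Petersson formula of Theorem~\ref{Theo4:spectralaverg}, then Vorono\"{i} in the $n$-variable, then optimize in $l$. The diagonal analysis and the final balancing in $l$ are correct.

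The gap is in the off-diagonal step, and it is not only a matter of missing detail. After Vorono\"{i}, the local sum at $p$ is no longer $G_p$ but the dual object $\widetilde{G}_p(m_1,m_2,a,\theta,\mu)$ of Definition~\ref{Defn5.3:Dualsum}. You appeal to Lemma~\ref{Lem4.4:GeneralKLWholerange} (square-root cancellation for $G_p$), but that lemma says nothing about $\widetilde{G}_p$, and ``preserving square-root cancellation'' is not what actually happens. The paper's mechanism is different: via Lemma~\ref{Lem5.1:twoApproach} and Corollary~\ref{Cor5.2:Alt2ndcell} the second-cell integrand is rewritten as a genuine matrix coefficient of the newform, and then in the proof of Lemma~\ref{Lem5.3:dualsum} one recognizes $\widetilde{G}_p$ as $p^k\,\Phi_{\varphi_{\mathrm{new}}}(g)$ for an explicit $g$. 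The known support and size of $\Phi_{\varphi_{\mathrm{new}}}$ then yield two things simultaneously: a congruence restriction $v_p(1-\overline{M_2}\,n)\geq 2k-\cc$ on the dual variable $n$, and a pointwise bound $\widetilde{G}_p\ll p^{(3k-\cc)/2}$. In the interesting range $i_0<k<\cc$ this pointwise bound is \emph{worse} than $p^{k/2}$; it is the congruence condition, forcing $n$ into a thin arithmetic progression modulo $p^{2k-\cc}$, that carries the saving. This is precisely how Lemma~\ref{Lem6.2:KcZ} is proved and where the factor $p^{-l/2}$ ultimately originates. Your heuristic in the last paragraph (Bessel decay when $p^l\gtrsim M^{1/2}$, stationary phase otherwise) does not capture this; the Bessel weight alone, without the arithmetic thinning from Lemma~\ref{Lem5.3:dualsum}, would not deliver the stated off-diagonal bound.
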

\begin{rem}
The condition that $L(f\times g,1/2)\geq 0$ for all $f$ can be guaranteed when, for example, $g$ is dihedral. See the discussion in \cite[Section 1.1]{HT}.
\end{rem}
\begin{rem}
Note that from the proof in Section \ref{Sec6}, the $N^{1/2}p^l$ part comes from the accurate computations for the first-cell terms and $N^{1/4}M^{1/2}p^{-l/2}$ part comes from bounds for the second-cell terms.
Thus it is actually possible to obtain an asymptotic formula when \yh{$\epsilon(f\times g,1/2)=1$ and}  $N^{1/2}p^{3l}$ is sufficiently larger compared to $M$.
\end{rem}
\begin{rem}
Compared to \cite{feigon_averages_2009, HT}, this result has three differences/improvements. 
\begin{enumerate}
\item They assume $N$ to be square-free.
\item  They obtain a
subconvexity bound for any compact subset of $\delta\in (0,1)$, and
a Weyl-type bound only at $\delta=1/2$.
\end{enumerate}

We make a more detailed comparison of the method in this paper with the one used in \cite{HN18} (which extends \cite{feigon_averages_2009} in some sense). The current method  has the following advantages:
\begin{enumerate}
\item[(I)]It made use of the flexibility of Theorem \ref{Theo1.2:PTFaverage}, and the resulting subconvexity bound in Theorem \ref{Theo:subconv} is stronger than both \cite[Theorem 1.8]{HN18} (which obtains Weyl-type strength at $\delta=2$) and the analogue of \cite[Corollary 1]{HT}, allowing Weyl-type subconvexity bound in a wide hybrid range.
\item[(II)] It covers  the principal series case and the supercuspidal case somewhat uniformly.
\item[(III)] It  does not require any $\epsilon-$value condition for the Archimedean components.
\item[(IV)] The refined Petersson/Kuznetsov trace formulae should be applicable to many other problems.
\end{enumerate}
The method in \cite{HN18} involves using the relative trace formula associated to Waldspurger's period integral on some quaternion algebra. This quaternion algebra is assumed to be a division algebra at all Archimedean places (which translates into $\epsilon-$value conditions). The method there has the following advantages:
\begin{enumerate}
\item[(i)] It does not require $M$ to be square-free.
\item[(ii)] It is also used to prove a hybrid subconvexity bound \cite[Theorem 1.10]{HN18} in the joint ramification case.
\item[(iii)] It  works for general number fields, and does not rely on the Ramanujan conjecture.
\end{enumerate}
We do believe that some of the differences are amenable with extra work. For example, (I)-(III) may also be achieved by the method of \cite{HN18}. On the other hand, (i) (ii) may also be recovered using the method in this paper, by employing a more flexible version of the Voronoi summation formula.
\end{rem}

\subsection{Basic strategies}
\subsubsection{Deriving the refined Petersson/Kuznetsov trace formula}
The classical formula \eqref{Eq1.1:Pettracefull} can be obtained by setting the local test function for the relative trace formula to be the characteristic function of the related congruence subgroup for the newform, as is done in \cite{knightly_kuznetsovs_2013,KL06}.
The first idea to derive Theorem \ref{Theo1.2:PTF} is relatively straightforward, that is, to use instead suitable cut-off of the local matrix coefficient for the newform as the test function. %in the relative trace formula. 

The matrix coefficient itself however is not very convenient to directly make use of. So far we have some  understandings about its support, level (from \cite[Proposition 2.12]{Hu:17a}) and size (from \cite[Theorem 5.4]{HuSa:19}). 

Our approach in this paper is to make use of the special test vectors, i.e., the minimal vectors for the supercuspidal representations discussed in \cite{HN18,HuNelsonSaha:17a}  and the microlocal lifts for the principal series representations discussed in \cite{nelson_microlocal_2016}. These test vectors have the property that a large compact open subgroup acts on them by a character $\tilde{\theta}$, which can uniquely identify the test vector, while the local representation comes only from $\pi_\theta[l_0]$ (See Proposition  \ref{Prop3.2:B1givespi}/Corollary \ref{Cor:microlocallift} for more details). Using the relation between these special test vectors and the newforms in Corollary \ref{Cor:RelationNewMinimal}/Lemma \ref{Lem3.3:newformasMLL}, we construct test functions in Definition \ref{Def3.2:testfSC}, \ref{Def3.3:testPS} from a linear combination of translates of $\tilde{\theta}$, which exactly pick out the newforms from $\pi_\theta[l_0]$. See Proposition \ref{Prop3.2:testfunaction}, \ref{Prop3.3:testfunaction}.
We believe this idea of constructing test functions will also be useful on higher rank groups.

The second-cell terms from the relative trace formula for the constructed local test function can be reduced to the computations for $\tilde{\theta}$ by a change of variables, giving rise to the generalized Kloosterman sums in  Lemma \ref{Lem4.4:GeneralKLWholerange}/Definition \ref{Def4.4:GpPS}.
The explicit shape of these character sums allows us to prove the square-root cancellation (up to a bounded power of $p$), and also detect cancellations when taking sums in Theorem \ref{Theo1.2:PTFaverage}.

\subsubsection{Alternative description and the character sum after the Voronoi summation formula}\label{Sec1.3.2}
In Lemma \ref{Lem5.1:twoApproach}, we show that the local test function we have constructed and used actually coincides with the matrix coefficient of the newform in the range we are interested in. 
This alternative perspective also turns out to be quite useful. To explain this, we remark that in applications the Petersson/Kuznetsov trace formula is often followed by the use of the Voronoi summation formula. In the classical setting, the Kloosterman sum gives rise to the Ramanujan sum after the Voronoi summation:
\begin{equation*}
\widetilde{\KL}(m_1,m_2,\el,c)=\sum\limits_{x\in \lb \Z/c\Z\rb^\times}e\lb\frac{m_1+m_2\el}{c}x\rb.
\end{equation*}
Here $a$ is an additional parameter, which can be $-1$ for example. The Ramanujan sum has the property that its average size is roughly $1$ when, for example, taking a sum in $m_1$. 

On the other hand for the generalized Kloosterman sum $G(m_1,m_2,\theta,\mu)$, the corresponding character sum, which occurs in the proof of Lemma \ref{Lem6.2:KcZ} and is denoted by $\tilde{G}(m_1,m_2,\el,\theta,\mu)$ in Definition \ref{Defn5.3:Dualsum}, becomes more complicated to analyze. Take $\mu=\frac{1}{c^2}$ and $k=v_p(c)$.
Recall from Remark \ref{Rem1.2:Squarecancel} that when $k\geq \cc$, $G(m_1,m_2,\theta,\mu)$ becomes the classical Kloosterman sum, so $\tilde{G}(m_1,m_2,\el,\theta,\mu)$ becomes the Ramanujan sum. We focus on the case $ v_p(c_0)\leq k< \cc$ now.
Using the alternative description above, we can identify $\tilde{G}(m_1,m_2,\el,\theta,\mu)$ in the range of interest with the value of the matrix coefficient itself as in the proof of Lemma \ref{Lem5.3:dualsum}. Then we apply the known results in \cite[Theorem 5.4]{HuSa:19} on the support and the size of the matrix coefficient for the newform to obtain Lemma \ref{Lem5.3:dualsum}, which says that
$\widetilde{G}_p \lb m_1,m_2,\el,\theta,\mu \rb=0$ unless \begin{equation}\label{Eq1.4:Vcondition}
v\lb m_2\mu+\frac{\el m_1}{p^{2k}}\rb\geq -\cc
\end{equation}
for our particular setting, in which case we have
\begin{equation}\label{Eq1.3:GRSupper}
\widetilde{G}_p \lb m_1,m_2,\el,\theta,\mu \rb\ll_p p^{\frac{3k-\cc}{2}}.
\end{equation}
Note that when $k=v_p(c_0)$, the congruence condition \eqref{Eq1.4:Vcondition} is (almost) automatic, and the upper bound in \eqref{Eq1.3:GRSupper} shows square-root cancellation.
Thus $\widetilde{G}_p \lb m_1,m_2,\el,\theta,\mu \rb$ displays a transition from the Ramanujan-sum-type behavior to the square-root-cancellation behavior when $k$ goes from $\cc$ to roughly $\frac{\cc}{2}$.

\subsubsection{Studying moments and hybrid subconvexity bounds}
The strategy to use the approximate functional equation, the Petersson/Kuznetsov trace formula, and then the Voronoi summation formula etc., is relatively standard. We have taken some arguments and results directly from, for example,  \cite{HT,KMV}. The main new ingredients are the refined Petersson trace formula in Theorem \ref{Theo1.2:PTFaverage} with a flexible parameter $l$, and the study of the character sum $\widetilde{G}_p \lb m_1,m_2,\el,\theta,\mu \rb$. 

By choosing $l$ properly, we can to some extent balance the contributions from the first-cell terms and the second-cell terms, obtaining Weyl-type subconvexity bound in a relatively large hybrid range.
\subsection{The Structure of the paper}
In Section \ref{Sec2} we introduce some basic notations and results.

In Section \ref{Sec3} we review some basic properties for the minimal vectors and the microlocal lifts, discuss their relations with the newforms, and construct test functions which pick out small families of newforms.

In Section \ref{Sec4} we use the relative trace formula for period integrals on unipotent subgroups to derive Theorem \ref{Theo1.2:PTF}, \ref{Theo1.2:PTFaverage} and \ref{Theo:Kuz1}.

In Section \ref{Sec5} we relate the test functions constructed in Section \ref{Sec3} with the matrix coefficient for the newform. Then we prove Lemma \ref{Lem5.3:dualsum} for the character sum  $\widetilde{G}_p \lb m_1,m_2,\el,\theta,\mu \rb$. 

In Section \ref{Sec6} we review a special version of the Voronoi summation formula, and apply the techniques developed so far to prove Theorem \ref{Theo:subconv}.

\subsection{Acknowledgement}
The author would like to thank Ian Petrow for helpful discussions and anonymous referees for suggestions to improve the paper.

\section{Preliminaries}\label{Sec2}
\subsection{Notations}\label{Sec2.1}
\yh{For any $x\in \R$, let $\lceil x\rceil$ be the least integer greater than or equal to $x$, and $\lfloor x \rfloor$ be the greatest integer less than or equal to $x$.}

Globally we shall work with the rational field $\Q$. Many of the discussions also hold for general number fields.

Let $\A$ be the ring of adeles over $\Q$, and $\A_{fin}$ be the finite adeles. We fix an additive character $\psi$ on $\Q\backslash \A$, which is a product of local additive characters $\psi_v$, where $\psi_\infty(x)=e^{-2\pi i x}$, and $\psi_p(x)=e^{2\pi i x'}$ where $x'\in \Q$ and $x'\equiv x\mod \Z_p$.

Let $\F$ denote a $p-$adic local field,  $O_\F$ be its ring of integers and $\varpi_\F$ be a uniformizer with order of residue field $p\neq 2$.  In general the level of an additive character $\psi_v$ is defined to be the smallest integer $\cc(\psi_v)$ such that $\psi_v$ is trivial on $\varpi_\F^{\cc(\psi_v)}O_\F$. Let $U_\F(n)=1+\varpi_\F^n O_\F$ when $n\geq 1$, and $U_\F(0)=O_\F^\times$. The level of a multiplicative character $\chi$ is defined to be the smallest non-negative integer $\cc(\chi)$ such that $\chi$ is trivial on $U_\F(\cc(\chi))$.

Let $\L$ be a  quadratic \'{e}tale algebra over $\F$. When $\L$ is a field, let $e_\L$ be the ramification index of $\L$. Let $O_\L$, $\varpi_\L$ and $U_\L(n)$ be defined similarly as for $\F$.

If $\L= \F\times \F$ splits, let $e_\L=1$. Let $U_\L(0)=O_\L^\times= O_\F^\times \times O_\F^\times$, and $U_\L(n)=1+\varpi_\F^n (O_\F\times O_\F)$.

Let $\theta$ be a character of $\L^\times$ with $\theta|_{\F^\times}=1$. Let $\cc(\theta)$ be the level of $\theta$. If $\L$ splits, then we can write $\theta= \lb \chi,\chi^{-1} \rb $, and define $\cc(\theta)=\cc(\chi)$. 
%Let $\cc \lb \theta \rb =\cc \lb \chi \rb $ in this case.

For $\GL_2$, let $Z$ be its center, $N$ be the unipotent subgroup. Over $\F$, let $K$ be the standard maximal compact open subgroup $\GL_2(O_\F)$.
We also denote $G=\PGL_2$.
 We denote
$$n(x)=\zxz{1}{x}{}{1}, \, a(y)=\zxz{y}{}{}{1}.$$
%Let $Z$ be the center of $\GL_2$, $N$ be the unipotent subgroup, $G=PGL_2$.

Let $\pi$ be an irreducible cuspidal automorphic representation of $\GL_2$ with trivial central character. Let $\pi_v$ denote its local component at $v$. Let $\cc(\pi_v)$ be the level of $\pi_v$, which is the smallest integer such that $\pi_v$ contains an element invariant by $$K_0\lb p^{\cc(\pi_v)}\rb=\left\{ g=\zxz{a}{b}{c}{d}\in \GL_2(O_v), c\equiv 0\mod\varpi_v^{\cc(\pi_v)}\right\}.$$

Haar measures are normalized so that
$\Vol \lb \Q\backslash \A \rb =1$, $\Vol(K)=\Vol(Z\backslash ZK)=1$.

For an automorphic cuspidal form $\varphi$, define 
\begin{equation}\label{Eq2.1:AdelicL2}
||\varphi||^2=<\varphi,\varphi>=\int\limits_{Z(\A)\GL_2(\Q)\backslash \GL_2(\A)} |\varphi(g)|^2dg.
\end{equation}
\subsection{A basic result on characters}
\begin{lem}\label{Lem:LiealgChar}
%\yh{Do I need the stronger version?}
	Suppose that either $p$ is large enough and $i=1$, or $i$ is large enough. Then the $p-$adic logarithm $\log$ is a group isomorphism from $U_\F(i)$ with multiplication  \yh{ to $\varpi_\F^iO_\F$ with addition,
defined by the standard Taylor expansion
	\begin{equation*}
	\log \lb 1+u \rb =u-\frac{u^2}{2}+\frac{u^3}{3}+\cdots.
	\end{equation*}
} For any character $\chi$ with $\cc(\chi)>1$ there exists a unique $\alpha_\chi\in   \varpi^{-\cc \lb \chi \rb +\cc \lb \psi_\F \rb }O_\F^\times$ modulo $ U_\F(c(\chi)-i)$ multiplicatively, such that
%$\alpha_\chi\in  \lb \varpi^{-\cc \lb \chi \rb +\cc \lb \psi_\F \rb }O_\F/\varpi^{-i+\cc \lb \psi_\F \rb } \rb ^\times$ \yh{Not right way} such that
	\begin{equation*}
	\chi \lb 1+u \rb =\psi_\F \lb \alpha_\chi \log \lb 1+u \rb  \rb , \forall u\in \varpi_\F^i O_\F.
	\end{equation*}
	On the other hand if $p\neq 2$ and $i\geq \cc(\chi)/2$, we have
\begin{equation}\label{Eq:multitoadditive}
\chi(1+u)=\psi_\F(\alpha_\chi u).
\end{equation}

\end{lem}
Note that we formulate this lemma for general $\cc(\psi_\F)$ because we will also apply it to characters of $\L^\times$ later on.
\subsection{Kirillov model, Whittaker model and unitary pairings}
This subsection is purely local so we skip the subscript $v$ from some of the notations.

For a fixed additive character $\psi$, the Kirillov model of $\pi$ is a unique realization of $\pi$ on a subspace of  $C^\infty (\F^\times)\cap S \lb \F  \rb $ such that
\begin{equation}\label{Kirilmodel}
\pi \lb \zxz{a_1}{m}{0}{a_2} \rb \varphi \lb x \rb =w_{\pi} \lb a_2 \rb \psi \lb ma_2^{-1}x \rb \varphi \lb a_1a_2^{-1}x \rb ,
\end{equation}
where $w_{\pi}$ is the central character for $\pi$. %Note if $\pi$ is not supercuspidal, one can still define its Kirillov model, but it's realized in $S \lb \F \rb $. 
Let $W_\varphi$ be the Whittaker function associated to $\varphi$. Then it is related to the Kirillov model by
$$\varphi \lb \alpha \rb =W_\varphi \lb \zxz{\alpha}{0}{0}{1} \rb ,$$
$$W_\varphi \lb g \rb =\pi \lb g \rb \varphi \lb 1 \rb .$$

When $\pi$ is unitary, one can define the $G-$invariant unitary pairing on the Kirillov model by
\begin{equation}\label{Eq2.3:unitarypair}
<\varphi_1,\varphi_2>=\int\limits_{\F ^\times}\varphi_1 \lb x \rb \overline{\varphi_2} \lb x \rb d^\times x.
\end{equation}

On the other hand, if $\pi=\pi(\chi_1,\chi_2)$ is a principal series representation with $\chi_i$ unitary, the unitary pairing can be alternatively defined by
\begin{equation}\label{Eq2.3:UnitaryPricipal}
<f_1,f_2>=\int\limits_{K}f_1(k)\overline{f}_2(k)dk.
\end{equation}
Here $f_i\in \pi$ are elements in the parabolic induction model, and $K$ is a fixed maximal compact open subgroup.

\yh{Note that when we assume $p\neq 2$, $w_\pi$ trivial, and $\cc(\pi)\geq 3$, we do not need to consider complementary series representations or special representations. Though it won't be hard to include these cases under weaker assumptions.}

\subsection{Global Whittaker function}\label{Sec2.4}
%We record here some well-known results on the local Whittaker functions. 
Let $W_\varphi$ be now the global Whittaker function associated to a holomorphic newform $\varphi$ and the fixed additive character $\psi$. It can be computed as
\begin{equation*}
W_\varphi(g)= \int\limits_{t\in N \lb \Q \rb \backslash N \lb \A \rb }\varphi(n(t)g)\psi(-t)dt.
\end{equation*}
$W_\varphi$ factorizes into a product of local Whittaker functions
\begin{equation}
W_\varphi(g)=\prod\limits_{v} W_v(g).
\end{equation}
Here $W_\infty$ is the Whittaker function associated to the lowest (non-negative) weight element in $\pi_\infty$. For example for  a discrete series representation of weight $\kappa$ over $\R$, we have the following explicit expression
\begin{equation*}
W_\infty\lb \zxz{y}{x}{0}{1}\rb=\begin{cases}
y^{\kappa/2}e^{-2\pi y}e^{2\pi ix}, &\text{if $y>0$,}\\
0, &\text{otherwise}.
\end{cases} 
\end{equation*}
On the other hand,  $W_v$ is the Whittaker function associated to the local newform at a finite place $v$ with $W_v(1)=1$. They are closely related to the classical Fourier coefficients. More  explicitly for a positive integer $m$,
\begin{equation}
\prod\limits_{v\text{\, finite}} W_v\lb\zxz{m}{}{}{1}\rb= |m|^{-1/2}\lambda_m(\varphi).
\end{equation}
Here $\lambda_m(\varphi)$ is normalized so that $\lambda_1(\varphi)=1$, and $\lambda_m(\varphi)\ll m^\epsilon$ by the Ramanujan Conjecture.
%Let $W_\varphi$ be the global Whittaker function associated to an automorphic form $F$ and the fixed  additive character $\psi$, which factors into product of local Whittaker functions. 
%It is well known that the Whittaker functions can be directly related to the Fourier coefficients. More precisely, when $F$ is a holomorphic newform of weight $\kappa$, we have 
%\begin{equation}
%W_\infty\lb \zxz{y}{x}{0}{1}\rb=\begin{cases}
%y^{k/2}e^{-2\pi y}e^{2\pi ix}, &\text{if $y>0$,}\\
%0, &\text{otherwise}.
%\end{cases} 
%\end{equation}
%\begin{equation}
%W_p(a(y))=|y|_p^{1/2}\
%\end{equation}
\subsection{Hecke algebra action}
We shall choose a test function $f=f_\infty\times f_{fin}$ on $G(\A)$ (which can be view as a function on $\GL_2(\A)$  invariant by $Z(\A)$), where $f_{fin}$ is smooth on  $ G \lb \A_{fin} \rb $ and compactly supported mod center, and $f_\infty\in C \lb G \lb \R \rb  \rb $  is sufficiently differentiable and with proper decay (the exact requirements depend on whether we are deriving a Petersson trace formula or a Kuznetsov trace formula). 
We define the Hecke algebra action, globally and locally, as follows:
$$\rho \lb f \rb \varphi \lb h \rb =\int\limits_{ G \lb \A \rb }f \lb g \rb \varphi \lb hg \rb dg, \,\, \pi_v \lb f_v \rb \varphi_v=\int\limits_{ G \lb \Q_v \rb } f_v \lb g \rb \pi_v \lb g \rb \varphi_v dg.$$

\section{Minimal vector, microlocal lifts and newforms}\label{Sec3}

This section is purely local so we skip subscript $v$ from all notations. %Most of this section is from \cite{HN18}, \cite{HSY} \yh{Add reference} or direct corollary from them.

\subsection{Small families}

\begin{defn}\label{defn:FamilythetaN}
Let $\L$ be an \'{e}tale quadratic algebra over $\F$. 
	Let $\theta_i,i=1,2$ be characters of $\L^\times$ such that $\theta_i|_{\F^\times}=1$ and $\cc \lb \theta_1 \rb =\cc \lb \theta_2 \rb $. Denote $$i_0=\cc \lb \theta \rb /e_\L,$$ which is always an integer by  $\theta_i|_{\F^\times}=1$.
\yh{
This is because when $\L$ splits, we take $e_\L=1$ by  the convention from Section \ref{Sec2.1}. And when $e_\L=2$, $\cc(\theta)$ is always even. See the list in the beginning of Section \ref{Sec:3.2.1}.
}

	For $0\leq n< i_0$, denote $$\theta_1 \sim_n\theta_2$$ if $\cc \lb \theta_1^{-1}\theta_2 \rb \leq e_\L n$.
%	 $\min\{\cc \lb \theta_1^{-1}\theta_2 \rb , \cc \lb \theta_1^{-1}\overline{\theta}_2 \rb \}\leq e_\L n$.

	For a fixed character $\theta$ with $\theta|_{\F^\times}=1$, denote $$\theta[n]=\{\theta' \text{ \ over }\L| \cc \lb \theta' \rb =\cc \lb \theta \rb ,\theta'|_{\F^\times}=1,\theta'\sim_n\theta\}.$$
\end{defn}
\begin{defn}\label{Def3.1:pithetan}
	Define $$\pi_\theta[n]=\{
%\pi'\simeq
 \pi_{\theta'}|\theta'\in\theta[n]\}.$$
\end{defn}
Here $\pi_{\theta'}$ is the representation associated to $\theta'$ either by the compact induction theory or the parabolic induction theory depending on $\L$ is a field or not. See Section \ref{Sec3.2} \ref{Sec3.3} for more details.
\begin{rem}\label{Remark:subtlefamily}
	When $n< i_0$, there is a bijection between $\theta[n]$ and $\pi_\theta[n]$. This is however not true when $n=i_0$, as $\pi_{\theta}\simeq \pi_{\overline{\theta}}$ while $\theta$ is not necessarily equal to $\overline{\theta}$.
\end{rem}
The following lemma gives an alternative description of the small families in terms of conductors. Though it is not necessary for understanding the remaining part of the paper.
\begin{lem}\label{Lem3.1:Conductorfamily}
%[\yh{leq;$\theta,\overline{\theta}$}]
	Let $\pi'=\pi_{\theta'}$, where  $\theta$ and $\theta'$ are both characters of $\L^\times$ with $\cc \lb \theta' \rb =\cc \lb \theta \rb\geq 2 $. Then $\pi'\in \pi_\theta[n]$ for $n<i_0$ if and only if $C \lb \pi_{\theta^{-1}}\times \pi_{\theta'} \rb \leq C \lb \pi_\theta \rb  p^{2n+e_\L-1}$.
\end{lem}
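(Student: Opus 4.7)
The plan is to decompose the Rankin--Selberg tensor product via local Langlands plus Mackey, then compare the conductor with that of $\pi_\theta$ under the constraint $\theta|_{\F^\times}=1$. Let $\sigma$ denote the nontrivial element of $\text{Gal}(\L/\F)$ in the non-split case, and the swap of factors in the split case. The key preliminary observation is that $N_{\L/\F}(x)=x\sigma(x)$ together with $\theta|_{\F^\times}=1$ forces $\theta^\sigma=\theta^{-1}$, and analogously $(\theta')^\sigma=(\theta')^{-1}$. Writing $\rho_\theta=\Ind_{W_\L}^{W_\F}\theta$ (with the obvious analogue for split $\L$), Mackey's formula combined with $(\theta')^\sigma=(\theta')^{-1}$ gives
\begin{equation*}
\rho_{\theta^{-1}}\otimes\rho_{\theta'}\ \cong\ \Ind_{W_\L}^{W_\F}(\theta^{-1}\theta')\ \oplus\ \Ind_{W_\L}^{W_\F}((\theta\theta')^{-1}).
\end{equation*}

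Next I would apply the conductor--discriminant formula $\cc_\F(\Ind_{W_\L}^{W_\F}\chi)=v_\F(\mathfrak{d}_{\L/\F})+f_{\L/\F}\cc(\chi)$, using $v_\F(\mathfrak{d}_{\L/\F})=e_\L-1$ for $p\neq 2$, the identity $e_\L f_{\L/\F}=2$, and the normalization $\cc(\theta)=e_\L i_0$, to obtain
\begin{equation*}
\cc_\F(\pi_{\theta^{-1}}\times\pi_{\theta'})\ =\ 2(e_\L-1)+f_{\L/\F}\bigl(\cc(\theta^{-1}\theta')+\cc(\theta\theta')\bigr),\qquad C(\pi_\theta)\,p^{2n+e_\L-1}=p^{2i_0+2n+2(e_\L-1)}.
\end{equation*}
Hence the desired conductor inequality is equivalent to $\cc(\theta^{-1}\theta')+\cc(\theta\theta')\leq e_\L(i_0+n)$.

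The combinatorial engine is the identity $\cc(\theta^2)=\cc(\theta)=e_\L i_0$, valid because $p\neq 2$: on the quotient $U_\L(e_\L i_0-1)/U_\L(e_\L i_0)$, which has exponent $p$, the restriction of $\theta$ is nontrivial, and so is that of $\theta^2$ since $2$ is a unit mod $p$. Writing $\theta'=\theta\xi$ with $\xi=\theta^{-1}\theta'$ gives $\theta\theta'=\theta^2\xi$, so if $\cc(\xi)<e_\L i_0$ the leading term $\theta^2$ dominates and $\cc(\theta\theta')=e_\L i_0$. The forward direction is then immediate: $\theta'\in\theta[n]$ means $\cc(\xi)\leq e_\L n<e_\L i_0$, forcing $\cc(\theta\theta')=e_\L i_0$, and the displayed formula yields the desired conductor bound.

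For the converse I would argue by contradiction: assume $\cc(\theta^{-1}\theta')+\cc(\theta\theta')\leq e_\L(i_0+n)$ but suppose both summands exceed $e_\L n$. If $\cc(\theta^{-1}\theta')<e_\L i_0$, the dominance argument above forces $\cc(\theta\theta')=e_\L i_0$, giving a total $>e_\L(i_0+n)$; the symmetric case $\cc(\theta\theta')<e_\L i_0$ leads to the same contradiction via the rewriting $\theta^{-1}\theta'=\theta^{-2}\cdot\theta\theta'$. Therefore at least one of $\cc(\theta^{-1}\theta'),\cc(\theta\theta')$ is $\leq e_\L n$; the first case directly gives $\theta'\in\theta[n]$, while the second gives $\theta'\in\theta^{-1}[n]$, whence $\pi_{\theta'}\in\pi_{\theta^{-1}}[n]=\pi_\theta[n]$ via the isomorphism $\pi_{\theta^{-1}}\simeq\pi_\theta$ (which follows from $\Ind_{W_\L}^{W_\F}\theta\cong\Ind_{W_\L}^{W_\F}\theta^\sigma$ in the non-split case and from the symmetry of principal series in the split case). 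The main subtlety is precisely this dichotomy: one must allow both $\theta[n]$ and $\theta^{-1}[n]$ on the character side even though they collapse to the single family $\pi_\theta[n]$ on the representation side, and the sharp combinatorial input $\cc(\theta^2)=\cc(\theta)$ (which is the only place $p\neq 2$ is used) is what makes the $e_\L-1$ offset in the conductor bound exact.
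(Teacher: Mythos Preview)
Your proof is correct and follows essentially the same route as the paper: factor the Rankin--Selberg conductor as a product over the two characters $\theta^{-1}\theta'$ and $(\theta\theta')^{-1}=\theta^{-1}\overline{\theta}'$, use $p\neq 2$ to pin one factor at level $\cc(\theta)$, and compare (the paper states the factorization directly and folds your converse dichotomy into a WLOG step $\theta'\leftrightarrow\overline{\theta}'$, while you spell out Mackey and conductor--discriminant explicitly). One tiny omission in your contradiction argument: you should also dispose of the case $\cc(\theta^{-1}\theta')=\cc(\theta\theta')=e_\L i_0$, but this is immediate since then the sum is $2e_\L i_0>e_\L(i_0+n)$.
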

\begin{proof}
	As $\theta$ and $\theta'$ are both characters of $\L^\times$, $C \lb \pi_{\theta^{-1}}\times \pi_{\theta'} \rb =C \lb \pi_{\theta^{-1}\theta{}'} \rb C \lb \pi_{\theta^{-1}\overline{\theta}{}'} \rb$. (This equality can be proved by using the local Langlands correspondence, but we shall skip the details here.) 
 Since $p\neq 2$ and $\cc(\theta)\geq 2$, at least one of $\cc \lb \theta^{-1}\theta{}' \rb , \cc \lb \theta^{-1}\overline{\theta}{}' \rb $ is $\cc \lb \theta \rb $. 
	As $\pi_{\theta'}\simeq \pi_{\overline{\theta}'}$, we can assume without loss of generality that $\cc \lb \theta^{-1}\overline{\theta}{}' \rb =\cc \lb \theta \rb $. 
	
	Now $\pi'\in\pi_\theta[n]$ if and only if  $\cc \lb \theta^{-1}\theta{}' \rb \leq e_\L n$. The lemma now follows from Lemma \ref{Eq:cpictheta} below.
%claim that $\cc \lb \pi_\theta \rb =\frac{2}{e_\L}\cc \lb \theta \rb +e_\L-1$ in general. This claim is true  for the supercuspidal representation cases due to the list before Definition \ref{defn:specialorder}. It can  also be checked easily for the parabolic induction case.
\end{proof}
\begin{lem}\label{Lem:Indexoffamily}%[\yh{Not necessary, and not quite working for principal series representation}]
	%	For any $1\leq j< n<i_0$, $\theta[n]/\sim_j$ is a group with the usual multiplication for characters.
For $1\leq n<i_0$,
	$$[\theta[n]: \theta[0]]=p^nL_\F^{-1} \lb 1,\epsilon_{\L/\F} \rb =p^{n-1}\begin{cases}
	p-1, &\text{\ if $\L$ splits,}\\
	p+1, &\text{\ if $\L$ is an inert field extension,}\\
	p, &\text{\ if $e_\L=2$.}
	\end{cases}$$
	%For $1<n<i_0$, $$[\theta[n]: \theta[n-1]]=p.$$
\end{lem}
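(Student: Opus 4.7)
The plan is to reduce the problem to counting characters in a small dual group. Multiplication by $\theta^{-1}$ gives a bijection between $\theta[n]$ and the group
\[
H_n := \{\eta : \L^\times \to \C^\times \mid \eta|_{\F^\times} = 1,\ c(\eta) \leq e_\L n\},
\]
because the equal-conductor condition $c(\theta') = c(\theta)$ is automatic once $c(\eta) \leq e_\L n < e_\L i_0 = c(\theta)$. Hence $[\theta[n]:\theta[n-1]] = [H_n:H_{n-1}]$. By Pontryagin duality, $H_n$ is the character group of $\L^\times/(\F^\times U_\L(e_\L n))$, and the indices in the increasing chain of $H_n$'s match those in the decreasing chain of these quotients, so
\[
[H_n:H_{n-1}] = \bigl|\,\F^\times U_\L(e_\L(n-1))\big/\F^\times U_\L(e_\L n)\,\bigr|.
\]

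Next, I would project $U_\L(e_\L(n-1))$ into this quotient. The projection is surjective with kernel $U_\L(e_\L(n-1))\cap \F^\times U_\L(e_\L n)$, and a short diagonal check shows this kernel equals $(\F^\times \cap U_\L(e_\L(n-1))) \cdot U_\L(e_\L n)$: any $u = av$ with $v \in U_\L(e_\L n)$ forces $a = uv^{-1} \in \F^\times \cap U_\L(e_\L(n-1))$. Therefore
\[
[H_n:H_{n-1}] = \frac{|U_\L(e_\L(n-1))/U_\L(e_\L n)|}{|(\F^\times \cap U_\L(e_\L(n-1)))/(\F^\times \cap U_\L(e_\L n))|}.
\]

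For the numerator, a direct filtration computation (handling one step in the split/inert cases and two steps in the ramified case) shows $|U_\L(e_\L(n-1))/U_\L(e_\L n)| = p^2$ whenever $n \geq 2$, while for $n=1$ the value $|U_\L(0)/U_\L(e_\L)| = |(O_\L/\varpi_\L^{e_\L})^\times|$ is $(p-1)^2$, $p^2-1$, or $p(p-1)$ in the three respective cases. For the denominator, the key local computation to carry out is $\F^\times \cap U_\L(m)$ in each of the three cases. In the split and inert cases this is straightforward since $\varpi_\L = \varpi_\F$; in the ramified case one writes $\varpi_\L^2 = \varpi_\F \cdot u$ with $u$ a unit and checks that $\F^\times \cap U_\L(2m) = U_\F(m)$ and $\F^\times \cap U_\L(2m+1) = U_\F(m+1)$. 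In all three cases this gives $\F^\times \cap U_\L(e_\L n) = U_\F(n)$, so the denominator is $|U_\F(n-1)/U_\F(n)|$, which is $p-1$ when $n=1$ and $p$ when $n \geq 2$.

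Substituting yields $[H_n:H_{n-1}] = p$ for $1 < n < i_0$ uniformly, and for $n=1$ the three values $p-1$, $p+1$, $p$ as claimed, which one checks match $pL_\F^{-1}(1, \epsilon_{\L/\F})$ by using $L_\F(1,1) = p/(p-1)$, $L_\F(1,\epsilon_{\rm unr}) = p/(p+1)$, and $L_\F(s, \epsilon_{\rm ram}) \equiv 1$. The main care needed in this argument is the case analysis for $\F^\times \cap U_\L(m)$ in the ramified setting; otherwise the proof is a routine exercise in the filtration of units and duality.
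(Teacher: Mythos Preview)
Your proof is correct and close in spirit to the paper's, with a slightly different organization that is worth noting. Both arguments reduce the index to a unit-group count via Pontryagin duality. The paper treats the split case by the direct identification $\theta[n]/\theta[n-1]\cong \hat{\F}^\times\{n\}/\hat{\F}^\times\{n-1\}$ and, in the field case, uses the short exact sequence
\[
1\to \theta[n]/\theta[n-1]\xrightarrow{\ \theta'\mapsto\theta^{-1}\theta'\ } \hat{\L}^\times\{ne_\L\}/\hat{\L}^\times\{(n-1)e_\L\}\xrightarrow{\ \text{restriction}\ } \hat{\F}^\times\{n\}/\hat{\F}^\times\{n-1\}\to 1,
\]
then counts the two outer quotients. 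You instead identify $\theta[n]$ uniformly with the dual of $\L^\times/\F^\times U_\L(e_\L n)$ and compute the single quotient $\F^\times U_\L(e_\L(n-1))/\F^\times U_\L(e_\L n)$ on the group side via the second isomorphism theorem. Your route is a bit more uniform (the split/inert/ramified distinction appears only in the final arithmetic of $|U_\L(e_\L(n-1))/U_\L(e_\L n)|$), while the paper's exact sequence separates the $\L$- and $\F$-contributions conceptually; both lead to the same numerics and neither requires any input beyond the standard filtration of units and finite Pontryagin duality.
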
 
\begin{proof}
In general for a finite abelian group $H$, let $\hat{H}$ denote the group of unitary characters on $H$.

When $\L$ is a field, we have the following bijection:
\begin{align}\label{Eq3.1:M2}
\theta[n]/\theta[0] &\rightarrow \widehat{U_\L(0)/O_\F^\times U_\L(e_\L n)}\\
\theta'&\mapsto \lb\theta^{-1}\theta'\rb|_{U_\L(0)}.\notag
\end{align}
%It is direct to see the injectivity of both maps and the surjectivity of \eqref{Eq3.1:M1}. 

Similarly when $\L$ splits, we have the following bijection:
\begin{align}\label{Eq3.1:M1}
\theta[n]/\theta[0] &\rightarrow \widehat{U_\L(0)/O_\F^\times U_\L(e_\L n)}=\widehat{U_\F(0)/U_\F(n)}\\
\theta'&\mapsto \chi|_{U_\F(0)}\text{\ if } \theta^{-1}\theta'=\lb \chi,\chi^{-1}\rb.\notag
\end{align}
Then the lemma follows directly from the general
Pontryagin duality for finite abelian groups, and counting the sets $U_\L(0)/O_\F^\times U_\L(e_\L n)$ and $
U_\F(0)/U_\F(n)$.

%\yhn{
%For $n\geq 1$, let $\hat{\F}^\times\lB n\rB=\lB \text{unitary character } \chi \text{\ over $\F^\times $, } \cc \lb \chi \rb \leq n\rB$.
%When $\L$ splits, the lemma can be easily proven as we have an identification
%\begin{align}
%\theta[n]/\theta[n-1] &\rightarrow \hat{\F}^\times\lB n\rB/\hat{\F}^\times\lB n-1\rB\\
%\theta'&\mapsto \chi\text{\ if } \theta^{-1}\theta'=\lb \chi,\chi^{-1}\rb.\notag
%\end{align}
%When $\L$ is a field, define  $\hat{\L}^\times\lB n\rB$ similarly. Then we have a short exact sequence
%$$1\rightarrow \theta[n]/\theta[n-1]\xrightarrow{\iota} \hat{\L}^\times\lB{ne_\L}\rB/\hat{\L}^\times\lB{ \lb n-1 \rb e_\L}\rB \xrightarrow{\sigma} \hat{\F}^\times\lB n\rB/\hat{\F}^\times\lB n-1\rB\rightarrow 1.
%$$
%Here $\iota  \lb \theta' \rb =\theta^{-1}\theta'$, and $\sigma  \lb \theta \rb =\theta|_{\F^\times}$. The lemma follows from counting $\hat{\L}^\times\lB{ne_\L}\rB/\hat{\L}^\times\lB{ \lb n-1 \rb e_\L}\rB$ and $\hat{\F}^\times\lB n\rB/\hat{\F}^\times\lB n-1\rB$, which can be done by using the Pontryagin duality for finite abelian groups.}
\end{proof}
\begin{rem}
It is also direct to see that $\sharp \theta[0]=1$ when $\L$ is  inert, and $\sharp \theta[0]=2$ when $\L$ is ramified. When $\L$ splits, $\theta[0]$ is however not finite, as any $\theta'=\theta\cdot (\chi_0,\chi_0^{-1})$ for an unramified character $\chi_0$ still belongs to $\theta[0]$.
\end{rem}

%\begin{lem}\label{Lem3.1:sumfamily}
%Let $\L$ be an \'{e}tale quadratic algebra over $\F$, $x\in O_\L^\times$, $j\geq 1$. Then
%$$\frac{1}{[\theta[j]:\theta[0]]}\sum\limits_{\theta'\in\theta[j]/\sim_0} \theta' \lb x \rb =\begin{cases}
%\theta \lb x \rb , &\text{if }x\in O_\F^\times U_\L \lb e_\L j \rb ,\\
%0, &\text{otherwise}.
%\end{cases}$$
%\end{lem}
%\begin{proof}
%When $x\in O_\F^\times U_\L $, we have $\theta' \lb x \rb =\theta \lb x \rb \theta^{-1}\theta' \lb x \rb =\theta \lb x \rb $ as $\cc \lb \theta^{-1}\theta' \rb \leq e_\L j$.
%On the other hand, we have identified $\theta[j]/\theta[0]$ with $\widehat{U_\L(0)/O_\F^\times U_\L(e_\L n)}$.
%%$=\sharp  \lb O_\L^\times/O_\F^\times U_\L \lb e_\L j \rb  \rb $ by, for example, Lemma \ref{Lem:Indexoffamily}. So $\theta[j]/
%%\sim_0$ is the Pontryagin dual of $O_\L^\times/O_\F^\times U_\L \lb e_\L j \rb $.
% The sum is thus vanishing if $x\not\in O_\F^\times U_\L$ due to the orthogonality of the characters.
%
%
%
%
%\end{proof}

	For any $\theta'\in \theta[n]$, there is   an element $\alpha_{\theta'}\in   \varpi_\L^{-\cc \lb \theta \rb +\cc \lb \psi_\L \rb }O_\L^\times$   by Lemma \ref{Lem:LiealgChar}
	with 
	\begin{equation}\label{Eq:LiealgcharL}
\theta' \lb 1+u \rb =\psi_\L \lb \alpha_{\theta'}\log  \lb 1+u \rb  \rb ,\forall u\in\varpi_\L^{i}O_\L.
\end{equation}
	$\theta'|_{\F^\times}=1$ implies that $\alpha_\theta'$  ( and also $\alpha_\theta$) can be chosen to be imaginary, i.e. $\overline{\alpha}_\theta'=-\alpha_\theta'$ where $x\mapsto\overline{x}$ is the nontrivial automorphism of $\L/\F$.

\begin{lem}\label{Lem:ThetafamilyPara}
	Fix $n<i_0$. Suppose that $p$ is large enough or $1\leq j<n$ is large enough. For any $\theta'\in \theta[n]$, let $\alpha_{\theta'}$ be an imaginary element associated to $\theta'$ by Lemma \ref{Lem:LiealgChar}. Then we have the following bijection
	\begin{align}\label{Eq:isomap}
	\theta[n]/\sim_j &\rightarrow \alpha_\theta U_\F \lb i_0-n \rb /U_\F \lb i_0-j \rb \\
	\theta' &\mapsto \alpha_{\theta'}\notag
	\end{align}
\end{lem}
Here $j$ being large enough is similar to $i$ large enough in Lemma \ref{Lem:LiealgChar}, depending only on $\L$.
\begin{proof}

 We can write $\alpha_\theta'=\alpha_\theta u$ for $u\in O_\F^\times$, as $\cc \lb \theta \rb =\cc \lb \theta' \rb $ and both $\alpha_\theta,\alpha_\theta'$ are imaginary.
		From $\cc \lb \theta^{-1}\theta' \rb \leq e_\L n$, we get that $\theta^{-1}\theta'$ is trivial on $U_\L \lb e_\L n \rb $, whose image under $\log$ is $\varpi_\L^{e_\L n}O_\L=\varpi^{n} O_\L$. As the associated constant to $\theta^{-1}\theta'$ is $\alpha_\theta'-\alpha_\theta=\alpha_\theta \lb u-1 \rb $, we get that $$\psi_\L \lb \alpha_\theta \lb u-1 \rb x \rb =1, \,\forall x\in \varpi^{n} O_\L.$$ 
This implies that $u\in U_\F \lb i_0-n \rb $.
On the other hand, if $\alpha_\theta'\in \alpha_\theta U_\F \lb i_0-j \rb $, then by \eqref{Eq:LiealgcharL} we get that $\cc \lb \theta^{-1}\theta' \rb \leq e_\L j$. So the map in \eqref{Eq:isomap} is well-defined.
To show that the map is a bijection, it remains to see that the cardinalities of both sides agree using Lemma \ref{Lem:Indexoffamily}, which should be obvious. %\cite[Lemma 2.2]{HN18}
\end{proof}

\subsection{Supercuspidal representation case}\label{Sec3.2}

\subsubsection{Review of compact induction}\label{Sec:3.2.1}
Supercuspidal representations are  associated to characters $\theta$ defined over quadratic field extensions $\L$.
 The detailed construction can be found in, for example, \cite{BushnellHenniart:06a} with some different conventions.

Let $\F$ be a $p-$adic local field, $\L=\F \lb \sqrt{D} \rb $ be a quadratic field extension with ramification index $e_\L$.
%In  \cite{HN18}\cite{HuYinShu182}, %\cite{HSY}, 
We assumed that $v_\F \lb D \rb =e_\L-1$ and $p\neq 2$, and used the following embedding of $\L$ as a standard embedding:
\begin{equation}\label{Eq:standardembeddingL}
x+y\sqrt{D}\mapsto \zxz{x}{y}{yD}{x}.
\end{equation}

%\begin{lem}\label{Lem:DualLiealgForChar}
%	%Suppose that $p$ is large enough.
%	For a multiplicative character $\nu$ over $\F$ with $\cc \lb \nu \rb \geq 2$, there exists $\alpha_\nu\in \F^\times$ with $v_\F \lb \alpha_\nu \rb =-\cc \lb \nu \rb +\cc \lb \psi \rb $ such that
%	\begin{equation}\label{eq:alphatheta}
%	\nu \lb 1+u \rb =\psi \lb \alpha_\nu u \rb  \end{equation}
%	for any $u\in \varpi_\F^{\lceil \cc \lb \nu \rb /2\rceil} \CO_\F$. $\alpha_\nu$ is determined $\mod \varpi_\F^{-\lceil \cc \lb \nu \rb /2\rceil+\cc \lb \psi \rb } \CO_\F$.
%\end{lem}

We  fix an additive character $\psi$ such that $\cc \lb \psi \rb =0$. Then $\cc \lb \psi_\L \rb =-e_\L+1$. 

The supercuspidal representations are parameterized by characters $\theta$ over some quadratic field extension $\L$  according to the compact induction theory. More specifically we have the following quick guide. \begin{enumerate}
	\item[Case 1.]$\cc \lb \pi \rb =2n+1$ corresponds to $e_\L=2$ and $\cc \lb \theta \rb =2n$ .
	\item[Case 2.] $\cc \lb \pi \rb =4n$ corresponds to $e_\L=1$ and $\cc \lb \theta \rb =2n$.
	\item[Case 3.] $\cc \lb \pi \rb =4n+2$ corresponds to $e_\L=1$ and $\cc \lb \theta \rb =2n+1$ .
\end{enumerate}

\yh{
Combining with the parabolic induction case, we have
\begin{lem}\label{Eq:cpictheta}
Let $\pi$ be an irreducible smooth admissible representation of $\GL_2(\F)$ with trivial central character and $\cc(\pi)\geq 3$, associated to a character $\theta$ of $\L^\times$. Then
\begin{equation}
\cc(\pi)= \frac{2}{e_\L}\cc(\theta) +e_\L-1=2i_0+e_\L-1.
\end{equation}
\end{lem}

}
%\yh{Make consistent $\CO$ with $O_\F$.}
\begin{defn}\label{defn:specialorder}
	For $e_{\L}=1,2$, define the hereditary orders $$\fA_{e_{\L}}=\begin{cases}
	M_2 \lb \CO_{\F} \rb , \text{\ if }e_\L=1,\\
	\zxz{ \CO_\F}{\CO_\F}{\varpi \CO_\F}{ \CO_\F},\text{\ otherwise}.
	\end{cases}
	$$
	Its Jacobson radical is given by
	$$\CB_{e_\L}=\begin{cases}
	\varpi M_2 \lb \CO_{\F} \rb , \text{\ if }e_\L=1,\\
	\zxz{\varpi \CO_\F}{\CO_\F}{\varpi \CO_\F}{\varpi \CO_\F},\text{\ otherwise}.
	\end{cases}$$
Define a filtration of compact open subgroups as follows:
\begin{equation}
K_{\fA_{e_\L}} \lb n \rb =1+\CB_{e_\L}^n.\ \ \ %  U_{\L} \lb n \rb =1+\varpi_{\L}^n\CO_{\L}.
\end{equation}

\end{defn}
\begin{rem}\label{Rem3.2:KLvsL}
Note that each $K_{\fA_{e_\L}} \lb n \rb $ is normalised by $\L^\times$ which is embedded as in \eqref{Eq:standardembeddingL}.
Furthermore $K_{\fA_{e_\L}} \lb n \rb \cap \L=U_\L(n)$.
\end{rem}

Denote 
$$J=\L^\times K_{\fA_{e_\L}} \lb \lfloor \cc \lb \theta \rb /2\rfloor \rb ,\ J^1=U_\L \lb 1 \rb K_{\fA_{e_\L}} \lb \lfloor \cc \lb \theta \rb /2\rfloor \rb ,\ H^1=U_\L \lb 1 \rb K_{\fA_{e_\L}} \lb \lceil \cc \lb \theta \rb /2\rceil \rb .$$ Then $\theta$ on $\L^\times$ can be extended to be a character $\tilde{\theta}$ on $H^1$ by
\begin{equation}\label{Eq:thetatilde}
\tilde{\theta} \lb l  \lb 1+x \rb  \rb =\theta \lb l \rb \psi\circ\Tr  \lb \alpha_\theta x \rb ,
\end{equation}
where $l\in U_\L(1)$, $1+x\in K_{\fA_{e_\L}} \lb \lceil \cc \lb \theta \rb /2\rceil \rb $ and $\alpha_\theta\in \L^\times\subset M_2 \lb \F \rb $ is associated to $\theta$ by Lemma \ref{Lem:LiealgChar} under the fixed embedding.

When $\cc \lb \theta \rb $ is even, then $H^1=J^1$ by definition, and $\tilde{\theta}$ can be further extended to $J$ by the same formula with $l\in\L$. In this case denote $\Lambda=\tilde{\theta}$  and $\pi_\theta=c-\Ind_J^G\Lambda$ is an irreducible supercuspidal representation. $\pi_\theta\simeq \pi_{\theta'}$ if and only if $\theta=\theta'$ or $\overline{\theta}'$.

When $\cc \lb \theta \rb $ is odd, $J^1/H^1$ is a two dimensional vector space over the residue field. This case only occurs when $\cc \lb \pi \rb =4n+2$ as listed above.
Then there exists a $q-$dimensional (or $q-1$ dimensional if $\cc(\pi)=2$, but we will be mainly interested in the case when $\cc(\pi)$ is large enough) irreducible representation $\Lambda$ of $J$ such that
$\Lambda|_{H^1}$ is a multiple of $\tilde{\theta}$, and
\begin{equation}\label{Eq3.2:DecompLambda}
\Lambda|_{\L^\times}=\bigoplus\limits_{\theta'\in\theta[1],\theta'\neq \theta, \overline{\theta}} \theta'
\end{equation}

 More specifically, let $B^1$ be any intermediate group between $J^1$ and $H^1$ such that $B^1/H^1$ gives a polarisation of $J^1/H^1$ under the  pairing given by 
\begin{equation}\label{Eq3.2:antisymPairing}
 \lb 1+x,1+y \rb \mapsto \psi\circ \Tr  \lb \alpha_\theta [x, y] \rb .
\end{equation}

Then $\tilde{\theta}$ can be extended to $B^1$ by the same formula \eqref{Eq:thetatilde} and  
\begin{equation}\label{eq3.2:LambdaJ1}
\Lambda|_{J^1}=\Ind_{B^1}^{J^1}\tilde{\theta}.
\end{equation}
 %\yh{Check with test vector paper}
\eqref{Eq3.2:DecompLambda} and \eqref{eq3.2:LambdaJ1} determine the representation $\Lambda$ of $J$, and $\pi_\theta=c-\Ind_{J}^G\Lambda$ is irreducible and supercuspidal in this case, with $\pi_\theta\simeq \pi_{\theta'}$ if and only if $\theta=\theta'$ or $\overline{\theta}'$.  We always have $w_\pi=\theta|_{\F^\times}$.

Note that when $J^1\neq H^1$, any intermediate subgroup $B^1$ works, as the pairing \eqref{Eq3.2:antisymPairing} is skew-symmetric.
It will however be convenient to fix a choice of $B^1$ for later purposes.
\begin{defn}\label{Defn3.2:B1}
When $\L$ is inert and $\cc \lb \theta \rb =2n+1$, let
\begin{equation}
B^1=U_\L \lb 1 \rb K_{\fA_2} \lb 2n+1 \rb .
\end{equation} 
%Note that this case corresponds to $\fA_1$, but we are using the compact subgroups coming from $\fA_2$.
To be uniform, in the case where $\cc \lb \theta \rb $ is even and $J^1=H^1$,  we take $B^1=J^1$.
\end{defn}
\yh{Note that when $e_\L=1$ and $\cc(\theta)$ is odd, we utilized the compact subgroup constructed from the ramification index 2 case to define $B^1$.} In explicit coordinates, 
$$K_{\fA_2} \lb 2n+1 \rb =1+\zxz{\varpi^{n+1}O_\F}{\varpi^{n}O_\F}{\varpi^{n+1}O_\F}{\varpi^{n+1}O_\F},$$
which is  indeed  an intermediate subgroup between  $K_{\fA_1}(n+1)$ and $ K_{\fA_1} \lb n\rb$.
%\yh{Remark the difference from \cite{HN18}}

\begin{defn}
	There exists a unique element $\varphi_0\in \pi$ such that $B^1$ acts on it by $\tilde{\theta}$.  
	We  call any single translate $\pi \lb g \rb \varphi_0$ a minimal vector (Type 1 minimal vector in the notation of \cite{HN18}).
\end{defn}
Note that the conjugated group $gB^1g^{-1}$ acts on $\pi \lb g \rb \varphi_0$ by the conjugated character $\tilde{\theta}^g$. The following result is a direct consequence of the compact induction theory. % and the set $\lB \pi(g) \varphi_0\rB_{g\in G/ZB^1}$ form an orthonormal basis of $\pi$. \yh{?}
\begin{cor}\label{Cor:MCofGeneralMinimalVec}
	Let $\Phi_{\varphi_0}$ be the matrix coefficient associated to a minimal vector $\varphi_0$ as above. Then $\Phi_{\varphi_0}$ is supported on $J$, and
	\begin{equation}
	\Phi_{\varphi_0} \lb bx \rb =\Phi_{\varphi_0} \lb xb \rb =\tilde{\theta} \lb b \rb \Phi_{\varphi_0} \lb x \rb 
	\end{equation}
	for any $b\in B^1$. Furthermore when $\dim \Lambda\neq 1$, $\Phi_{\varphi_0}|_{J^1}$ is supported only on $B^1$.
\end{cor}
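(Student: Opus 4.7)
The plan is to realize $\pi_\theta = c\text{-}\Ind_J^G \Lambda$ and compute $\Phi_{\varphi_0}$ by unwinding the standard inner product on the compact induction. First I would identify $\varphi_0$ with the function $f_0 \in c\text{-}\Ind_J^G \Lambda$ supported on $J$ and defined by $f_0(j) = \Lambda(j) v_0$, where $v_0 \in V_\Lambda$ is a nonzero vector on which $B^1$ acts by $\tilde{\theta}$. When $\dim \Lambda = 1$ one may take $v_0$ a generator of $V_\Lambda$; when $\dim \Lambda \neq 1$, the polarisation realization $\Lambda|_{J^1} = \Ind_{B^1}^{J^1} \tilde{\theta}$ provides a canonical such $v_0$, namely the function on $J^1$ supported on $B^1$ with $v_0(b) = \tilde{\theta}(b)$.

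Next, using the natural $G$-invariant pairing $\langle f_1, f_2 \rangle = \sum_{x \in J \backslash G} \langle f_1(x), f_2(x) \rangle_{V_\Lambda}$ on the compactly induced representation, the matrix coefficient collapses to
\begin{equation*}
\Phi_{\varphi_0}(g) = \langle \pi(g) f_0, f_0 \rangle = \langle f_0(g), f_0(1) \rangle_{V_\Lambda},
\end{equation*}
since $f_0(x) \neq 0$ only for $x \in J$, so only the coset of the identity contributes. This immediately gives $\Supp \Phi_{\varphi_0} \subseteq J$. For $g = j \in J$ one has $\Phi_{\varphi_0}(j) = \langle \Lambda(j) v_0, v_0 \rangle$, from which the two covariance identities follow directly: for the left one, expand $\Phi_{\varphi_0}(bj) = \langle \Lambda(b) \Lambda(j) v_0, v_0 \rangle$ and move $\Lambda(b)$ across the pairing using unitarity $\Lambda(b)^* = \Lambda(b^{-1})$, then apply $\Lambda(b^{-1}) v_0 = \tilde{\theta}(b^{-1}) v_0$ together with $\overline{\tilde{\theta}(b^{-1})} = \tilde{\theta}(b)$; for the right one, apply $\Lambda(b) v_0 = \tilde{\theta}(b) v_0$ directly inside the pairing.

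For the refined support statement when $\dim \Lambda \neq 1$, I would restrict to $g \in J^1$ and work inside the induced model $V_\Lambda = \Ind_{B^1}^{J^1} \tilde{\theta}$ with its natural pairing summed over $B^1 \backslash J^1$. The computation
\begin{equation*}
\Phi_{\varphi_0}(g) = \langle \Lambda(g) v_0, v_0 \rangle = \sum_{x \in B^1 \backslash J^1} v_0(xg) \overline{v_0(x)}
\end{equation*}
reduces, via $\overline{v_0(x)} = 0$ for $x \notin B^1$, to the single term at $x = 1$, giving a constant multiple of $v_0(g)$. Since $v_0$ is supported on $B^1$, the claim follows.

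The statement is essentially a formal consequence of the $B^1$-equivariance characterization of $\varphi_0$ together with the support property of functions in the compact induction, so no substantial obstacle appears. The only item requiring care is bookkeeping of the unitary structure so that $\tilde{\theta}$ emerges without a spurious complex conjugate, which is handled by unitarity of $\Lambda$ and $\tilde{\theta}$, and tracking the two distinct realizations of $\Lambda$ in the even and odd $\cc(\theta)$ cases.
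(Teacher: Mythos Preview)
Your argument is correct and is precisely the standard unwinding that the paper leaves implicit: the corollary is stated without proof, as an immediate consequence of the compact-induction construction of $\pi_\theta$ and the definition of $\varphi_0$ via the $\tilde\theta$-eigenvector $v_0\in V_\Lambda$. Your identification of $\Phi_{\varphi_0}(g)$ with $\langle \Lambda(g)v_0,v_0\rangle_{V_\Lambda}$, the covariance computation via unitarity, and the refined support via the model $\Lambda|_{J^1}=\Ind_{B^1}^{J^1}\tilde\theta$ are exactly what is intended.
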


Due to the central character, it is clear that $ZB^1$ acts on $\varphi_0$ by a character, which we also denote by $\tilde{\theta}$ without confusion.
We also need a converse result. 
\begin{prop}\label{Prop3.2:B1givespi}
Let $\pi$ be an irreducible smooth  representation of $\GL_2 \lb \F \rb $, with central character $w_\pi=\theta|_{\F^\times}$ and $\cc \lb \pi \rb \geq 3$. Suppose that there exists an element $\varphi\in \pi$ on which $ZB^1$ acts by a given character $\tilde{\theta}$, then $\varphi$ is unique up to a constant. Furthermore we must have $\pi\simeq\pi_{\theta'}$ where $\theta'\in \theta[l_0]$, for $l_0=1$ when $\L$ is inert, \old{ and $\cc \lb \theta \rb $ is odd,} and $l_0=0$ when $\L$ is ramified. \old{otherwise.}
\end{prop}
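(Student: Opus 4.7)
The plan is to establish both the classification and the uniqueness simultaneously, by combining Frobenius reciprocity with the intertwining theory for simple characters.

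First I would translate the hypothesis: a vector $\varphi$ on which $ZB^1$ acts by $\tilde\theta$ is precisely a nonzero element of $\Hom_{ZB^1}(\tilde\theta,\pi)$, and by Frobenius reciprocity this is equivalent to a surjection from $c\text{-}\Ind_{ZB^1}^{G}\tilde\theta$ onto $\pi$. Hence it suffices to identify the irreducible quotients of this compactly induced representation and show that each appears with multiplicity at most one.

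Next I would use the theory of simple characters. The restriction $\tilde\theta|_{H^1}$ is the simple character attached to the stratum $(\alpha_\theta,H^1)$. Under the hypothesis $\cc(\pi)\geq 3$, the Bushnell--Kutzko classification identifies every irreducible smooth representation containing this simple character as a supercuspidal $\pi_{\theta'}$ with $\theta'$ defined over the same $\L$ and with $\cc(\theta')=\cc(\theta)$, and with $\alpha_{\theta'}$ congruent to $\alpha_\theta$ modulo the kernel data of the simple character; via Lemma \ref{Lem:ThetafamilyPara} this already places $\theta'$ in some $\theta[n]$. To sharpen $n$ to the claimed value of $l_0$, I would use the full $B^1$-action rather than just $H^1$. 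In the ramified case, as well as whenever $\cc(\theta)$ is even (so that $B^1=J$ and $\Lambda=\tilde\theta$ is one-dimensional), the appearance of $\tilde\theta$ itself forces $\theta'=\theta$ or $\overline{\theta}$, hence $\theta'\in\theta[0]$. In the remaining inert-and-odd case, the decomposition \eqref{Eq3.2:DecompLambda} of $\Lambda|_{\L^\times}$ as $\bigoplus_{\theta''\in\theta'[1]\setminus\{\theta',\overline{\theta}'\}}\theta''$, combined with the required appearance of $\theta|_{U_\L(1)}=\tilde\theta|_{\L^\times\cap B^1}$, forces $\theta\in\theta'[1]$ and therefore $\theta'\in\theta[1]$.

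Finally, the uniqueness of $\varphi$ up to scalar follows from Mackey's formula applied to $\pi_{\theta'}=c\text{-}\Ind_J^G\Lambda$: the dimension $\dim\Hom_{B^1}(\tilde\theta,\pi_{\theta'})$ decomposes as a sum over the double cosets in $B^1\backslash G/J$, and the identity coset contributes exactly one by the construction of $\tilde\theta$ as the $B^1$-character inside $\Lambda$. The main obstacle, and the technical heart of the argument, is to show that every other double coset contributes zero. This amounts to the intertwining property of the simple character $\tilde\theta|_{H^1}$: under $\cc(\pi)\geq 3$ the intertwining set of this simple character is exactly $J$, so a conjugate $\tilde\theta^g$ can never agree with $\tilde\theta$ on the overlap $B^1\cap gJg^{-1}$ for $g\notin J$, and no nontrivial double coset can support an intertwining operator.
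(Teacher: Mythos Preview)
Your overall framework via Frobenius reciprocity matches the paper's starting point, but there is a concrete error in your case analysis.

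You assert that whenever $\cc(\theta)$ is even (which covers the ramified case and the inert-even case) one has $B^1=J$. This is false: by Definition~\ref{Defn3.2:B1}, in the even case $B^1=U_\L(1)K_{\fA_{e_\L}}(\cc(\theta)/2)=J^1$, not $J$. The quotient $J/ZB^1\simeq\L^\times/\F^\times U_\L(1)$ has order $p+1$ when $\L$ is inert and $2$ when $\L$ is ramified. In particular, in the inert-even case the character $\tilde\theta|_{ZB^1}$ does \emph{not} single out $\theta$: one checks directly that $\tilde\theta'|_{ZJ^1}=\tilde\theta|_{ZJ^1}$ for every $\theta'\in\theta[1]$, so any $\pi_{\theta'}$ with $\theta'\in\theta[1]$ contains a $(ZB^1,\tilde\theta)$-eigenvector. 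Your conclusion $\theta'\in\theta[0]$ is therefore too strong there; the correct answer, as the proposition states, is $l_0=1$ for all inert cases.

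The paper's argument sidesteps this by computing $\Ind_{ZB^1}^{J}\tilde\theta$ explicitly. In the inert-odd case treated in detail, one uses $\Ind_{B^1}^{J^1}\tilde\theta=\Lambda|_{J^1}$, observes that $\Lambda_{\theta'}|_{ZJ^1}=\Lambda|_{ZJ^1}$ for each $\theta'\in\theta[1]$, and a dimension count yields $\Ind_{ZB^1}^{J}\tilde\theta=\bigoplus_{\theta'\in\theta[1]}\Lambda_{\theta'}$; the remaining cases follow the same pattern with the appropriate index. Since the $\pi_{\theta'}=c\text{-}\Ind_J^G\Lambda_{\theta'}$ are irreducible and pairwise inequivalent, both the classification and the multiplicity-one statement drop out immediately from Schur's lemma. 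There is no need to invoke the general Bushnell--Kutzko intertwining theory, and your Mackey argument for uniqueness, while not wrong in spirit, is more work than required once the decomposition of $\Ind_{ZB^1}^J\tilde\theta$ is in hand.
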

\begin{proof}

We consider only the case where $\L$ is inert and $\cc \lb \theta \rb $ is odd here, as the other cases are very similar and slightly easier. 

By the condition, $ZB^1$ acts on $\varphi$ by $\tilde{\theta}$. By the Frobenius reciprocity for compact inductions, we have
\begin{equation}\label{Eq3.2:FrobReciInd}
0\neq \Hom_{ZB^1} \lb \tilde{\theta}, \pi|_{ZB^1} \rb =\Hom_G \lb c-\Ind_{ZB^1}^{G}\tilde{\theta}, \pi \rb .
\end{equation}
We study $c-\Ind_{ZB^1}^{G}\tilde{\theta}$ step by step as the induction of representations is transitive.
Since $\Ind_{B^1}^{J_1}\tilde{\theta}=\Lambda|_{J^1}$, we have
$$\Ind_{ZB^1}^{ZJ^1}\tilde{\theta}=\Lambda|_{ZJ^1}.$$
For each $\theta'\in \theta[1]$, let $\Lambda_{\theta'}$ be an irreducible representation of $J$ constructed similarly as $\Lambda$. $\Lambda_{\theta'}$'s are not equivalent to each other by \eqref{Eq3.2:DecompLambda} and $\cc(\theta)\geq 2$. From $\theta'\in \theta[1]$, we get that $\Lambda_{\theta'}|_{ZJ^1}=\Lambda|_{ZJ^1}$, as the construction in \eqref{Eq:thetatilde} and \eqref{eq3.2:LambdaJ1} does not distinguish elements in $\theta[1]$. 
In particular we have $$\Hom_{J} \lb \Ind_{ZB^1}^{J}\tilde{\theta},\Lambda_{\theta'} \rb =\Hom_{ZJ^1} \lb \Ind_{ZB^1}^{ZJ_1}\tilde{\theta}, \Lambda|_{ZJ^1} \rb \neq 0.$$
Then we must have
$$\Ind_{ZB^1}^{J}\tilde{\theta}=\bigoplus_{\theta'\in\theta[1]} \Lambda_{\theta'}$$
as both sides are $(p+1)p$ dimensional.

Then  \eqref{Eq3.2:FrobReciInd} becomes
\begin{equation*}
\Hom_{ZB^1} \lb \tilde{\theta}, \pi|_{ZB^1} \rb =\bigoplus_{\theta'\in\theta[1]}\Hom_G \lb c-\Ind_{J}^{G}\Lambda_{\theta'}, \pi \rb =\bigoplus_{\theta'\in\theta[1]}\Hom_G \lb \pi_{\theta'}, \pi \rb .
\end{equation*}
From this we see that the righ-hand side is either trivial, or 1-dimensional when $\pi\simeq \pi_{\theta'}$ for some $\theta'\in \theta[1]$, as $\pi_{\theta'}$'s are irreducible and not mutually equivalent. The claims in the proposition are clear now.
\end{proof}
\subsubsection{Kirillov model and recovering the newform}
We also need to describe the minimal vectors explicitly in the Kirillov model. 

As we are going to vary $\theta$, we fix a choice of $D$  (unlike \cite{HuYinShu182, HN18}
), and assume that
%For simplicity,  from now on  we choose $D$ such that  
\begin{equation}\label{Eq:specialAlphaTheta}
\alpha_\theta= \frac{\alpha_0}{\varpi_\L^{\cc \lb \theta \rb }\sqrt{D}} \mapsto \frac{\alpha_0}{\varpi^{\cc \lb \theta \rb /e_\L}}\zxz{0}{\frac{1}{D}}{1}{0}
\end{equation}
for certain $\alpha_0\in O_\F^\times$ by our assumption $\theta|_{\F^\times}=1$.
%\yh{$\alpha_0=1$ to check consistency with old way}
We define an intertwining operator from $\pi$ to its Whittaker model by
\begin{equation}\label{eq:3.4:IntertwiningtoWhittaker}
\varphi \mapsto W_\varphi \lb g \rb =\int\limits_{\F}\Phi_{\varphi,\varphi_0} \lb \zxz{\frac{\varpi^{\lfloor \cc \lb \pi \rb /2\rfloor}}{\alpha_0}}{0}{0}{1}\zxz{1}{n}{0}{1}g \rb \psi \lb -n \rb dn.
\end{equation}
Its image is in the Whittaker model by a change of variable in $n$.
It is $\GL_2(\Q_p)-$equivalent as $\pi(h)\varphi$ is mapped to the function 
\begin{align*}
g\mapsto &\int\limits_{\F}\Phi_{\pi(h)\varphi,\varphi_0} \lb \zxz{\frac{\varpi^{\lfloor \cc \lb \pi \rb /2\rfloor}}{\alpha_0}}{0}{0}{1}\zxz{1}{n}{0}{1}g \rb \psi \lb -n \rb dn\\
=&\int\limits_{\F}\Phi_{\varphi,\varphi_0} \lb \zxz{\frac{\varpi^{\lfloor \cc \lb \pi \rb /2\rfloor}}{\alpha_0}}{0}{0}{1}\zxz{1}{n}{0}{1}gh \rb \psi \lb -n \rb dn=\pi(h)W_{\varphi}(g).
\end{align*}
The Kirillov model associated to $\varphi$ is given by 
$$\varphi(x)=W_\varphi\lb\zxz{x}{}{}{1}\rb.$$
 The  intertwining operator \eqref{eq:3.4:IntertwiningtoWhittaker} is nontrivial by the following lemma:

%A particular minimal vector was given in the Kirillov model in \cite{HN18} under this operator.
%very obvious, but one can compute that
\begin{lem}\label{lem:toricnewforminKirillov}
	Up to a constant multiple, a minimal vector $\varphi_0$ is given in the Kirillov model by the following: \begin{enumerate}
		\item When $\cc \lb \pi \rb =4n$, $\varphi_0=\Char \lb \varpi^{-2n}\alpha_0U_\F \lb n \rb  \rb  $.
		\item When $\cc \lb \pi \rb =2n+1$, $\varphi_0=\Char \lb \varpi^{-n}\alpha_0U_\F \lb \lceil n/2 \rceil \rb  \rb $.
		\item When $\cc \lb \pi \rb =4n+2$, $\varphi_0=\Char \lb \varpi^{-2n-1}\alpha_0U_\F \lb n+1 \rb  \rb $.
	\end{enumerate}
Here for any set $U$, $\Char(U)$ denotes the function given by
$$\Char\lb U\rb (x)=\begin{cases}
1, &\text{\ if }x\in U;\\
0, &\text{otherwise}.
\end{cases}$$
\end{lem}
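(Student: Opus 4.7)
The uniqueness of $\varphi_0$ up to scalar follows from Proposition~\ref{Prop3.2:B1givespi}; what remains is to verify that the explicit characteristic function stated in each case does carry the correct character of $ZB^1$. The strategy is to translate the condition $\pi(b)\varphi_0 = \tilde{\theta}(b)\varphi_0$, for $b$ ranging over generators of $ZB^1$, into equations on the Kirillov model via \eqref{Kirilmodel}.

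First I would use the unipotent generators $n(x) \in B^1 \cap N$ to pin down the support. The relation
\[
\pi(n(x))\varphi_0(y) = \psi(xy)\varphi_0(y) = \tilde{\theta}(n(x))\varphi_0(y),
\]
together with the explicit $\alpha_\theta$ from \eqref{Eq:specialAlphaTheta} (which is antidiagonal in the standard embedding \eqref{Eq:standardembeddingL}), gives $\tilde{\theta}(n(x)) = \psi(\alpha_0 x \varpi^{-\cc(\theta)/e_\L})$. Hence the support of $\varphi_0$ lies in the coset $\alpha_0 \varpi^{-\cc(\theta)/e_\L} + I^\vee$, where $I = \{x \in \F : n(x) \in B^1\}$ and $I^\vee$ is its $\psi$-dual.

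Next, I would analyse the diagonal elements $a(t) \in B^1$ together with the embedded elements of $\L^\times \subset B^1$. Since $\alpha_\theta$ is antidiagonal, $\tilde{\theta}(a(t)) = 1$ for $t \in U_\F$ in a sufficiently deep congruence subgroup; combined with \eqref{Kirilmodel} this forces $\varphi_0$ to be constant on $U_\F$-orbits on its support at the appropriate level. The interplay with the non-split torus elements coming from $U_\L(1)$ --- and, in the case $\cc(\pi) = 4n+2$, with the non-abelian generators of $B^1 \setminus H^1$ specified by Definition~\ref{Defn3.2:B1} --- cuts the support down further and fixes the final $U_\F(\cdot)$-coset.

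The main obstacle will be the careful case-by-case bookkeeping: the level on which $\varphi_0$ is constant is $n$, $\lceil n/2\rceil$, and $n+1$ in the three cases respectively, and reconciling these with the structure of $U_\L(1) \cdot K_{\fA_{e_\L}}(\lfloor\cc(\theta)/2\rfloor)$ (plus the polarizing extension when $\cc(\theta)$ is odd) requires tracking $\varpi$-adic valuations of matrix entries under the embedding \eqref{Eq:standardembeddingL}. The cleanest route I envisage is to separate into the three subcases $(e_\L, \cc(\theta) \bmod 2) = (1,0),(2,0),(1,1)$ and in each to exhibit explicit generators of $B^1$ whose Kirillov action yields the claimed support, and finally to confirm that the characteristic function itself lies in the Kirillov model (i.e.\ is nonzero).
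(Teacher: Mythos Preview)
Your verification strategy is sound and differs from the paper's route. The paper simply cites \cite[Lemma~A.7]{HN18}, where the argument goes in the forward direction: one plugs $g=a(y)$ into the explicit intertwining operator \eqref{eq:3.4:IntertwiningtoWhittaker} and evaluates the integral using the known support and values of the matrix coefficient $\Phi_{\varphi_0,\varphi_0}$ (Corollary~\ref{Cor:MCofGeneralMinimalVec}). That is a direct computation of $W_{\varphi_0}(a(y))$ from the compact-induction data, rather than a verification in the Kirillov model.

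Your approach is cleaner in one respect --- it does not require the matrix coefficient --- but you are overcomplicating the endgame. The unipotent and diagonal constraints coming just from the Borel part of $K_{\fA_{e_\L}}(\lfloor \cc(\theta)/2\rfloor)$ (or $K_{\fA_2}(2n+1)$ in case~3) already force the support to be a \emph{single} $U_\F(\cdot)$-coset of the stated depth, and force $\varphi_0$ to be constant on that coset; you can check this in each case by the valuation bookkeeping you outline. So the space of Kirillov functions satisfying those Borel relations is already one-dimensional. Since the minimal vector exists (by the compact-induction construction) and satisfies them, it must equal your characteristic function up to scalar. You therefore never need to touch the non-split torus elements of $U_\L(1)$ or the extra generators of $B^1\setminus H^1$: this is fortunate, because their action on the Kirillov model is not given by \eqref{Kirilmodel} and would require knowing the Weyl-element action, which is exactly what the matrix-coefficient route supplies and your route avoids.

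One small wording issue: your final step ``confirm that the characteristic function lies in the Kirillov model (i.e.\ is nonzero)'' conflates two things. For supercuspidal $\pi$ the Kirillov model is all of $\mathcal{S}(\F^\times)$, so membership is automatic; what you really need is the existence of the minimal vector in $\pi$, which you should cite from the compact-induction theory rather than treat as a nondegeneracy check.
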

The computations are essentially the same as in \cite[Lemma A.7]{HN18}. Using the notation $
	i_0=\frac{\cc \lb \theta \rb }{e_\L},
$
one can uniformly write \begin{equation*}
\varphi_0=\sqrt{ \lb p-1 \rb p^{\lceil i_0/2\rceil -1}}\Char \lb \varpi^{-i_0}\alpha_0 U_\F \lb \lceil i_0/2\rceil \rb  \rb .
\end{equation*}
Note here we have $L^2-$normalized $\varphi_0$. $i_0$ is roughly $\frac{\cc \lb \pi \rb }{2}$. 
%\begin{proof}
%We only prove part  \lb 3 \rb  here. The first two results are similar and much easier. By the intertwining operator defined above, we have that
%\begin{equation}
%W_{\varphi_0}\lrb{\zxz{a}{0}{0}{1}}=\int_\F \Phi_{\varphi_0}\lrb{\zxz{\varpi^{2n+1}a}{\varpi^{2n+1}n}{0}{1}}\psi \lb -n \rb dn.
%\end{equation}
%By the support of $\Phi_{\varphi_0}$ in Corollary \ref{Cor:MCofGeneralMinimalVec} we get that the integral is non-vanishing only when $a\in \varpi^{-2n-1}U_\F \lb n+1 \rb $ and $n\in \varpi^{-n-1}\CO_\F$, in which case by \eqref{Eq:specialAlphaTheta} and definition of $\tilde{\theta}$,
%\begin{equation}
%W_{\varphi_0}\lrb{\zxz{a}{0}{0}{1}}=\int_{n\in \varpi^{-n-1}\CO_\F} \psi \lb \varpi^{-2n-1}\varpi^{2n+1}n \rb \psi \lb -n \rb dn=p^{n+1}.
%\end{equation}
% So up to a constant, $\varphi_0$ in the Kirillov model is \New{$$\varphi_0 \lb x \rb =W_{\varphi_0}\lrb{\zxz{x}{0}{0}{1}}=\Char \lb \varpi^{-2n-1}U_\F \lb n+1 \rb  \rb  \lb x \rb ,$$} and the intertwining operator is not trivial. 
%\end{proof}
%From now on we no longer need the special shape of $\alpha_\theta$ or $B^1$. We do want to keep all test vectors $L^2-$normalised.

%Note that the newform in the Kirillov model is $\varphi_{new}=\Char \lb \O^\times \rb $. 

\begin{rem}\label{Rem3.2:ONB}
From the explicit Kirillov model, 
and the local unitary pairing given by $$<\varphi_1,\varphi_2>=\int\limits_{x\in\F^\times}\varphi_1 \lb x \rb \overline{\varphi_2 \lb x \rb }d^\times x,$$
one can  see that the set $$B_\pi=\lB \pi\lb \zxz{a}{0}{0}{1}\zxz{1}{n}{0}{1}\rb\varphi_0 \mid a\in \F^\times/U_\F \lb \lceil i_0/2\rceil \rb , n\in \F/\varpi^{\lfloor i_0/2\rfloor}O_\F\rB$$
forms an orthogonal basis for $\pi$, and is invariant by any diagonal translation. 
\end{rem}
\begin{cor}\label{Cor:RelationNewMinimal}
%\begin{enumerate}
%\item For $a\in \F^\times/U_\F \lb \lceil i_0/2 \rceil \rb $
%\item 
For $a\in  \lb \CO_\F/\varpi^{\lceil i_0/2 \rceil} \CO_\F \rb ^\times$, let $\varphi_a=\pi \lb \zxz{\varpi^{-i_0}a^{-1}}{0}{0}{1} \rb \varphi_0$. 
	Then we have for  $\varphi_{new}=\Char \lb \O_\F^\times \rb $
	\begin{equation*}
	\varphi_{new}=\frac{1}{\sqrt{ \lb p-1 \rb p^{\lceil i_0/2 \rceil-1}}}
	\sum\limits_{a\in  \lb \CO_\F/\varpi^{\lceil i_0/2 \rceil} \CO_\F \rb ^\times} \varphi_a.
	\end{equation*}
%\end{enumerate}
	
	%Here  $\varphi_{new} \lb 1 \rb =1$.
\end{cor}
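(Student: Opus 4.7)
The plan is to work entirely in the Kirillov model and simply compute both sides. First I would recall from the Kirillov-model formula \eqref{Kirilmodel} that, since $\pi$ has trivial central character,
\[
\varphi_a(x) = \pi\!\lb \zxz{\varpi^{-i_0}a^{-1}}{0}{0}{1}\rb \varphi_0(x) = \varphi_0(\varpi^{-i_0}a^{-1}x).
\]
Using the explicit uniform description $\varphi_0 = \sqrt{(p-1)p^{\lceil i_0/2\rceil-1}}\,\Char(\varpi^{-i_0}\alpha_0 U_\F(\lceil i_0/2\rceil))$ from Lemma \ref{lem:toricnewforminKirillov} (stated right after it), this gives
\[
\varphi_a = \sqrt{(p-1)p^{\lceil i_0/2\rceil-1}}\,\Char\!\lb a\alpha_0\, U_\F(\lceil i_0/2\rceil)\rb.
\]

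Next I would observe that as $a$ ranges over $(\CO_\F/\varpi^{\lceil i_0/2\rceil}\CO_\F)^\times$, the cosets $a\alpha_0 U_\F(\lceil i_0/2\rceil)$ form a partition of $\CO_\F^\times$: they are disjoint by definition of the quotient, their union is all of $\CO_\F^\times$ since $\alpha_0\in \CO_\F^\times$, and the number of cosets equals $|(\CO_\F/\varpi^{\lceil i_0/2\rceil}\CO_\F)^\times| = (p-1)p^{\lceil i_0/2\rceil-1}$, which matches $|\CO_\F^\times / U_\F(\lceil i_0/2\rceil)|$. Consequently
\[
\sum_{a} \Char\!\lb a\alpha_0\, U_\F(\lceil i_0/2\rceil)\rb = \Char(\CO_\F^\times) = \varphi_{new}.
\]

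Combining these two observations and dividing by the normalization constant yields the identity in the corollary. There is no real obstacle here: the argument is essentially a change of variable in the Kirillov model plus a partition counting, and the normalizing factor $\sqrt{(p-1)p^{\lceil i_0/2\rceil-1}}$ is exactly the square root of the number of cosets, which is why it appears inverted in the final formula. The only minor care needed is to check that the diagonal translation $a(\varpi^{-i_0}a^{-1})$ really does shift the support $\varpi^{-i_0}\alpha_0 U_\F(\lceil i_0/2\rceil)$ onto the subset $a\alpha_0 U_\F(\lceil i_0/2\rceil)$ of $\CO_\F^\times$; this is immediate from \eqref{Kirilmodel} once one remembers that $w_\pi$ is trivial on $\CO_\F^\times$, a fact consistent with the orthogonal-basis statement already recorded in Remark \ref{Rem3.2:ONB}.
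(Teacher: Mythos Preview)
Your proof is correct and is exactly the argument the paper has in mind: the corollary is stated without proof precisely because it follows immediately from the explicit Kirillov-model description of $\varphi_0$ in Lemma \ref{lem:toricnewforminKirillov} via the diagonal shift and coset partition you wrote out. One tiny remark: the central character does not actually enter the computation at all, since for $\zxz{\varpi^{-i_0}a^{-1}}{0}{0}{1}$ the lower-right entry is $1$ and \eqref{Kirilmodel} gives $\varphi_a(x)=\varphi_0(\varpi^{-i_0}a^{-1}x)$ directly, so the aside about $w_\pi$ is unnecessary (though harmless).
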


%\subsubsection{Adaption to this paper}

%For $a\in  \lb \CO_\F/\varpi^{\lceil i_0/2 \rceil} \CO_\F \rb ^\times$ let 
%$$\varphi_a=\sqrt{ \lb p-1 \rb p^{\lceil i_0/2 \rceil-1}}\pi \lb \zxz{\varpi^{-i_0}a^{-1}}{0}{0}{1} \rb \varphi_0=\sqrt{ \lb p-1 \rb p^{\lceil i_0/2 \rceil-1}}\Char \lb a+\varpi^{\lceil i_0/2 \rceil}\O \rb .$$  
%So $\varphi_a$ is $L^2-$normalised.

%Then by Corollary \ref{Cor:RelationNewMinimal},
%\begin{equation}\label{Eq:newVSminimal}
%\varphi_{new}=\frac{1}{\sqrt{ \lb p-1 \rb p^{\lceil i_0/2 \rceil-1}}}\sum\limits_{a\in  \lb \CO_\F/\varpi^{\lceil i_0/2 \rceil} \CO_\F \rb ^\times}\varphi_{a}.
%\end{equation}
%Note that here $\varphi_{new}$ is also $L^2-$normalised. This equality also makes sense globally, when all other components are taken to be the same.
Note that $\varphi_a$ can be viewed as the minimal vector associated to the embedding 
\begin{equation}\label{Eq:conjugatedembedding}
x+y\sqrt{D}\mapsto \zxz{x}{\frac{y}{a\varpi^{i_0}}}{yDa\varpi^{i_0}}{x}.
\end{equation}
%We shall denote the image of this embedding by $\L_a$. Similarly define $$B_a^1=\zxz{a^{-1}\varpi^{-i_0}}{0}{0}{1}B^1 \zxz{a\varpi^{i_0}}{0}{0}{1} ,$$
%$$\alpha_a=\varpi^{-i_0}\zxz{0}{\frac{1}{Da\varpi^{i_0}}}{a\varpi^{i_0}}{0}.$$
\begin{defn}\label{Defn3.1:Phiaa'}
	Define
	$
	\Phi_{0,0} \lb g \rb =<\pi \lb g \rb \varphi_0,\varphi_{0}> \text{\ with normalisation } \Phi_{0,0} \lb 1 \rb =1, $
	%	 $
	%	\Phi_{a,a'} \lb g \rb =\Phi_{1,1} \lb  \zxz{a'}{0}{0}{1}g\zxz{a^{-1}}{0}{0}{1} \rb $.
	and define
	$
	\tilde{\Phi}_{0,0}=\Phi_{0,0}|_{Z B^1}$. Define in general for $a,a'\in  \lb \CO_\F/\varpi^{\lceil i_0/2 \rceil} \CO_\F \rb ^\times$
	$${\Phi}_{a,a'} \lb g \rb = {\Phi}_{0,0} \lb  \zxz{\varpi^{i_0}a'}{0}{0}{1}g\zxz{\varpi^{-i_0}a^{-1}}{0}{0}{1} \rb,\  \tilde{\Phi}_{a,a'} \lb g \rb = \tilde{\Phi}_{0,0} \lb  \zxz{\varpi^{i_0}a'}{0}{0}{1}g\zxz{\varpi^{-i_0}a^{-1}}{0}{0}{1} \rb .$$
\end{defn}

%Note that $\tilde{\Phi}_{a,a'} \lb g \rb $ is actually the restriction of $\Phi_{a,a'}$ to proper compact open subgroups.

%Using a case by case check, we have the following
\begin{cor}\label{Cor:conjugatedMC}
	$\tilde{\Phi}_{0,0} \lb g \rb =0$ unless we can write $g=e\zxz{1+x}{m}{0}{1}$ or $\zxz{1+x}{m}{0}{1} e$ for some $e\in Z U_\L \lb 1 \rb $ %or $ZO_\L^\times$ depending on $\cc \lb \pi \rb =4n+2$ or not, 
with embedding as in \eqref{Eq:standardembeddingL}, $x\in \varpi^{\lceil i_0/2\rceil}\CO_\F$ and $m\in \varpi^{\lfloor i_0/2\rfloor}\CO_\F$. In that case, we have
	\begin{equation*}
	\tilde{\Phi}_{0,0} \lb g \rb %=\theta \lb e \rb \psi\circ\Tr \lb \alpha_a \zxz{x}{m}{0}{0} \rb 
	=\theta \lb e \rb \psi \lb \varpi^{-i_0} \alpha_0m \rb .
	\end{equation*}
Let $a\in  \lb \CO_\F/\varpi^{\lceil i_0/2 \rceil} \CO_\F \rb ^\times$. Then
	$\tilde{\Phi}_{a,a} \lb g \rb =0$ unless $g=e\zxz{1+x}{m}{0}{1}$ or $\zxz{1+x}{m}{0}{1} e$ for $e\in Z U_\L \lb 1 \rb $ %or $ZO_\L^\times$ depending on $\cc \lb \pi \rb =4n+2$ or not, 
with embedding as in \eqref{Eq:conjugatedembedding}, $x\in \varpi^{\lceil i_0/2\rceil}\CO_\F$ and $m\in \varpi^{-\lceil i_0/2\rceil}\CO_\F$. In that case, we have
	\begin{equation*}
	\tilde{\Phi}_{a,a} \lb g \rb %=\theta \lb e \rb \psi\circ\Tr \lb \alpha_a \zxz{x}{m}{0}{0} \rb 
	=\theta \lb e \rb \psi \lb \alpha_0am \rb .
	\end{equation*}
\end{cor}
\begin{proof}
Note that any $g\in \GL_2$ can be written as a product of $e\in \L$ with some element in the Borel subgroup.
	Then the first part of the corollary  follows from \eqref{Eq:thetatilde}, Corollary \ref{Cor:MCofGeneralMinimalVec}, 
%\yhn{the explicit shape of $B^1$ in Definition \ref{Defn3.2:B1} and}
 the explicit shape of $\alpha_\theta$ as in \eqref{Eq:specialAlphaTheta} and the definition of $\tilde{\Phi}_{0,0}$. The second part follows  from the first part and the definition of  $\tilde{\Phi}_{a,a}$ in terms of $\tilde{\Phi}_{0,0}$.
\end{proof}

\begin{defn}\label{Def3.2:testfSC}
For  a quadratic field extension $\L$ and a character $\theta$ on it, choose the local test function to be
\begin{equation}\label{Eq3.2:TestfpSC}
f \lb g \rb =\frac{1}{ \lb p-1 \rb p^{\lceil i_0/2\rceil-1}\Vol \lb Z\backslash Z B^1 \rb }\sum\limits_{a,a'\in  \lb \CO_\F/\varpi^{\lceil i_0/2 \rceil} \CO_\F \rb ^\times} \overline{\tilde{\Phi}}_{a,a'} \lb g \rb .
\end{equation}
\end{defn}

\begin{prop}\label{Prop3.2:testfunaction}
Let $f$ be as in \eqref{Eq3.2:TestfpSC}, and let $\pi$ be an irreducible smooth representation of $\GL_2 \lb \F \rb $ with trivial central character. Then
 $\pi \lb f \rb $ is zero unless $\pi\simeq\pi_{\theta'}$ where $\theta'\in \theta[l_0]$, in which case $\pi \lb f \rb $ is the projection to the line generated by the newform.
%\begin{enumerate}
%
%\item $f*f=f$;
%\item 
%\end{enumerate}
\end{prop}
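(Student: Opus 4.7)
The plan is to compute the action of the building block $\pi(\overline{\tilde{\Phi}}_{0,0})$ as a character projector, then propagate the answer to $\pi(\overline{\tilde{\Phi}}_{a,a'})$ via a change of variables, and finally sum these up and invoke Corollary \ref{Cor:RelationNewMinimal} to identify the image as the line of the newform.

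First I would exploit that $\tilde{\Phi}_{0,0}$ is the matrix coefficient of $\varphi_0$ restricted to $ZB^1$. Since $B^1$ acts on $\varphi_0$ by $\tilde{\theta}$, the central character of $\pi$ is trivial, and $\Phi_{0,0}(1)=1$, one has $\tilde{\Phi}_{0,0}(zb) = \tilde{\theta}(b)$ for all $z \in Z$, $b \in B^1$. Thus for any $v \in \pi$,
\begin{equation*}
\pi(\overline{\tilde{\Phi}}_{0,0}) v = \int_{Z\backslash ZB^1} \overline{\tilde{\theta}(b)}\, \pi(b) v \, db.
\end{equation*}
By the orthogonality of characters of the compact quotient $ZB^1/Z$, this operator equals $\Vol(Z\backslash ZB^1)$ times the orthogonal projection onto the $\tilde{\theta}$-isotypic subspace of $\pi|_{ZB^1}$. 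Proposition \ref{Prop3.2:B1givespi} asserts precisely that this subspace is one-dimensional when $\pi \simeq \pi_{\theta'}$ for some $\theta' \in \theta[l_0]$ (spanned by a unit vector which we continue to denote $\varphi_0$), and is zero otherwise. In particular, if $\pi$ is not of this form, every $\pi(\overline{\tilde{\Phi}}_{a,a'})=0$ and hence $\pi(f)=0$, giving the vanishing part of the claim.

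Next I would treat the general pair $(a,a')$. Writing $h_1 = a(\varpi^{i_0} a')$ and $h_2 = a(\varpi^{-i_0} a^{-1})$, the definition of $\tilde{\Phi}_{a,a'}$ combined with the change of variable $g \mapsto h_1^{-1} g h_2^{-1}$ (using bi-invariance of Haar measure on the unimodular group $\PGL_2(\F)$) gives
\begin{equation*}
\pi(\overline{\tilde{\Phi}}_{a,a'}) = \pi(h_1^{-1})\, \pi(\overline{\tilde{\Phi}}_{0,0})\, \pi(h_2^{-1}).
\end{equation*}
Applying this to $v$, combining with the previous step, and using unitarity together with the notation $\varphi_a = \pi(a(\varpi^{-i_0}a^{-1}))\varphi_0$, one obtains $\pi(\overline{\tilde{\Phi}}_{a,a'}) v = \Vol(Z\backslash ZB^1)\, \langle v, \varphi_a\rangle\, \varphi_{a'}$. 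Summing over $a, a' \in (\CO_\F/\varpi^{\lceil i_0/2\rceil}\CO_\F)^\times$ the double sum factors as $\Vol(Z\backslash ZB^1)\,\langle v, \sum_a \varphi_a\rangle \sum_{a'} \varphi_{a'}$, and Corollary \ref{Cor:RelationNewMinimal} rewrites $\sum_a \varphi_a = \sqrt{(p-1)p^{\lceil i_0/2\rceil-1}}\, \varphi_{new}$. Dividing by the normalizing constant in Definition \ref{Def3.2:testfSC} yields $\pi(f) v = \langle v, \varphi_{new}\rangle \varphi_{new}$, i.e.\ orthogonal projection onto the newform line.

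The main subtlety I expect to spend time on is the case $\pi \simeq \pi_{\theta'}$ for $\theta' \in \theta[l_0]$ with $\theta' \neq \theta$: there the distinguished vector produced by Proposition \ref{Prop3.2:B1givespi} is not literally the minimal vector of $\pi_{\theta'}$ (which transforms by $\tilde{\theta}'$, not $\tilde{\theta}$). One has to check that this vector still admits an explicit Kirillov description of the same shape as in Lemma \ref{lem:toricnewforminKirillov}, so that the Kirillov-model identity used to prove Corollary \ref{Cor:RelationNewMinimal} still outputs the newform of $\pi_{\theta'}$ when one replaces $\varphi_0$ by this new distinguished vector. Granted that, the rest of the argument proceeds uniformly in $\theta' \in \theta[l_0]$.
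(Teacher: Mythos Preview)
Your argument is correct and matches the paper's approach: both show that $\frac{1}{\Vol(Z\backslash ZB^1)}\pi(\overline{\tilde{\Phi}}_{0,0})$ projects onto the $\tilde{\theta}$-eigenline via Proposition \ref{Prop3.2:B1givespi}, transport this to general $(a,a')$ by the diagonal change of variable, and then assemble the newform through Corollary \ref{Cor:RelationNewMinimal}; the paper phrases the middle step using the explicit orthonormal basis of Remark \ref{Rem3.2:ONB} rather than your rank-one formula $\pi(\overline{\tilde{\Phi}}_{a,a'})v=\Vol(Z\backslash ZB^1)\langle v,\varphi_a\rangle\varphi_{a'}$, but this is cosmetic. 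The subtlety you flag for $\theta'\neq\theta$ is indeed left implicit in the paper, and your proposed resolution---that the Kirillov description of the $\tilde{\theta}$-eigenvector is forced by the restriction of $\tilde{\theta}$ to the Borel part of $B^1$ and hence is uniform across $\theta'\in\theta[l_0]$---is the correct one.
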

\begin{proof}
We first discuss $\pi \lb \overline{\tilde{\Phi}}_{0,0} \rb $. 
If $\pi \lb \overline{\tilde{\Phi}}_{0,0} \rb \varphi \neq 0$, then by a change of variable the element $\varphi'=\pi \lb \overline{\tilde{\Phi}}_{0,0} \rb \varphi$ has the property  that $B^1$ acts on it by $\tilde{\Phi}_{0,0}=\tilde{\theta}$.
According to Proposition \ref{Prop3.2:B1givespi}, $\pi\simeq\pi_{\theta'}$ for $\theta'\in \theta[l_0]$.

 In that case, we also know that $\varphi'$ must be a multiple of $\varphi_0$. 
We choose the orthonormal basis $B_\pi$ as in Remark \ref{Rem3.2:ONB}. 
%Since $\overline{\tilde{\theta}}=\tilde{\theta}^{-1}$, 
Then we have
$$<\pi \lb \overline{\tilde{\Phi}}_{0,0} \rb \varphi,\varphi_0>=<\varphi,\pi \lb {\tilde{\Phi}}^{-1}_{0,0} \rb \varphi_0>=\Vol \lb Z\backslash Z B^1 \rb <\varphi,\varphi_0>,$$
which implies that if $\varphi\in B_\pi$, then $\pi \lb \overline{\tilde{\Phi}}_{0,0} \rb \varphi=0$ unless $\varphi=\varphi_0$. Thus $\pi\lb\frac{1}{\Vol \lb Z\backslash Z B^1 \rb }\overline{\tilde{\Phi}}_{0,0}\rb$ is the projection onto the line spanned by $\varphi_0$.

Now for any $a,a' \in  \lb \CO_\F/\varpi^{\lceil i_0/2 \rceil} \CO_\F \rb ^\times$, $\varphi\in B_\pi$, we have by definition
\begin{align}
\pi \lb \overline{\tilde{\Phi}}_{a,a'} \rb \varphi&=\int\limits_{g\in Z\backslash Z B^1}{\tilde{\theta}}^{-1} \lb g \rb \pi\lb\zxz{\varpi^{-i_0}a'{}^{-1}}{}{}{1} g \zxz{\varpi^{i_0}a}{}{}{1}\rb\varphi dg\\
&=\pi\lb \zxz{\varpi^{-i_0}a'{}^{-1}}{}{}{1}\rb \pi \lb \overline{\tilde{\Phi}}_{0,0} \rb \pi\lb \zxz{\varpi^{i_0}a}{}{}{1}\rb\varphi.\notag
\end{align}
As $B_\pi$ is invariant by diagonal translates (up to constants), we see from the previous discussion that $$\pi\lb \frac{1}{\Vol \lb Z\backslash Z B^1 \rb }\overline{\tilde{\Phi}}_{a,a'}\rb\varphi=0$$ unless $\varphi=\pi\lb \zxz{\varpi^{-i_0}a^{-1}}{}{}{1}\rb\varphi_0=\varphi_a$, in which case $$\pi\lb \frac{1}{\Vol \lb Z\backslash Z B^1 \rb }\overline{\tilde{\Phi}}_{a,a'}\rb\varphi=\varphi_{a'}.$$
By Corollary \ref{Cor:RelationNewMinimal} and Definition \ref{Def3.2:testfSC}, we get that
\begin{align*}
\pi \lb f \rb \varphi_{new}&=\frac{1}{ \lb p-1 \rb p^{\lceil i_0/2\rceil-1}}\sum\limits_{a,a'}\pi\lb\frac{1}{\Vol \lb Z\backslash Z B^1 \rb }\overline{\tilde{\Phi}}_{a,a'}\rb\frac{1}{\sqrt{ \lb p-1 \rb p^{\lceil i_0/2 \rceil-1}}}\sum\limits_{b}\varphi_{b}\\
&=\frac{1}{ \lb p-1 \rb p^{\lceil i_0/2\rceil-1}}\frac{1}{\sqrt{ \lb p-1 \rb p^{\lceil i_0/2 \rceil-1}}}\sum\limits_{a',b}\varphi_{a'}=\varphi_{new}
\end{align*}
as $\sharp \lb \CO_\F/\varpi^{\lceil i_0/2 \rceil} \CO_\F \rb ^\times= \lb p-1 \rb p^{\lceil i_0/2\rceil-1}.$
\end{proof}

\begin{rem}\label{Rem3.2:Issuesforsmallerfamily}
It may seem possible and desirable to devise $f$ so that one can take $l_0=0$ also for the $e_\L=1$ case. We work with $l_0=1$ in this paper because of the following reasons: 
\begin{enumerate}
\item 
When $\cc \lb \pi_\theta \rb =4n+2$, it is still complicated (though not impossible) to write down and make use of the matrix coefficients on the whole  group $J$, compared to its restriction to $ZB^1$.
\item When $\cc \lb \pi_\theta \rb =4n$, one can easily start from $l_0=0$ and $k\geq i_0$. 
One small benefit to start with $l_0=1$ is that the formulations in Theorem \ref{Theo1.2:PTF} \ref{Theo1.2:PTFaverage} are relatively more uniform for the supercuspidal representation cases. 
The proof of Lemma\ref{Lem:AverGeneralKL} in Section \ref{Sec4.6.1} also becomes slightly easier when $k>i_0$ holds.
\item For applications, considering the family $\theta[1]$ instead of $\theta[0]$ would affect asymptotic bounds by a fixed power of $p$, which is negligible for depth aspect problem considered in this paper.
\end{enumerate}

\end{rem}

\subsection{Principal series representation case}\label{Sec3.3}
We remark that when the central character $w_\pi$ is trivial, $p\neq 2$ and $\cc \lb \pi \rb \geq 3$ , $\pi$ can neither be a Steinberg representation nor a twisted complementary series representation. Henceforth we assume $\pi$ to be parabolically induced from two unitary characters.
\subsubsection{Microlocal lifts}
Here we review the microlocal lifts from \cite{nelson_microlocal_2016}. For convenience, we mainly restrict ourselves to the case where the central character is trivial, but the approach can be easily extended to more general cases.

We start with slightly more general situations. Let $\pi=\pi \lb \chi_1,\chi_2 \rb $ be a principal series representation, whose elements $\varphi\in\pi$ satisfy
$$\varphi\lb\zxz{a}{n}{0}{b}g\rb=\chi_1(a)\chi_2(b)|\frac{a}{b}|^{1/2}\varphi(g).$$
Let $\pi_1=\pi \lb 1,\chi_1^{-1}\chi_2 \rb =\pi\otimes\chi_1^{-1}$, so that $\pi=\pi_1\otimes \chi_1$. In this case denote $i_0=\cc \lb \chi_1^{-1}\chi_2 \rb $. Let $$K_0 \lb \varpi^{i_0} \rb =\left\{\zxz{a}{b}{c}{d}\equiv \zxz{*}{*}{0}{*}\mod\varpi^{i_0}\right\}$$ be the usual congruence subgroup.

\begin{lem}\label{Lem:twistnewform}
	The exists a unique  (up to constant) element $\varphi_1\in \pi_1$  such that $K_0 \lb \varpi^{i_0} \rb $ acts on $\varphi_1$ by $\chi_1^{-1}\chi_2 \lb d \rb $. The normalised Whittaker function associated to $\varphi_1$ is given by
	$$W_{\varphi_1}\lb\zxz{\alpha}{0}{0}{1}\rb=\sqrt{1-p^{-1}}\begin{cases}
	p^{-v \lb \alpha \rb /2}, &\text{\ if $v \lb \alpha \rb \geq 0$,}\\
	0, &\text{\ otherwise.}
	\end{cases}$$
	Furthermore if there exists an element $\varphi'$ from an irreducible smooth admissible representation $\pi'$ such that $K_0 \lb \varpi^{i_0} \rb $ acts on $\varphi'$ by $\chi_1^{-1}\chi_2 \lb d \rb $, then $\pi'\simeq \pi \lb \nu_1,\nu_2 \chi_1^{-1}\chi_2 \rb $ for some unramified characters $\nu_1,\nu_2$.
\end{lem}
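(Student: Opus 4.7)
The plan is to establish the three claims of the lemma in order: existence and uniqueness of $\varphi_1$, the explicit Whittaker values, and the converse characterization of $\pi'$.

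For existence and uniqueness, I will work in the induced model of $\pi_1 = \pi \lb 1, \chi_1^{-1}\chi_2 \rb$. Using the Iwasawa decomposition $\GL_2(\F) = B K$ and decomposing $K$ into right cosets for $K_0(\varpi^{i_0})$ (parametrized by $\P^1 \lb \O_\F/\varpi^{i_0} \rb $), I define $\varphi_1$ to be the unique element supported on the ``big-cell'' coset $B \cdot K_0(\varpi^{i_0})$ normalized so that $\varphi_1(1) = 1$. The transformation law $\varphi_1 \lb g k \rb = \chi_1^{-1}\chi_2 \lb d_k \rb \varphi_1 \lb g \rb$ for $k \in K_0(\varpi^{i_0})$ then follows directly from the induction formula. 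Uniqueness is Casselman's new-vector theorem: letting $K_1(\varpi^{i_0}) = \lB k \in K_0(\varpi^{i_0}) : d_k \equiv 1 \mod \varpi^{i_0} \rB$, the character $\chi_1^{-1}\chi_2(d)$ is trivial on $K_1(\varpi^{i_0})$, so any vector transforming as stated lies in $\pi_1^{K_1 \lb \varpi^{i_0} \rb }$, which is one-dimensional at the conductor level $c(\pi_1) = c(\chi_1^{-1}\chi_2) = i_0$.

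For the Whittaker function, I will use the standard intertwining $W_{\varphi_1}(g) = \int_\F \varphi_1 \lb w n(x) g \rb \psi(-x)\, dx$ with $w = \zxz{0}{-1}{1}{0}$. Setting $g = \zxz{\alpha}{0}{0}{1}$, I Iwasawa-decompose $w n(x) a(\alpha)$ and determine the range of $x$ for which its $K$-factor lies in the support $B \cdot K_0(\varpi^{i_0})$ of $\varphi_1$. Evaluating $\varphi_1$ via its transformation law and integrating in $x$, the oscillation of $\psi$ against the ramified character $\chi_1^{-1}\chi_2$ annihilates the integral for $v(\alpha) < 0$; for $v(\alpha) \geq 0$ a direct computation gives $\sqrt{1 - p^{-1}}\, p^{-v(\alpha)/2}$ after the chosen normalization.

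For the converse, suppose $\pi'$ is irreducible smooth admissible with a vector $\varphi'$ on which $K_0(\varpi^{i_0})$ acts by $\chi_1^{-1}\chi_2(d)$. Restricting this action to $\lB z I : z \in \O_\F^\times \rB \subset K_0(\varpi^{i_0})$ gives $\omega_{\pi'}|_{\O_\F^\times} = \chi_1^{-1}\chi_2|_{\O_\F^\times}$, so $\omega_{\pi'} = \nu \cdot \chi_1^{-1}\chi_2$ for some unramified $\nu$ and in particular $c \lb \omega_{\pi'} \rb = i_0$. Since $\chi_1^{-1}\chi_2$ is trivial on $K_1 \lb \varpi^{i_0} \rb$, the vector $\varphi'$ is $K_1 \lb \varpi^{i_0} \rb$-fixed, so Casselman's theorem gives $c(\pi') \leq i_0$; combined with the general bound $c \lb \omega_{\pi'} \rb \leq c(\pi')$ we obtain $c(\pi') = i_0$. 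The equality $c \lb \omega_{\pi'} \rb = c(\pi')$, together with the classification — dihedral supercuspidals $\pi_\theta$ satisfy $c \lb \omega \rb \leq \lceil c(\theta)/e_\L \rceil < c(\pi_\theta)$ by the formulas in Section \ref{Sec3.2}, and (twisted) Steinbergs are ruled out by $i_0 \geq 2$ and central-character considerations — forces $\pi' \simeq \pi(\mu_1, \mu_2)$ with one of $\mu_i$ unramified. Writing $\mu_1 = \nu_1$ unramified and matching central characters via $\mu_1 \mu_2 = \nu \chi_1^{-1}\chi_2$ yields $\mu_2 = \nu_2 \chi_1^{-1}\chi_2$ with $\nu_2 = \nu_1^{-1} \nu$ also unramified. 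The main obstacle is this supercuspidal exclusion: it relies on the strict inequality $c(\omega) < c(\pi)$ for $\GL_2(\F)$-supercuspidals with $p \neq 2$, which is the only place the detailed dihedral structure enters the argument.
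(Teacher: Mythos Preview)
Your proposal is correct and follows essentially the same approach as the paper: existence/uniqueness via Casselman's newform theory, and the converse via the squeeze $c(\omega_{\pi'}) = i_0 \leq c(\pi') \leq i_0$ coming from the central-character restriction and $K_1(\varpi^{i_0})$-invariance. The paper's proof is much terser than yours---it cites \cite[Lemma 2.13]{hu_triple_2017} for the Whittaker formula rather than computing it, and simply asserts ``this forces $\pi'$ to be in the specified shape'' without writing out the supercuspidal/Steinberg exclusion---so your classification argument via $c(\omega) < c(\pi)$ for non-principal-series (valid for $p \neq 2$) is a welcome elaboration of what the paper leaves implicit.
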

\begin{proof}
%\yh{
%Note that there is a natural map
%\begin{align*}
%\pi_1 &\rightarrow \pi\\
%\varphi&\mapsto \varphi\otimes \chi_1.
%\end{align*}
%We take $\varphi_1$ to be the image of newform from $\pi_1$
%}
	The existence/uniqueness of $\varphi_1$ follows simply from the newform theory in \cite{Casselman73}. In the parabolic induction model, $\varphi_1$ is supported only on $BK_0 \lb \varpi^{i_0} \rb $. Furthermore, for any $\varphi'\in\pi'$ with the same equivalent property, $\varphi'$ is in particular invariant by $$K_1 \lb \varpi^{i_0} \rb =\left\{\zxz{a}{b}{c}{d}\equiv \zxz{*}{*}{0}{1}\mod\varpi^{i_0}\right\},$$
	so $\cc \lb \pi' \rb \leq i_0$. On the other hand the equivalent property implies that $w_{\pi'}|_{\CO^\times}=\chi_1^{-1}\chi_2$. Then $\cc \lb \pi' \rb \geq \cc \lb w_{\pi'} \rb  =i_0$. This forces $\pi'$ to be in the specified shape.
	
	The expression for $W_{\varphi_1}$ follows immediately from, for example, \cite[Lemma 2.13]{hu_triple_2017}.
\end{proof}

%\begin{defn}
%	Let $\pi$, $\pi'$ be two principal series representations with $\pi=\pi \lb \chi_1,\chi_2 \rb $. We say they  differ by an unramified  twist if there exists unramified characters $\nu_1,\nu_2$ such that $\pi'\simeq \pi \lb \chi_1\nu_1,\chi_2\nu_2 \rb $. 
%\end{defn}
%%%%%%%%%%%%%%%%%In more general context, see \cite{BushnellKutzko:}

For uniformity, let $\L$ denote the diagonal torus, and let $\theta$ be the character $ \lb \chi_1,\chi_2 \rb $.  We associate the pair $ \lb \L,\theta \rb $ to  the principal series representation $\pi=\pi \lb \chi_1,\chi_2 \rb $, and simply write $\pi=\pi_\theta$. 

Let $\tilde{\theta}$ be the character on $ZK_0 \lb \varpi^{i_0} \rb $ defined by 
\begin{equation}
\tilde{\theta} \lb zg \rb =\chi_1\chi_2 \lb z \rb \chi_1 \lb \det g \rb  \chi_1^{-1}\chi_2 \lb d \rb ,
\end{equation}
where $z\in Z$, $g\in K_0 \lb \varpi^{i_0} \rb $. \yh{The microlocal lift we need is an element in the following sense.}

\begin{cor}\label{Cor:microlocallift}
	There exists a unique element (up to a constant) $\varphi_\theta\in \pi=\pi \lb \chi_1,\chi_2 \rb $ such that $ZK_0 \lb \varpi^{i_0} \rb $ acts on $\varphi_\theta$ by $\tilde{\theta}$. 
	The associated Whittaker function for $\varphi_\theta$ is given by
	\begin{equation}
	W_{\varphi_\theta}\lb\zxz{\alpha}{0}{0}{1}\rb=\sqrt{1-p^{-1}}\begin{cases}
	p^{-v \lb \alpha \rb /2}\chi_1 \lb \alpha \rb , &\text{\ if $v \lb \alpha \rb \geq 0$},\\
	0, &\text{\ otherwise.}
	\end{cases}
	\end{equation}
Conversely, if there is an element $\varphi\in \pi'$ such that $ZK_0 \lb \varpi^{i_0} \rb $ acts on it by $\tilde{\theta}$, then $\pi'\simeq\pi(\nu\chi_1,\nu^{-1}\chi_2)$ for some unramified character $\nu$.
%	The existence of such $\varphi_\theta$ determines $\pi$ up to an unramified twist.
\end{cor}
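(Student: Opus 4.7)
The plan is to reduce everything to the newform statement in Lemma \ref{Lem:twistnewform} via the twist $\pi = \pi_1 \otimes \chi_1$, where $\pi_1 = \pi(1, \chi_1^{-1}\chi_2)$. Concretely, I would let $\varphi_\theta \in \pi$ be the element corresponding under this identification to the newform $\varphi_1 \in \pi_1$ of Lemma \ref{Lem:twistnewform}. Then for any $g \in \GL_2(\F)$ one has $\pi(g)\varphi_\theta = \chi_1(\det g)\, \pi_1(g)\varphi_1$, so for $z \in Z$ and $h \in K_0(\varpi^{i_0})$ I compute
\begin{equation*}
\pi(zh)\varphi_\theta = \chi_1(\det(zh))\, \pi_1(zh)\varphi_1 = \chi_1(z^2 \det h)\, \chi_1^{-1}\chi_2(z)\, \chi_1^{-1}\chi_2(d)\, \varphi_\theta = \chi_1\chi_2(z)\, \chi_1(\det h)\, \chi_1^{-1}\chi_2(d)\, \varphi_\theta,
\end{equation*}
which is exactly $\tilde{\theta}(zh)$. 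This gives existence, and the same consistency check shows $\tilde{\theta}$ is a well-defined character on $ZK_0(\varpi^{i_0})$.

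For uniqueness and the converse, suppose $\varphi \in \pi'$ is a nonzero element on which $ZK_0(\varpi^{i_0})$ acts through $\tilde{\theta}$. I would twist back by $\chi_1^{-1}$, i.e.\ consider the element $\varphi' \in \pi' \otimes \chi_1^{-1}$ corresponding to $\varphi$. The computation above, read in reverse, shows that $K_0(\varpi^{i_0})$ then acts on $\varphi'$ by $\chi_1^{-1}\chi_2(d)$. By the final assertion of Lemma \ref{Lem:twistnewform}, $\pi' \otimes \chi_1^{-1} \simeq \pi(\nu_1, \nu_2 \chi_1^{-1}\chi_2)$ for some unramified characters $\nu_1,\nu_2$. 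Since the action of $Z$ on $\varphi$ (and hence on $\varphi'$) is by $\chi_1\chi_2$ (resp.\ $\chi_1^{-1}\chi_2$), matching central characters gives $\nu_1\nu_2 = 1$; writing $\nu = \nu_1$ this yields $\pi' \simeq \pi(\nu\chi_1, \nu^{-1}\chi_2)$, as claimed. Applied to $\pi' = \pi$ itself, the only unramified $\nu$ compatible with $(\chi_1,\chi_2)$ up to permutation forces $\varphi$ to be a scalar multiple of $\varphi_\theta$, giving uniqueness.

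For the Whittaker function formula, I would use that twisting by $\chi_1 \circ \det$ multiplies the Whittaker functional of $\pi_1$ by $\chi_1(\det g)$, so
\begin{equation*}
W_{\varphi_\theta}\!\lb\zxz{\alpha}{0}{0}{1}\rb = \chi_1(\alpha)\, W_{\varphi_1}\!\lb\zxz{\alpha}{0}{0}{1}\rb,
\end{equation*}
and the explicit expression in Lemma \ref{Lem:twistnewform} gives the stated formula directly.

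The only mildly subtle point is the converse direction, specifically the determination of the pair $(\nu_1,\nu_2)$: Lemma \ref{Lem:twistnewform} only guarantees that the two unramified twisting characters exist, and one must use the central character condition (i.e.\ the action of $Z \subset ZK_0(\varpi^{i_0})$ via $\chi_1\chi_2$ built into $\tilde{\theta}$) to pin down $\nu_2 = \nu_1^{-1}$. Everything else is formal manipulation of the twist $\pi_1 \otimes \chi_1$.
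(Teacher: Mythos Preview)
Your proposal is correct and takes exactly the same approach as the paper, whose proof is the single line ``Follows directly from Lemma \ref{Lem:twistnewform} by a twist, and the requirement for the central character to be $\chi_1\chi_2$.'' You have simply unpacked this in full detail; the only cosmetic remark is that uniqueness of $\varphi_\theta$ in $\pi$ follows more directly from the uniqueness of $\varphi_1$ in $\pi_1$ via the twist isomorphism, rather than through the converse statement.
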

\begin{proof}
	The results follow directly from Lemma \ref{Lem:twistnewform} by a twist, and the requirement for the central character to be $\chi_1\chi_2$.
\end{proof}
In particular if we assume the central character to be trivial, we get $\pi'=\pi_{\theta'}$ for some $\theta'\in\theta[0]$ as in Definition \ref{defn:FamilythetaN}.
\subsubsection{Recovering the newform}

\begin{lem}\label{Lem3.3:newformasMLL}
	Denote $c_1=\cc \lb \chi_1 \rb $,  $\varphi_a=\pi\lb \zxz{1}{\frac{a}{\varpi^{c_1}}}{0}{1}\rb \varphi_\theta$, and  $$C_0= \lb 1-p^{-1} \rb^{3/2} p^{c_1}\int\limits_{x\in \CO^\times}\chi_1 \lb x \rb \psi \lb \varpi^{-c_1}x \rb d^\times x.$$ Then the newform can be written as
	
	$$\varphi_{new}=\Char \lb O_\F^\times \rb=\frac{1}{C_0}\sum\limits_{a\in  \lb \CO/\varpi^{c_1}\CO \rb ^\times}  \chi_1 \lb a \rb \varphi_a.
	$$
\end{lem}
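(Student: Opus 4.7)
The plan is to work entirely in the Kirillov model attached to $\psi$. Using the transformation rule \eqref{Kirilmodel} with trivial central character, $n(a/\varpi^{c_1})$ acts by multiplication by $\psi(ax/\varpi^{c_1})$, so $\varphi_a(x) = \psi(ax/\varpi^{c_1})\varphi_\theta(x)$. Combined with the explicit formula for the Whittaker function in Corollary \ref{Cor:microlocallift}, namely $\varphi_\theta(x) = \sqrt{1 - p^{-1}}\,p^{-v(x)/2}\chi_1(x)$ for $v(x) \geq 0$ and $0$ otherwise, the identity to be proved reduces to the pointwise statement
\begin{equation*}
\varphi_\theta(x)\sum_{a \in (\CO/\varpi^{c_1}\CO)^\times} \chi_1(a)\psi(ax/\varpi^{c_1}) \;=\; C_0\,\Char(\CO^\times)(x).
\end{equation*}

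Next I would analyze the inner character sum according to $k := v(x)$. When $k = 0$, the substitution $a \mapsto ax^{-1}$ gives $\chi_1(x^{-1})G(\chi_1)$, where $G(\chi_1) := \sum_a \chi_1(a)\psi(a/\varpi^{c_1})$; combined with $\varphi_\theta(x) = \sqrt{1-p^{-1}}\,\chi_1(x)$, this yields the constant $\sqrt{1-p^{-1}}\,G(\chi_1)$ independently of $x \in \CO^\times$. When $1 \leq k < c_1$, the additive character $a \mapsto \psi(ax/\varpi^{c_1})$ has conductor $c_1 - k$ on $(\CO/\varpi^{c_1})^\times$, strictly less than $c_1 = \cc(\chi_1)$; the standard trick of replacing $a$ by $a(1+u)$ with $u \in \varpi^{c_1 - 1}\CO$ leaves the additive factor unchanged while producing a factor $\chi_1(1+u)$, which can be chosen to be a nontrivial root of unity, so the sum must vanish. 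When $k \geq c_1$ the additive factor is trivial and the sum collapses to $\sum_a \chi_1(a) = 0$, since $\chi_1$ is nontrivial.

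It remains to verify that the constant produced in the $k = 0$ case agrees with the prescribed $C_0$. Partitioning $\CO^\times = \bigsqcup_{a \in (\CO/\varpi^{c_1})^\times}(a + \varpi^{c_1}\CO)$ and using that $\chi_1$ is constant on these cosets, the integral defining $C_0$ reduces to $p^{-c_1}/(1-p^{-1}) \cdot G(\chi_1)$ under the normalization $\Vol(\CO^\times,d^\times x) = 1$; multiplying by the prefactor $(1-p^{-1})^{3/2} p^{c_1}$ gives $C_0 = \sqrt{1-p^{-1}}\,G(\chi_1)$, matching the $k=0$ contribution exactly. The only step requiring real care is the bookkeeping of the powers of $1 - p^{-1}$ coming from the Haar-measure normalization and the square-root appearing in Corollary \ref{Cor:microlocallift}; once these align, the lemma follows. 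Conceptually the proof is parallel to Corollary \ref{Cor:RelationNewMinimal} in the supercuspidal setting, with the additional twist by $\chi_1(a)$ reflecting that the microlocal lift is a twisted newform rather than a newform.
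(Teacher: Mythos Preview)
Your proof is correct and follows essentially the same approach as the paper: both work in the Kirillov model, use the unipotent action together with the explicit formula for $\varphi_\theta$ from Corollary \ref{Cor:microlocallift}, and reduce to a Gauss-sum computation that vanishes when $v(x)>0$ because the conductors do not match. Your treatment is somewhat more explicit in separating the cases $1\leq k<c_1$ and $k\geq c_1$ and in verifying the normalization constant $C_0$, but the argument is the same.
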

\begin{proof}
	Note that one can alternatively write $$C_0=\sqrt{1-p^{-1}}\sum\limits_{x\in  \lb \CO/\varpi^{c_1}\CO \rb ^\times}\chi_1 \lb x \rb \psi \lb \varpi^{-c_1}x \rb .$$ In the Kirillov/Whittaker model, we have for $v(x)\geq 0$,
	
	\begin{align*}
	\sum\limits_{a\in  \lb \CO/\varpi^{c_1}\CO \rb ^\times}\chi_1 \lb a \rb W_{\varphi_a}\lb\zxz{ x }{0}{0}{1}\rb&=\sum\limits_{a\in  \lb \CO/\varpi^{c_1}\CO \rb ^\times}\chi_1 \lb a \rb \pi\lb\zxz{1}{\frac{a}{\varpi^{c_1}}}{0}{1}\rb W_{\varphi_\theta}\lb\zxz{ x }{0}{0}{1}\rb\\
	&=\sum\limits_{a\in  \lb \CO/\varpi^{c_1}\CO \rb ^\times}\chi_1 \lb a \rb \psi\lb\frac{a x }{\varpi^{c_1}}\rb W_{\varphi_\theta}\lb\zxz{ x }{0}{0}{1}\rb\\
%	&=\sqrt{1-p^{-1}}p^{-v \lb  x  \rb /2}\sum\limits_{a\in  \lb \CO/\varpi^{c_1}\CO \rb ^\times}\chi_1 \lb a \rb \psi\lb\frac{a x }{\varpi^{c_1}}\rb \chi_1 \lb  x  \rb \\
	&=\sqrt{1-p^{-1}}p^{-v \lb  x  \rb /2}\sum\limits_{a\in  \lb \CO/\varpi^{c_1}\CO \rb ^\times}\psi\lb\frac{a x }{\varpi^{c_1}}\rb \chi_1 \lb a x  \rb .
	\end{align*}
	Here we used Corollary \ref{Cor:microlocallift} for the third line. The sum is automatically 0 when $v(x)<0$.
	Note that when $v \lb  x  \rb >0$, the sum in $a$ in the last line will be vanishing as the levels do not match. Thus by a change of variable, we have
	$$\frac{1}{C_0}\sum\limits_{a\in  \lb \CO/\varpi^{c_1}\CO \rb ^\times}\chi_1 \lb a \rb W_{\varphi_a}\lb\zxz{x}{0}{0}{1}\rb=\Char  \lb \CO^\times \rb =W_{\varphi_{new}}.$$
\end{proof}

From now on we assume that $\pi=\pi \lb \chi_1,\chi_1^{-1} \rb $, $p\neq 2$ and $\cc(\chi_1)\geq 2$, so that \begin{equation}\label{Eq3.3:i0PS}
i_0=\cc(\chi_1)=c_1.
\end{equation} Then the character $\tilde{\theta}$ can be rewritten as
\begin{equation}\label{Eq3.2:newtheta}
\tilde{\theta}\lb z\zxz{a}{b}{c}{d}\rb=\chi_1 \lb a \rb \chi_1^{-1} \lb d \rb , \text{\ for }\forall \zxz{a}{b}{c}{d}\in K_0 \lb \varpi^{i_0} \rb .
\end{equation}

\begin{defn}\label{Defn3.2:Phiaa'Prinicipal}
	Define $\Phi_{0,0} \lb g \rb =<\pi \lb g \rb \varphi_\theta,\varphi_\theta> \text{\ with normalisation } \Phi_{0,0} \lb 1 \rb =1$, $\tilde{\Phi}_{0,0}=\Phi_{0,0}|_{ZK_0 \lb \varpi^{i_0} \rb }$, and define for $a,a'\in  \lb \CO/\varpi^{i_0}\CO \rb ^\times$
	\begin{equation}\label{Eq:Phiaa'Princial0}
	{\Phi}_{a,a'} \lb g \rb =\chi_1 \lb a \rb \chi_1^{-1} \lb a' \rb {\Phi}_{0,0}\lb\zxz{1}{-a'\varpi^{-i_0}}{0}{1} g\zxz{1}{a\varpi^{-i_0}}{0}{1} \rb
	\end{equation}	
\begin{equation}\label{Eq:Phiaa'Princial}
	\tilde{\Phi}_{a,a'} \lb g \rb =\chi_1 \lb a \rb \chi_1^{-1} \lb a' \rb \tilde{\Phi}_{0,0}\lb\zxz{1}{-a'\varpi^{-i_0}}{0}{1} g\zxz{1}{a\varpi^{-i_0}}{0}{1} \rb
	\end{equation}
\end{defn}

\begin{defn}\label{Def3.3:testPS}
Define the following test function
\begin{equation}\label{Eq3.3:testfun}
f \lb g \rb =\frac{1}{ \lb p-1 \rb p^{i_0-1} \Vol \lb Z\backslash ZK_0 \lb \varpi^{i_0} \rb  \rb }\sum\limits_{a,a'\in  \lb \CO_\F/\varpi^{i_0} \CO_\F \rb ^\times} \overline{\tilde{\Phi}}_{a,a'} \lb g \rb .
\end{equation}
\end{defn}

\begin{prop}\label{Prop3.3:testfunaction}
\yh{
For $\L$ split, $\theta=(\chi_1,\chi_1^{-1})$, $f$ defined in \eqref{Eq3.3:testfun}, and  $\pi$ an irreducible smooth representation of $\GL_2 \lb \F \rb $ with trivial central character, we have that
 $\pi \lb f \rb $ is zero unless $\pi\simeq\pi_{\theta'}$ where $\theta'\in \theta[0]$, in which case $\pi \lb f \rb $ is the projection to the line generated by the newform.
}
\end{prop}
\begin{proof}
The proof is parallel to that of Proposition \ref{Prop3.2:testfunaction}. We first specify the orthonormal basis we are going to work with. First of all, the elements in the set
\begin{equation}\label{Eq3.3:Orthogonalset}
\lB \pi\lb\zxz{1}{n}{0}{1}\rb \varphi_\theta\,\middle\vert\, n\in \F/O_\F \rB
\end{equation}
are orthogonal to each other. This will be shown in the proof of Lemma \ref{Lem:PrincipalONB} below.
%Indeed by Corollary \ref{Cor:microlocallift}, we have for $n\in \F-O_\F$,
%	\begin{align*}
%	<\pi\lb\zxz{1}{n}{0}{1}\rb\varphi_\theta,\varphi_\theta>&=\int\limits_{\alpha\in \F^\times} W_{\varphi_\theta}\lb\zxz{\alpha}{0}{0}{1}\zxz{1}{x}{0}{1}\rb\overline{W_{\varphi_\theta}\lb\zxz{\alpha}{0}{0}{1}\rb}d^\times \alpha=\int\limits_{v \lb \alpha \rb \geq 0}p^{-v \lb \alpha \rb }\psi \lb \alpha x \rb d^\times\alpha\\
%	&=\frac{1}{1-p^{-1}}\int\limits_{v \lb \alpha \rb \geq 0}\psi \lb \alpha x \rb d\alpha=0\notag
%	\end{align*}
Then we  complete an orthonormal basis $B_\pi$ from \eqref{Eq3.3:Orthogonalset}. 

As in the proof of Proposition \ref{Prop3.2:testfunaction}, we get that $\pi\lb\frac{1}{\Vol \lb Z\backslash ZK_0 \lb \varpi^{i_0} \rb  \rb }\overline{\tilde{\Phi}}_{0,0}\rb$ is the projection onto the line spanned by $\varphi_\theta$
by Corollary \ref{Cor:microlocallift}.
Then as $\overline{\chi_1}=\chi_1^{-1}$, $$\pi\lb\frac{1}{\Vol \lb Z\backslash ZK_0 \lb \varpi^{i_0} \rb  \rb }\overline{\tilde{\Phi}}_{a,a'}\rb\varphi=\begin{cases}
0, &\text{\ if $\varphi\in B_\pi, \varphi\neq \varphi_a$,}\\
\chi^{-1}_1 \lb a \rb \chi_1 \lb a' \rb \varphi_{a'}, &\text{\ if $\varphi=\varphi_a$}
\end{cases}$$ 
Using Lemma \ref{Lem3.3:newformasMLL}, we get that
\begin{align*}
\pi \lb f \rb \varphi_{new}&=\frac{1}{C_0 \lb p-1 \rb p^{i_0-1}}\sum\limits_{a,a'}\pi\lb\frac{1}{\Vol \lb Z\backslash ZK_0 \lb\varpi^{i_0} \rb  \rb }\overline{\tilde{\Phi}}_{a,a'}\rb\sum\limits_{b}\chi_1 \lb b \rb \varphi_{b}\\
&=\frac{1}{C_0 \lb p-1 \rb p^{i_0-1}}\sum\limits_{a',b}\chi_1 \lb a' \rb  \varphi_{a'}=\varphi_{new}.
\end{align*}
%Note that $\sharp  \lb\CO_\F/\varpi^{i_0} \CO_\F \rb ^\times= \lbp-1 \rb p^{i_0-1}=\frac{1}{\Vol \lbK_0

\end{proof}

\subsubsection{$K-$type generated by $\varphi_\theta$}%\yh{Maybe unnecessary}

Let $K'=\lB g\in \GL_2 \lb\F \rb \cap \zxz{\CO_\F}{\varpi^{-i_0}\CO_\F}{\varpi^{i_0}\CO_\F}{\CO_\F} \rB$. We shall discuss the representation $\sigma$ of $K'$ generated by $\varphi_\theta$ here, which might have independent interest. It will also be used in Lemma \ref{Lem5.1:twoApproach}. %The reason that we do not use the standard maximal compact open subgroup is that we want to include the newform in the $K'-$type by the discussion above.
\begin{lem}\label{Lem:PrincipalONB}
	Let $\pi=\pi \lb\chi_1,\chi_1^{-1} \rb $ be a unitary principal series representation with $i_0=\cc(\chi_1)\geq 1$, and $\varphi_\theta\in\pi$ be as in Corollary \ref{Cor:microlocallift}. Let $\sigma$ be the representation of $K'$ generated by $\varphi_\theta$.
	The set $\{ \pi \lb g \rb  \varphi_\theta\}_{g\in K'/K_0 \lb\varpi^{i_0} \rb }$ provides an orthonormal basis for
	the representation $\sigma$, which is dimension $[K':K_0 \lb\varpi^{i_0} \rb ]= \lb p+1 \rb p^{i_0-1}$.
\end{lem}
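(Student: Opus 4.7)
The plan is to reduce the orthonormality to the vanishing of the matrix coefficient $\Phi_{0,0}(g):=\langle\pi(g)\varphi_\theta,\varphi_\theta\rangle$ on nontrivial coset representatives for $K'/K_0(\varpi^{i_0})$. Since $\pi$ is unitary and $\varphi_\theta$ is $\tilde\theta$-equivariant under $K_0(\varpi^{i_0})$, one has $\langle\pi(g_1)\varphi_\theta,\pi(g_2)\varphi_\theta\rangle=\Phi_{0,0}(g_2^{-1}g_1)$, which depends only on the coset of $g_2^{-1}g_1$; so orthogonality across distinct cosets together with the $L^2$-normalization of $\varphi_\theta$ (readily checked from Corollary \ref{Cor:microlocallift}) delivers the orthonormal system.

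First I would parametrize $K'/K_0(\varpi^{i_0})$. Conjugation by $d_0=\text{diag}(\varpi^{i_0},1)$ identifies $K'$ with $\GL_2(\CO)$ and $K_0(\varpi^{i_0})$ with the opposite congruence subgroup $K^0(\varpi^{i_0})=\{k\in\GL_2(\CO): k_{12}\in\varpi^{i_0}\CO\}$, so $K'/K_0(\varpi^{i_0})\cong\GL_2(\CO)/K^0(\varpi^{i_0})\cong\P^1(\CO/\varpi^{i_0})$, of cardinality $(p+1)p^{i_0-1}$, which already gives the dimension count. Concrete coset representatives in $K'$ can be taken as $n(m)$ for $m\in\varpi^{-i_0}\CO/\CO$ (Type A) together with $w_{i_0}'n(m')$ for $m'\in\varpi^{1-i_0}\CO/\CO$ (Type B), where $w_{i_0}'=\begin{pmatrix}0 & \varpi^{-i_0}\\ \varpi^{i_0} & 0\end{pmatrix}\in K'$.

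Next I would evaluate $\Phi_{0,0}$ on each nontrivial coset. For Type A with $v(m)<0$, the Kirillov action $[\pi(n(m))\varphi_\theta](x)=\psi(mx)\varphi_\theta(x)$ together with $|\varphi_\theta(x)|^2=(1-p^{-1})p^{-v(x)}\mathbf{1}_{v(x)\geq 0}$ from Corollary \ref{Cor:microlocallift} reduces $\Phi_{0,0}(n(m))$ to $(1-p^{-1})\sum_{n\geq 0}p^{-n}\int_{\CO^\times}\psi(m\varpi^n u)\,d^\times u$, which vanishes for $v(m)<0$ by standard orthogonality of additive characters on $\CO^\times$. For Type B, $g=w_{i_0}'n(m')$, I would pass to the parabolic induction model, in which $\varphi_\theta|_K$ is supported on $K_0(\varpi^{i_0})$ with value $\tilde\theta$. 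An Iwasawa decomposition of $kg$ for $k\in K_0(\varpi^{i_0})$ shows $\varphi_\theta(kg)\neq 0$ precisely when $(k)_{21}\in\varpi^{i_0}\CO^\times$ with exact valuation $i_0$, and on that subset the integrand $\varphi_\theta(kg)\overline{\varphi_\theta(k)}$ simplifies (after cancellations in the $B$-part and the $\tilde\theta$-factors) to $\chi_1(-1)\chi_1^2(d/\mu)$, where $d=(k)_{22}$ and $\mu=(k)_{21}\varpi^{-i_0}+d\varpi^{i_0}m'\in\CO^\times$. The substitutions $\nu=(k)_{21}\varpi^{-i_0}\in\CO^\times$ and $\tau=\nu+d\varpi^{i_0}m'\in\CO^\times$ (valid because $v(d\varpi^{i_0}m')\geq 1$) then factor the remaining double integral as $\int_{\CO^\times}\chi_1^2(d)\,d^\times d \cdot \int_{\CO^\times}\chi_1^{-2}(\tau)\,d^\times\tau$, which vanishes since the standing hypotheses $p\neq 2$ and $\cc(\chi_1)=i_0\geq 2$ force $\cc(\chi_1^2)=i_0$, making $\chi_1^2$ a nontrivial character of $\CO^\times$.

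The main technical obstacle is the Type B case, because the supports of $k\mapsto\varphi_\theta(k)$ and $k\mapsto\varphi_\theta(kg)$ already overlap on a positive-measure subset of $K_0(\varpi^{i_0})$, so the vanishing of $\Phi_{0,0}$ is not a support-disjointness phenomenon but must come from the above character-orthogonality of $\chi_1^2$ on $\CO^\times$. Once both types are handled, the $(p+1)p^{i_0-1}$ vectors $\{\pi(g)\varphi_\theta\}_{g\in K'/K_0(\varpi^{i_0})}$ form an orthonormal set; since they span $\sigma$ by construction, they constitute an orthonormal basis and $\dim\sigma=(p+1)p^{i_0-1}$.
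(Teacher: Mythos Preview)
Your proposal is correct and follows essentially the same route as the paper: the same coset decomposition of $K'/K_0(\varpi^{i_0})$ into upper-unipotent (Type A) and Weyl-translated (Type B) representatives, the Kirillov-model computation for Type A, and the parabolic-induction-model computation for Type B reducing to the vanishing of $\int_{\CO^\times}\chi_1^{\pm 2}$. The only cosmetic differences are that the paper handles Type A by converting $p^{-v(\alpha)}d^\times\alpha$ directly to additive measure (rather than summing over valuation shells) and phrases the Type B vanishing as a single integral $\int\chi_1^{-2}(k_3/k_4+x\varpi^{i_0})\,dk$ rather than your explicit factorization, but the substance is identical.
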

Note that $\chi_1$ is automatically a unitary character by the setting.
\begin{proof}
	It is straightforward to verify that we can choose the coset representatives as follows:
	\begin{equation}\label{Eq3.2:cosetrepresentative}
	K'/K_0 \lb\varpi^{i_0} \rb =\coprod\limits_{x\in \varpi^{-i_0}\CO_\F/\CO_\F} \zxz{1}{x}{0}{1}\cup \coprod\limits_{x\in \varpi^{-i_0+1}\CO_\F/\CO_\F} \zxz{0}{\varpi^{-i_0}}{\varpi^{i_0}}{0}\zxz{1}{x}{0}{1}
	\end{equation}
	where the RHS has exactly $ \lb p+1 \rb p^{i_0-1}$ elements.
	% Case 1: $g=\zxz{a}{b}{c}{d}$ with $v \lbd \rb =0$, then $g\in NK_0 \lb\varpi^{i_0} \rb $
	% Case 2: $v \lbd \rb >0$, then $v \lbc \rb =i_0=-v \lbb \rb $, and $\zxz{0}{\varpi^{-i_0}}{\varpi^{i_0}}{0}g$ satisfies $v \lbd' \rb =0$. Preceed as in Case 1.
	
	Let $g,g'$ be any two different elements from the right-hand side of \eqref{Eq3.2:cosetrepresentative}.
	By the invariance of the unitary pairing, we have $$<\pi \lb g \rb \varphi_\theta,\pi \lb g' \rb \varphi_\theta>=<\pi \lb{g'}^{-1}g \rb \varphi_\theta,\varphi_\theta>,$$ where ${g'}^{-1}g \in K'-K_0 \lb\varpi^{i_0} \rb $.
	
	Thus for the orthorgonality, it suffices to show that for any coset representative $g\neq 1$, $$<\pi \lb g \rb \varphi_\theta,\varphi_\theta>=0.$$
	
	Let $g=\zxz{1}{x}{0}{1}$ for $x\notin \CO_\F$ first. Then using Corollary \ref{Cor:microlocallift},
	\begin{align*}
	<\pi \lb g \rb \varphi_\theta,\varphi_\theta>&=\int\limits_{\alpha\in \F^\times} W_{\varphi_\theta}\lb\zxz{\alpha}{0}{0}{1}\zxz{1}{x}{0}{1}\rb\overline{W_{\varphi_\theta}\lb\zxz{\alpha}{0}{0}{1}\rb}d^\times \alpha=\int\limits_{v \lb\alpha \rb \geq 0}p^{-v \lb\alpha \rb }\psi \lb\alpha x \rb d^\times\alpha\\
	&=\frac{1}{1-p^{-1}}\int\limits_{v \lb\alpha \rb \geq 0}\psi \lb\alpha x \rb d\alpha=0.\notag
	\end{align*}
	
	Now let $g=\zxz{0}{\varpi^{-i_0}}{\varpi^{i_0}}{0}\zxz{1}{x}{0}{1}$ with $v \lb x \rb \geq -i_0+1$, let $K$ be the standard maximal compact open subgroup. Then up to a constant multiple, we have by \eqref{Eq2.3:UnitaryPricipal}
	\begin{align*}
	<\pi \lb g \rb \varphi_\theta,\varphi_\theta>=\int\limits_{k\in K}\varphi_\theta \lb k\zxz{0}{\varpi^{-i_0}}{\varpi^{i_0}}{0}\zxz{1}{x}{0}{1}\rb \overline{\varphi_\theta} \lb k \rb dk\sim \int\limits_{k\in K_0 \lb\varpi^{i_0} \rb }\varphi_\theta \lb k\zxz{0}{\varpi^{-i_0}}{\varpi^{i_0}}{0}\zxz{1}{x}{0}{1}\rb \tilde{\theta}^{-1} \lb k \rb dk.
	\end{align*}
\yh{Here $\sim$ means equality up to a nonzero constant.}
	We have also used that $\varphi_\theta$ in the parabolic induction model is only supported on $BK_0 \lb\varpi^{i_0} \rb $.
	Writing $$k=\zxz{k_1}{k_2}{\varpi^{i_0}k_3}{k_4},$$ for $k_1,k_4\in O_\F^\times$ and $k_2,k_3\in O_\F,$ we have
	$$k\zxz{0}{\varpi^{-i_0}}{\varpi^{i_0}}{0}\zxz{1}{x}{0}{1}=\zxz{k_2\varpi^{i_0}}{k_1\varpi^{-i_0}+k_2x\varpi^{i_0}}{k_4\varpi^{i_0}}{k_3+k_4x\varpi^{i_0}}.$$
	As $v \lb k_4x\varpi^{i_0} \rb >0$, we need $v \lb k_3 \rb =0$ for the matrix above to land in the support of $\varphi_\theta$, which is $BK_0 \lb\varpi^{i_0} \rb $.
	In that case we can write the matrix above as
	$$\zxz{-\frac{\det k}{k_3+k_4x\varpi^{i_0}}}{k_1\varpi^{-i_0}+k_2x\varpi^{i_0}}{0}{k_3+k_4x\varpi^{i_0}}\zxz{1}{0}{\frac{k_4\varpi^{i_0}}{k_3+k_4x\varpi^{i_0}}}{1},$$
	%where $v\lb -\frac{k_4\varpi^{i_0}}{k_3+k_4x\varpi^{i_0}}\rb\geq i_0$
	thus
	\begin{align*}
	<\pi \lb g \rb \varphi_\theta,\varphi_\theta>\sim &\int\limits_{k\in K_0 \lb\varpi^{i_0} \rb }\chi_1\lb\frac{\det k}{k_3+k_4x\varpi^{i_0}}\rb\chi_1^{-1} \lb k_3+k_4x\varpi^{i_0} \rb \chi_1^{-1} \lb k_1 \rb \chi_1 \lb k_4 \rb dk\\
	=&	\int\limits_{k\in K_0 \lb\varpi^{i_0} \rb }\chi_1^{-2}\lb \frac{k_3}{k_4}+x\varpi^{i_0}  \rb dk=0.	\notag
	\end{align*}
	Here we have used  \eqref{Eq3.2:newtheta} for $\tilde{\theta}(k)$, and that $\chi_1(\det k)=\chi_1(k_1k_4)$ as $\cc \lb\chi_1 \rb =i_0$.
\end{proof}

%\begin{cor}
%	$\tilde{\Phi}_{0,0}=\Phi_{0,0}|_{K'}$, $\tilde{\Phi}_{a,a'}=\Phi_{a,a'}|_{K'}$, where $\Phi_{a,a'} \lb g \rb =<\pi \lb g \rb \varphi_a,\varphi_{a'}>$. 
%%	Furthermore 	$$\int\limits_{
%	
%\end{cor}
%\begin{proof}
%	The statement for $\tilde{\Phi}_{0,0}$ follows from the orthogonality in Lemma \ref{Lem:PrincipalONB}. The statement for $\tilde{\Phi}_{a,a'}$ follows in addition from that $\zxz{1}{\frac{a}{\varpi^{c_1}}}{0}{1}\in K'$.
%\end{proof}

%Then we have
%\begin{cor}\label{Cor3.2:idempotent}
%	$\tilde{\Phi}_{0,0}=\Phi_{0,0}|_{K'}$ as an element in the Hecke algebra is the projection operator
%\end{cor}
\section{A refined Petersson trace formula}\label{Sec4}
%\yh{In general an \'etale algebra is an algebra which is also a finite product of finite separable field extensions.}
Fix an \'etale quadratic algebra $\L$ over $\F=\Q_p$ at a fixed place $p$, and a character $\theta$ on $\L^\times$. 
Let $\cc=\cc \lb \pi_\theta \rb $ be the level of $\pi_\theta$. Fix an even weight $\kappa\geq 4$. Let $n, i_0$ be as in Definition \ref{defn:FamilythetaN}. Define
\begin{align}\label{Eq4.0:family}
\mathcal{F}_\theta[n]=\{&\text{holomorphic newforms $\varphi$ of weight $\kappa$, level $N=p^\cc$ and trivial nebentypus} \\
& \text{ such that $\pi_p\in \pi_\theta[n]$ where $\pi_p$ is the local representation  associated to $\varphi$}\}. \notag
\end{align}
%\begin{align}
%\Pi_\theta[n]=\{& \pi \text{\ Automorphic irreducible cuspidal}| \pi_p \in \pi_\theta[n], \pi_v \text{\ unramified for other} \\
%&\text{ finite places,  $\pi_\infty$ is a discrete series representation of weight $k$}\}.\notag
%\end{align}

We shall develop refined Petersson trace formula where only the members of $\mathcal{F}_\theta[n]$ appear on the spectral side. We shall start with smaller families and get the larger families by summation.
\subsection{Test function}\label{Sec4.1:testfun}
We shall make the standard choice for the local test functions when $v\neq p$. In particular $f_v=\Char \lb Z\GL_2 \lb \CO_v \rb  \rb $ for any non-archimedean place $v\neq p$. $f_\infty$ is the conjugate of the matrix coefficient for the lowest weight element of $\pi_\infty$, normalized to be an idempotent element under convolution. Explicitly one can take
\begin{equation}\label{Eq4.1:finfty}
f_\infty \lb g \rb =\begin{cases}
\frac{\kappa-1}{4\pi}\frac{\det \lb g \rb ^{\kappa/2} \lb 2i \rb ^\kappa}{ \lb -b+c+ \lb a+d \rb i \rb ^\kappa}, &\text{\ if $g=\zxz{a}{b}{c}{d}$ with  }\det \lb g \rb >0,\\
0, &\text{\ otherwise}.
\end{cases}
\end{equation}

At the place $p$, $f_p$ is chosen to be \eqref{Eq3.2:TestfpSC} or \eqref{Eq3.3:testfun}, depending on $\L$ or the local representations we are interested in. 

Let now $f=\otimes f_v$. Note that $f$ is $Z-$invariant by our choices.

\subsection{Relative trace formula for integrals along unipotent orbits}\label{Sec4.2:RTFsetup}
For more details relevant to this section one can see \cite{KL06, knightly_kuznetsovs_2013}.
Let $\psi$ be a fixed additive character of $\Q\backslash \A$. 
Recall from Definition \ref{defn:FamilythetaN} that

	 \begin{equation}\label{Eq:i0}
	i_0=\frac{\cc \lb \theta \rb }{e_\L}.
	\end{equation}	
Recall that when $\L\simeq \F\times \F$, we use the convention that $\cc(\theta)=\cc(\chi)$ if $\theta=(\chi,\chi^{-1})$, and $e_\L=1$.

Alternatively one can define
\begin{equation*}
i_0=\lfloor \frac{\cc(\pi_\theta)}{2}\rfloor.
\end{equation*}

To get the relative trace formula associated to the unipotent period integrals, we start with a pretrace formula for  $f$ specified above:
\begin{equation}\label{Eq4.2:pretrace}
\sum\limits_{\varphi}\frac{1}{||\varphi||^2}\rho \lb f \rb \varphi \lb x \rb \overline{\varphi} \lb y \rb=\sum\limits_{\gamma\in G(\Q)} f(x^{-1}\gamma y)
\end{equation}
Here $||\cdot||$ denotes the $L^2-$norm, $G=\text{PGL}_2$, and 
$$\rho(f)\varphi=\int\limits_{G(\A)}f(g)\rho(g)\varphi.$$
The sum in $\varphi$ is over some orthogonal basis for holomorphic automorphic forms with trivial central character, extended from the holomorphic newforms. 

Then by the choice of $f$ specified in Section \ref{Sec4.1:testfun}, and Proposition \ref{Prop3.2:testfunaction} \ref{Prop3.3:testfunaction}, the sum for $\varphi$ is actually over $\varphi\in \mathcal{F}_\theta[l_0]$ as in \eqref{Eq4.0:family}, with 
\begin{equation}\label{Eq4.5:l0}
l_0=\begin{cases}
1, &\text{\ if $\L/\F$ is an inert quadratic field extension, \old{and $i_0$ is odd;}}\\
0, &\text{\ otherwise}.
\end{cases}
\end{equation}

Integrating $x,y$ in \eqref{Eq4.2:pretrace} along unipotent subgroups against additive characters, we obtain that that %\yh{$t\in N$ should be the same as $t\in \A$.}
\begin{align}\label{Eq4.2:pre-relativeTF}
\sum\limits_{\varphi\in \mathcal{F}_\theta[l_0]}\frac{1}{||\varphi||^2}\iint\limits_{t_1,t_2\in  \Q  \backslash  \A  }\rho \lb f \rb \varphi \lb n(t_1) \rb \overline{\varphi} \lb n(t_2) \rb \psi \lb -m_1t_1+m_2t_2 \rb dt_1dt_2
=\sum\limits_{\gamma\in N \lb \Q \rb \backslash G \lb \Q \rb /N \lb \Q \rb }I(\gamma,f,m_1,m_2) ,%\notag
\end{align}
where 
$$I(\gamma,f,m_1,m_2)=\int\limits_{(n(t_1),n(t_2))\in H_\gamma\backslash H \lb \A \rb }f\lb\zxz{1}{t_1}{0}{1}^{-1}\gamma \zxz{1}{t_2}{0}{1}\rb\psi \lb -m_1t_1+m_2t_2 \rb d \lb t_1,t_2 \rb,$$
$H= N\times N$, $H_\gamma$ is the stabiliser of $\gamma $ in $H(\Q)$, and $d(t_1,t_2)$ is the Haar measure on $H_\gamma\backslash H \lb \A \rb$. %inherited from $H(\A)$.
% $y_i\in \R$ are considered as elements in $\A^\times$ with all other components being $1$.

%For simplicity we also take $y_i=1$. 
The period integrals on the left-hand side of \eqref{Eq4.2:pre-relativeTF} is directly related to the global Whittaker function:  $$\int\limits_{t\in N \lb \Q \rb \backslash N \lb \A \rb }\varphi(n(t))\psi(-mt)dt=W_\varphi\lb\zxz{m}{}{}{1}\rb.$$
Using the discussions in Section \ref{Sec2.4} we can rewrite the spectral side of \eqref{Eq4.2:pre-relativeTF} as
\begin{equation}\label{Eq4.2:Spectralside}
 \lb m_1m_2 \rb ^{\kappa/2-1/2}e^{-2\pi  \lb m_1+m_2 \rb }\sum\limits_{\varphi\in \mathcal{F}_\theta[l_0]}\frac{1}{||\varphi||^2}\lambda_{m_1} \lb \varphi \rb \overline{\lambda}_{m_2} \lb \varphi \rb.
\end{equation}
The main task is, of course, to analyze the geometric side of \eqref{Eq4.2:pre-relativeTF}.
\begin{defn}\label{Defn4.2:faa'}
For convenience, 
denote $f_{a,a'}$ to be the test function which agrees with $f$ at all other places, and at $p$ equals $ \overline{\tilde{\Phi}}_{a,a'}$. 
\end{defn} 

Note that using the same computations as in \cite[Corollary A.6]{HN18}, together with that $$[\L^\times:\F^\times U_\L(1)]=\begin{cases}
p+1, &\text{\, if $e_\L=1$;}\\
2, &\text{\, if $e_\L=2$.}
\end{cases}$$ we have for the supercuspidal representation case
\begin{equation}\label{Eq4.2:VolZB}
\Vol \lb Z\backslash ZB^1 \rb =
\frac{1}{ \lb p^2-1 \rb p^{i_0-1}}.
\end{equation}
\old{ $$\Vol \lb Z\backslash ZB^1 \rb =\begin{cases}
\frac{1}{ \lb p-1 \rb p^{i_0-1}}, &\text{ if }e_\L=1, \cc \lb \theta \rb \text{\ even},\\
\frac{1}{ \lb p^2-1 \rb p^{i_0-1}}, &\text{\ if }e_\L=1, \cc \lb \theta \rb \text{\ odd},\\
\frac{1}{ \lb p^2-1 \rb p^{i_0-1}}, &\text{\ if }e_\L=2.
\end{cases}$$}

On the other hand for the principal series representation case, it is also straightforward to check that \begin{equation}\label{Eq4.2:VolZKi0}
\Vol \lb Z\backslash ZK_0 \lb \varpi^{i_0} \rb  \rb =\frac{1}{ \lb p+1 \rb p^{i_0-1}}.
\end{equation}

Denote by $D_\mathcal{F}$ the constant multiple appearing in $f_p$, which is
\begin{equation}\label{Eq4.2:DF1}
D_\mathcal{F}=\frac{1}{ \lb p-1 \rb p^{\lceil i_0/2\rceil-1}\Vol \lb Z\backslash ZB^1 \rb }=(p+1)p^{\lfloor i_0/2\rfloor}
\asymp_p p^{\cc \lb \pi \rb /4}
\end{equation} 
when $\pi_\theta$ is a supercuspidal representation, and
\begin{equation}\label{Eq4.2:DF2}
D_\mathcal{F}=\frac{1}{ \lb p-1 \rb p^{i_0-1} \Vol \lb Z\backslash ZK_0 \lb \varpi^{i_0} \rb  \rb }=\frac{p+1}{p-1} \asymp 1
\end{equation}
when $\pi_\theta$ is a principal series representation.

Define
\begin{equation*}
I \lb \gamma, a,a',m_1,m_2 \rb =\iint\limits_{H_\gamma \backslash H_\A}f_{a,a'} \lb n \lb t_1 \rb ^{-1}\gamma n \lb t_2 \rb  \rb \psi \lb -m_1t_1+m_2t_2 \rb d \lb t_1,t_2 \rb .
\end{equation*}

Now the geometric side of \eqref{Eq4.2:pre-relativeTF} becomes
\begin{align}\label{Eq:PreKuz2}
D_\mathcal{F}\sum\limits_{a,a'\in   \lb \CO_\F/\varpi^{\lceil i_0/d_\L \rceil} \CO_\F \rb ^\times}\sum\limits_{\gamma \in N\backslash G \lb \Q \rb /N} I \lb \gamma, a,a',m_1,m_2 \rb .
\end{align}
Here $d_\L=2$ when $\L$ is a field, and $d_\L=1$ when $\L$ splits.

Also recall that by the Bruhat decomposition, $N\backslash G \lb \Q \rb /N$ consists of first-cell terms $\zxz{\mu}{}{}{1}$ for $\mu\in\Q^\times$, as well as second-cell terms $\zxz{}{-\mu}{1}{}$, $\mu\in \Q^\times$. We shall discuss the corresponding orbit integrals $I \lb \gamma, a,a',m_1,m_2 \rb $ in the next two subsections.

\subsection{Geometric side: first-cell terms}\label{Sec:KutGeoComp}
The manipulations and the local factors at $v\neq p$ for the first-cell terms and second-cell terms %before plugging in our particular $f_p$ is 
are the same as in \cite[Section 3]{KL06} or \cite[Section 7]{knightly_kuznetsovs_2013}. 
%\subsubsection{}
When $\gamma=\zxz{\mu}{}{}{1}$, $H_\gamma=\{ \lb n \lb \mu t \rb , n \lb t \rb  \rb \in N \lb \Q \rb ^2\}$.
We get that
\begin{align*}
I \lb \gamma, a,a',m_1,m_2 \rb &=\iint\limits_{\{ \lb \mu t, t \rb \in \Q^2\}\backslash \A^2}f_{a,a'}\lb \zxz{\mu}{\mu t_2-t_1}{0}{1}\rb\psi \lb -m_1t_1+m_2t_2 \rb d(t_1, t_2)\\
&=\int\limits_{x\in \A}\int\limits_{t_2\in \Q\backslash \A}f_{a,a'}\lb\zxz{\mu}{x}{0}{1}\rb\psi \lb m_1x \rb \psi \lb  \lb m_2-\mu m_1 \rb t_2 \rb dxdt_2.
\end{align*}
Here we made a change of variable $x=\mu t_2-t_1$. As $\psi$ is nontrivial, the integral in $t_2$ is nontrivial only when $\mu=\frac{m_2}{m_1}$. In that case, we write $m_1x =t$ and get that 
%The non-vanishing first cell term comes from $\gamma =\zxz{\frac{m_2}{m_1}}{0}{0}{1}$. By a change of variable $t=m_2t_2-m_1t_1$, 
\begin{equation*}
I \lb \gamma, a,a',m_1,m_2 \rb =\int\limits_{t\in \A} f_{a,a'} \lb \zxz{m_2}{t}{0}{m_1} \rb \psi \lb t \rb  dt,
\end{equation*}
which is factorisable. 
At all finite places, we need $ v \lb m_1 \rb =v \lb m_2 \rb \geq 0$ for the local factor to be nonvanishing. 
%At unramified places, we get $v \lb t \rb \geq v \lb m_1 \rb =v \lb m_2 \rb \geq 0$ for the local factor to be nonvanishing. 
%At $p$ we get $v \lb m_1 \rb =v \lb m_2 \rb $ from Corollary \ref{Cor:conjugatedMC}. 
At $\infty$, we get $m_1m_2>0$. So $I \lb \gamma, a,a',m_1,m_2 \rb $ is non-vanishing only when $m_1=m_2$. 

For any finite place $v\neq p$, we have
$$\int\limits_{t\in\Q_v}f_v\lb\zxz{m_1}{t}{0}{m_1}\rb\psi_v \lb t \rb  dt=\begin{cases}
||m_1||_v , &\text{\ if } v \lb m_1 \rb \geq 0,\\
0, &\text{\ otherwise}.
\end{cases} $$

For $v=\infty$, $m_1,m_2>0$, we have according to \cite[Proposition 3.4]{KL06}
$$\int\limits_{t\in\Q_v}f_v\lb\zxz{m_2}{t}{0}{m_1}\rb\psi_v \lb t \rb  dt=\frac{ \lb 4\pi \rb ^{\kappa-1}}{ \lb \kappa-2 \rb !}  \lb m_1m_2 \rb ^{\kappa/2} e^{-2\pi  \lb m_1+m_2 \rb }.$$

When $v=p$ and $\L$ is a field, we have by Definition \ref{Defn3.1:Phiaa'} and \ref{Defn4.2:faa'},
\begin{align*}
\int\limits_{t\in\Q_p}f_{a,a',p}\lb\zxz{m_1}{t}{0}{m_1}\rb\psi_p \lb t \rb  dt&=\int\limits_{t\in\Q_p}\overline{\tilde{\Phi}}_{a,a'}\lb\zxz{m_1}{t}{0}{m_1}\rb\psi_p \lb t \rb  dt\\
&=\int\limits_{t\in\Q_p}\overline{\tilde{\Phi}}_{0,0}\lb\zxz{\frac{a'}{a}m_1}{\varpi^{i_0} a' t}{0}{m_1}\rb\psi_p \lb t \rb  dt. \notag
\end{align*}
By Corollary \ref{Cor:conjugatedMC}, $\overline{\tilde{\Phi}}_{0,0}\lb\zxz{\frac{a'}{a}m_1}{\varpi^{i_0} a' t}{0}{m_1}\rb\neq 0$ if and only if $a'\equiv a\mod \varpi_p^{\lceil i_0/2\rceil}$ and $v \lb t \rb -v \lb m_1 \rb \geq -\lceil i_0/2\rceil$, in which case
\begin{equation*}
\int\limits_{t\in\Q_p}f_{a,a',p}\lb\zxz{m_1}{t}{0}{m_1}\rb\psi_p \lb t \rb  dt=\int\limits_{v \lb t \rb -v \lb m_1 \rb \geq -\lceil i_0/2\rceil} \psi_p \lb -\alpha_0 a \frac{t}{m_1} \rb \psi_p \lb t \rb dt,
\end{equation*}
which is nonzero if and only if $v \lb m_1 \rb =0$ and $a\equiv \frac{m_1}{\alpha_0 } \mod \varpi_p^{\lceil i_0/2\rceil}$, in which case the integral is $p^{\lceil i_0/2\rceil}$.

In this case we obtain that when $m_1,m_2\in \Z_{>0}$, $ \lb m_i,p \rb =1$,
\begin{align}\label{Eq4.3.1:firstcellSC}
D_\mathcal{F}\sum\limits_{a,a'\in   \lb \CO_\F/\varpi^{\lceil i_0/2 \rceil} \CO_\F \rb ^\times} I \lb \gamma, a,a',m_1,m_2 \rb =\delta_{m_1=m_2}\frac{ \lb 4\pi \rb ^{\kappa-1}}{ \lb \kappa-2 \rb !} m_1^{\kappa-1} e^{-4\pi m_1}D_\mathcal{F}p^{\lceil i_0/2\rceil}\asymp _p \delta_{m_1=m_2}p^{\cc \lb \pi \rb /2}.
\end{align}

When $v=p$ and $\L$ splits, we have by \eqref{Eq3.2:newtheta} and Definition \ref{Defn3.2:Phiaa'Prinicipal}
\begin{align*}
\int\limits_{t\in\Q_p}f_{a,a',p}\lb\zxz{m_1}{t}{0}{m_1}\rb\psi_p \lb t \rb  dt&=\int\limits_{t\in\Q_p}\overline{\tilde{\Phi}}_{a,a'}\lb\zxz{m_1}{t}{0}{m_1}\rb\psi_p \lb t \rb  dt\\
&=\chi_1^{-1} \lb a \rb \chi_1 \lb a' \rb \int\limits_{t\in\Q_p}\overline{\tilde{\Phi}}_{0,0}\lb\zxz{m_1}{\varpi^{-i_0}m_1 \lb a-a' \rb +t}{0}{m_1}\rb\psi_p \lb t \rb  dt \notag\\
&=\chi_1^{-1} \lb a \rb \chi_1 \lb a' \rb \int\limits_{t\in -\varpi^{-i_0}m_1 \lb a-a' \rb +m_1O_\F}\psi_p \lb t \rb dt\notag\\
&=\delta_{v \lb m_1 \rb \geq 0}\chi_1^{-1} \lb a \rb \chi_1 \lb a' \rb ||m_1||_v\psi_p \lb -\varpi^{-i_0}m_1 \lb a-a' \rb  \rb .
\end{align*}
The sum over $a,a'$ would now be vanishing unless $v \lb m_1 \rb =0$. In that case we obtain that
\begin{align}\label{Eq4.3.1:firstcellPS}
D_\mathcal{F}\sum\limits_{a,a'\in   \lb \CO_\F/\varpi^{i_0} \CO_\F \rb ^\times} I \lb \gamma, a,a',m_1,m_2 \rb &=\delta_{m_1=m_2}\frac{ \lb 4\pi \rb ^{\kappa-1}}{ \lb \kappa-2 \rb !} m_1^{\kappa-1} e^{-4\pi m_1}D_\mathcal{F}|\sum\limits_{a'}\chi_1 \lb a' \rb \psi_p \lb \varpi^{-i_0}m_1a' \rb |^2\\
&=\delta_{m_1=m_2}\frac{ \lb 4\pi \rb ^{\kappa-1}}{ \lb \kappa-2 \rb !} m_1^{\kappa-1} e^{-4\pi m_1}D_\mathcal{F}   p^{i_0}\asymp \delta_{m_1=m_2}p^{\cc \lb \pi \rb /2}. \notag
\end{align}

%\yh{ There is a mistake here post publication! The evaluation of absolute value of Gauss is wrong. Should be $p^{i_0}$ instead of $\lb p-1 \rb p^{i_0-1}$. }
%Since $\tilde{\Phi}_{0,0}$ is supported on $ZK_0 \lb \varpi^{i_0} \rb $, $\zxz{m_1}{\varpi^{-i_0}m_1 \lb a-a' \rb +t}{0}{m_1}\in ZK_0 \lb \varpi^{i_0} \rb $ if and only if $t\in -\varpi^{-i_0}m_1 \lb a'-a \rb +m_1O_p$, in which case the integral is 
%
%
%
%
% and consequently $a\equiv a'\mod \varpi ^{\lceil i_0/2\rceil}\CO$. In that case,
%by Corollary \ref{Cor:conjugatedMC}
%\begin{equation}
%I \lb \gamma, a,a',m_1,m_1 \rb _p=\int\limits_{v \lb t \rb \geq -\lceil i_0/2\rceil }\psi \lb -\frac{a}{m_1}t \rb \psi \lb t \rb dt=p^{\lceil i_0/2\rceil } \delta \lb \frac{a}{m_1}\equiv 1\mod \varpi ^{\lceil i_0/2\rceil}\CO \rb .
%\end{equation}
%Thus the contribution to the geometric side of \eqref{Eq:PreKuz2} from first cell terms comes only from $$m_1=m_2\equiv a\equiv a'\mod \varpi ^{\lceil i_0/2\rceil}\CO$$ and
%\begin{align}
%&D_\mathcal{F}\frac{1}{\sqrt{y_1y_2}}\sum\limits_{a,a'\in  \lb \CO_\F/\varpi^{\lceil i_0/2 \rceil} \CO_\F \rb ^\times}\sum\limits_{\gamma \in N\backslash G \lb \Q \rb /N \text{\ in first cell}} I \lb \gamma, a,a',m_1,m_2 \rb \\
%=&\delta \lb m_1=m_2 \rb D_\mathcal{F} p^{\lceil i_0/2\rceil } \frac{1}{\sqrt{y_1y_2}}\int f_\infty \lb \zxz{m_2y_2}{t}{0}{m_1y_1} \rb \psi \lb t \rb  dt.\notag
%\end{align}
%Note that $D_\mathcal{F} p^{\lceil i_0/2\rceil }\asymp_p p^{\cc \lb \pi \rb /2}.$

\subsection{Geometric side: second-cell term}\label{Sec4.4:2ndcell}
This is probably the most technical part of the paper, requiring more careful computations for the test function $f_p$.

For $v \lb \mu \rb \leq 0$ even, denote the classical Kloosterman sum \begin{equation}\label{Eq4.4:ClassicalKloosterman}
\KLv{a,b, \mu}=\sum\limits_{t_1, t_2\in  \lb \varpi_v^{v \lb \mu \rb /2}\O_\F/\O_\F \rb , t_1t_2\equiv \mu \mod\O_\F}\psi_v \lb at_1+bt_2 \rb 
\end{equation}
where the additive character $\psi_v$ is assumed to be unramified.

First of all, as in the standard situation, we have for $\gamma=\zxz{0}{-\mu}{1}{0}$, $H_\gamma=1$ and
\begin{equation}
I \lb \gamma, a,a',m_1,m_1 \rb =\int\limits_{\A^2}f_{a,a'} \lb \zxz{1}{t_1}{0}{1}^{-1}\zxz{0}{-\mu}{1}{0}\zxz{1}{t_2}{0}{1} \rb \psi \lb -m_1t_1+m_2t_2 \rb dt_1dt_2,
\end{equation}
which is factorisable. The computation at the archimedean place and unramified places are the same as in \cite{KL06}. %\cite{knightly_kuznetsovs_2013}.
%We shall focus on the computation at finite places, where $y_i$'s have trivial components. 
At the unramified places, the local factor is nonvanishing only if $v \lb \mu \rb \leq 0$ is even. Then 
\begin{align}\label{Eq4.4:Ifin}
I_v \lb \gamma, a,a',m_1,m_2 \rb &=\int\limits_{\Q_v^2}f_v \lb \zxz{-t_1}{-\mu-t_1t_2}{1}{t_2} \rb \psi \lb -m_1t_1+m_2t_2 \rb dt_1dt_2\\
&=\KLv{m_1, m_2,\mu}.\notag
\end{align}
At infinity, the local factor is nonvanishing if and only if $m_i, \mu>0$, in which case
\begin{align}\label{Eq4.4:Iinf}
I_\infty \lb \gamma, a,a',m_1,m_2 \rb =\frac{e^{-2\pi \lb m_1+m_2 \rb } \lb 4\pi i \rb ^\kappa\sqrt{m_1m_2}^{\kappa-1}}{2 \lb \kappa-2 \rb !}\mu^{1/2}J_{\kappa-1} \lb 4\pi \sqrt{\mu m_1m_2} \rb .
\end{align}

At the  place $p$, the computations are more complicated. The basic strategy is to compute first $$I_p \lb \zxz{0}{-\mu}{1}{0},a,a',m_1,m_2 \rb $$ for a single pair of $ \lb a,a' \rb $, and then relate to others by a simple change of variable.
\subsubsection{Supercuspidal representation case}

\begin{lem}\label{Lem4.4:micong}
	Suppose $I_p \lb \zxz{0}{-\mu}{1}{0},1,1,m_1,m_2 \rb \neq 0$. Then we must have $ m_1\equiv m_2 \equiv \alpha_0 \mod\varpi^{\lceil i_0/2 \rceil}$, where $\alpha_0$ is as in \eqref{Eq:specialAlphaTheta}.
%In particular as $\varphi_a$ only depends on the congruence class of $a\mod \varpi^n$, we can take without loss of generality that $a=m_1$.
	
\end{lem}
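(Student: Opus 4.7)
The plan is to use Corollary~\ref{Cor:conjugatedMC} to pin down the support of $\tilde{\Phi}_{1,1}$ and then convert the $(t_1,t_2)$-integral into one where the variable conjugate to $m$ appears linearly, so that the non-vanishing of the inner $m$-integral forces a congruence on $m_2$ (resp.\ $m_1$).

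Concretely, I would first write $g(t_1,t_2)=\zxz{-t_1}{-\mu-t_1t_2}{1}{t_2}$ and attempt to match $g=e\cdot\zxz{1+x}{m}{0}{1}$ with $e=\zxz{a}{b'}{b'D}{a}\in ZU_\L(1)$, $x\in\varpi^{\lceil i_0/2\rceil}\O_\F$, $m\in\varpi^{-\lceil i_0/2\rceil}\O_\F$. Comparing entries gives $b'=1/(D(1+x))$, $a=-t_1/(1+x)$, $1+x=(t_1^2-1/D)/\mu$ and $m=t_2(1+x)+t_1$. The condition $e\in ZU_\L(1)$ translates to $b'/a\in\varpi\O_\F$, i.e.\ $v(t_1)\le -1$, while $1+x\in U_\F(\lceil i_0/2\rceil)$ becomes a condition on $(t_1^2-1/D)/\mu$. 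On this locus Corollary~\ref{Cor:conjugatedMC} gives $\tilde{\Phi}_{1,1}(g)=\theta(e)\,\psi(\alpha_0 m)$.

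Next I would change variables from $(t_1,t_2)$ to $(t_1,m)$: since $t_2=(m-t_1)/(1+x)$ and $|1+x|_p=1$, the Jacobian is trivial. The integral becomes
\begin{align*}
I_p=\int\!\!\int \overline{\theta(e)}\,\psi(-\alpha_0 m)\,\psi\!\Bigl(-m_1 t_1 + m_2\frac{m-t_1}{1+x}\Bigr)\,dt_1\,dm,
\end{align*}
and, after isolating the $m$-dependence, the inner integral is
\begin{align*}
\int_{m\in\varpi^{-\lceil i_0/2\rceil}\O_\F}\!\psi\!\Bigl(\bigl(m_2/(1+x)-\alpha_0\bigr)m\Bigr)\,dm,
\end{align*}
which is a character integral on $\varpi^{-\lceil i_0/2\rceil}\O_\F$ and therefore vanishes unless $v\bigl(m_2/(1+x)-\alpha_0\bigr)\ge\lceil i_0/2\rceil$. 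Since $1+x\equiv 1\mod\varpi^{\lceil i_0/2\rceil}$ and $v(m_2)\ge 0$, this simplifies to $m_2\equiv\alpha_0\mod\varpi^{\lceil i_0/2\rceil}$.

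Applying the parallel computation to the other strand of the support, namely writing $g=\zxz{1+x}{m}{0}{1}\cdot e$, one obtains $b'=1/D$, $a=t_2$, $1+x=\mu/(t_2^2-1/D)$, $m=-t_1-(1+x)t_2$, and now after changing variables from $t_1$ to $m$ the inner integral becomes $\int\psi((m_1-\alpha_0)m)\,dm$, whose non-vanishing forces $m_1\equiv\alpha_0\mod\varpi^{\lceil i_0/2\rceil}$. Since both parametrizations describe the same support of $\tilde{\Phi}_{1,1}$, both congruences must hold once $I_p\neq 0$. The main technical point will be to verify that the $t_1$-dependence in $\theta(e)$ (through $a=-t_1/(1+x)$) is a character on the level of $U_\L(1)/U_\L(\lceil i_0/2\rceil)$ that contributes only via $t_1$, so it does not interfere with the $m$-integral giving the sought congruence; this is exactly the feature that makes the two conditions come out cleanly. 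The cases $e_\L=2$ and $c(\theta)$ odd are handled by the same computation, using the appropriate embedding \eqref{Eq:specialAlphaTheta} of $\alpha_\theta$ and the modified valuations in Definition~\ref{defn:specialorder}.
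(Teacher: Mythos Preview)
Your approach is essentially correct but takes a much longer route than the paper's, and contains a minor (fixable) error in the choice of embedding.

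The paper's proof is a two-line equivariance argument: since $g(t_1,t_2+\Delta t_2)=g(t_1,t_2)\,n(\Delta t_2)$ and Corollary~\ref{Cor:conjugatedMC} gives $\tilde{\Phi}_{1,1}\bigl(g\,n(\Delta t_2)\bigr)=\psi(\alpha_0\Delta t_2)\,\tilde{\Phi}_{1,1}(g)$ for $\Delta t_2\in\varpi^{-\lceil i_0/2\rceil}\O_\F$, the substitution $t_2\mapsto t_2+\Delta t_2$ yields $I_p=\psi\bigl((m_2-\alpha_0)\Delta t_2\bigr)\,I_p$, whence $m_2\equiv\alpha_0$; the $t_1$ shift is symmetric. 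No parametrization of the support is needed at all. Your explicit decomposition $g=e\cdot n$ and change of variables $(t_1,t_2)\to(t_1,m)$ is precisely the unfolding of this equivariance: the inner $m$-integral you isolate is the same character orthogonality, just seen through coordinates on the support rather than through the group action.

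One concrete correction: you wrote $e=\zxz{a}{b'}{b'D}{a}$ using the standard embedding~\eqref{Eq:standardembeddingL}, but Corollary~\ref{Cor:conjugatedMC} is stated for $\tilde{\Phi}_{1,1}$ with the \emph{conjugated} embedding~\eqref{Eq:conjugatedembedding} (with $a=1$), so the matrix form of $e$ should be $\zxz{x_e}{y_e/\varpi^{i_0}}{y_eD\varpi^{i_0}}{x_e}$. Redoing your entry comparison with this embedding gives $1+x=\bigl(t_1^2-\tfrac{1}{D\varpi^{2i_0}}\bigr)/\mu$ rather than $(t_1^2-1/D)/\mu$; your formula for $m=(1+x)t_2+t_1$ survives unchanged, so the rest of your argument goes through. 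This is exactly the kind of bookkeeping the paper's equivariance trick circumvents.
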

\begin{proof}
	By making a change of variable $t_2\rightarrow t_2+\Delta t_2$ for $\Delta t_2\in \varpi^{-\lceil i_0/2 \rceil} \O_\F$, and noting that $\zxz{1}{\Delta t_2}{0}{1}\in \Supp \tilde{\Phi}_{1,1}$, we get by Corollary \ref{Cor:conjugatedMC}  that the integral is non-vanishing only if $$\psi \lb -\alpha_0\Delta t_2 \rb \psi \lb m_2\Delta t_2 \rb =1,\, i.e. \, m_2\equiv \alpha_0 \mod\varpi^{\lceil i_0/2 \rceil}.$$ Similarly by a change of variable for $t_1$, we get that $m_1\equiv \alpha_0 \mod \varpi^{\lceil i_0/2 \rceil}$. 
\end{proof}

To compute $I_p \lb \zxz{0}{-\mu}{1}{0},1,1,m_1,m_2 \rb $ explicitly when $m_1\equiv m_2\equiv \alpha_0\mod \varpi^{\lceil i_0/2 \rceil}$, we care about when $\zxz{-t_1}{-\mu-t_1t_2}{1}{t_2}\in \Supp \tilde{\Phi}_{1,1}$. By considering the determinant, we see that $v \lb \mu \rb =-2k$ must be even  (including the $e_\L=2$ case, by the choice of $f_{1,1}$). In that case we have the following lemma:
\begin{lem}\label{Lem4.4:tidomains}
	$\zxz{-t_1}{-\mu-t_1t_2}{1}{t_2}\in  \Supp \tilde{\Phi}_{1,1}$ if and only if all the followings hold
	\begin{enumerate}
		\item[(i)] $ \lb t_2+\frac{1}{\sqrt{D}\varpi^{i_0}} \rb \in ZU_\L \lb 1 \rb $. \old{ $ZO_\L^\times$ in the case $\cc \lb \pi \rb =4n$  or $2n+1$, and $\in ZU_\L \lb 1 \rb $ in the case $\cc \lb \pi \rb =4n+2$, }
		\item[(ii)] $t_2^2-\frac{1}{D\varpi^{2i_0}}\equiv \mu \mod\varpi^{v \lb \mu \rb +\lceil i_0/2\rceil}$.
		\item[(iii)] $t_1\equiv -\frac{\mu}{t_2^2-\frac{1}{D\varpi^{2i_0}}}t_2\mod \varpi^{-\lceil i_0/2\rceil}$.
	\end{enumerate}
	In that case, we have
	\begin{equation}\label{Eq4.4:DecompBE}
	\zxz{-t_1}{-\mu-t_1t_2}{1}{t_2}=\zxz{\frac{\mu}{t_2^2-\frac{1}{D\varpi^{2i_0}}}}{-t_1-\frac{\mu}{t_2^2-\frac{1}{D\varpi^{2i_0}}} t_2}{0}{1} 
	\zxz{t_2}{\frac{1}{D\varpi^{2i_0}}}{1}{t_2},
	\end{equation}
	and\begin{equation*}
	f_{1,1,p} \lb \zxz{-t_1}{-\mu-t_1t_2}{1}{t_2} \rb =\theta^{-1}\lb t_2+\frac{1}{\sqrt{D}\varpi^{i_0}}\rb\psi\lb\alpha_0 \lb t_1+\frac{\mu}{ t_2^2-\frac{1}{D\varpi^{2i_0}}} t_2\rb\rb.
	\end{equation*}
\end{lem}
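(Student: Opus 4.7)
The plan is to directly exhibit an Iwasawa-type decomposition of $\zxz{-t_1}{-\mu-t_1t_2}{1}{t_2}$ compatible with the structure of $ZB^1$, and then read off the support and value of $\tilde{\Phi}_{1,1}$ by invoking Corollary \ref{Cor:conjugatedMC}.

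First I would verify the matrix identity \eqref{Eq4.4:DecompBE} by a direct multiplication; this is a routine check where the crucial cancellation $A(\tfrac{1}{D\varpi^{2i_0}}-t_2^2)=-\mu$ with $A=\frac{\mu}{t_2^2-\frac{1}{D\varpi^{2i_0}}}$ produces the lower-right block. Next, I would observe that the right factor $\zxz{t_2}{\frac{1}{D\varpi^{2i_0}}}{1}{t_2}$ is exactly the image of $t_2+\frac{1}{\sqrt{D}\varpi^{i_0}}\in\L^\times$ under the conjugated embedding \eqref{Eq:conjugatedembedding} with $a=1$; indeed, writing $\frac{1}{\sqrt{D}\varpi^{i_0}}=\frac{1}{D\varpi^{i_0}}\sqrt{D}$ and applying the embedding gives precisely this matrix, and its determinant equals the norm $t_2^2-\frac{1}{D\varpi^{2i_0}}$, consistent with both sides of \eqref{Eq4.4:DecompBE}.

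Now I would use Corollary \ref{Cor:conjugatedMC} (for $a=a'=1$): $\tilde{\Phi}_{1,1}$ is supported on $g=\zxz{1+x}{m}{0}{1}e$ with $e\in ZU_\L(1)$, $x\in\varpi^{\lceil i_0/2\rceil}\CO_\F$, $m\in\varpi^{-\lceil i_0/2\rceil}\CO_\F$, with value $\theta(e)\psi(\alpha_0 m)$. Matching this with the decomposition, the factor $e$ is $t_2+\frac{1}{\sqrt{D}\varpi^{i_0}}$, giving condition (i). The condition $A=1+x$ with $x\in\varpi^{\lceil i_0/2\rceil}\CO_\F$ translates to
\[
\mu\equiv t_2^2-\frac{1}{D\varpi^{2i_0}}\pmod{\varpi^{v(\mu)+\lceil i_0/2\rceil}},
\]
using $v(t_2^2-\tfrac{1}{D\varpi^{2i_0}})=v(\mu)$, which is condition (ii). The condition $B=-t_1-At_2\in\varpi^{-\lceil i_0/2\rceil}\CO_\F$ is condition (iii). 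Taking complex conjugate (since $f_{1,1,p}=\overline{\tilde{\Phi}}_{1,1}$) and using $\theta|_{\F^\times}=1$ (so conjugation acts via the nontrivial Galois action) gives the stated formula, where the sign in $\psi(\alpha_0(t_1+\cdots))$ arises from $\overline{\psi(\alpha_0 m)}=\psi(-\alpha_0 m)$ together with $m=-t_1-At_2$.

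The main obstacle I anticipate is bookkeeping: one must verify that under the conjugated embedding \eqref{Eq:conjugatedembedding} (with $a=1$), the element $t_2+\frac{1}{\sqrt{D}\varpi^{i_0}}$ really lands in $ZU_\L(1)$ precisely under condition (i), and that the valuation conditions in (ii)–(iii) match the shape of $B^1$ as in Definition \ref{Defn3.2:B1}, treating the even and odd $\cc(\theta)$ cases uniformly. Once that bookkeeping is in place, the lemma follows with no further input beyond Corollary \ref{Cor:conjugatedMC} and the explicit form \eqref{Eq:specialAlphaTheta} of $\alpha_\theta$.
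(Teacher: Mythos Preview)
Your proposal is correct and follows exactly the same approach as the paper: verify the matrix identity \eqref{Eq4.4:DecompBE} by direct multiplication, then read off the support conditions (i)--(iii) and the value of $f_{1,1,p}=\overline{\tilde{\Phi}}_{1,1}$ from Corollary~\ref{Cor:conjugatedMC}. The paper's own proof is a two-line sketch saying precisely this; you have simply filled in the details it omits. One small remark: the passage from $\overline{\theta(e)}$ to $\theta^{-1}(e)$ uses only that $\theta$ is a unitary character, not that $\theta|_{\F^\times}=1$ or any Galois action.
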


\begin{proof}
Note that $	\zxz{t_2}{\frac{1}{D\varpi^{2i_0}}}{1}{t_2} $ is an element of $\L$ with the embedding in \eqref{Eq:conjugatedembedding}.
	The matrix decomposition \eqref{Eq4.4:DecompBE} is direct to check, while the remaining statements follow directly from the definition  $f_{1,1,p}=\overline{\tilde{\Phi}}_{1,1}$  and Corollary \ref{Cor:conjugatedMC}.
\end{proof}

We make an explicit description of the admissible values for $v \lb \mu \rb $ and  $v \lb t_2 \rb $.

\begin{cor}\label{Cor4.4:vt2SC}
 When the set of $t_1,t_2$ satisfying (i)-(iii) is non-empty, we must have $v \lb \mu \rb =-2k<-\cc \lb \pi_\theta \rb $, and $v \lb t_2 \rb =-k<-i_0$.
\end{cor}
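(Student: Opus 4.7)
The plan is to extract both claims from conditions (i) and (ii) of Lemma \ref{Lem4.4:tidomains} by setting $\xi := t_2 + \frac{1}{\sqrt{D}\varpi^{i_0}} \in \L^\times$ and tracking its valuation inside $\L$. Observe that $\mathrm{Nm}(\xi) = t_2^2 - \frac{1}{D\varpi^{2i_0}}$ is precisely the quantity appearing on the left of (ii); since $\lceil i_0/2 \rceil \geq 1$, condition (ii) forces $v(\mathrm{Nm}(\xi)) = v(\mu) = -2k$. The standard formula $v(\mathrm{Nm}(\xi)) = (2/e_\L)\, v_\L(\xi)$ then pins down $v_\L(\xi) = -k\, e_\L$.

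The main work is to extract from condition (i)---namely $\xi \in \F^\times U_\L(1)$---a lower bound on $-v(t_2)$. Writing $\xi = z(1+u)$ with $z \in \F^\times$ and $u \in \varpi_\L O_\L$ yields the necessary condition $\xi/\bar\xi = (1+u)/(1+\bar u) \in U_\L(1)$. A direct manipulation gives
\[
\frac{\xi}{\bar\xi} = \frac{1+\epsilon}{1-\epsilon}, \qquad \epsilon := \frac{1}{\sqrt{D}\,\varpi^{i_0}\,t_2}.
\]
Since $p \neq 2$, membership of this ratio in $1 + \varpi_\L O_\L$ is equivalent to $v_\L(\epsilon) \geq 1$. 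Computing $v_\L(\sqrt{D}) + v_\L(\varpi^{i_0}) + v_\L(t_2)$ case by case (using $v(D) \in \{0,1\}$) reduces this inequality, in both the unramified and ramified settings, to $v(t_2) \leq -i_0 - 1$, i.e.\ $v(t_2) < -i_0$.

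With $v(t_2) < -i_0$ in hand, I would compute $v_\L(\xi)$ directly by observing that the two summands of $\xi$ have distinct $v_\L$-values: in the ramified case they differ in parity modulo $e_\L = 2$; in the unramified case the summand $t_2$ strictly dominates thanks to $v(t_2) < -i_0$. Hence $v_\L(\xi) = v_\L(t_2) = e_\L\, v(t_2)$. Combining with $v_\L(\xi) = -k\, e_\L$ from the first paragraph gives $v(t_2) = -k$, and the bound $v(t_2) \leq -i_0-1$ becomes $k \geq i_0 + 1$. Finally, using from the case list in Section \ref{Sec3.2} that $\cc(\pi_\theta) = 2i_0$ when $e_\L = 1$ and $\cc(\pi_\theta) = 2i_0 + 1$ when $e_\L = 2$, the bound $k \geq i_0 + 1$ gives $v(\mu) = -2k \leq -2i_0 - 2 < -\cc(\pi_\theta)$ in both cases. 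The one place requiring attention is the book-keeping of valuations between $v$ on $\F$ and $v_\L$ on $\L$ (especially in the ramified setting); beyond that I do not foresee any deeper obstacle.
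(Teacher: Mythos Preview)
Your argument is correct and reaches the same conclusion as the paper's proof, which simply asserts that condition (i) forces $v(t_2)<-i_0$ and then reads off $v(\mu)=2v(t_2)$ from (ii). The only real difference is in how you justify the first step: the paper (implicitly) compares the $\F$-rational and $\sqrt{D}$-parts of $\xi=z(1+u)$ to see that the imaginary summand $\tfrac{1}{\sqrt{D}\varpi^{i_0}}$ must have strictly larger valuation than the real summand $t_2$, while you pass to the ratio $\xi/\bar\xi$ and argue with $v_\L$. Both routes are short and equivalent; yours has the mild advantage of making the role of $p\neq 2$ explicit, whereas the paper's direct comparison is a line shorter but leaves that point unspoken. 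Your valuation bookkeeping between $v$ and $v_\L$ is fine.
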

\begin{proof}
Consider the case $e_\L=1$ first.
From Lemma \ref{Lem4.4:tidomains}(i), we get that $v \lb t_2 \rb <-i_0$. From (ii), we get $v \lb \mu \rb =2v \lb t_2 \rb <-2i_0=-\cc \lb \pi_\theta \rb $. 
When $e_\L=2$, we also get $v \lb t_2 \rb <-i_0$ from (i), and $v \lb \mu \rb =2v \lb t_2 \rb <-2i_0-1=-\cc \lb \pi_\theta \rb $ from (ii).
\end{proof}
\old{\begin{example}\label{Example:admissibleMu}
	\begin{enumerate}
		\item When $\cc \lb \pi \rb =4n$, we have $v \lb \mu \rb =-2k\leq -2i_0$,  $v \lb t_2 \rb =-k$ in case $k> i_0$, $v \lb t_2 \rb \geq -i_0$ in case $k=i_0$.
		\item When $\cc \lb \pi \rb =2n+1$, we have $v \lb \mu \rb =-2k<-2i_0-1$, and $v \lb t_2 \rb =-k$.
		\item When $\cc \lb \pi \rb =4n+2$, we have $v \lb \mu \rb =-2k<-2i_0$, and  $v \lb t_2 \rb =-k$.
	\end{enumerate}
	
\end{example}}
%It may seem that when $v \lb \mu \rb =-2k=-2i_0$ in case  \lb 1 \rb  above, one can have $v \lb t_2 \rb \geq -k$ for (ii) to hold. Note however that $v \lb \mu \rb $ is even for all finite places, thus $\mu=\frac{1}{c^2}$ for $c\in \Q$ from global consideration. Thus if $v \lb t_2 \rb >-k$, 

Under the conditions in Lemma \ref{Lem4.4:micong}, \ref{Lem4.4:tidomains}, we have
\begin{align*}
&\,I_p \lb \zxz{0}{-\mu}{1}{0},1,1,m_1,m_2 \rb \\
&=\int\limits_{t_i \text{\ satisfying (i)-(iii)}}\theta^{-1}\lb t_2+\frac{1}{\sqrt{D}\varpi^{i_0}}\rb\psi\lb\alpha_0 \lb t_1+\frac{\mu}{ t_2^2-\frac{1}{D\varpi^{2i_0}}} t_2\rb\rb\psi \lb -m_1t_1+m_2t_2 \rb dt_1dt_2\notag\\
&=\int\limits_{t_2 \text{\ satisfying (i)-(ii)}}\theta^{-1}\lb t_2+\frac{1}{\sqrt{D}\varpi^{i_0}}\rb\psi\lb\frac{\alpha_0 \mu}{ t_2^2-\frac{1}{D\varpi^{2i_0}}} t_2+m_2t_2\rb \int\limits_{t_1 \text{\ satisfying (iii)}}\psi\lb  \lb  \alpha_0-m_1 \rb t_1\rb dt_1dt_2\notag\\
&=p^{\lceil i_0/2\rceil}\int\limits_{t_2 \text{\ satisfying (i)-(ii)}}\theta^{-1}\lb t_2+\frac{1}{\sqrt{D}\varpi^{i_0}}\rb\psi\lb\frac{\alpha_0 \mu}{ t_2^2-\frac{1}{D\varpi^{2i_0}}} t_2+m_2t_2\rb \psi\lb-\lb \alpha_0-m_1\rb  \frac{\mu t_2}{t_2^2-\frac{1}{D\varpi^{2i_0}}}\rb dt_2
\notag\\
&=p^{\lceil i_0/2\rceil}\int\limits_{t_2 \text{\ satisfying (i)-(ii)}}\theta^{-1} \lb t_2+\frac{1}{\sqrt{D}\varpi^{i_0}} \rb \psi \lb \frac{m_1\mu}{t_2^2-\frac{1}{D\varpi^{2i_0}}} t_2+m_2t_2 \rb dt_2.\notag
\end{align*}
Here in the third equality, we have used Lemma \ref{Lem4.4:micong}, so that the integrand is constant for the integral in $t_1$ with the domain given in (iii).

For a general pair $ \lb a,a' \rb $, we have

\begin{lem}
	$$
	I _p\lb \zxz{0}{-\mu}{1}{0}, a,a',m_1,m_2 \rb =I_p \lb \zxz{0}{-\mu a a'}{1}{0},1,1,a'^{-1}m_1,a^{-1}m_2 \rb .$$
\end{lem}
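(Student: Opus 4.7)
The identity is essentially a change of variable at the $p$-adic place, taking advantage of the equivariance built into the definition of $\tilde\Phi_{a,a'}$ and the fact that $\tilde\Phi_{0,0}$ descends from $\GL_2(\F)$ to $\PGL_2(\F)$ (the central character of $\pi$ being trivial).

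Starting from
$$I_p\lb\zxz{0}{-\mu}{1}{0},a,a',m_1,m_2\rb = \int\int_{\Q_p^2} \overline{\tilde{\Phi}}_{a,a'}\lb n(-t_1)\zxz{0}{-\mu}{1}{0}n(t_2)\rb \psi_p(-m_1 t_1 + m_2 t_2)\, dt_1\, dt_2,$$
I would substitute $t_1 = a'^{-1}s_1$, $t_2 = a^{-1}s_2$. Since $a,a' \in \CO_\F^\times$, the Haar measure is preserved, and the additive character becomes $\psi_p(-a'^{-1} m_1 s_1 + a^{-1} m_2 s_2)$, which already matches the character appearing in the claimed right-hand side.

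It then remains to verify the matrix-coefficient identity
$$\tilde\Phi_{a,a'}\lb n(-a'^{-1}s_1) \zxz{0}{-\mu}{1}{0} n(a^{-1}s_2)\rb = \tilde\Phi_{1,1}\lb n(-s_1) \zxz{0}{-\mu a a'}{1}{0} n(s_2)\rb.$$
Unfolding both sides with Definition \ref{Defn3.1:Phiaa'} and using the elementary commutation $\zxz{\alpha}{0}{0}{\beta}n(t) = n(\alpha\beta^{-1}t)\zxz{\alpha}{0}{0}{\beta}$ to move the diagonal conjugators past the unipotent factors, one finds that both sides equal $\tilde\Phi_{0,0}$ evaluated at a matrix of the form
$$(\text{scalar}) \cdot n(-\varpi^{i_0}s_1) \zxz{0}{-\varpi^{2i_0} a a' \mu}{1}{0} n(\varpi^{i_0}s_2),$$
where the scalar prefactor is $\varpi^{-i_0}a^{-1}$ on the left and $\varpi^{-i_0}$ on the right. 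Since $\tilde\Phi_{0,0}$ is trivial on the center $Z$, these scalars drop out and the two matrix-coefficient values coincide.

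The only obstacle is the bookkeeping of the diagonal conjugations through the Bruhat element; the point I would emphasize is that the asymmetric diagonal factor $\zxz{\varpi^{-i_0}a^{-1}}{0}{0}{1}$ with bottom-right entry $1$ forces the appearance of a scalar $\varpi^{-i_0}a^{-1}$ after passing through $\zxz{0}{-\mu}{1}{0}$, and this scalar is precisely what is invisible to $\tilde\Phi_{0,0}$ on $\PGL_2$. In particular, the same argument goes through verbatim in the split $\L = \F \times \F$ (principal series) case using Definition \ref{Defn3.2:Phiaa'Prinicipal}, since the additional character factors $\chi_1(a)\chi_1^{-1}(a')$ in \eqref{Eq:Phiaa'Princial} also cancel between the two sides.
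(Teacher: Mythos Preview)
Your proof of the stated (supercuspidal) lemma is correct and is essentially the paper's argument with the two steps interchanged: the paper first unfolds $\tilde\Phi_{a,a'}$ in terms of $\tilde\Phi_{1,1}$ and pushes the diagonal matrices $\zxz{a'}{}{}{1}$, $\zxz{a^{-1}}{}{}{1}$ through the unipotents and the Bruhat element (absorbing the scalar $a^{-1}$ into the center), and only then makes the change of variables $t_i\mapsto a'^{-1}t_1,\ a^{-1}t_2$; you do the change of variables first and then compare both sides against $\tilde\Phi_{0,0}$. Either order works.

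One caveat on your closing remark: the claim that ``the same argument goes through verbatim in the split $\L=\F\times\F$ case'' is not correct. In the principal series case (Definition~\ref{Defn3.2:Phiaa'Prinicipal}) the outer translates defining $\tilde\Phi_{a,a'}$ are \emph{unipotent} matrices $n(\pm a\varpi^{-i_0})$, not diagonal ones, so they combine additively with $n(-t_1), n(t_2)$ rather than rescaling them; there is no way to absorb them into a change of variables producing $\zxz{0}{-\mu aa'}{1}{0}$. The character factors $\chi_1(a)\chi_1^{-1}(a')$ do not cancel between the two sides, and indeed the paper proves a genuinely different identity there, namely
\[
I_p\lb\zxz{0}{-\mu}{1}{0},a,a',m_1,m_2\rb=\chi_1^{-1}(a)\psi(-m_2a\varpi^{-i_0})\chi_1(a')\psi(m_1a'\varpi^{-i_0})\,I_p\lb\zxz{0}{-\mu}{1}{0},0,0,m_1,m_2\rb,
\]
with the same $\mu$ on both sides. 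This aside does not affect the validity of your proof of the lemma as stated.
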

\begin{proof}
	By definition, 
	\begin{align*}
	I _p\lb \zxz{0}{-\mu}{1}{0}, a,a',m_1,m_2 \rb &=\int\limits_{\F ^2}\overline{\tilde{\Phi}}_{a,a'} \lb \zxz{1}{t_1}{0}{1}^{-1}\zxz{0}{-\mu}{1}{0}\zxz{1}{t_2}{0}{1} \rb \psi \lb -m_1t_1+m_2t_2 \rb dt_1dt_2\\
	&=\int\limits_{\F ^2}\overline{\tilde{\Phi}}_{1,1'} \lb \zxz{a'}{0}{0}{1}\zxz{1}{t_1}{0}{1}^{-1}\zxz{0}{-\mu}{1}{0}\zxz{1}{t_2}{0}{1}\zxz{a^{-1}}{0}{0}{1} \rb \psi \lb -m_1t_1+m_2t_2 \rb dt_1dt_2\notag\\
	&=\int\limits_{\F ^2}\overline{\tilde{\Phi}}_{1,1'} \lb \zxz{1}{a't_1}{0}{1}^{-1}\zxz{0}{-\mu  a'}{a^{-1}}{0}\zxz{1}{at_2}{0}{1} \rb \psi \lb -m_1t_1+m_2t_2 \rb dt_1dt_2\notag\\
	&=\int\limits_{\F ^2}\overline{\tilde{\Phi}}_{1,1'} \lb \zxz{1}{t_1}{0}{1}^{-1}\zxz{0}{-\mu a a'}{1}{0}\zxz{1}{t_2}{0}{1} \rb \psi \lb -a'^{-1}m_1t_1+a^{-1}m_2t_2 \rb dt_1dt_2\notag\\
	&=I_p \lb \zxz{0}{-\mu a a'}{1}{0},1,1,a'^{-1}m_1,a^{-1}m_2 \rb .
	\end{align*}
	
\end{proof}

Note that $a,a'$ are defined $\mod \varpi^{\lceil i_0/2\rceil}$, and the local integral should be independent of the choice of representatives. Combining the previous lemmas, we get that $I_p \lb \zxz{0}{-\mu}{1}{0}, a,a',m_1,m_2 \rb $ is non-vanishing if and only if $a'\alpha_0\equiv m_1\mod\varpi^{\lceil i_0/2\rceil}$,  $a\alpha_0\equiv m_2\mod\varpi^{\lceil i_0/2\rceil}$, in which case we simply choose $a',a$ such that $a'\alpha_0=m_1$, $a\alpha_0=m_2$.

As a result, we have for fixed $m_1,m_2$, %the second cell term associated to $\gamma=\zxz{}{-\mu}{1}{}$ is
\begin{align}\label{Eq4.4:GpSC}
& %\int\limits_{\F_v^2}f_p \lb \zxz{-t_1}{-\mu-t_1t_2}{1}{t_2} \rb \psi \lb -m_1t_1+m_2t_2 \rb dt_1dt_2=
\sum\limits_{a,a'}I_p \lb \zxz{0}{-\mu}{1}{0}, a,a',m_1,m_2 \rb =I _p \lb \zxz{0}{-\mu}{1}{0}, \alpha_0^{-1}m_2,\alpha_0^{-1}m_1,m_1,m_2 \rb=I _p\lb \zxz{0}{-\alpha_0^{-2}\mu m_1m_2}{1}{0},1,1,\alpha_0,\alpha_0 \rb \\
=&p^{\lceil i_0/2\rceil}\int\limits_{t_2 \text{\ satisfying (i)}, t_2^2-\frac{1}{D\varpi^{2i_0}} \equiv \frac{m_1m_2\mu}{\alpha_0^{2}} \mod\varpi^{v \lb \mu \rb +\lceil i_0/2\rceil}}\theta^{-1}\lb t_2+\frac{1}{\sqrt{D}\varpi^{i_0}}\rb\psi\lb\frac{m_1m_2\mu}{\alpha_0\lb t_2^2-\frac{1}{D\varpi^{2i_0}}\rb} t_2+\alpha_0t_2\rb dt_2\notag\\
=&p^{\lceil i_0/2\rceil}\int\limits_{t_2+\alpha_\theta\in ZU_\L \lb 1 \rb , \Nm \lb t_2+\alpha_\theta \rb \equiv {m_1m_2\mu} \mod\varpi^{v \lb \mu \rb +\lceil i_0/2\rceil}}
\theta^{-1}\lb t_2+\alpha_\theta\rb\psi\lb\frac{m_1m_2\mu}{ t_2^2-\frac{\alpha_0^{2}}{D\varpi^{2i_0}}} t_2+t_2\rb dt_2.\notag
\end{align}
In the last line we have made a change of variable $\alpha_0t_2\rightarrow t_2$, and used that $\alpha_\theta=\frac{\alpha_0}{\sqrt{D}\varpi^{i_0}} $, $\theta|_{\F^\times}=1$.

\begin{defn}\label{Def4.4:KloostermanSC}
Denote by $G_p \lb m_1,m_2,\theta,\mu \rb$  the following integral, which we call the generalized Kloosterman sum:
\begin{align}\label{Eq4.4.1:Gp1}
G_p \lb m_1,m_2,\theta,\mu \rb =\int\limits_{t_2+\alpha_\theta\in ZU_\L \lb 1 \rb , \Nm \lb t_2+\alpha_\theta \rb \equiv {m_1m_2\mu} \mod\varpi^{v \lb \mu \rb +\lceil i_0/2\rceil}}
\theta^{-1}\lb t_2+\alpha_\theta\rb\psi\lb\frac{m_1m_2\mu}{ t_2^2-\frac{\alpha_0^{2}}{D\varpi^{2i_0}}} t_2+t_2\rb dt_2.
\end{align}
\end{defn}
Note that we can alternatively write
\begin{align}\label{Eq4.4.1:Gp1'}
G_p \lb m_1,m_2,\theta,\mu \rb =\int\limits_{e=t_2+\alpha_\theta\in ZU_\L \lb 1 \rb , \Nm \lb e \rb  \equiv m_1m_2\mu \mod\varpi^{v \lb \mu \rb +\lceil i_0/2\rceil}}\theta^{-1} \lb e \rb \psi\circ\Tr\lb\frac{1}{2}\lb\frac{m_1m_2\mu}{e} +e\rb\rb de. \notag
\end{align}

%It is actually possible to remove the congruence requirement in the integral. We do so for the sake of conciseness in the definition.
\begin{lem}\label{Lem4.4:GeneralKLWholerange}
When $k>i_0$, we can adjust the congruence requirement for $t_2$, that is,
	\begin{align*}%\label{Eq4.4:Lem47}
	G_p \lb m_1,m_2,\theta,\mu \rb &=\int\limits_{v \lb t_2 \rb =-k}
\theta^{-1}\lb t_2+\alpha_\theta\rb\psi\lb\frac{m_1m_2\mu}{ t_2^2-\frac{\alpha_0^{2}}{D\varpi^{2i_0}}} t_2+t_2\rb dt_2 \\
&=\int\limits_{ \Nm \lb t_2+\alpha_\theta \rb \equiv {m_1m_2\mu} \mod\varpi^{v \lb \mu \rb +i}}
\theta^{-1}\lb t_2+\frac{\alpha_0}{\sqrt{D}\varpi^{i_0}}\rb\psi\lb\frac{m_1m_2\mu}{ t_2^2-\frac{\alpha_0^{2}}{D\varpi^{2i_0}}} t_2+t_2\rb dt_2								
	\end{align*}
for any $0< i\leq \lfloor k/2\rfloor$. In particular the generalized Kloosterman sum satisfies the   square-root cancellation:
$$G_p \lb m_1,m_2,\theta,\mu \rb \ll_p p^{k/2}.$$
\end{lem}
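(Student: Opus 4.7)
The plan is to analyse how the integrand
\begin{equation*}
F(t_2) = \theta^{-1}(t_2 + \alpha_\theta)\,\psi\!\Bigl(\tfrac{m_1 m_2 \mu\, t_2}{N} + t_2\Bigr), \qquad N := t_2^2 - \alpha_\theta^2,
\end{equation*}
transforms under a small additive shift $t_2 \mapsto t_2 + x$ in $\F$, and then apply Fourier orthogonality fibrewise to convert the shift-average into the desired $\Nm$-congruence. As a preliminary, under $k > i_0$ the condition $e := t_2 + \alpha_\theta \in Z U_\L(1)$ in \eqref{Eq4.4.1:Gp1} collapses to $v(t_2) = -k$: indeed $v_\L(\alpha_\theta) = -\cc \lb \theta \rb - e_\L + 1$ strictly exceeds $v_\L(t_2) = -k e_\L$ when $k \geq i_0 + 1$, so the factorisation $e = t_2(1 + \alpha_\theta/t_2)$ witnesses $e \in \F^\times U_\L(1)$ automatically.

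Next I would compute $F(t_2+x)/F(t_2)$ for $x \in \varpi^{-i}O_\F$ with $0 < i \leq \lfloor k/2\rfloor$. Three contributions arise: (a) $\theta^{-1}(1 + x/e) = \psi(2\alpha_\theta^2 x/N)$, which is exact by the second part of Lemma~\ref{Lem:LiealgChar}, valid because $v_\L(x/e) = (v(x)+k)e_\L \geq (k-i)e_\L \geq \cc \lb \theta \rb /2$; (b) the factor $\psi(t_2)$ contributes $\psi(x)$; (c) expanding $(t_2+x)/\bigl((t_2+x)^2 - \alpha_\theta^2\bigr)$ and simplifying, the change inside $\psi(m_1m_2\mu t_2/N)$ is, to first order, $\psi\bigl(-m_1m_2\mu(t_2^2+\alpha_\theta^2)x/N^2\bigr)$, with an $x^2$-remainder of $\F$-valuation at least $k - 2i \geq 0$, hence trivial under $\psi$. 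Summing the three pieces and using $t_2^2 + \alpha_\theta^2 = N + 2\alpha_\theta^2$, the total phase change is $\psi\bigl(\lambda(t_2)\,x\bigr)$ with
\begin{equation*}
\lambda(t_2) = \frac{(N - m_1 m_2 \mu)(N + 2\alpha_\theta^2)}{N^2}, \qquad v(\lambda(t_2)) = v(N - m_1 m_2 \mu) + 2k.
\end{equation*}

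I would then partition $\{v(t_2)=-k\}$ into cosets of $\varpi^{-i}O_\F$. Within each coset the perturbation $2t_2 x + x^2$ of $N$ has $\F$-valuation at least $-k-i \geq -2k+i$ (again because $i \leq k/2$), so $v(N - m_1m_2\mu)$, and thus $v(\lambda)$, is locally constant. The inner integral $\int_{\varpi^{-i}O_\F}\psi(\lambda x)\,dx$ therefore vanishes unless $v(\lambda) \geq i$, i.e.\ $\Nm(e) \equiv m_1 m_2 \mu \pmod{\varpi^{v(\mu)+i}}$; on cosets where the congruence holds the shift invariance makes $F$ constant and the inner integral contributes the full coset measure. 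Summing over cosets yields the second displayed identity for every $0 < i \leq \lfloor k/2\rfloor$, and rerunning the argument at $i = 0$ (empty constraint) gives the first identity. Finally, specialising the second formula to $i = \lfloor k/2\rfloor$, Hensel's lemma applied to $t_2^2 \equiv m_1 m_2 \mu + \alpha_\theta^2$ confines $t_2$ to at most two cosets of $\varpi^{-\lceil k/2\rceil}O_\F$; since $|F| = 1$, the bound $G_p \ll_p p^{k/2}$ follows.

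The main technical obstacle will be the valuation bookkeeping: at each step one must verify that the higher-order corrections in $x$ (the tail of the Taylor expansion of the rational phase, and the slack in the $\theta$-linearisation) have non-negative $\F$-valuation so as to be invisible to $\psi$. This is exactly what forces both the hypothesis $k > i_0$ and the tight range $0 < i \leq \lfloor k/2\rfloor$ in the statement.
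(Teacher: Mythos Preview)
Your argument is correct and is essentially the $p$-adic stationary phase analysis carried out in the paper: the paper writes $t_2=t_0(1+dt)$ with $v(dt)\geq\lceil k/2\rceil$, linearises $\theta^{-1}$ and the $\psi$-phase in $dt$, and obtains the same factorised derivative $\bigl(1-\tfrac{m_1m_2\mu}{N}\bigr)\tfrac{t_2^2+\alpha_\theta^2}{N}$ as your $\lambda(t_2)$; your additive shift $t_2\mapsto t_2+x$ with $v(x)\geq -i$ is the same perturbation up to the change of variable $x=t_0\,dt$. The only cosmetic difference is that you treat a general $0<i\leq\lfloor k/2\rfloor$ directly by a coset decomposition, whereas the paper works at $i=\lfloor k/2\rfloor$ and then observes that coarser congruences follow automatically.
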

\begin{proof}
When $k>i_0$, $t_2+\alpha_\theta\in ZU_\L \lb 1 \rb $ follows directly from $v \lb t_2 \rb =-k$.
%	First of all, note that when $k>i_0$, condition (i) for $t_2$ is automatic. 
%Also note that $\lfloor k/2\rfloor\geq \lceil i_0/2\rceil$. 
We apply now the $p-$adic analogue of the stationary phase analysis for any alternative integral expression in Lemma \ref{Lem4.4:GeneralKLWholerange}.	Writing $t_2=t_0 \lb 1+dt \rb $,  with $t_0\in \varpi^{-k}O_\F^\times/U_\F\lb \lceil k/2\rceil\rb$,  $v \lb dt \rb \geq \lceil k/2\rceil$, we have by Lemma \ref{Lem:LiealgChar}
	\begin{equation*}
	\theta^{-1}\lb t_2+\frac{\alpha_0}{\sqrt{D}\varpi^{i_0}}\rb=\theta^{-1}\lb t_0+\frac{\alpha_0}{\sqrt{D}\varpi^{i_0}}\rb\psi \lb \frac{\frac{2\alpha_0^2t_0dt}{D\varpi^{2i_0}}}{t_0^2-\frac{\alpha_0^2}{D\varpi^{2i_0}}} \rb,
	\end{equation*}
	
	\begin{equation*}
	\psi\lb\frac{m_1m_2\mu}{ t_2^2-\frac{\alpha_0^2}{D\varpi^{2i_0}}} t_2+t_2\rb=\psi\lb \frac{m_1m_2\mu}{ t_0^2-\frac{\alpha_0^2}{D\varpi^{2i_0}}} t_0+t_0\rb\psi\lb -\frac{m_1m_2\mu \lb t_0^2+\frac{\alpha_0^2}{D\varpi^{2i_0}} \rb }{ \lb t_0^2-\frac{\alpha_0^2}{D\varpi^{2i_0}} \rb ^2}t_0dt+t_0dt \rb.
	\end{equation*}
	
	For the integral in $dt$ to be non-zero, the stationary point $t_0$ has to satisfy
	\begin{equation}\label{Eq:removecong1}
	\frac{\frac{2\alpha_0^2}{D\varpi^{2i_0}}}{t_0^2-\frac{\alpha_0^2}{D\varpi^{2i_0}}}-\frac{m_1m_2\mu \lb t_0^2+\frac{\alpha_0^2}{D\varpi^{2i_0}} \rb }{ \lb t_0^2-\frac{\alpha_0^2}{D\varpi^{2i_0}} \rb ^2}+1\equiv 0\mod\varpi^{\lfloor k/2\rfloor}.
	\end{equation}
	This equation factorizes as
%	When $k>i_0$, \eqref{Eq:removecong1} can be reduced to that
	\begin{equation*}
	\lb 1-\frac{m_1m_2\mu}{ t_0^2-\frac{\alpha_0^2}{D\varpi^{2i_0}}}\rb \frac{t_0^2+\frac{\alpha_0^2}{D\varpi^{2i_0}}}{t_0^2-\frac{\alpha_0^2}{D\varpi^{2i_0}}}\equiv 0\mod\varpi^{\lfloor k/2\rfloor}.
	\end{equation*}
	When $k>i_0$, we have  $\frac{t_0^2+\frac{\alpha_0^2}{D\varpi^{2i_0}}}{t_0^2-\frac{\alpha_0^2}{D\varpi^{2i_0}}}\not\equiv 0\mod \varpi$ and $\lfloor k/2\rfloor\geq \lceil i_0/2\rceil$. Thus the stationary points in particular satisfy the congruence condition imposed in \eqref{Eq4.4.1:Gp1}, % the definition of $G_p \lb m_1,m_2,\theta,\mu \rb $, 
and the nonzero contribution comes only from those $t_0$s satisfying
$$\Nm \lb t_0+\alpha_\theta \rb \equiv {m_1m_2\mu} \mod\varpi^{v \lb \mu \rb +\lfloor k/2\rfloor}.$$  
This  congruence equation is non-degenerate and has at most two solutions of $t_0\mod{U_\F(\lfloor k/2\rfloor)}$. 
The square-root cancellation then follows directly. % from this requirement for the stationary point.
\end{proof}
%\begin{rem}\label
%It is clear from the proof that if $k=i_0$, we may introduce more stationary points by enlarging the domain for $t_2$.
%\end{rem}
\begin{rem}\label{Rem4.4:Issueforsmallerfamily}
The freedom to adjust the congruence condition for $t_2$ is later used in the proof of Lemma \ref{Lem:AverGeneralKL} to obtain cancellations among second-cell terms for different $\theta$s.
%	This step is important to combine with the Voronoi 	 formula later on, as it removes the additional congruence condition for $m_i$. %\yh{Really?}
	%	By exactly the same proof, we can also assume that the range of integral in $t_2$ is $ m_1m_2\mu\equiv t_2^2-\frac{1}{D\varpi^{2i_0}}\equiv t_2^2 \mod\varpi^{v \lb \mu \rb +1}$ as $k>i_0$.
\end{rem}
\begin{rem}\label{Rem:Gpbecomeskloosterman}
	As a sanity check, we show that when $k\geq \cc \lb \pi \rb $, the local integral $G_p \lb m_1,m_2,\theta,\mu \rb $ reduces to the usual Kloosterman sum. Indeed in that case, we have $\theta^{-1} \lb t_2+\alpha_\theta \rb =1$ by the level of $\theta$, and $$\psi \lb \frac{m_1m_2\mu}{t_2^2-\frac{\alpha_0^2}{D\varpi^{2i_0}}} t_2+t_2 \rb =\psi \lb t_2+\frac{m_1m_2\mu}{t_2} \lb 1+\frac{\alpha_0^2}{Dt_2^2\varpi^{2i_0}}+\cdots \rb  \rb =\psi \lb t_2+\frac{m_1m_2\mu}{t_2} \rb .$$
\end{rem}

\subsubsection{Principal series representation case}
In this case, it is easier to compute 
$I_p \lb \zxz{0}{-\mu}{1}{0}, 0,0,m_1,m_2 \rb $ first, that is, to use $\tilde{\Phi}_{0,0}$ as the test function.
\begin{lem}\label{Lem4.4:tidomainsPS}
	$\zxz{-t_1}{-\mu-t_1t_2}{1}{t_2}\in ZK_0 \lb \varpi^{i_0} \rb $ if and only if all the followings hold
	\begin{enumerate}
\item $v \lb \mu \rb =-2k$, $v \lb t_1 \rb =v \lb t_2 \rb =-k\leq -i_0$;
\item $t_1t_2\equiv -\mu \mod \varpi^{-k}$.
\end{enumerate}
In that case, we have
\begin{align*}
I_p \lb \zxz{0}{-\mu}{1}{0}, 0,0,m_1,m_2 \rb =\int\limits_{v \lb t_2 \rb =-k}\chi_1^{-1} \lb \mu \rb \chi_1^2 \lb t_2 \rb \psi \lb \frac{m_1\mu}{t_2}+m_2t_2 \rb dt_2.
\end{align*}
\end{lem}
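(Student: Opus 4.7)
My plan is to determine the support condition by brute force, then to use the explicit formula \eqref{Eq3.2:newtheta} for $\tilde{\theta}$ and perform a change of variables in $t_1$.

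First I would note that $\det\zxz{-t_1}{-\mu-t_1t_2}{1}{t_2}=\mu$, so if $g:=\zxz{-t_1}{-\mu-t_1t_2}{1}{t_2}=zk$ with $z=\mathrm{diag}(u,u)$ and $k\in K_0 \lb \varpi^{i_0} \rb $, then $\det k=\mu/u^2\in\CO^\times$ forces $v \lb u \rb =v \lb \mu \rb /2$. Writing out $k=u^{-1}g$ entry by entry, the four conditions ``top-left and bottom-right in $\CO^\times$'', ``bottom-left in $\varpi^{i_0}\CO$'', and ``top-right in $\CO$'' translate respectively into $v \lb t_1 \rb =v \lb t_2 \rb =v \lb u \rb $, $v \lb u \rb \leq -i_0$, and $v \lb \mu+t_1t_2 \rb \geq v \lb u \rb $. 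Setting $v \lb u \rb =-k$ this gives precisely the stated conditions (i) and (ii).

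Next I would evaluate the character. Since $w_\pi=1$, equation \eqref{Eq3.2:newtheta} gives $\tilde\theta \lb g \rb =\chi_1 \lb -t_1/u \rb \chi_1^{-1} \lb t_2/u \rb =\chi_1 \lb -t_1/t_2 \rb $, with the $u$ cancelling. Since $\chi_1$ is unitary, $\overline{\tilde{\Phi}}_{0,0} \lb g \rb =\chi_1^{-1} \lb -t_1/t_2 \rb $. Parameterising $t_1=-\mu/t_2+s$, condition (ii) becomes $st_2\in\varpi^{-k}\CO$, i.e.\ $s\in\CO$, while $v \lb t_1 \rb =-k$ is automatic since $v \lb \mu/t_2 \rb =-k$ and $v \lb s \rb \geq 0$. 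Writing
\[
-t_1/t_2=\frac{\mu}{t_2^2}\lb 1-\frac{st_2}{\mu}\rb,\qquad v \lb st_2/\mu \rb =v \lb s \rb +k\geq k\geq i_0,
\]
the factor $\chi_1^{-1} \lb 1-st_2/\mu \rb $ is trivial because the conductor of $\chi_1$ equals $i_0$ by \eqref{Eq3.3:i0PS}. Hence $\overline{\tilde\Phi}_{0,0} \lb g \rb =\chi_1^{-1} \lb \mu \rb \chi_1^2 \lb t_2 \rb $ independently of $s$.

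Finally I would carry out the $s$-integral. With $dt_1=ds$ and $-m_1t_1=m_1\mu/t_2-m_1s$, the orbital integral factors as
\[
I_p=\int\limits_{v \lb t_2 \rb =-k}\chi_1^{-1} \lb \mu \rb \chi_1^2 \lb t_2 \rb \psi\lb\frac{m_1\mu}{t_2}+m_2t_2\rb dt_2\cdot\int\limits_{s\in\CO}\psi \lb -m_1s \rb ds,
\]
and the inner integral equals $1$ since $v \lb m_1 \rb \geq 0$ and $\Vol \lb \CO \rb =1$. This yields the claimed formula. The only mildly delicate step is the character simplification using $v \lb st_2/\mu \rb \geq i_0$; everything else is bookkeeping.
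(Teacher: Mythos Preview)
Your proof is correct and follows essentially the same route as the paper's. The paper's argument is terser---it scales by $\varpi^k$, reads off the entry conditions, then invokes \eqref{Eq3.2:newtheta} and the congruence $t_1\equiv -\mu/t_2\bmod\CO_\F$ to write down the $t_2$-integral in one line---whereas you make explicit both the conductor argument $v(st_2/\mu)\geq k\geq i_0$ that kills the $\chi_1$-dependence on $s$, and the $\int_{\CO}\psi(-m_1s)\,ds=1$ step using $v(m_1)\geq 0$; these are exactly the points the paper leaves implicit.
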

\begin{proof}
Note that this case is very similar to the classical case where $f_p$ is the characteristic function of a congruence subgroup. By considering the determinant, we get that $v \lb \mu \rb =-2k$ for some $k\in \Z$. Thus $\varpi^{k}\zxz{-t_1}{-\mu-t_1t_2}{1}{t_2}\in K_0 \lb \varpi^{i_0} \rb $, giving rise to all the conditions for $\mu$, $t_i$ and $k$. Then by \eqref{Eq3.2:newtheta}, Definition \ref{Defn3.2:Phiaa'Prinicipal} and $t_1\equiv -\frac{\mu}{t_2} \mod O_\F$,
\begin{align*}
I_p \lb \zxz{0}{-\mu}{1}{0}, 0,0,m_1,m_2 \rb &=\int\limits_{v \lb t_2 \rb =-k}\chi_1^{-1} \lb \varpi^k\frac{\mu}{t_2} \rb \chi_1 \lb \varpi^kt_2 \rb \psi \lb \frac{m_1\mu}{t_2}+m_2t_2 \rb dt_2\\
&=\int\limits_{v \lb t_2 \rb =-k}\chi_1^{-1} \lb \mu \rb \chi_1^2 \lb t_2 \rb \psi \lb \frac{m_1\mu}{t_2}+m_2t_2 \rb dt_2. \notag
\end{align*}
\end{proof}

For a general pair $ \lb a,a' \rb $, we have

\begin{lem}
	$$
	I _p\lb \zxz{0}{-\mu}{1}{0}, a,a',m_1,m_2 \rb =\chi_1^{-1} \lb a \rb \psi \lb -m_2a\varpi^{-i_0} \rb \chi_1 \lb a' \rb \psi \lb m_1a'\varpi^{-i_0} \rb I_p \lb \zxz{0}{-\mu}{1}{0}, 0,0,m_1,m_2 \rb .$$
\end{lem}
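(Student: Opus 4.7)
The plan is to reduce everything to $I_p(\gamma,0,0,m_1,m_2)$ using the explicit definition of $\tilde{\Phi}_{a,a'}$ from Definition \ref{Defn3.2:Phiaa'Prinicipal} together with a simple translation of the integration variables $t_1,t_2$. This is the principal-series analogue of the change-of-variable lemma already carried out in the supercuspidal case just above, and the key point is that the two conjugating unipotent matrices in the definition of $\tilde{\Phi}_{a,a'}$ merge with the unipotent matrices $n(t_1)^{-1}$ and $n(t_2)$ coming from the Bruhat integral.

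Concretely, since $\chi_1$ is unitary, taking complex conjugates in \eqref{Eq:Phiaa'Princial} gives
\[
f_{a,a',p}(g)=\overline{\tilde{\Phi}}_{a,a'}(g)=\chi_1^{-1}(a)\chi_1(a')\,\overline{\tilde{\Phi}}_{0,0}\!\lb\zxz{1}{-a'\varpi^{-i_0}}{0}{1}g\zxz{1}{a\varpi^{-i_0}}{0}{1}\rb.
\]
With $g=n(t_1)^{-1}\zxz{0}{-\mu}{1}{0}n(t_2)$, the left unipotent factor combines with $n(t_1)^{-1}$ to give $n(t_1+a'\varpi^{-i_0})^{-1}$, and the right unipotent factor combines with $n(t_2)$ to give $n(t_2+a\varpi^{-i_0})$. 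Thus the integrand of $I_p(\gamma,a,a',m_1,m_2)$ is precisely
\[
\chi_1^{-1}(a)\chi_1(a')\,\overline{\tilde{\Phi}}_{0,0}\!\lb n(t_1+a'\varpi^{-i_0})^{-1}\gamma\, n(t_2+a\varpi^{-i_0})\rb\psi(-m_1t_1+m_2t_2).
\]

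Next I make the change of variables $t_1\mapsto t_1-a'\varpi^{-i_0}$ and $t_2\mapsto t_2-a\varpi^{-i_0}$, which is measure-preserving on $\F$. The matrix argument of $\overline{\tilde{\Phi}}_{0,0}$ becomes $n(t_1)^{-1}\gamma n(t_2)$, while the additive character produces the extra phases $\psi(m_1 a'\varpi^{-i_0})\psi(-m_2 a\varpi^{-i_0})$. Collecting these factors yields exactly the claimed identity
\[
I_p(\gamma,a,a',m_1,m_2)=\chi_1^{-1}(a)\psi(-m_2 a\varpi^{-i_0})\,\chi_1(a')\psi(m_1 a'\varpi^{-i_0})\,I_p(\gamma,0,0,m_1,m_2).
\]

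There is no real obstacle here: the only thing to be careful about is matching the conjugation signs on the $\chi_1$ factors when passing from $\tilde{\Phi}_{a,a'}$ to $\overline{\tilde{\Phi}}_{a,a'}$ (which uses $\overline{\chi_1}=\chi_1^{-1}$), and keeping the signs of the shifts consistent so that $-m_1t_1$ produces $+m_1 a'\varpi^{-i_0}$ and $+m_2 t_2$ produces $-m_2 a\varpi^{-i_0}$. Once combined with Lemma \ref{Lem4.4:tidomainsPS}, this immediately gives a closed-form expression for $\sum_{a,a'}I_p(\gamma,a,a',m_1,m_2)$ in terms of a character sum, analogous to the generalized Kloosterman sum $G_p(m_1,m_2,\theta,\mu)$ obtained in \eqref{Eq4.4:GpSC} for the supercuspidal case.
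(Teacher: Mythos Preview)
Your proof is correct and follows essentially the same approach as the paper's own proof: both use Definition~\ref{Defn3.2:Phiaa'Prinicipal} to absorb the conjugating unipotents into $n(t_1)^{-1}$ and $n(t_2)$, then perform the obvious translation in $(t_1,t_2)$ to extract the $\psi$-phases. Your write-up is simply more explicit about the sign bookkeeping (including the use of $\overline{\chi_1}=\chi_1^{-1}$), which the paper leaves implicit.
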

\begin{proof}
	By Definition \ref{Defn3.2:Phiaa'Prinicipal}, 
	\begin{align*}
	&\,\, I_p \lb \zxz{0}{-\mu}{1}{0}, a,a',m_1,m_2 \rb \\
&=\int\limits_{\F ^2}\overline{\tilde{\Phi}}_{a,a'} \lb \zxz{1}{t_1}{0}{1}^{-1}\zxz{0}{-\mu}{1}{0}\zxz{1}{t_2}{0}{1} \rb \psi \lb -m_1t_1+m_2t_2 \rb dt_1dt_2\notag\\
	&=\chi_1^{-1} \lb a \rb \chi_1 \lb a' \rb \int\limits_{\F ^2}\overline{\tilde{\Phi}}_{0,0} \lb \zxz{1}{-a'\varpi^{-i_0}}{0}{1}\zxz{1}{t_1}{0}{1}^{-1}\zxz{0}{-\mu}{1}{0}\zxz{1}{t_2}{0}{1}\zxz{1}{a\varpi^{-i_0}}{0}{1} \rb \psi \lb -m_1t_1+m_2t_2 \rb dt_1dt_2\notag\\
&=\chi_1^{-1} \lb a \rb \psi \lb -m_2a\varpi^{-i_0} \rb \chi_1 \lb a' \rb \psi \lb m_1a'\varpi^{-i_0} \rb I_p \lb \zxz{0}{-\mu}{1}{0}, 0,0,m_1,m_2 \rb. \notag
	\end{align*}
	
\end{proof}
\begin{cor}\label{Cor4.4:IpPS}
$\sum\limits_{a,a'}I_p \lb \zxz{0}{-\mu}{1}{0}, a,a',m_1,m_2 \rb $ is nonzero only when $v_p \lb m_i \rb =0$, in which case
\begin{align*}
\sum\limits_{a,a'}I_p \lb \zxz{0}{-\mu}{1}{0}, a,a',m_1,m_2 \rb = p^{i_0}\int\limits_{v \lb t_2 \rb =-k}	\chi_1^{-1} \lb m_1m_2\mu  \rb \chi_1^2 \lb t_2 \rb \psi \lb \frac{m_1m_2\mu}{t_2}+t_2 \rb dt_2.
\end{align*}
\end{cor}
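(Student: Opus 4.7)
The plan is to mirror the first-cell analysis of Section \ref{Sec:KutGeoComp}: factor the sum over $(a, a')$ into a product of two Gauss sums times the base orbit integral $I_p(\gamma, 0, 0, m_1, m_2)$, and then simplify by a change of variable in that base integral.

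Using the preceding lemma, summing over $a, a' \in (\CO/\varpi^{i_0})^\times$ yields
\[
\sum_{a, a'} I_p(\gamma, a, a', m_1, m_2) = S_1 \cdot S_2 \cdot I_p(\gamma, 0, 0, m_1, m_2),
\]
where $S_1 = \sum_a \chi_1^{-1}(a) \psi_p(-m_2 a \varpi^{-i_0})$ and $S_2 = \sum_{a'} \chi_1(a') \psi_p(m_1 a' \varpi^{-i_0})$. Since $\chi_1$ is primitive of conductor $\varpi^{i_0}$, standard orthogonality forces each $S_i$ to vanish unless the twisting additive character has matching conductor, i.e.\ unless $v_p(m_i) = 0$, which gives the stated support condition. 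Under this hypothesis, the substitutions $a \mapsto -a/m_2$ and $a' \mapsto a'/m_1$ reduce $S_1$ and $S_2$ to $\chi_1(-m_2)\tau(\chi_1^{-1})$ and $\chi_1^{-1}(m_1)\tau(\chi_1)$ respectively; the classical identity $\tau(\chi_1)\tau(\chi_1^{-1}) = \chi_1(-1)(p-1)p^{i_0-1}$ (consistent with the Gauss-sum evaluation used implicitly in the first-cell term) then produces $S_1 S_2 = (p-1)p^{i_0-1}\,\chi_1^{-1}(m_1)\chi_1(m_2)$.

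For the base integral, I will apply the substitution $t_2 \mapsto t_2/m_2$ in the formula from Lemma \ref{Lem4.4:tidomainsPS}; since $|m_2|_p = 1$ this preserves the additive measure on $\{v(t_2) = -k\}$, the character $\chi_1^2(t_2)$ picks up a factor $\chi_1^{-2}(m_2)$, and the phase becomes $\psi(m_1 m_2 \mu/t_2 + t_2)$, giving
\[
I_p(\gamma, 0, 0, m_1, m_2) = \chi_1^{-1}(\mu)\,\chi_1^{-2}(m_2) \int_{v(t_2) = -k} \chi_1^2(t_2)\, \psi\!\left(\frac{m_1 m_2 \mu}{t_2} + t_2\right) dt_2.
\]
Multiplying by $S_1 S_2$ and collecting character factors via $\chi_1^{-1}(m_1)\chi_1(m_2) \cdot \chi_1^{-1}(\mu)\chi_1^{-2}(m_2) = \chi_1^{-1}(m_1 m_2 \mu)$ produces exactly the claimed identity. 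The only point requiring care (rather than a true obstacle) is pinning down the precise $(p-1) p^{i_0-1}$ constant in the Gauss-sum identity so that it is consistent with the normalization already appearing in the first-cell calculation; all remaining manipulations are routine character-sum and change-of-variable arguments.
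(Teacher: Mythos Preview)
Your proof is correct and follows essentially the same route as the paper: factor the sum over $(a,a')$ via the preceding lemma, recognize the two Gauss sums (whose nonvanishing forces $v_p(m_i)=0$), and then perform the change of variable $t_2\mapsto t_2/m_2$ in the base integral. The paper writes the Gauss-sum product directly as $\chi_1(m_1^{-1}m_2)\,\bigl|\sum_{a'}\chi_1(a')\psi(a'\varpi^{-i_0})\bigr|^2$, whereas you pass through $\tau(\chi_1)\tau(\chi_1^{-1})=\chi_1(-1)\,|\tau(\chi_1)|^2$; these are the same computation, and your $\chi_1(-1)$ cancels against the one in $\chi_1(-m_2)$ to match the paper's prefactor exactly.
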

\begin{proof}
By the previous discussions, we see indeed that 
$$\sum\limits_{a,a'}\chi_1^{-1} \lb a \rb \psi \lb -m_2a\varpi^{-i_0} \rb \chi_1 \lb a' \rb \psi \lb m_1a'\varpi^{-i_0} \rb \neq 0$$
if and only if $v_p \lb m_i \rb =0$. In that case,
 we get by a change of variable
\begin{align*}
\sum\limits_{a,a'}I_p \lb \zxz{0}{-\mu}{1}{0}, a,a',m_1,m_2 \rb &=\chi_1 \lb m_1^{-1}m_2 \rb \left|\sum\limits_{a'}\chi_1 \lb a' \rb \psi \lb a'\varpi^{-i_0} \rb \right|^2I_p \lb \zxz{0}{-\mu}{1}{0}, 0,0,m_1,m_2 \rb \\
&=\chi_1 \lb m_1^{-1}m_2 \rb {  p^{i_0}} \int\limits_{v \lb t_2 \rb =-k}\chi_1^{-1} \lb \mu \rb \chi_1^2 \lb t_2 \rb \psi \lb \frac{m_1\mu}{t_2}+m_2t_2 \rb dt_2				\notag\\
&= p^{i_0} \int\limits_{v \lb t_2 \rb =-k}	\chi_1^{-1} \lb m_1m_2\mu  \rb \chi_1^2 \lb t_2 \rb \psi \lb \frac{m_1m_2\mu}{t_2}+t_2 \rb dt_2.	\notag
\end{align*}
\end{proof}
%\yh{ There is a mistake here post publication! The evaluation of absolute value of Gauss is wrong. Should be $p^{i_0}$ instead of $\lb p-1 \rb p^{i_0-1}$. }
\begin{defn}\label{Def4.4:GpPS}
When $\L$ splits, denote by $G_p \lb m_1,m_2,\theta,\mu \rb$ the following generalized Kloosterman sum
$$G_p \lb m_1,m_2,\theta,\mu \rb =	\chi_1^{-1} \lb m_1m_2\mu  \rb 	\int\limits_{v \lb t_2 \rb =-k}\chi_1^2 \lb t_2 \rb \psi \lb \frac{m_1m_2\mu}{t_2}+t_2 \rb dt_2.$$
\end{defn}
We also have an analogue of Lemma \ref{Lem4.4:GeneralKLWholerange}.
\begin{lem}\label{Lem4.4:StationaryPS}
$G_p \lb m_1,m_2,\theta,\mu \rb $ is vanishing unless there exists $t_2$ such that $$v_p \lb t_2 \rb =-k, \ t_2^2+2\alpha_{\chi_1}t_2\equiv m_1m_2\mu\mod \varpi^{-\lceil 3k/2\rceil}.$$ In that case, we have
\begin{align*}
G_p \lb m_1,m_2,\theta,\mu \rb &=
	\chi_1^{-1} \lb m_1m_2\mu  \rb 	\int\limits_{v(t_2)=-k} \chi_1^2 \lb t_2 \rb \psi \lb \frac{m_1m_2\mu}{t_2}+t_2 \rb dt_2		\\
&=	\chi_1^{-1} \lb m_1m_2\mu  \rb 	\int\limits_{t_2^2+2\alpha_{\chi_1}t_2\equiv m_1m_2\mu\mod \varpi^{v(\mu)+i
} }\chi_1^2 \lb t_2 \rb \psi \lb \frac{m_1m_2\mu}{t_2}+t_2 \rb dt_2.			\notag
\end{align*}
Here $0<i<\lfloor k/2\rfloor$.
Furthermore when $k>i_0$, we have
$$|G_p \lb m_1,m_2,\theta,\mu \rb |\ll_p p^{k/2} .$$
\end{lem}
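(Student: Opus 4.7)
The plan is to mimic the $p$-adic stationary phase argument of Lemma \ref{Lem4.4:GeneralKLWholerange}, exploiting the fact that the principal series case is in fact slightly cleaner because we integrate over $\F^\times$ rather than over $\L^\times$ for $\L$ a field, and the character is simply $\chi_1^2$. First I would write $t_2 = t_0(1+dt)$ with $v(t_0) = -k$ and $dt \in \varpi^{\lceil k/2 \rceil} O_\F$, which decomposes the annular integral over $v(t_2) = -k$ into an outer integral over representatives $t_0$ modulo $U_\F(\lceil k/2 \rceil)$ and an inner integral over $dt$ on $\varpi^{\lceil k/2 \rceil} O_\F / \varpi^{k} O_\F$.

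Next I would expand each piece of the integrand as a function of $dt$. Since $p \neq 2$ and $c(\chi_1^2) = c(\chi_1) = i_0$, Lemma \ref{Lem:LiealgChar} applied to $\chi_1^2$ gives $\chi_1^2(1+dt) = \psi(2\alpha_{\chi_1} dt)$ for $v(dt) \geq i_0/2$, so $\chi_1^2(t_2) = \chi_1^2(t_0)\,\psi(2\alpha_{\chi_1} dt)$. The Taylor expansions $\frac{m_1 m_2 \mu}{t_2} = \frac{m_1 m_2 \mu}{t_0}(1 - dt + dt^2 - \cdots)$ and $t_2 = t_0 + t_0\,dt$ show that the quadratic and higher order terms in $dt$ land in $O_\F$ (since $v(m_1 m_2 \mu / t_0) = -k$ and $v(dt^2) \geq k$), hence are killed by $\psi$. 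Collecting the linear terms, the inner integral becomes
\begin{equation*}
\psi\!\left(\tfrac{m_1 m_2 \mu}{t_0} + t_0\right)\chi_1^2(t_0) \int\limits_{dt \in \varpi^{\lceil k/2 \rceil} O_\F / \varpi^{k} O_\F} \psi\!\left(\!\left(2\alpha_{\chi_1} + t_0 - \tfrac{m_1 m_2 \mu}{t_0}\right) dt\!\right) d(dt),
\end{equation*}
which vanishes unless $v\!\left(2\alpha_{\chi_1} + t_0 - m_1 m_2 \mu / t_0\right) \geq -\lceil k/2 \rceil$. Multiplying this inequality through by $t_0$ (of valuation $-k$) converts it into the congruence
\begin{equation*}
t_0^2 + 2\alpha_{\chi_1} t_0 - m_1 m_2 \mu \equiv 0 \pmod{\varpi^{-\lceil 3k/2 \rceil}},
\end{equation*}
which is exactly the stationary condition in the statement.

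From this description, the square-root bound is immediate: the stationary congruence restricts $t_0$ to a subset of $\varpi^{-k} O_\F^\times / U_\F(\lceil k/2 \rceil)$ of size $O_p(1)$, so the outer integral has mass $O_p(p^{-\lceil k/2 \rceil})$ while the inner integral contributes $p^{-k}\cdot p^{k-\lceil k/2\rceil} = p^{-\lceil k/2 \rceil}$ after restoring normalizations; combining with the total volume factor $p^{k}$ of the annulus gives $|G_p| \ll_p p^{k/2}$. To get the alternative integral representation with congruence mod $\varpi^{v(\mu) + i}$ for $0 < i < \lfloor k/2 \rfloor$, I would run the stationary phase analysis in reverse with a weaker threshold $\lceil k/2 \rceil$ replaced by $k - i$: on the complement of $\{t_2^2 + 2\alpha_{\chi_1} t_2 \equiv m_1 m_2 \mu \pmod{\varpi^{v(\mu)+i}}\}$ the inner $dt$-integral still vanishes, so restricting the domain of integration does not alter the value of $G_p$.

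The main obstacle is bookkeeping: tracking precisely when the Taylor truncations of $\chi_1^2(1+dt)$, $\psi(m_1 m_2 \mu/t_2)$, and $\psi(t_2)$ are legitimate at the required order, and confirming that the powers $\lceil 3k/2 \rceil$ and the range $0 < i < \lfloor k/2 \rfloor$ are exactly the tight ones. In particular one must verify that the $k > i_0$ regime (implied by the non-vanishing analysis in Lemma \ref{Lem4.4:tidomainsPS}) ensures $\lceil k/2 \rceil \geq c(\chi_1^2)/2$ so that the Lemma \ref{Lem:LiealgChar} expansion of $\chi_1^2$ is available, paralleling the condition $k > i_0$ used in the supercuspidal argument.
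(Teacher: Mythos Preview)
Your proposal is correct and follows essentially the same $p$-adic stationary phase argument as the paper: write $t_2 = t_0(1+dt)$ with $v(dt)\geq \lceil k/2\rceil$, linearize $\chi_1^2$ via Lemma~\ref{Lem:LiealgChar} and Taylor-expand the $\psi$-argument, then read off the stationary congruence $2\alpha_{\chi_1}-\tfrac{m_1m_2\mu}{t_0}+t_0\equiv 0\pmod{\varpi^{-\lceil k/2\rceil}}$, which after multiplying by $t_0$ is the condition in the statement. One small correction: in the principal series case Lemma~\ref{Lem4.4:tidomainsPS} gives $k\geq i_0$ (not strictly $k>i_0$), but this still suffices for $\lceil k/2\rceil\geq i_0/2$ and hence for the character expansion you invoke.
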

\begin{proof}
Let $t_2=t_0 \lb 1+dt \rb $ for 
 $t_0\in \varpi^{-k}O_\F^\times/U_\F\lb \lceil k/2\rceil\rb$,  $v_p \lb dt \rb \geq \lceil k/2\rceil$. Then 
\begin{equation*}
G_p \lb m_1,m_2,\theta,\mu \rb =	\chi_1^{-1} \lb m_1m_2\mu  \rb \sum\limits_{t_0}\chi_1 \lb t_0^2 \rb \psi \lb \frac{m_1m_2\mu}{t_0}+t_0 \rb \int\limits_{dt\in \varpi^{\lceil k/2\rceil}O_\F}\psi \lb 2\alpha_{\chi_1}dt \rb \psi \lb -\frac{m_1m_2\mu}{t_0}dt+t_0dt \rb .
\end{equation*}

The integral in $dt$ is nonvanishing only if 
\begin{equation}\label{Eq4.4:PrincipalStationary}
2\alpha_{\chi_1}-\frac{m_1m_2\mu}{t_0}+t_0\equiv 0\mod \varpi^{-\lceil k/2\rceil}
\end{equation}
for some $t_0$. It is also straightforward to check that this congruence equation is non-degenerate when $k>i_0$. The claims  now follow easily.

\end{proof}
\begin{rem}\label{Rem4.4.2:isstandard}
Note that when $k=i_0$, it is possible that \eqref{Eq4.4:PrincipalStationary} is degenerate, so there can be more solutions for $t_0$ and the square-root cancellation does not necessarily hold. One way to avoid this problem is to sum over a slightly larger family on the spectral side, so that we get $k>i_0$ automatically, as we shall see in Section \ref{Sec:KuzAverg}.

On the other hand when $k\geq 2i_0=\cc(\pi_\theta)$, we get that the stationary points satisfy $$t_2^2\equiv m_1m_2\mu\mod\varpi^{-k-i_0},\text{\ \  so }\chi_1\lb \frac{t_2^2}{m_1m_2\mu}\rb=\chi_1(1)=1.$$ Then the generalized Kloosterman sum becomes the classical Kloosterman sum.
\end{rem}
\subsection{Petersson trace formula for small families}
\begin{defn}\label{Def4.5:GlobalGKloosterman}
Globally define the generalized Kloosterman sum to be
$$G \lb m_1,m_2,\theta,\mu \rb =G_p \lb m_1,m_2,\theta,\mu \rb \times\prod\limits_{v\neq p\text{\ finite}} \KLv{m_1,m_2,\mu}$$
where $G_p \lb m_1,m_2,\theta,\mu \rb $ is given in Definition \ref{Def4.4:KloostermanSC}/Definition \ref{Def4.4:GpPS} according to whether $\pi_\theta$ is a supercuspidal/principal series representation, and $\KLv{m_1,m_2,\mu}$ is as in \eqref{Eq4.4:ClassicalKloosterman}.
\end{defn}
Recall $i_0$ from \eqref{Eq:i0} and $l_0$ from \eqref{Eq4.5:l0}.
% \begin{equation}
%l_0=\begin{cases}
%1, \text{\ if $\L$ is an inert field extension}\\
%0, \text{\ otherwise.}
%\end{cases}
%\end{equation}

\begin{defn}\label{Def4.5:c0}
\yh{Denote  $$c_0=\begin{cases}
p^{i_0+1}, &\text{if $\pi_\theta$ is supercuspidal,}\\
p^{i_0}, &\text{otherwise.}
\end{cases}  $$}
%Let $c_0=p^{i_0+1}$  when $\pi_\theta$ is a supercuspidal representation by Corollary \ref{Cor4.4:vt2SC}, and $c_0=p^{i_0}$ when $\pi_\theta$ is a principal series representation by Lemma \ref{Lem4.4:tidomainsPS}.
\end{defn}

Recall $D_\mathcal{F}$ is given in \eqref{Eq4.2:DF1}/\eqref{Eq4.2:DF2}. Denote
\begin{equation*}
C_\mathcal{F}[l_0]=D_\mathcal{F}\times\begin{cases}
p^{\lceil i_0/2\rceil},\text{\ if $\pi_\theta$ is a supercuspidal representation,}\\
 p^{i_0}, \text{\ if $\pi_\theta$ is a principal series representation.}
\end{cases}
\end{equation*}
Then in either case, we have \yh{
\begin{equation}\label{Eq4.5:CF}
C_\mathcal{F}[l_0]
\asymp (1+p^{-1})c_0
\asymp_p \sqrt{C \lb \pi \rb },
\end{equation}
}
 and
\begin{equation}\label{Eq4.5:IpGp}
I_p(\gamma,f, m_1,m_2)=C_\mathcal{F}[l_0] G_p(m_1,m_2,\theta,\mu)
\end{equation}
for the second-cell term $\gamma=\zxz{}{-\mu}{1}{}$ by \eqref{Eq:PreKuz2} \eqref{Eq4.4:GpSC} and Corollary \ref{Cor4.4:IpPS}.

\begin{theo}\label{Theo4.5:smallfamilyPTF}
For fixed even weight $\kappa \geq 4$ and the family of newforms $\mathcal{F}_\theta[l_0]$,
we have the following refined Petersson trace formula:
\begin{align}
\sum\limits_{\varphi\in \mathcal{F}_\theta[l_0]}\frac{1}{||\varphi||^2}\lambda_{m_1} \lb \varphi \rb \overline{\lambda}_{m_2} \lb \varphi \rb 
= 		C_\mathcal{F}[l_0]\frac{ \lb 4\pi \rb ^{\kappa-1}}{ \lb \kappa-2 \rb !}		\lb\delta_{m_1=m_2}+2\pi i^\kappa\sum\limits_{c\equiv 0 \mod{c_0}, c>0}\frac{G \lb m_1,m_2,\theta,c^{-2} \rb }{c}J_{\kappa-1} \lb \frac{4\pi\sqrt{m_1m_2}}{c} \rb \rb.	\notag
\end{align}
\end{theo}

\begin{proof}
Here we collect all the calculations we have done in the last three subsections. We start with the relative trace formula in \eqref{Eq4.2:pre-relativeTF}.
The spectral side is given in \eqref{Eq4.2:Spectralside}, while the geometric side is set up in \eqref{Eq:PreKuz2}. The first order terms on the geometric side are given in \eqref{Eq4.3.1:firstcellSC}/ \eqref{Eq4.3.1:firstcellPS}. 

The second-cell terms are given in \eqref{Eq4.4:GpSC}/Corollary \ref{Cor4.4:IpPS} at $p$, and in \eqref{Eq4.4:Ifin}\eqref{Eq4.4:Iinf} at other places. Note that the local requirements for $\mu$ implies that $\mu=\frac{1}{c^2}$ for $c_0|c$.

We have also canceled $ \lb m_1m_2 \rb ^{k/2-1/2}e^{-2\pi  \lb m_1+m_2 \rb }$ from both sides for the final formula.
\end{proof}

\subsection{Spectral average}\label{Sec:KuzAverg}
For applications, it is helpful to be able to sum over a larger family than $\theta[l_0]$ on the spectral side, in order to reach a balance between the main terms and the complicated analysis of the error terms. The main idea is that with longer sum on the spectral side, the sum of the generalized Kloosterman sum should be shorter. 

%\subsection{Proof}
%Let $C_{\theta[l]}=C_{\theta[1]}[\theta[1]:\theta[l]]$. 
%To make use of the calculations in Section \ref{Sec:KutGeoComp} directly, let $D'= \frac{1}{\varpi^{2i_0}\alpha_{\theta'}^2}$.
Fix an integer $l$ such that $l_0< l< i_0$.
 For any $\theta'\in \theta[l]$,  we apply Theorem \ref{Theo4.5:smallfamilyPTF} and get

\begin{align}\label{Eq:Kuztrace2}
\sum\limits_{\varphi\in \mathcal{F}_{\theta'}[l_0]}\frac{1}{||\varphi||^2}\lambda_{m_1} \lb \varphi \rb \overline{\lambda}_{m_2} \lb \varphi \rb 
= 		C_\mathcal{F}[l_0]\frac{ \lb 4\pi \rb ^{\kappa-1}}{ \lb \kappa-2 \rb !}		\lb\delta_{m_1=m_2}+2\pi i^\kappa \sum\limits_{c\equiv 0 \mod{c_0}, c>0}\frac{G \lb m_1,m_2,\theta',c^{-2} \rb }{c}J_{\kappa-1} \lb \frac{4\pi\sqrt{m_1m_2}}{c} \rb \rb.	
\end{align}
Note that $C_\mathcal{F}[l_0]$ depends only on $\L$ and $\cc \lb \theta \rb $.

We now take
a sum of \eqref{Eq:Kuztrace2} over $\theta'\in \theta[l]/\sim_{l_0}$. The non-trivial observation is that there are further cancellations for the second-cell terms on the geometric side as follows:
\begin{lem}\label{Lem:AverGeneralKL}For  $v \lb \mu \rb =-2k<-2i_0$% and $l_0<l<i_0$
,  we have
	\begin{equation}\label{Eq:AvergQuadKL}
	\frac{1}{ [\theta[l]:\theta[l_0]]}\sum\limits_{\theta'\in \theta[l]/\sim_{l_0}} G_p \lb m_1,m_2,\theta',\mu \rb =\begin{cases}
	G_p \lb m_1,m_2,\theta,\mu \rb ,&\text{\ if $k\geq v_p \lb c_0 \rb +l-l_0 $,}\\
	0, &\text{\ otherwise}.
	\end{cases}
	\end{equation}
\end{lem}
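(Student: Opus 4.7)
The strategy is to substitute the integral representations of each $G_p(m_1,m_2,\theta',\mu)$ into the average, interchange the order of summation and integration, and evaluate the resulting character sum in $\theta'$ by orthogonality. The threshold $k\geq v_p(c_0)+l-l_0$ will emerge from matching the vanishing condition of a Pontryagin character sum against the valuation $v(t_2)=-k$ imposed on the integration variable.

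First, I would use Lemma \ref{Lem:ThetafamilyPara} to parametrize the coset representatives of $\theta[l]/\sim_{l_0}$ by $\beta=\alpha_{\theta'}/\alpha_\theta$ ranging over $U_\F(i_0-l)/U_\F(i_0-l_0)$, which has cardinality $[\theta[l]:\theta[l_0]]$. Next, using Lemma \ref{Lem4.4:GeneralKLWholerange} in the supercuspidal case (or Lemma \ref{Lem4.4:StationaryPS} in the principal series case) with an appropriate choice of the congruence precision $i$, followed by the change of variable $t_2=\beta s_2$ — legitimate because $(\theta')|_{\F^\times}=1$ makes the Jacobian factor $(\theta')^{-1}(\beta)=1$ — each $G_p(m_1,m_2,\theta',\mu)$ can be written as an integral over the common domain $v(s_2)=-k$. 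All the $\theta'$-dependence is then confined to the character factor $(\theta')^{-1}(s_2+\alpha_\theta)=\theta^{-1}(s_2+\alpha_\theta)\chi^{-1}(s_2+\alpha_\theta)$, with $\chi=\theta^{-1}\theta'$, together with a $\beta$-dependent phase coming from the substitution $t_2\mapsto\beta s_2$ in the phase $\frac{m_1m_2\mu\,t_2}{t_2^2-\alpha_\theta^2\beta^2}+t_2$.

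After interchanging sum and integral, I would use Lemma \ref{Lem:LiealgChar} to express $\chi^{-1}(s_2+\alpha_\theta)$ via the associated linear parameter $\alpha_\chi=\alpha_\theta(\beta-1)$, expand the $\beta$-dependent phase around $\beta=1$, and sum over $\eta=\beta-1\in\varpi^{i_0-l}O_\F/\varpi^{i_0-l_0}O_\F$. The linear-in-$\eta$ contributions coming from $\chi^{-1}$ and from the phase combine into a single additive character on $\eta$, evaluated at an expression $P(s_2)$ which, on inspection, is precisely the stationary-phase derivative of $\psi\bigl(\frac{m_1m_2\mu\,s_2}{s_2^2-\alpha_\theta^2}+s_2\bigr)$ in the $\beta$-direction. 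By Pontryagin orthogonality on $\varpi^{i_0-l}O_\F/\varpi^{i_0-l_0}O_\F$, this character sum vanishes unless $P(s_2)$ has sufficiently large valuation, and the required threshold translates directly into $k\geq v_p(c_0)+l-l_0$ once the valuations are tracked through $v(s_2)=-k$ and $v_p(c_0)\asymp i_0$.

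In the case $k\geq v_p(c_0)+l-l_0$, the orthogonality collapses the sum to $[\theta[l]:\theta[l_0]]$ times the integrand of $G_p(m_1,m_2,\theta,\mu)$ (with possibly tightened congruence condition on $s_2$, which is harmless by a final application of Lemma \ref{Lem4.4:GeneralKLWholerange}). In the complementary case, the orthogonality forces pointwise vanishing on the whole domain, giving $0$. The principal series case proceeds in complete analogy, parametrizing $\chi_1'=\chi_1\cdot\chi$ in place of $\theta'=\theta\cdot\chi$ and using Lemma \ref{Lem4.4:StationaryPS} to replace Lemma \ref{Lem4.4:GeneralKLWholerange}.

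\textbf{Main obstacle.} The technically delicate point is the bookkeeping in the combination of $\chi^{-1}(s_2+\alpha_\theta)$ with the $\beta$-expansion of the phase: both contributions contain nontrivial $O(\eta^2)$ terms which need not vanish after summation, and one must exploit the stationary-phase structure inherited from Lemma \ref{Lem4.4:GeneralKLWholerange} to see that only the $\eta$-linear term affects the orthogonality, producing exactly the threshold $k\geq v_p(c_0)+l-l_0$ rather than something weaker. A secondary subtlety is compatibility: the congruence precision $i$ in Lemma \ref{Lem4.4:GeneralKLWholerange} must be chosen large enough for the character sum to be well-defined on $\sim_{l_0}$-classes (i.e.\ so that the domain lies in $\F^\times U_\L(e_\L l_0)$), but not so large that the flexibility in Step 5 is lost.
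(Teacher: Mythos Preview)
Your plan is in the right spirit, but the point you correctly flag as the ``main obstacle'' is a genuine gap, not a bookkeeping detail. Summing over the full range $\eta=\beta-1\in\varpi^{i_0-l}O_\F/\varpi^{i_0-l_0}O_\F$ at once, the $O(\eta^2)$ contribution to the phase $\psi\bigl(\tfrac{m_1m_2\mu\,s_2}{\beta(s_2^2-\alpha_\theta^2)}+\beta s_2\bigr)$ has valuation $\geq -k+2(i_0-l)$, which is not $\geq 0$ in general (for instance whenever $k>2(i_0-l)$, which is unavoidable once $l>i_0/3$). So the character sum in $\eta$ is not purely linear, and your appeal to ``the stationary-phase structure'' does not fix this: restricting first to the congruence domain of Lemma~\ref{Lem4.4:GeneralKLWholerange} and \emph{then} substituting $t_2=\beta s_2$ produces a $\beta$-dependent domain, undoing the point of the change of variable. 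A related issue is that Lemma~\ref{Lem:ThetafamilyPara} is stated only for $j\geq 1$ large enough (or $p$ large), so parametrizing down to $j=l_0\in\{0,1\}$ needs separate justification.

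The paper sidesteps both difficulties by treating the two ranges with different mechanisms and, crucially, never summing over the whole family at once. For $k\geq v_p(c_0)+l-l_0$ there is no orthogonality at all: one checks directly (keeping $t_2$ fixed, no substitution $t_2=\beta s_2$) that $t_2+\alpha_{\theta'}\in(t_2+\alpha_\theta)U_\L(e_\L i_0)$ and $\Nm(t_2+\alpha_{\theta'})\in\Nm(t_2+\alpha_\theta)U_\F(k)$, so both the character and the phase agree with those for $\theta$, giving $G_p(m_1,m_2,\theta',\mu)=G_p(m_1,m_2,\theta,\mu)$ term by term. For the vanishing range $v_p(c_0)\leq k<v_p(c_0)+l-l_0$, the paper does \emph{not} sum over $\theta[l]/\sim_{l_0}$; it sets $j=k+l_0-v_p(c_0)+1\leq l$ and sums only over the single-digit layer $\theta'\in\theta_1[j]/\sim_{j-1}$, parametrized by $u\in\varpi^{i_0-j}O_\F/\varpi^{i_0-j+1}O_\F$. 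At this scale the linear-in-$u$ part of the phase has valuation exactly $-1$ while the quadratic part lands in $O_\F$ (since $i_0-j\geq i_0-l\geq 1$), so the sum over $u$ vanishes; one then sums trivially over $\theta_1\in\theta[l]/\sim_j$. This layer-by-layer reduction to a single $p$-adic digit is the idea missing from your proposal.
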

\begin{defn}\label{Eq4.6:CFl}
Define 
\begin{equation*}
C_\mathcal{F}[l]=C_\mathcal{F}[l_0][\theta[l]:\theta[l_0]],
\end{equation*} 
\begin{equation*}
c_l=c_0p^{l-l_0}.
\end{equation*}
\end{defn}
It is clear from Lemma \ref{Lem:Indexoffamily} that \begin{equation}\label{Eq4.6:CFl2}
C_\mathcal{F}[l] \asymp p^{l-l_0}C_\mathcal{F}[l_0].
\end{equation}

From Lemma \ref{Lem:AverGeneralKL},  we immediately obtain the following result:
\begin{theo}\label{Theo4:spectralaverg}
For fixed even weight $\kappa \geq 4$ and the family of newforms $\mathcal{F}_\theta[l]$, we have the following:
\begin{align}\label{Eq:Kuztraceaverg}
\sum\limits_{\varphi\in \mathcal{F}_{\theta}[l]}\frac{1}{||\varphi||^2}\lambda_{m_1} \lb \varphi \rb \overline{\lambda}_{m_2} \lb \varphi \rb 
= 		C_\mathcal{F}[l]\frac{ \lb 4\pi \rb ^{\kappa-1}}{ \lb \kappa-2 \rb !}	\lb\delta_{m_1=m_2}+2\pi i^\kappa\sum\limits_{c\equiv 0 \mod{c_l}, c>0}\frac{G \lb m_1,m_2,\theta,c^{-2} \rb }{c}J_{\kappa-1} \lb \frac{4\pi\sqrt{m_1m_2}}{c} \rb \rb	.
\end{align}

\end{theo}

%\begin{align}
%&\sum\limits_{\varphi_{new}\in \Pi_{\theta}[l]}\frac{\lambda_{m_1} \lb \varphi_{new} \rb \overline{\lambda_{m_2} \lb \varphi_{new} \rb }}{||\varphi_{new}||^2}\frac{h \lb t_{\varphi_{new}} \rb }{\cosh \lb {\pi} t_{\varphi_{new}} \rb }= C_{\theta[l]}p^{\lceil i_0/2\rceil}[\frac{\delta \lb m_1=m_2 \rb }{\pi^2}\int\limits_{-\infty}^{\infty}h \lb t \rb \tanh \lb {\pi} t \rb  tdt\\
%&+\frac{2{i}}{\pi [\theta[1]:\theta[l]]}\sum\limits_{\theta'\in \theta[l]/\sim_1} \sum\limits_{c} \frac{G \lb m_1, m_2, \theta',\frac{1}{c^2} \rb }{c}		\int\limits_{-\infty}^{\infty} J_{2it} \lb \frac{4{\pi}\sqrt{m_1 m_2}}{c} \rb \frac{h \lb t \rb t}{\cosh \lb {\pi}t \rb }dt	].	\notag 
%\end{align}
%We need however to further show that the sum in $\theta'$ on the geometric side has further cancellations.

%The local factor of $G \lb m_1, m_2, \theta',\frac{1}{c^2} \rb $ at $p$ becomes now
\subsubsection{Proof of Lemma \ref{Lem:AverGeneralKL}: supercuspidal representation case}\label{Sec4.6.1}

Consider first the case where $\pi_\theta$ is a supercuspidal representation. Note that $v \lb \Nm \lb \alpha_{\theta'} \rb  \rb =-\cc \lb \pi_\theta \rb $, and $v_p \lb c_0 \rb =i_0+1$ in this case. %For any $\theta_1\in \theta[l]$, we form a subfamily  $\theta_1[j]$ where $j=\min\lB l, i_0+k-\cc \lb \pi_\theta \rb \rB$, 
Suppose  $k\geq v_p \lb c_0 \rb +l-l_0$ first.
For any $\theta'\in \theta[l]$, we have $\alpha_{\theta'}\in \alpha_{\theta}U_\F \lb i_0-l \rb $ by Lemma \ref{Lem:ThetafamilyPara}. Then  we claim that
\begin{align}
G_p \lb m_1, m_2, \theta',\frac{1}{c^2} \rb &=\int\limits_{ v \lb t_2 \rb =-k}
{\theta'}^{-1}\lb t_2+\alpha_{\theta'}\rb\psi\lb\frac{m_1m_2\mu}{ \Nm \lb t_2+\alpha_{\theta'} \rb } t_2+t_2\rb dt_2 	\notag	\\
&= 	\int\limits_{ v \lb t_2 \rb =-k}
{\theta}^{-1}\lb t_2+\alpha_{\theta}\rb\psi\lb\frac{m_1m_2\mu}{ \Nm \lb t_2+\alpha_{\theta} \rb } t_2+t_2\rb dt_2		\label{Eq4.6:nearbyfamilyGp}.
\end{align}
%Indeed $j$ is chosen so that this claim holds.
Here the first equality is Lemma \ref{Lem4.4:GeneralKLWholerange}. %For the second equality, we only verify the $e_\L=1$ case, and left the other case to the readers. In this case, 
By the condition $\alpha_{\theta'}\in \alpha_{\theta}U_\F \lb i_0-l \rb $, we have $$t_2+\alpha_{\theta'}\in  \lb t_2+\alpha_{\theta} \rb U_\L \lb e_\L\lb k-\cc(\pi_\theta)/2+i_0-l\rb \rb \subset  \lb t_2+\alpha_{\theta} \rb U_\L \lb e_\L i_0\rb .$$
Here we have used that $\cc \lb \pi_\theta \rb =2i_0+e_\L-1$
. Thus ${\theta'}^{-1}\lb t_2+\alpha_{\theta'}\rb={\theta'}^{-1}\lb t_2+\alpha_{\theta}\rb$ as $\cc \lb \theta' \rb =i_0e_\L$; Similarly 
we have
$$\Nm(t_2+\alpha_{\theta'})=t_2^2+\Nm(\alpha_{\theta'})\in \lb t_2^2+\Nm(\alpha_{\theta})\rb U_\F(2k-\cc(\pi_\theta)+i_0-l)\subset \lb t_2^2+\Nm(\alpha_{\theta})\rb U_\F(k).$$
Thus by the Taylor expansion, $v(\mu)=-2k<-2i_0$ and $v(t_2)=-k$, we have $$\frac{m_1m_2\mu}{ \Nm \lb t_2+\alpha_{\theta'} \rb } t_2\in \frac{m_1m_2\mu}{ \Nm \lb t_2+\alpha_{\theta} \rb } t_2+\CO_\F,$$  so
$$\psi\lb\frac{m_1m_2\mu}{ \Nm \lb t_2+\alpha_{\theta'} \rb } t_2\rb=\psi\lb\frac{m_1m_2\mu}{ \Nm \lb t_2+\alpha_{\theta} \rb } t_2\rb.$$
Lastly $${\theta'}^{-1} \lb t_2+\alpha_{\theta} \rb =\theta^{-1} \lb t_2+\alpha_\theta \rb ,$$
as $\cc \lb \theta^{-1}\theta' \rb \leq e_\L l$ 
while$$t_2+\alpha_\theta\in ZU_\L \lb \frac{e_\L}{2} ( 2k-\cc \lb \pi_\theta )  \rb \rb\subset ZU_\L \lb e_\L l \rb .$$
Thus 
$$	\frac{1}{ [\theta[l]:\theta[l_0]]}\sum\limits_{\theta'\in \theta[l]/\sim_{l_0}} G_p \lb m_1,m_2,\theta',\mu \rb =G_p \lb m_1,m_2,\theta,\mu \rb .$$

Consider now the case $v_p \lb c_0 \rb \leq k<v_p \lb c_0 \rb +l-l_0$. By the same argument as above, it is clear that for any $\theta_1\in \theta[l]$, and $\theta'\in \theta_1[k+l_0-v_p(c_0)]$, we have $G_p\lb m_1,m_2,\theta',\mu \rb =G_p \lb m_1,m_2,\theta_1,\mu \rb$. We shall average over slightly larger family $\theta'\in \theta_1[j]$ for $j=k+l_0-v_p(c_0)+1$, so that we will see the cancellation while only have to deal with the first order terms and first digits for the $p-$adic stationary phase analysis. Note that $j\leq l$ by the condition on $k$. Then we claim that for any $\theta_1\in \theta[l]$,
\begin{equation}\label{Eq4.6:cancelSC0}
\sum\limits_{\theta'\in \theta_1[j]}G_p\lb m_1,m_2,\theta',\mu \rb=\sum\limits_{\theta'\in \theta_1[j]}\int\limits_{t_2^2\equiv m_1m_2\mu \mod\varpi^{v(\mu)+1}}
{\theta'}^{-1}\lb t_2+\alpha_{\theta'}\rb\psi\lb\frac{m_1m_2\mu}{ \Nm \lb t_2+\alpha_{\theta'} \rb } t_2+t_2\rb dt_2 	=0.
\end{equation}

Then a further sum over $\theta_1\in \theta[l]/\sim_{j}$ would also be vanishing. 

For the first equality in \eqref{Eq4.6:cancelSC0}, we apply  
Lemma \ref{Lem4.4:GeneralKLWholerange} for $i=1$. Note that $v_p(t_2^2)<v_p(\Nm(\alpha_{\theta_1}))$ as $k\geq i_0+1$ in the supercuspidal representation case, the congruence requirement $\Nm \lb t_2+\alpha_\theta \rb \equiv {m_1m_2\mu} \mod\varpi^{v \lb \mu \rb +1}$ is the same as $t_2^2\equiv m_1m_2\mu \mod\varpi^{v(\mu)+1}$, which is independent of $\theta'$.

For the second equality of \eqref{Eq4.6:cancelSC0}
%To see the claim
, we write $\alpha_{\theta'}=\alpha_{\theta_1}+\alpha_{\theta_1} u$ for $u\in \varpi^{i_0-j}O_\F$. Then by Lemma \ref{Lem:ThetafamilyPara}, the sum over $\theta_1[j]/\sim_{j-1}$ is parameterized by the sum over $u\in \varpi^{i_0-j}O_\F/\varpi^{i_0-j+1}O_\F$. By the same argument as above, we have
\begin{equation*}%\label{Eq4.6.1:smalldisturbation}
t_2+\alpha_{\theta'}\in (t_2+\alpha_{\theta_1})U_\L(e_\L\lb k-\cc(\pi_\theta)/2+i_0-j\rb)=(t_2+\alpha_{\theta_1})U_\L(e_\L i_0-1).
\end{equation*}

Then by Lemma \ref{Lem:LiealgChar}, 
\begin{align}\label{Eq4.6:cancelSC1}
{\theta'}^{-1}(t_2+\alpha_{\theta'})
&={\theta'}^{-1}(t_2+\alpha_{\theta_1}+\alpha_{\theta_1 }u)
={\theta'}^{-1}(t_2+\alpha_{\theta_1})\psi_\L\lb-\alpha_{\theta'}\frac{\alpha_{\theta_1}u}{t_2+\alpha_{\theta_1}}\rb\\
&={\theta'}^{-1}(t_2+\alpha_{\theta_1})\psi\lb-\frac{2\alpha_{\theta_1}^2t_2u}{\Nm(t_2+\alpha_{\theta_1})}\rb\notag\\
&={\theta'}^{-1}(t_2+\alpha_{\theta_1})\psi\lb-\frac{2\alpha_{\theta_1}^2u}{t_2}\rb. \notag
\end{align}
Here in the last line we have used again that $v_p(t_2^2)<v_p(\Nm(\alpha_{\theta_1}))$, and that $v_p\lb\frac{2\alpha_{\theta_1}^2u}{t_2} \rb\geq -1$ by our choice of $j$.

Furthermore as $t_2+\alpha_\theta\in ZU_\L \lb \frac{e_\L}{2} ( 2k-\cc \lb \pi_\theta \rb)\rb$ with $\frac{e_\L}{2} ( 2k-\cc \lb \pi_\theta \rb )\geq \frac{e_\L j}{2}$, we have
\begin{equation}\label{Eq4.6:cancelSC2}
{\theta'}^{-1}(t_2+\alpha_{\theta_1})={\theta_1}^{-1}(t_2+\alpha_{\theta_1})(\theta_1{\theta'}^{-1})(t_2+\alpha_{\theta_1})={\theta_1}^{-1}(t_2+\alpha_{\theta_1})\psi_\L\lb-\alpha_{\theta_1} u \frac{\alpha_{\theta_1}}{t_2}\rb
={\theta_1}^{-1}(t_2+\alpha_{\theta_1})\psi\lb- \frac{2\alpha_{\theta_1}^2 u}{t_2}\rb.
\end{equation}
Lastly we have
$$\Nm(t_2+\alpha_{\theta'})=t_2^2+\Nm(\alpha_{\theta'})\in \lb t_2^2+\Nm(\alpha_{\theta_1})\rb U_\F(2k-\cc(\pi_\theta)+i_0-j)\subset \lb t_2^2+\Nm(\alpha_{\theta_1})\rb U_\F(k-1).$$
 Then one can compute that 
\begin{equation}\label{Eq4.6:cancelSC3}
\psi\lb\frac{m_1m_2\mu}{ \Nm \lb t_2+\alpha_{\theta'} \rb } t_2\rb=\psi\lb\frac{m_1m_2\mu}{ \Nm \lb t_2+\alpha_{\theta_1} \rb } t_2\rb \psi\lb\frac{2m_1m_2\mu\alpha_{\theta_1}^2 u}{ t_2^3} \rb. 
\end{equation}
Piecing together \eqref{Eq4.6:cancelSC0}-\eqref{Eq4.6:cancelSC3}, we get that
\begin{align*}
&\sum\limits_{\theta'\in \theta_1[j]}G_p\lb m_1,m_2,\theta',\mu \rb\\
=&\int\limits_{t_2^2\equiv m_1m_2\mu \mod\varpi^{v(\mu)+1}}
{\theta_1}^{-1}(t_2+\alpha_{\theta_1})
\psi\lb\frac{m_1m_2\mu}{ \Nm \lb t_2+\alpha_{\theta_1} \rb } t_2\rb
\sum\limits_{u\in \varpi^{i_0-j}O_\F/\varpi^{i_0-j+1}O_\F}
%\psi\lb- \frac{4\alpha_{\theta_1}^2 u}{t_2}\rb 
\psi\lb\frac{2(m_1m_2\mu-2t_2^2)\alpha_{\theta_1}^2 u}{ t_2^3} \rb dt_2\notag\\
=&0.\notag
\end{align*}
In the last equality we have used that $v_p(m_1m_2\mu-2t_2^2)=-2k$ as $t_2^2\equiv m_1m_2\mu\mod\varpi^{v(\mu)+1}$, and $v_p\lb\frac{2(m_1m_2\mu-2t_2^2)\alpha_{\theta_1}^2 }{ t_2^3}\rb=-i_0+j-1$, thus the sum in $u$ first gives $0$.

%
%\yh{PS case}
\subsubsection{Proof of Lemma \ref{Lem:AverGeneralKL}: principal series representation case}

Consider now the case where $\pi_\theta$ is a principal series representation. This case is easier than the supercuspidal representation case. In this case,
$\theta'= \lb \chi',\chi'{}^{-1} \rb \in \theta[j]$ if and only if $\cc \lb \chi_1^{-1}\chi' \rb \leq j$.
Recall that by Lemma \ref{Lem4.4:StationaryPS},

\begin{align}\label{Eq4.6:GpCancelPS}
G_p \lb m_1,m_2,\theta',\mu \rb &=	\int\limits_{v \lb t_2 \rb =-k}	\chi' \lb \frac{t_2^2}{m_1m_2\mu} \rb \psi \lb \frac{m_1m_2\mu}{t_2}+t_2 \rb dt_2\notag\\
&=\int\limits_{t_2^2+2\alpha_{\chi'}t_2\equiv m_1m_2\mu\mod \varpi^{v(\mu)+i}}	\chi' \lb \frac{t_2^2}{m_1m_2\mu} \rb \psi \lb \frac{m_1m_2\mu}{t_2}+t_2 \rb dt_2.
\end{align}
Recall that in this case $v_p \lb c_0 \rb =i_0$ and $l_0=0$. $0<i\leq \lfloor k/2\rfloor$. Note that $v \lb m_1m_2\mu \rb =v \lb t_2^2 \rb $. When $k\geq i_0+l$, 
choose now $i=\min\{\lfloor k/2 \rfloor, k-i_0\}$ for Lemma \ref{Lem4.4:StationaryPS}. Then the points in the integral domain in \eqref{Eq4.6:GpCancelPS} satisfy $$t_2^2+2\alpha_{\chi'}t_2-m_1m_2\mu\equiv t_2^2-m_1m_2\mu\equiv 0\mod \varpi^{v(\mu)+i},$$
as $v_p \lb \alpha_{\chi_1}t_2 \rb =-i_0-k$. Equivalently we have $\frac{t_2^2}{m_1m_2\mu}\equiv 1\mod\varpi^{i }.$

 For such $t_2$, it is clear that $$\chi' \lb \frac{t_2^2}{m_1m_2\mu} \rb =\chi_1 \lb \frac{t_2^2}{m_1m_2\mu} \rb \chi_1^{-1}\chi' \lb \frac{t_2^2}{m_1m_2\mu} \rb =\chi_1 \lb \frac{t_2^2}{m_1m_2\mu} \rb $$
as $\cc \lb \chi_1^{-1}\chi' \rb \leq l\leq \min\lB\lfloor k/2\rfloor, k-i_0\rB $. Here we have used that 
either $\lfloor k/2\rfloor \geq k-i_0\geq l$, or
 $\lfloor k/2\rfloor < k-i_0$, in which case we have $k\geq 2i_0+1$, and thus  $l<i_0\leq \lfloor k/2\rfloor$. 
Thus when $k\geq l+i_0$,
\begin{align*}
\frac{1}{ [\theta[l]:\theta[l_0]]}\sum\limits_{\theta'\in \theta[l]/\sim_{l_0}} G_p \lb m_1,m_2,\theta',\mu \rb&=\int\limits_{t_2^2\equiv m_1m_2\mu\mod \varpi^{v(\mu)+i}}	\chi_1 \lb \frac{t_2^2}{m_1m_2\mu} \rb \psi \lb \frac{m_1m_2\mu}{t_2}+t_2 \rb dt_2 \\& =G_p \lb m_1,m_2,\theta,\mu \rb .\notag
\end{align*}	

On the other hand when $i_0\leq k< i_0+l<2i_0$, we have $\lfloor k/2\rfloor > k-i_0$. 
Choose now $i=k-i_0+1$. The domain of the integral in \eqref{Eq4.6:GpCancelPS} becomes
%The stationary points satisfy
$$t_2^2-m_1m_2\mu\equiv 2\alpha_{\chi'}t_2\equiv 2\alpha_{\chi_1}t_2\not\equiv 0\mod\varpi^{v(\mu)+i}.$$
Here we have used that when $\theta'\in \theta[l]$, $\alpha_{\chi'}\in \alpha_{\chi_1}U_\F(i_0-l)$.
As $i=k-i_0+1\leq l$, we have
$$\frac{t_2^2}{m_1m_2\mu}\not\equiv 1\mod \varpi^{l}.$$
Then by the orthogonality of characters, we have
\begin{align}
&	\frac{1}{ [\theta[l]:\theta[l_0]]}\sum\limits_{\theta'\in \theta[l]/\sim_{l_0}} G_p \lb m_1,m_2,\theta',\mu \rb \\
=&	\frac{1}{ [\theta[l]:\theta[l_0]]} \int\limits_{t_2^2-m_1m_2\mu\equiv  2\alpha_{\chi_1}t_2\mod\varpi^{v(\mu)+i}}	\sum\limits_{\cc \lb \chi_1^{-1}\chi' \rb \leq l}\chi' \lb \frac{t_2^2}{m_1m_2\mu} \rb \psi \lb \frac{m_1m_2\mu}{t_2}+t_2 \rb dt_2=0.\notag
\end{align}

%\begin{cor}
%	\begin{align}\label{Eq:Kuztraceaverg2}
%	&\sum\limits_{\varphi_{new}\in \Pi_{\theta}[l]}\frac{\lambda_{m_1} \lb \varphi_{new} \rb \overline{\lambda_{m_2} \lb \varphi_{new} \rb }}{||\varphi_{new}||^2}\frac{h \lb t_{\varphi_{new}} \rb }{\cosh \lb {\pi} t_{\varphi_{new}} \rb }= C_{\theta[l]}p^{\lceil i_0/2\rceil}[\frac{\delta \lb m_1=m_2 \rb }{\pi^2}\int\limits_{-\infty}^{\infty}h \lb t \rb \tanh \lb {\pi} t \rb  tdt\\
%	&+\frac{2{i}}{\pi } \sum\limits_{p^{i_0+l-e_\L+1}|c} \frac{G \lb m_1, m_2, \theta,\frac{1}{c^2} \rb }{c}		\int\limits_{-\infty}^{\infty} J_{2it} \lb \frac{4{\pi}\sqrt{m_1 m_2}}{c} \rb \frac{h \lb t \rb t}{\cosh \lb {\pi}t \rb }dt	].	\notag 
%	\end{align}
%\end{cor}
%\begin{rem}
%In the case $\L$ splits, one can devise test function $f$ such that the 2nd cell terms on the geometric side start with $k=i_0$. In that case the stationary points are solutions to the quadratic equation $$\alpha_{\chi_1}-\frac{m_1m_2\mu}{t_2}+t_2\equiv 0,$$ 
%which could actually be congruent to $1$.
%\end{rem}
\subsection{The refined Kuznetsov trace formula}
The discussions so far also allow us to derive the refined Kuznetsov trace formula in Theorem \ref{Theo:Kuz1} without additional difficulty.
Note that the only difference for this case and the Petersson trace formula case is the Archimedean computation, which has already been done in, for example, \cite{knightly_kuznetsovs_2013}.

We shall skip the details here, leaving them to interested readers.

%\subsection{Vertical Sato-Tate}
%\yh{Need to do more research}
%The refined Kuznetsov/Petersson trace formula allows us to obtain vertical Sato-Tate for a thin family of newforms $f\in \mathcal{F}_{\theta}[l]$ as $\cc \lb \theta \rb \rightarrow \infty$.
%

\section{An alternative description and the compatibility with the Voronoi formula}\label{Sec5}
Again this section is purely local, so we skip subscript $v$ whenever possible.
\subsection{The relation between the test function and the local matrix coefficient}
The construction of the test function $f_p$ is closely related to the restriction of the matrix coefficient of the newform to proper subgroups. We make this relation  explicit here for later discussions. 
\begin{defn}
Let $K'$ be the maximal compact open subgroup whose elements lie in $$\zxz{O_\F}{\varpi^{-i_0}O_\F}{\varpi^{i_0}O_\F}{O_\F}.$$
\end{defn}
\begin{lem}\label{Lem5.1:twoApproach}
For $\pi=\pi_\theta$, suppose that $\cc(\pi)\geq 3$, $\varphi_{\text{new}}\in \pi$ is an $L^2-$normalized newform, and $\Phi_{\varphi_{\text{new}}}$ is the associated matrix coefficient.
Suppose that $v(\mu)=-2k<-2i_0$ and $v(t_1)=v(t_2)=-k$. Then for the test function $f_p$ as specified in Section \ref{Sec4.1:testfun} and some positive constant $a_\pi\asymp_p p^{\cc(\pi)/2}\asymp_p C_\mathcal{F}[l_0]$, we have
$$f_p\lb\zxz{-t_1}{-\mu-t_1t_2}{1}{t_2}\rb=a_\pi\overline{\Phi}_{\varphi_{\text{new}}}|_{ZK'}\lb\zxz{-t_1}{-\mu-t_1t_2}{1}{t_2}\rb.$$
\end{lem}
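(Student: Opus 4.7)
The strategy is to expand $\Phi_{\varphi_{new}}$ as a bilinear combination of matrix coefficients of translates of the minimal vector (supercuspidal) or microlocal lift (principal series), and match this term-by-term against the definition of $f_{p}$. Concretely, applying Corollary \ref{Cor:RelationNewMinimal} in the supercuspidal case or Lemma \ref{Lem3.3:newformasMLL} in the principal series case, I would write $\varphi_{new}=\sum_{a}c_{a}\varphi_{a}$ and expand bilinearly to get
\begin{equation*}
\Phi_{\varphi_{new}}(g)=\sum_{a,a'}c_{a}\overline{c_{a'}}\langle\pi(g)\varphi_{a},\varphi_{a'}\rangle=\sum_{a,a'}c_{a}\overline{c_{a'}}\Phi_{0,0}(h_{a'}^{-1}g\,h_{a}),
\end{equation*}
where $h_{a}=\zxz{\varpi^{-i_{0}}a^{-1}}{}{}{1}$ in the supercuspidal case and $h_{a}=\zxz{1}{a\varpi^{-i_{0}}}{}{1}$ in the principal series case. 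Comparing with Definition \ref{Defn3.1:Phiaa'} or \ref{Defn3.2:Phiaa'Prinicipal}, the character prefactors in the principal series case precisely cancel $c_{a}\overline{c_{a'}}$ (using $|\chi_{1}|=1$), and in both cases I obtain an identity $\Phi_{\varphi_{new}}(g)=C'\sum_{a,a'}\Phi_{a,a'}(g)$ with a real positive constant $C'=\tfrac{1}{(p-1)p^{\lceil i_{0}/2\rceil-1}}$ in the supercuspidal case and $C'=\tfrac{1}{|C_{0}|^{2}}$ in the principal series case.

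The crucial second step is to upgrade this to $\Phi_{a,a'}(g)=\tilde{\Phi}_{a,a'}(g)$ for every $(a,a')$ on the specified range $v(t_{1})=v(t_{2})=-k<-i_{0}$. For the principal series case this is essentially immediate: the conjugating matrices $\zxz{1}{\pm a\varpi^{-i_{0}}}{}{1}$ lie in $K'$, so $h_{a'}^{-1}g\,h_{a}$ remains in $ZK'$ whenever $g\in ZK'$, and the orthogonality computation in the proof of Lemma \ref{Lem:PrincipalONB} shows that $\Phi_{0,0}$ vanishes on $ZK'\setminus ZK_{0}(\varpi^{i_{0}})$, so $\Phi_{0,0}$ and $\tilde{\Phi}_{0,0}$ agree on $ZK'$. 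For the supercuspidal case the argument is more delicate: using the valuation conditions together with Corollary \ref{Cor:MCofGeneralMinimalVec}, one has to verify that if $h_{a'}^{-1}g\,h_{a}$ lies in the support $ZJ$ of $\Phi_{0,0}$, then it actually lies in $ZB^{1}$, which I would accomplish by examining the $\L^{\times}$-component of its Bruhat-type decomposition and comparing against the polarisation in Definition \ref{Defn3.2:B1}.

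Combining the two steps, on the specified range one has $f_{p}(g)=D_{\mathcal F}\overline{\sum_{a,a'}\Phi_{a,a'}(g)}=\frac{D_{\mathcal F}}{C'}\,\overline{\Phi_{\varphi_{new}}(g)}$, giving $a_{\pi}=D_{\mathcal F}/C'>0$; substituting the explicit values from \eqref{Eq4.2:DF1}\eqref{Eq4.2:DF2} together with $|C_{0}|^{2}\asymp_{p}p^{i_{0}}$ yields the claimed asymptotic $a_{\pi}\asymp_{p}p^{\cc(\pi)/2}\asymp_{p}C_{\mathcal F}[l_{0}]$. The main obstacle is the support-matching step in the odd-conductor supercuspidal case ($\cc(\pi)=4n+2$), where $J^{1}\neq H^{1}$, the extension $\Lambda$ is higher-dimensional, and $\Phi_{0,0}$ has genuine support on $ZJ^{1}\setminus ZB^{1}$; ruling out such contributions for our $g$ requires keeping careful track of how conjugation by the diagonal matrices $h_{a},h_{a'}^{-1}$ interacts with the chosen polarisation of $J^{1}/H^{1}$, and here the hypothesis $k>i_{0}$ is what ultimately forces the conjugated element into $ZB^{1}$ whenever it meets $ZJ$ at all.
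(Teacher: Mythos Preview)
Your proposal is correct and follows essentially the same route as the paper: bilinear expansion of $\Phi_{\varphi_{\mathrm{new}}}$ via Corollary~\ref{Cor:RelationNewMinimal}/Lemma~\ref{Lem3.3:newformasMLL}, matching with Definitions~\ref{Def3.2:testfSC}/\ref{Def3.3:testPS}, and then reducing to a support-matching statement $\Phi_{0,0}|_{ZK'}=\tilde\Phi_{0,0}$ on the conjugated element. The paper's execution of the supercuspidal step is slightly more concrete than your sketch: rather than examining the $\L^\times$-component of a Bruhat-type decomposition, it observes that $\varpi^k g_{a,a'}$ has lower-left entry $\varpi^{k-i_0}a^{-1}$ of valuation $\geq 1$ (this is precisely where $k>i_0$ enters), which, combined with $\Supp\Phi_{0,0}\subset ZJ$ and $\cc(\pi)\geq 3$, forces the element into $ZJ^1$; the last sentence of Corollary~\ref{Cor:MCofGeneralMinimalVec} then gives $\Phi_{0,0}|_{ZJ^1}$ supported on $ZB^1$, handling the $\cc(\pi)=4n+2$ obstacle you flagged without any direct polarisation bookkeeping.
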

\begin{proof}
Denote $g=\zxz{-t_1}{-\mu-t_1t_2}{1}{t_2}$.
Consider the supercuspidal representation case first.
By   Corollary \ref{Cor:RelationNewMinimal}, 
$${\Phi}_{\varphi_{\text{new}}}=\frac{1}{ \lb p-1 \rb p^{\lceil i_0/2 \rceil-1}}
	\sum\limits_{a, a'\in  \lb \CO_\F/\varpi^{\lceil i_0/2 \rceil}\CO_\F \rb ^\times}\Phi_{a,a'}.$$
Comparing with Definition \ref{Def3.2:testfSC}, 
we get that $$a_{\pi}=\frac{1}{\Vol(Z\backslash ZB^1)}\asymp_p p^{\cc(\pi)/2}$$
by \eqref{Eq4.2:VolZB}, and it suffices to check 
%that 
%$$\tilde{\Phi}_{a,a'}\lb g\rb=\Phi_{a,a'}|_{ZK'} \lb g\rb$$
%or equivalently 
by  Definition \ref{Defn3.1:Phiaa'} that,
$$\Phi_{0,0}|_{ZB^1}\lb  g_{a,a'} \rb=\Phi_{0,0}|_{ZK}\lb  g_{a,a'} \rb.$$
Here $g_{a,a'}=\zxz{\varpi^{i_0}a'}{0}{0}{1}g\zxz{\varpi^{-i_0}a^{-1}}{0}{0}{1}$, and we have used that
$$\zxz{\varpi^{i_0}a'}{0}{0}{1} K'\zxz{\varpi^{-i_0}a^{-1}}{0}{0}{1}=K.$$
 Note that $ZB^1\subset ZK$. Thus it suffices to show that $g_{a,a'} \in \Supp \Phi_{0,0}\cap ZK$ implies $g\in ZB^1$. Indeed in that case, we have $v_p(\det(g_{a,a'}))=v_p(\mu)=-2k$, so
$$\varpi^kg_{a,a'}=\zxz{-\varpi^ka^{-1}a't_1}{-(\mu+t_1t_2)\varpi^{i_0+k}a'}{\varpi^{k-i_0}a^{-1}}{\varpi^kt_2}\in K.$$
Note that the lower left element satisfies $v_p(\varpi^{k-i_0}a^{-1})\geq 1$. 
Recall that $$\Supp \Phi_{0,0}\subset J=\L^\times K_{\fA_{e_\L}}(\lfloor \cc(\theta)/2\rfloor),$$ and when $\cc(\pi)\geq 3$, the lower left entry of any element in $K_{\fA_{e_\L}}(\lfloor \cc(\theta)/2\rfloor)$ also satisfies $v_p\geq 1$.
Then  $\varpi^kg_{a,a'}\in \Supp \Phi_{0,0}\cap K$ implies that $\varpi^kg_{a,a'}\in ZJ^1=ZU_\L(1)K_{\fA_{e_\L}}(\lfloor \cc(\theta)/2\rfloor)$. The claim now follows from the last part of  Corollary \ref{Cor:MCofGeneralMinimalVec}.

The principal series representation case is mostly parallel. By Lemma \ref{Lem3.3:newformasMLL} and Definition \ref{Defn3.2:Phiaa'Prinicipal}, we have
$${\Phi}_{\varphi_{\text{new}}}=\frac{1}{|C_0|^2}\sum\limits_{a,a'\in  \lb \CO_\F/\varpi^{i_0} \CO_\F \rb ^\times} {{\Phi}}_{a,a'} \lb g \rb.$$
Comparing with  \eqref{Eq3.3:testfun}, we get that
$$a_\pi=|C_0|^2   \frac{1}{ \lb p-1 \rb p^{i_0-1} \Vol \lb Z\backslash ZK_0 \lb \varpi^{i_0} \rb  \rb }\asymp_p p^{\cc(\pi)/2},$$
and the lemma is reduced to check that
$\Supp \Phi_{0,0}\cap ZK'=ZK_0(\varpi^{i_0})$. This follows immediately from Lemma \ref{Lem:PrincipalONB}.
\end{proof}
\begin{rem}
 $a_\pi$  only depends on $\L$ and $\cc(\pi)$, and actually $a_\pi=(1-p^{-1})C_\mathcal{F}[l_0]$ for our choice of $f_p$ using a case by case check. But we do not need this property here. The condition $v(\mu)=-2k<-2i_0$ can be easily achieved by using the Petersson trace formula for slightly larger family according to Theorem \ref{Theo4:spectralaverg}.
\end{rem}
For later applications, we also prove the following lemma:
\begin{lem}\label{Lem5.1:vt1}
Let $\mu$ and $\pi$ be as in Lemma \ref{Lem5.1:twoApproach}, and $v(t_1)=-k,\ v(t_2)>-k$. Then both $f_p$ and $\Phi_{\varphi_{\text{new}}}$ are vanishing.
\end{lem}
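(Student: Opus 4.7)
The proof has two parts, $f_p(g) = 0$ and $\Phi_{\varphi_{\text{new}}}(g) = 0$, and my plan is to extend the entry-wise valuation analysis from the proof of Lemma \ref{Lem5.1:twoApproach} to the new regime $v(t_2) > -k$. For $f_p(g) = 0$ in the supercuspidal case, the computation is
\[
\varpi^k g_{a,a'} = \zxz{-a^{-1}a'\varpi^k t_1}{-\varpi^{i_0+k} a'(\mu+t_1t_2)}{\varpi^{k-i_0} a^{-1}}{\varpi^k t_2},
\]
and the key point is that under $v(t_2) > -k$ we now have $v(t_1 t_2) > -2k = v(\mu)$, so $v(\mu + t_1 t_2) = -2k$ with no cancellation. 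Hence the upper-right entry of $\varpi^k g_{a,a'}$ has valuation $i_0 - k < 0$, placing $\varpi^k g_{a,a'}$ outside $K$, and therefore $g_{a,a'}$ lies outside $\Supp \tilde{\Phi}_{0,0} \subset ZB^1 \subset ZK$, giving $\tilde{\Phi}_{a,a'}(g) = 0$. The principal series case is parallel, with the analogous conjugate having upper-right entry of valuation $-k < 0$.

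For $\Phi_{\varphi_{\text{new}}}(g) = 0$ in the supercuspidal case, my plan is to expand $\varphi_{\text{new}}$ via Corollary \ref{Cor:RelationNewMinimal} as a sum of minimal vectors $\varphi_a$, writing $\Phi_{\varphi_{\text{new}}}(g)$ as a sum of $\Phi_{0,0}(g_{a,a'})$, and then invoke the support containment $\Supp \Phi_{0,0} \subset J = \L^\times K_{\fA_{e_\L}}(\lfloor \cc(\theta)/2\rfloor)$ from Corollary \ref{Cor:MCofGeneralMinimalVec}. The strategy is a refined valuation count: if $g_{a,a'} = \ell(1+X)$ with $\ell \in \L^\times$ embedded as $\zxz{x}{y}{yD}{x}$ and $X \in \CB_{e_\L}^{\lfloor \cc(\theta)/2\rfloor}$, then $v(\Nm \ell) = v(\mu) = -2k$ forces $v(x), v(y) \geq -k$, while the entries of $\ell X$ have valuation at least $-k + \lfloor \cc(\theta)/(2e_\L)\rfloor$. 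Therefore the upper-right entry of $\ell(1+X)$ has valuation $\geq -k$, which contradicts the actual upper-right valuation $i_0 - 2k < -k$ of $g_{a,a'}$. Hence $g_{a,a'} \notin J$ and $\Phi_{0,0}(g_{a,a'}) = 0$.

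For the principal series case, $\Supp \Phi_{0,0}$ is not compact, so my plan is to work in the Kirillov model. Writing $g = n(-t_1) a(\mu) w n(t_2)$ and using $\varphi_{\text{new}} = \Char(\CO_\F^\times)$, one obtains
\[
\Phi_{\varphi_{\text{new}}}(g) = \int_{\CO_\F^\times} \psi(-\alpha t_1) \bigl[\pi(a(\mu) w n(t_2)) \varphi_{\text{new}}\bigr](\alpha)\, d^\times \alpha.
\]
Since the action of $w$ in the Kirillov model of $\pi(\chi_1, \chi_1^{-1})$ is essentially a twisted Fourier transform, the integrand reduces to a Gauss sum of $\chi_1^{-1}$ evaluated at $\mu\alpha + t_2$. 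The key point is that $v(\mu\alpha) = -2k < v(t_2)$, so $v(\mu\alpha + t_2) = -2k < -\cc(\chi_1) = -i_0$ (using $k > i_0$), and such a Gauss sum vanishes by character orthogonality at a frequency below the character's conductor. This last step is the main technical hurdle; alternatively, it may be handled by invoking \cite[Theorem 5.4]{HuSa:19} on the support of the newform's matrix coefficient, which would give a uniform argument covering both cases.
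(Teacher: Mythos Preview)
Your argument for $f_p(g)=0$ is correct and essentially coincides with the paper's: both proceed by valuation analysis of the conjugated matrix $g_{a,a'}$, noting that $v(\mu+t_1t_2)=v(\mu)=-2k$ when $v(t_2)>-k$, so the upper-right entry falls outside the relevant compact subgroup. The paper simply refers back to the support conditions in Lemmas~\ref{Lem4.4:tidomains} and~\ref{Lem4.4:tidomainsPS} rather than redoing the entry computation, but the content is the same.

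For $\Phi_{\varphi_{\text{new}}}(g)=0$ in the supercuspidal case your argument is correct and genuinely different from the paper's. You use directly that $\Supp\Phi_{0,0}\subset J=\L^\times K_{\fA_{e_\L}}(\lfloor\cc(\theta)/2\rfloor)$ and then a clean valuation count: any element $\ell(1+X)\in J$ with $v(\Nm\ell)=-2k$ has all entries of valuation $\geq -k$, contradicting the upper-right entry of $g_{a,a'}$ which has valuation $i_0-2k<-k$. The paper instead writes $g$ in an extended Iwasawa decomposition $g=z\cdot b\cdot\zxz{1}{}{\varpi^j}{1}\cdot k'$ and invokes \cite[Proposition~2.19]{hu_triple_2017}, a black-box classification of when such elements lie in the support of the newform matrix coefficient. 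Your route is more self-contained here and avoids that external reference; the paper's route, in exchange, handles both the supercuspidal and principal series cases uniformly in one stroke.

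For the principal series case your Kirillov-model sketch has a gap. The claim that the integrand ``reduces to a Gauss sum of $\chi_1^{-1}$ evaluated at $\mu\alpha+t_2$'' is imprecise: the action of $w$ on the Kirillov model of $\pi(\chi_1,\chi_1^{-1})$ is not literally a shift in the argument of a Gauss sum, and making the vanishing precise requires either an explicit formula for $\pi(w)\varphi_{\text{new}}$ in the Kirillov model or a local functional equation argument that you have not supplied. Your suggested fallback to \cite[Theorem~5.4]{HuSa:19} is the right instinct, and indeed the paper proceeds exactly this way (via the closely related \cite[Proposition~2.19]{hu_triple_2017}): it puts $g$ into the extended Iwasawa form and reads off directly that $g$ lies outside the known support of $\Phi_{\varphi_{\text{new}}}$. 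So for the principal series you should either carry the Kirillov computation through explicitly or, more efficiently, adopt the paper's uniform Iwasawa-decomposition approach from the start.
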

\begin{proof}
We first show that $f_p$ is vanishing on the given domain. In the supercuspidal representation case, this follows immediately from %Lemma \ref{Lem4.4:tidomains}(iii) and
 Corollary \ref{Cor4.4:vt2SC}.
%From the computations in Section \ref{Sec4.4:2ndcell}, it is straightforward to check that $f_p$ is vanishing. Consider for example the case where $\pi_\theta$ is a principal series representation.
Consider the principal series representation case now.
By Lemma \ref{Lem4.4:tidomainsPS}, $g=\zxz{-t_1}{-\mu-t_1t_2}{1}{t_2}\in \Supp \tilde{\Phi}_{0,0}=ZK_0(\varpi^{i_0})$ only if $v(t_1)=v(t_2)=-k$. For general $\tilde{\Phi}_{a,a'}$, we do  translations by $\zxz{1}{\pm\varpi^{-i_0}a}{}{1}$ on the left or right, which however does not change   the valuation of the upper left or lower right entries as $k>i_0$.

We discuss $\Phi_{\varphi_{\text{new}}}$ now. Suppose that $v(t_2)=-j>-k$. By the extended Iwasawa decomposition in the sense of \cite[Lemma 2.1]{hu_triple_2017}, we compute that
$$g=\begin{cases}
\varpi^{-j}\zxz{\mu\varpi^{2j}}{\varpi^j(-\mu-t_1t_2)}{}{\varpi^jt_2}\zxz{1}{}{\varpi^j}{1}\zxz{t_2^{-1}\varpi^{-j}}{}{}{1}, &\text{\ if $j\geq 0$},\\
\zxz{\mu}{-t_1}{}{1}\zxz{1}{}{1}{1}\zxz{}{-1}{1}{1+t_2}		,&\text{\ otherwise}.
\end{cases}$$
One can now check case by case that $g$ is not in the support using
\cite[Proposition 2.19]{hu_triple_2017}. For example when $0\leq j<k$, we have $v(a)=2j-2k$ for $a=\mu\varpi^{2j}$, while \cite[Proposition 2.19]{hu_triple_2017} requires $v(a)\geq \min\{0, 2j-\cc(\pi)\}>2j-2k$.
\end{proof}
\begin{rem}
With a little extra work, it is possible to show that $\Phi_{\varphi_{\text{new}}}$ is vanishing on the given $g$ when $k=i_0$. We skip that here.
\end{rem}
\subsection{Alternative approach to the second-cell terms}

\begin{cor}\label{Cor5.2:Alt2ndcell}
Let the test function $f$ be as in Section \ref{Sec4.1:testfun}. Suppose that $v(\mu)=-2k<-2i_0$.  Then the  second-cell terms can be alternatively written as
\begin{align*}
I_p(\gamma,f,m_1,m_2)&=\frac{a_\pi}{1-p^{-1}}\int\limits_{v(t_1)=-k}W_{\varphi_{\text{new}}}\lb\zxz{m_2}{0}{0}{1}\zxz{0}{-\mu}{1}{0}\zxz{1}{t_1}{0}{1}\rb\psi(-m_1t_1)  dt_1\\
&=\frac{a_\pi p^k}{1-p^{-1}}\int\limits_{v(u)=0}W_{\varphi_{\text{new}}}\lb\zxz{m_2}{0}{0}{1}\zxz{0}{-\mu}{1}{0}\zxz{1}{\frac{u}{p^k}}{0}{1}\rb\psi\lb-\frac{m_1 u}{p^k}\rb  du		.\notag
\end{align*}
\end{cor}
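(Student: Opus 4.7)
The strategy is to replace the test function by the conjugate matrix coefficient via Lemma \ref{Lem5.1:twoApproach}, then exploit the unitarity identity $\bar{\Phi}_{\varphi_{\text{new}}}(g) = \Phi_{\varphi_{\text{new}}}(g^{-1})$ together with the Kirillov model to collapse one unipotent integration into a Dirac delta that produces the Whittaker function. First, by Lemma \ref{Lem5.1:twoApproach} and the support analysis of $f_p$ in Section \ref{Sec4.4:2ndcell}, we have $f_p(n(-t_1)wn(t_2)) = a_\pi \bar{\Phi}_{\varphi_{\text{new}}}(n(-t_1)wn(t_2))$ on $\{v(t_1) = v(t_2) = -k\}$ and both sides vanish elsewhere. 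Using Lemma \ref{Lem5.1:vt1} together with the inversion identity (and the fact that $[n(-t_1)wn(t_2)]^{-1}$ is central-equivalent to $n(-t_2)wn(t_1)$ in $\PGL_2$, since $w^{-1} = -w/\mu$), one checks that $\bar{\Phi}_{\varphi_{\text{new}}}$ also vanishes off this diagonal, allowing the $t_i$ integrations in $I_p$ to be extended to all of $\F$.

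Next, apply the inversion identity inside the extended integral so that the integrand becomes $\Phi_{\varphi_{\text{new}}}(n(-t_2)wn(t_1))$. In the Kirillov model the $L^2$-normalized newform $\varphi_{\text{new}}$ is a scalar multiple of $\Char(\CO_\F^\times)$, and the unitary pairing (\ref{Eq2.3:unitarypair}), combined with $\pi(n(-t_2))\varphi(y) = \psi(-yt_2)\varphi(y)$, yields
\begin{equation*}
\Phi_{\varphi_{\text{new}}}(n(-t_2)wn(t_1)) = \int_{\CO_\F^\times} \psi(-yt_2)\, W_{\varphi_{\text{new}}}\!\lb\zxz{y}{0}{0}{1} w n(t_1)\rb d^*y,
\end{equation*}
where $d^*y$ is normalized so that $\Vol(\CO_\F^\times) = 1$.

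Substituting into $I_p$ and interchanging orders of integration, the inner $t_2$ integral evaluates to $\int_\F \psi((m_2-y)t_2)\,dt_2 = \delta_{dy}(y-m_2)$. Pairing this additive Dirac delta against $d^*y = dy/[(1-p^{-1})|y|]$ at $y = m_2 \in \CO_\F^\times$ (using $p\nmid m_2$, so $|m_2|=1$) picks out $y = m_2$ with a Jacobian factor $(1-p^{-1})^{-1}$. Re-imposing the support restriction $v(t_1) = -k$ then yields the first displayed formula. The second formula follows from the substitution $t_1 = u/p^k$, whose $p$-adic additive Jacobian is $dt_1 = p^k\,du$.

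The main obstacle is justifying in the first step that $\bar{\Phi}_{\varphi_{\text{new}}}(n(-t_1)wn(t_2))$ vanishes whenever $(t_1,t_2)$ falls outside the diagonal $\{v(t_1) = v(t_2) = -k\}$. Lemma \ref{Lem5.1:vt1} addresses the case $v(t_1) = -k$, $v(t_2) \neq -k$, and the inversion symmetry of the second step covers the transposed case; the remaining configurations can be ruled out by comparing the valuation constraint $v(\det) = v(\mu) = -2k$ against the explicit support description of the matrix coefficient of the newform, for instance via \cite[Proposition 2.19]{hu_triple_2017}.
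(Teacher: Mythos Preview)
Your overall strategy coincides with the paper's---replace $f_p$ by $a_\pi\bar\Phi_{\varphi_{\text{new}}}$ via Lemma~\ref{Lem5.1:twoApproach}, expand the matrix coefficient through the Kirillov pairing, and integrate out $t_2$ against the additive character---but there is a genuine gap in the execution.

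First, Lemma~\ref{Lem5.1:vt1} only treats the case $v(t_1)=-k$, $v(t_2)>-k$; it does \emph{not} cover $v(t_2)<-k$, and the inversion symmetry you invoke turns this into $v(t_1')<-k$, $v(t_2')=-k$, which is again outside the lemma's scope. So your extension of the $t_2$-integration to all of $\F$ is not justified.

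Second, and more seriously, the identity $\int_\F \psi((m_2-y)t_2)\,dt_2=\delta_{dy}(y-m_2)$ is only heuristic over a $p$-adic field: the integral diverges. The paper avoids this by extending $t_2$ only to the annulus $v(t_2)\ge -k$ (precisely what Lemma~\ref{Lem5.1:vt1} permits). Integrating over that compact set gives $p^k\cdot\Char(x\equiv m_2\bmod\varpi^k)$, a congruence condition rather than a point mass. To collapse the remaining $x$-integral to the single value $x=m_2$, the paper then carries out a nontrivial constancy argument: using an explicit Iwasawa decomposition of $a(x)\,w\,n(t_1)$ together with the level bound of \cite[Proposition~2.12]{Hu:17a}, one shows that $W_{\varphi_{\text{new}}}(a(x)\,w\,n(t_1))$ is locally constant of level $\le k$ in $x$. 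Your delta-function shortcut elides exactly this step, which is where the real work sits; without it the passage from the congruence class to the exact value $m_2$ (and hence the factor $(1-p^{-1})^{-1}$ as the volume of $m_2 U_\F(k)$) is unproven.
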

\begin{proof}
By Lemma \ref{Lem5.1:twoApproach}, Lemma \ref{Lem5.1:vt1} and \eqref{Eq2.3:unitarypair}, we can rewrite
\begin{align}
I_p(\gamma, f,m_1,m_2)&=\int\limits_{\F^2}f_p \lb\zxz{1}{t_1}{0}{1}^{-1}\zxz{0}{-\mu}{1}{0}\zxz{1}{t_2}{0}{1}\rb\psi(-m_1t_1+m_2t_2)dt_1dt_2\\
&=a_\pi\int\limits_{v(t_1)= -k,v(t_2)\geq -k}\overline{\Phi}_{\varphi_{\text{new}}}\lb\zxz{1}{t_1}{0}{1}^{-1}\zxz{0}{-\mu}{1}{0}\zxz{1}{t_2}{0}{1}\rb\psi(-m_1t_1+m_2t_2)dt_1dt_2\notag\\
&=a_\pi\int\limits_{t_1,t_2}\overline{w_\pi(-\mu)}\overline{<\pi\lb\zxz{1}{t_2}{0}{1}\rb\varphi_{\text{new}},\pi\lb\zxz{0}{-\mu}{1}{0}\zxz{1}{t_1}{0}{1}\rb\varphi_{\text{new}}>}\psi(-m_1t_1+m_2t_2)dt_1dt_2\notag\\
&=a_\pi\int\limits_{t_1,t_2}\int\limits_{x\in\F ^\times}\overline{W_{\varphi_{\text{new}}}}\lb\zxz{x}{0}{0}{1}\zxz{1}{t_2}{0}{1}\rb W_{\varphi_{\text{new}}}\lb\zxz{x}{0}{0}{1}\zxz{0}{-\mu}{1}{0}\zxz{1}{t_1}{0}{1}\rb d^\times x \psi(-m_1t_1+m_2t_2)dt_1dt_2.\notag
\end{align}
Here we have used the assumption that $w_\pi$ is trivial.
Now we swap the order and integrate in $v(t_2)\geq -k$ first. Using that $$W_\varphi\lb\zxz{x}{0}{0}{1}\zxz{1}{t_2}{0}{1}\rb=\psi(xt_2)W_\varphi\lb\zxz{x}{0}{0}{1}\rb,$$
we get that the integral in $t_2$ is nonvanishing if and only if $x\equiv m_2\mod{\varpi^k}$. As $W_{\varphi_{\text{new}}}(a(x))=\Char \lb O_\F^\times \rb(x)$, we get
\begin{equation*}
I_p(\gamma, f,m_1,m_2)=a_\pi p^k \int\limits_{v(t_1)=-k}\int\limits_{x\equiv m_2\mod\varpi^k} W_{\varphi_{\text{new}}}\lb\zxz{x}{0}{0}{1}\zxz{0}{-\mu}{1}{0}\zxz{1}{t_1}{0}{1}\rb\psi(-m_1t_1) d^\times x dt_1.
\end{equation*}
We show now that the integrand is a constant function in $x$ when $x\equiv m_2\mod\varpi^k$.
Note that in the extended Iwasawa decomposition, we can write
$$\zxz{x}{}{}{1}\zxz{0}{-\mu}{1}{0}\zxz{1}{t_1}{0}{1}=t_1\zxz{\frac{x\mu\varpi^{k}}{t_1}}{-\frac{x\mu}{t_1}}{}{1}\zxz{1}{}{\varpi^k}{1}\zxz{t_1^{-1}\varpi^{-k}}{}{}{1}.$$
Thus
\begin{equation*}%\label{Eq5.2:Whittaker}
W_{\varphi_{\text{new}}}\lb\zxz{x}{0}{0}{1}\zxz{0}{-\mu}{1}{0}\zxz{1}{t_1}{0}{1}\rb=\psi\lb-\frac{x\mu}{t_1}\rb W_{\varphi_{\text{new}}}\lb\zxz{\frac{x\mu\varpi^{k}}{t_1}}{}{}{1}\zxz{1}{}{\varpi^k}{1}\rb,
\end{equation*}
which is of level $\leq k$ in $x$ by \cite[Proposition 2.12]{Hu:17a}. Thus the integrand is constant for $x\equiv m_2\mod\varpi^k$. The corollary is now clear.
\end{proof}
\subsection{Compatibility with the Voronoi formula}
The alternative description Corollary \ref{Cor5.2:Alt2ndcell} for the second-cell terms for the refined Petersson/Kuznetsov trace formula allows us to analyze the character sum after applying the Voronoi formula more easily and to reduce the problem to the existing works.

\begin{defn}\label{Defn5.3:Dualsum}
For some integer $\el $ with $(\el ,p)=1$, define the accompanying character sum/integral of $G_p\lb m_1,m_2,\theta,\mu \rb$ to be
$$\widetilde{G}_p \lb m_1,m_2,\el ,\theta,\mu \rb=\frac{ p^k}{1-p^{-1}}\int\limits_{v(u)=0}W_{\varphi_{\text{new}}}\lb\zxz{m_2}{0}{0}{1}\zxz{0}{-\mu}{1}{0}\zxz{1}{\frac{u}{p^k}}{0}{1}\rb\psi\lb-\frac{\el m_1}{ u p^k}\rb du	.$$
\end{defn}

The reason we make this definition will be clear in Section \ref{Sec6}.

\begin{lem}\label{Lem5.3:dualsum}
$\widetilde{G}_p \lb m_1,m_2,\el,\theta,\mu \rb=0$ unless $v\lb m_2\mu+\frac{\el m_1}{p^{2k}}\rb\geq \min\{-\cc(\pi),-k-1\}$, %\yh{Not accurate} 
in which case we have
$$\widetilde{G}_p \lb m_1,m_2,\el,\theta,\mu \rb\ll_p \min\{p^{\frac{3k-\cc(\pi)}{2}},p^k\}.$$
\end{lem}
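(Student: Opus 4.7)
The plan is to identify $\widetilde{G}_p$ with an integral involving the matrix coefficient of the newform, and then invoke the sharp support and size estimates for $\Phi_{\varphi_{\text{new}}}$ from \cite[Theorem 5.4]{HuSa:19}. This mirrors the philosophy of Lemma \ref{Lem5.1:twoApproach} and Corollary \ref{Cor5.2:Alt2ndcell}, where the locally constructed test function $f_p$ was matched with the matrix coefficient on the relevant range; once such a match is made, the question becomes entirely a known statement about $\Phi_{\varphi_{\text{new}}}$.

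First I would perform the extended Iwasawa decomposition used in the proof of Corollary \ref{Cor5.2:Alt2ndcell}: setting $t_1 = u/p^k$ with $v(t_1) = -k$, one writes
\begin{equation*}
\zxz{m_2}{0}{0}{1}\zxz{0}{-\mu}{1}{0}\zxz{1}{t_1}{0}{1} = t_1\zxz{m_2\mu p^k/t_1}{-m_2\mu/t_1}{0}{1}\zxz{1}{0}{p^k}{1}\zxz{t_1^{-1}p^{-k}}{0}{0}{1}.
\end{equation*}
By the Kirillov transformation law the Whittaker value simplifies to $\psi(-m_2\mu p^k/u)\,W_{\varphi_{\text{new}}}(a(m_2\mu p^{2k}/u)\zxz{1}{0}{p^k}{1})$, and combining with the $\psi(-\el m_1/(up^k))$ in Definition \ref{Defn5.3:Dualsum} yields
\begin{equation*}
\widetilde{G}_p(m_1,m_2,\el,\theta,\mu) = \frac{p^k}{1-p^{-1}}\int\limits_{v(u)=0} W_{\varphi_{\text{new}}}\lb a\lb\frac{m_2\mu p^{2k}}{u}\rb\zxz{1}{0}{p^k}{1}\rb \psi\lb-\frac{m_2\mu p^{2k}+\el m_1}{up^k}\rb du.
\end{equation*}

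Second, by reversing the Kirillov-to-matrix-coefficient passage underlying Corollary \ref{Cor5.2:Alt2ndcell} (and using Lemma \ref{Lem5.1:twoApproach}), the Whittaker value $W_{\varphi_{\text{new}}}(a(y)\zxz{1}{0}{p^k}{1})$ is (up to an explicit Kirillov normalisation) proportional to the restriction of $\Phi_{\varphi_{\text{new}}}$ to the second Bruhat cell of $ZK'$ along elements with lower-left entry $p^k$, so \cite[Theorem 5.4]{HuSa:19} applies directly. The support statement there --- that $\Phi_{\varphi_{\text{new}}}$ is supported on a specific subset of $ZJ$ --- combined with the oscillation of $\psi(-(m_2\mu p^{2k}+\el m_1)/(up^k))$ over $u\in O^\times$, forces the integral to vanish unless $v(m_2\mu p^{2k}+\el m_1)\geq 2k-\cc(\pi)$, which is exactly $v(m_2\mu + \el m_1/p^{2k}) \geq -\cc(\pi)$.

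Third, the size bound from \cite[Theorem 5.4]{HuSa:19} gives $|\Phi_{\varphi_{\text{new}}}(g)|\ll_p p^{-\cc(\pi)/2}$ on the support, which when converted to the Whittaker side picks up an extra factor of $p^{k/2}$ from the normalisation at second-cell elements (the same $p^k$-factor that appeared in Corollary \ref{Cor5.2:Alt2ndcell}, distributed between the integral weight and the leading constant). Combined with the $p^k$ in front of the integral and the unit volume of $\{v(u)=0\}$, this yields $|\widetilde{G}_p(m_1,m_2,\el,\theta,\mu)|\ll_p p^{(3k-\cc(\pi))/2}$. The main obstacle I expect is the careful bookkeeping of the various normalisations --- the constant $a_\pi\asymp p^{\cc(\pi)/2}$ from Lemma \ref{Lem5.1:twoApproach}, the Kirillov-to-matrix-coefficient conversion, and the $p^k$-factors generated by the Iwasawa decomposition --- so that the exponent lands on exactly $(3k-\cc(\pi))/2$; and relatedly, translating the precise support description of \cite[Theorem 5.4]{HuSa:19} on the second Bruhat cell into the clean congruence $v(m_2\mu+\el m_1/p^{2k})\geq -\cc(\pi)$.
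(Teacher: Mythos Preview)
Your overall strategy---reduce to a value of the matrix coefficient $\Phi_{\varphi_{\text{new}}}$ and invoke \cite[Theorem 5.4]{HuSa:19}---is exactly what the paper does, but there is a genuine gap in your second step. You assert that the pointwise Whittaker value $W_{\varphi_{\text{new}}}\bigl(a(y)\,\zxz{1}{0}{p^k}{1}\bigr)$ is, up to a normalisation, equal to a value of $\Phi_{\varphi_{\text{new}}}$. This is not correct: a single Whittaker value is not a matrix coefficient value. The relation between the two goes through an \emph{integral}---the Kirillov pairing \eqref{Eq2.3:unitarypair}---not a pointwise identity. Consequently, the oscillation argument you sketch (``combined with the oscillation of $\psi(\cdots)$ over $u\in O^\times$, forces the integral to vanish'') cannot be carried out from the support of $\Phi_{\varphi_{\text{new}}}$ alone; you would instead need a sharp bound on the level in $u$ of $W_{\varphi_{\text{new}}}\bigl(a(m_2\mu p^{2k}/u)\,\zxz{1}{0}{p^k}{1}\bigr)$, and the bound $\leq k$ from \cite[Proposition 2.12]{Hu:17a} is not strong enough in the range $i_0<k<\cc(\pi)$. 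Your ``extra factor of $p^{k/2}$ from the normalisation'' in the third step is likewise not a real object: the $k$-dependence of the bound comes from the matrix coefficient estimate itself, which is $|\Phi_{\varphi_{\text{new}}}|\ll_p p^{(k-\cc(\pi))/2}$ (not $p^{-\cc(\pi)/2}$) at elements with lower-left valuation $k$.

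The paper avoids the Iwasawa decomposition entirely and instead writes the integral itself as a matrix coefficient. After the substitution $u\mapsto 1/u$ and some matrix manipulation (using right $K_0$-invariance and triviality of $w_\pi$), one arrives at
\[
\widetilde{G}_p = p^k\int\limits_{v(u)=0} W_{\varphi_{\text{new}}}\Bigl(a(u)\,\zxz{0}{-m_2\mu}{1}{0}\zxz{1}{1/p^k}{0}{1}\Bigr)\,\overline{W_{\varphi_{\text{new}}}\Bigl(a(u)\,\zxz{1}{\el m_1/p^k}{0}{1}\Bigr)}\,d^\times u.
\]
Since the second factor equals $\psi(\el m_1 u/p^k)\,\Char(O_\F^\times)(u)$, the integral extends to all of $\F^\times$ and is exactly the Kirillov pairing, giving $\widetilde{G}_p = p^k\,\Phi_{\varphi_{\text{new}}}\bigl(\zxz{1}{-\el m_1/p^k}{}{1}\zxz{0}{-m_2\mu}{1}{0}\zxz{1}{1/p^k}{0}{1}\bigr)$. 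The argument simplifies to $p^{-k}\zxz{-\el m_1}{-(m_2\mu+\el m_1/p^{2k})p^k}{p^k}{1}$, and now \cite[Theorem 5.4]{HuSa:19} applies directly with no bookkeeping of extra normalisation constants: the support condition is $v\bigl((m_2\mu+\el m_1/p^{2k})p^k\bigr)\geq k-\cc(\pi)$ and the size is $\ll_p p^{(k-\cc(\pi))/2}$, yielding both claims immediately.
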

\begin{proof}
Our strategy is to reinterpret the integral as the value of the matrix coefficient for the newform. 
By a change of variable and the invariance of the newform, we get
\begin{align*}
\widetilde{G}_p \lb m_1,m_2,\el,\theta,\mu \rb& =p^k\int\limits_{v(u)=0}W_{\varphi_{\text{new}}}\lb\zxz{m_2}{0}{0}{1}\zxz{0}{-\mu}{1}{0}\zxz{1}{\frac{1}{up^k}}{0}{1}\rb\psi\lb-\frac{\el m_1 u}{  p^k}\rb d^\times u\\
&=p^k\int\limits_{v(u)=0}W_{\varphi_{\text{new}}}\lb\zxz{0}{-m_2\mu}{1}{0}\zxz{\frac{1}{u}}{}{}{1}\zxz{1}{\frac{1}{p^k}}{0}{1}\rb\psi\lb-\frac{\el m_1 u}{ p^k}\rb d^\times u					\notag\\
&=p^k\int\limits_{v(u)=0}w_\pi^{-1}(u)W_{\varphi_{\text{new}}}\lb\zxz{u}{}{}{1}\zxz{0}{-m_2\mu}{1}{0}\zxz{1}{\frac{1}{p^k}}{0}{1}\rb\psi\lb-\frac{\el m_1 u}{  p^k}\rb d^\times u					\notag\\
&= p^k\int\limits_{v(u)=0}W_{\varphi_{\text{new}}}\lb\zxz{u}{}{}{1}\zxz{0}{-m_2\mu}{1}{0}\zxz{1}{\frac{1}{p^k}}{0}{1}\rb
\overline{W_{\varphi_{\text{new}}}\lb\zxz{u}{}{}{1}\zxz{1}{\frac{\el m_1}{p^k}}{}{1}\rb			}
d^\times u					\notag\\
&= p^k\Phi_{\varphi_{\text{new}}}\lb\zxz{1}{-\frac{\el m_1}{p^k}}{}{1}\zxz{0}{-m_2\mu}{1}{0}\zxz{1}{\frac{1}{p^k}}{0}{1}	\rb. \notag
\end{align*}

Note that
$$\zxz{1}{-\frac{\el m_1}{p^k}}{}{1}\zxz{0}{-m_2\mu}{1}{0}\zxz{1}{\frac{1}{p^k}}{0}{1}
=p^{-k}\zxz{-\el m_1}{-\lb m_2\mu+\frac{\el m_1}{p^{2k}}\rb p^k}{p^k}{1}.
$$
By \cite[Theorem 5.4]{HuSa:19}, this matrix is not in the support of the matrix coefficient of the newform unless $v\lb\lb m_2\mu+\frac{\el m_1}{p^{2k}}\rb p^k\rb\geq \min\{k-\cc(\pi),-1\}$, in which case $
|\Phi_{\varphi_{\text{new}}}|\ll_p \min\{p^{\frac{k-\cc(\pi)}{2}},1\}.
$
The lemma follows easily now.
%\eqref{Eq5.2:Whittaker}, 
\end{proof}

\section{Application to the first moment of the Rankin--Selberg $L-$function}\label{Sec6}
\subsection{Preparations}
We take a special version of the Voronoi formula from \cite[Lemma 2.6]{HT} or \cite[Theorem A.4]{KMV}, though a more flexible version would be helpful to  extend our main result to more general situations.
\begin{theo}\label{Theo6.1:Voronoi}
Suppose that $a\in \Z$ is coprime to $c$, and $h$ is a smooth compactly-supported function on $(0,\infty)$.
Let $g$ be a holomorphic modular form of weight $\kappa_g$, square-free level $M$ and  nebentypus $\chi$.  	We factorize $M$ as $M=M_1M_2$ with $M_1=(M,c)$. Then there exists a newform $g^*$ of the same level $M$ and weight $\kappa_g$ such that
\begin{align}\label{Eq6.1:Voronoiformula}
\sum\limits_{n}\lambda_g(n)e\lb\frac{an}{c}\rb h(n)
=\frac{2\pi \eta}{c\sqrt{M_2}}
\sum\limits_{n}\lambda_{g^*}(n)e\lb-\frac{\overline{aM_2} n}{c}\rb\int\limits_{0}^{\infty} h(\xi)J_{\kappa_g-1}\lb\frac{4\pi\sqrt{n\xi}}{c\sqrt{M_2}}\rb d\xi.
\end{align}
Here $\overline{x}$ denotes the multiplicative inverse of $x\mod c$, and $\eta$ is a complex number of modulus 1 depending  on $a,c,g$. $J_{\kappa_g-1}$ is the J-Bessel function.
%$\eta=i^\kappa \eta_g(M_2)$ $\eta_g(M_2)$ is the eigenvalue of $g$ for the $M_2$ Atkin-Lehner operator.
\end{theo}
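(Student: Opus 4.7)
The plan is to prove this via the classical Mellin-transform plus functional-equation route to Voronoi-type identities. First, since $h$ is smooth and compactly supported in $(0,\infty)$, I would apply Mellin inversion to write
$$
\sum_n \lambda_g(n) e\lb \frac{an}{c} \rb h(n) = \frac{1}{2\pi i}\int\limits_{(\sigma)} h^\sharp(s)\, L \lb s, g; a/c \rb\, ds,
$$
where $h^\sharp(s) = \int_0^\infty h(\xi)\xi^{s-1}d\xi$ and $L \lb s, g; a/c \rb = \sum_n \lambda_g(n)e(an/c) n^{-s}$, taken for $\sigma$ large so that the sum converges absolutely. Because $g$ is cuspidal, $L(s, g; a/c)$ is entire in $s$, so the contour can be shifted freely without residue contributions.

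The key step is to establish a functional equation relating $L(s, g; a/c)$ to the dual additive twist built from $g^*$ and $-\overline{aM_2}/c$. Since $(a,c) = 1$ and $M = M_1 M_2$ with $M_1 = (M,c)$ and $(M_2, c) = 1$, I would decompose $e(an/c)$ into primitive Dirichlet character twists via Gauss-sum orthogonality modulo $c$ and apply the standard functional equation of each multiplicative twist $L(s, g \otimes \chi)$. The Atkin-Lehner involution $W_{M_1}$ enters naturally because $M_1 \mid c$: square-freeness of $M$ guarantees that $W_{M_1}$ is a genuine involution with a well-controlled pseudo-eigenvalue, and conjugating $g$ by $W_{M_1}$ produces the Atkin-Lehner partner $g^*$ at the same level $M$. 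Reassembling the resulting multiplicative twists on the dual side converts them back into the additive twist $e(-\overline{aM_2}n/c)$, while all root numbers and Atkin-Lehner pseudo-eigenvalues combine into a single unit-modulus constant $\eta = \eta(a,c,g)$.

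The archimedean gamma-factor ratio arising from the functional equation is exactly the Mellin transform of the Bessel kernel $J_{\kappa_g - 1}$: using the identity
$$
\frac{\Gamma(\kappa_g/2 + s - 1/2)}{\Gamma(\kappa_g/2 - s + 1/2)} = (2\pi)^{1-2s}\int_0^\infty J_{\kappa_g-1}(4\pi\sqrt{y})\, y^{s-1}\, dy,
$$
together with tracking the conductor factor $(c\sqrt{M_2})^{2s}$ and inverting Mellin on the dual side, converts this ratio into the integral against $J_{\kappa_g-1}\lb 4\pi\sqrt{n\xi}/(c\sqrt{M_2}) \rb$ that appears on the right-hand side of \eqref{Eq6.1:Voronoiformula}, with the overall prefactor $2\pi\eta/(c\sqrt{M_2})$ coming from the conductor factor at $s = 1/2$ balanced against the Mellin normalization.

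The main obstacle is the careful bookkeeping of the root number $\eta$ and the verification that the Atkin-Lehner partner $g^*$ is the correct newform at level $M$ (rather than at some intermediate level). Square-freeness of $M$ is essential throughout: it keeps the Atkin-Lehner action and the character decomposition modulo $c$ tractable, forces the factorization $M = M_1 M_2$ to be into coprime parts, and ensures that each local root number at a prime dividing $M_1$ can be computed in closed form. These technicalities are carried out in detail in \cite{HT}\cite{KMV}, whose formulation is the one invoked here; the plan for this paper is simply to quote their identity in the precise form needed for Section \ref{Sec6}.
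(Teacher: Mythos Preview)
The paper does not prove this theorem at all: it is quoted verbatim from \cite[Lemma 7]{HT} and \cite[Theorem A.4]{KMV}, exactly as you conclude in your final sentence. Your sketch of the underlying Mellin-inversion plus functional-equation argument is the standard route and is essentially what those references carry out, so your proposal is consistent with the paper's treatment.
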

%\begin{rem}
%Under the trivial nebentypus assumption, $g$ is the same as the form $g^*$ in \cite[Lemma 7]{HT} or \cite[Theorem A.4]{KMV}.
%\end{rem}

The following lemma is straightforward to check using the Chinese remainder theorem:

\begin{lem}\label{Lem:compositeinverse0}
	Suppose $ \lb n_1,n_2 \rb =1$, $a_i\overline{a_i}\equiv 1\mod{n_i}$, $i=1,2$, $\overline{n_1}n_1\equiv 1\mod{n_2}$, $\overline{n_2}n_2\equiv 1\mod{n_1}$. Then $$ \lb a_1n_2+a_2n_1 \rb   \lb \overline{a_1}n_2 \overline{n_2}^2+\overline{a_2}n_1 \overline{n_1}^2 \rb \equiv 1\mod {n_1n_2}.$$
\end{lem}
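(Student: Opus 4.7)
The plan is to prove the stated congruence by working modulo $n_1$ and modulo $n_2$ separately and then invoking the Chinese Remainder Theorem, which is applicable since $(n_1,n_2)=1$. The identity mod $n_1n_2$ will follow once the product on the left is shown to be $\equiv 1$ modulo each factor.

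Modulo $n_1$, the second summand in each of the two parenthesized expressions vanishes (each contains a factor of $n_1$), so the product reduces to $(a_1 n_2)(\overline{a_1} n_2 \overline{n_2}^2) = a_1 \overline{a_1} \cdot (n_2 \overline{n_2})^2$. The first factor is $\equiv 1 \pmod{n_1}$ by the hypothesis $a_1 \overline{a_1}\equiv 1\pmod{n_1}$, and the second is $\equiv 1 \pmod{n_1}$ by the hypothesis $\overline{n_2} n_2\equiv 1\pmod{n_1}$. The computation modulo $n_2$ is completely parallel, with the roles of the indices $1$ and $2$ swapped and using $a_2\overline{a_2}\equiv 1$ and $n_1\overline{n_1}\equiv 1\pmod{n_2}$.

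Since the product is congruent to $1$ modulo both $n_1$ and $n_2$, and these moduli are coprime, CRT yields the claimed congruence modulo $n_1 n_2$. There is no genuine obstacle here; the only thing to watch is keeping straight which inverse is taken modulo which modulus (the $\overline{a_i}$'s and $\overline{n_i}$'s are defined modulo different numbers), and confirming that the exponents on $\overline{n_i}$ are exactly what is needed to cancel the corresponding $n_i^2$ that arises from multiplying the two $n_j$-factors (for $j\neq i$) together. This is precisely why the asymmetric-looking expression $\overline{a_1}n_2\overline{n_2}^2+\overline{a_2}n_1\overline{n_1}^2$ is the correct inverse of $a_1n_2+a_2n_1$ modulo $n_1n_2$.
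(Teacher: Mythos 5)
Your proof is correct and matches the paper's approach: the paper simply states that the lemma "is straightforward to check using the Chinese remainder theorem," which is exactly the reduction modulo $n_1$ and modulo $n_2$ that you carry out explicitly.
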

\subsection{The first moment of  the Rankin--Selberg $L-$function and a hybrid subconvexity bound}\label{Sec6.2:1stMoment}
Recall that $\mathcal{F}_\theta[l]$ is the set of holomorphic newforms of weight $\kappa \geq 4$, level $N=p^\cc$ with $\cc\geq 3$, and trivial nebentypus, whose associated local representation $\pi_p$ belongs to the family $\pi_\theta[l]$. 
%Let $N=p^{\cc(\pi)} $ for any $\pi\in \Pi_{\theta[l]}$.
Let $g$ be a fixed self-dual holomorphic cusp form of weight $\kappa_g$, level $\D$ and nebentypus $\chi$. We assume that $\D$ is square-free and coprime to $N$. $\chi$ will be quadratic as $g$ is self-dual.
The implied constants for the bounds $\ll$ are always allowed to depend on %$p$\footnote{Actually the implied constant is bounded by a fixed power of $p$, which can be made more explicit.} and 
$\epsilon$, which we omit from notations.
%We also assume trivial central character for both $f$ and $g$ for simplicity.

Let $M_g$ be the first moment of the Rankin--Selberg $L-$functions
\begin{equation}\label{Eq6.2:Mg=LM}
M_g={\sum\limits_{f\in  \mathcal{F}_{\theta}[l]}}^hL \lb f\times g,1/2 \rb .
\end{equation}
Here ${\sum\limits_f}^h$ is the  harmonic average as in \cite{KMV}:
\begin{equation*}
{\sum\limits_f}^h \alpha_f:=\frac{\Gamma \lb \kappa-1 \rb }{ \lb 4\pi \rb ^{\kappa-1}}\sum\limits_{f}\frac{\alpha_f}{||f ||^2 }.
\end{equation*}
 $f$ is normalized so that $\lambda_f \lb 1 \rb =1$. 

The first few steps are standard, and we follow \cite{HT} closely.
\yh{
By the approximate functional equation as in, for example, \cite[(2.19)]{HT}, we get
\begin{equation}\label{Eq:Mg}
M_{g}=\sum\limits_{n\geq 1}\frac{\lambda_g \lb n \rb }{\sqrt{n}}V \lb \frac{n}{N\D} \rb {\sum\limits_{f\in  \mathcal{F}_{\theta}[l]}}^h\lambda_f \lb n \rb+ \sum\limits_{n\geq 1}\frac{\overline{\lambda_g \lb n \rb} }{\sqrt{n}}\tilde{V} \lb \frac{n}{N\D} \rb {\sum\limits_{f\in  \mathcal{F}_{\theta}[l]}}^h\epsilon(f\times g,1/2)\lambda_f \lb n \rb.
\end{equation}
Here $V, \tilde{V}$ are  fixed smooth functions on $(0,\infty)$, rapidly decaying as $x\rightarrow \infty$.
 In the above equality we have  used that $\lambda_f(n)\in \R$ as the central character for $f$ is trivial. We also note  that $\epsilon(f\times g,1/2)$ is actually the same for any $f\in \mathcal{F}_{\theta}[l]$ due to, for example, \cite[(1.1.2),(1.1.5), (1.1.6)]{Tunnell} and similar computations as in \cite[Section 2.3]{HT}.
}

We  assume from now on that $\epsilon(f\times g,1/2)=1$, since if it is $-1$, $L(f\times g,1/2)=0$ and Theorem \ref{Theo:subconv} is automatic. \yh{In the following we focus on the first term in \eqref{Eq:Mg}, as the second term with epsilon value can be analyzed similarly.
}
%\yh{$\lambda_f$?}
Multiplying with $\overline{\lambda_f \lb 1 \rb }=1$ and applying the refined Petersson trace formula in Theorem \ref{Theo4:spectralaverg}, we get that
\begin{equation}\label{Eq:d+od}
M_g\sim M_g^d+M_g^{od}, 
\end{equation}
where  $M_{g}^d$ involves diagonal terms coming from $\delta_{m_1=m_2}$ in Theorem \ref{Theo4:spectralaverg}:
\begin{equation}\label{Eq6.2:Md}
M_{g}^d=	C_\mathcal{F}[l]V \lb \frac{1}{N\D} \rb , 
\end{equation}
and $M_g^{od}$ involves the off-diagonal terms:
\begin{equation}\label{Eq6.1:Moffdiag}
M^{od}_{g}=2\pi i^{\kappa}C_\mathcal{F}[l]\sum\limits_{c\equiv 0 \mod{c_l}, c>0}\frac{1}{c}\sum\limits_{n} \frac{\lambda_g \lb n \rb }{\sqrt{n}} V \lb \frac{n}{N\D} \rb G \lb n,1, \theta, \frac{1}{c^2} \rb J_{\kappa-1} \lb \frac{4{\pi}\sqrt{n}}{c} \rb .
\end{equation}
To analyze the off-diagonal term $M_g^{od}$, we break the sum in $n$ into dyadic ranges as usual by multiplying with a bump function $\eta_Z$, where the size of the sum in $n$ is $Z\ll  \lb N\D \rb ^{1+\epsilon}$. Up to a small error, we may also assume that $c\ll  \lb MN \rb ^A$ for some  fixed large $A$. This is because for the complementary range, one can easily control the sum by using Lemma \ref{Lem4.4:GeneralKLWholerange}, \ref{Lem4.4:StationaryPS}, and that when $\kappa\geq 2$,
$$J_{\kappa-1}(x)\ll x \text{\ as }x\rightarrow 0. $$

Furthermore, we write $c=\aa_p  p^k$ %\yh{Change $\aa_p $} 
for $k\geq v_p(c_l)$ and $ \lb \aa_p ,p \rb =1$, and organize the sum in $c$ according to $\aa_p $ and $k$. 
We shall however be mainly interested in the case where $k<\cc(\pi)$, as the complementary case is much easier to deal with by Remark \ref{Rem:Gpbecomeskloosterman}, \ref{Rem4.4.2:isstandard}.
By Definition \ref{Def4.5:GlobalGKloosterman}, \eqref{Eq4.5:IpGp} and Corollary \ref{Cor5.2:Alt2ndcell},

\begin{align*}
G \lb n,1,\theta, \frac{1}{c^2} \rb &=\frac{1}{C_\mathcal{F}[l_0]}\sum\limits_{y\in  \lb \Z/{\aa_p }\Z \rb ^\times} e \lb \frac{\overline{p}^{2k}y}{\aa_p }+\frac{n\overline{y}}{\aa_p } \rb I_p(\gamma, f,n,1) \notag\\
&=\frac{a_\pi p^k}{C_\mathcal{F}[l_0](1-p^{-1})}\sum\limits_{y\in  \lb \Z/{\aa_p }\Z \rb ^\times} e \lb \frac{\overline{p}^{2k}y}{\aa_p }+\frac{n\overline{y}}{\aa_p } \rb\int\limits_{v(u)=0}W_{\varphi_{\text{new}}}\lb\zxz{0}{-\mu}{1}{0}\zxz{1}{\frac{u}{p^k}}{0}{1}\rb e\lb-\frac{n u}{p^k}\rb  du.
\end{align*}
Because of this, we write
\begin{equation}\label{Eq6.2:Modg}
M^{od}_{g}=2\pi i^{\kappa}a_\pi \frac{C_\mathcal{F}[l]}{C_\mathcal{F}[l_0]}\sum\limits_{c_l|c}
\sum\limits_{Z\ll (MN)^{1+\epsilon}}
\frac{1}{c}K_{c,Z},
\end{equation}
where
\begin{align*}
K_{c,Z}=&\frac{p^{k}}{1-p^{-1}}\sum\limits_{n} \frac{\lambda_g \lb n \rb \eta_Z(n)}{\sqrt{n}} V \lb \frac{n}{N\D} \rb J_{\kappa-1} \lb \frac{4{\pi}\sqrt{n}}{c} \rb	\\
&\times\sum\limits_{y\in  \lb \Z/{\aa_p }\Z \rb ^\times} e \lb \frac{\overline{p}^{2k}y}{\aa_p }+\frac{n\overline{y}}{\aa_p } \rb\int\limits_{v(u)=0}W_{\varphi_{\text{new}}}\lb\zxz{0}{-\mu}{1}{0}\zxz{1}{\frac{u}{p^k}}{0}{1}\rb e\lb-\frac{n u}{p^k}\rb  du \notag \\
=&\frac{p^{k}}{1-p^{-1}}\sum\limits_{y\in  \lb \Z/{\aa_p }\Z \rb ^\times} e \lb \frac{\overline{p}^{2k}y}{\aa_p }\rb   \int\limits_{v(u)=0}W_{\varphi_{\text{new}}}\lb\zxz{0}{-\mu}{1}{0}\zxz{1}{\frac{u}{p^k}}{0}{1}\rb\notag\\
&\times\left[\sum\limits_{n} \frac{\lambda_g \lb n \rb \eta_Z(n)}{\sqrt{n}} V \lb \frac{n}{N\D} \rb J_{\kappa-1} \lb \frac{4{\pi}\sqrt{n}}{c} \rb e\lb\frac{n\overline{y}}{\aa_p } \rb e\lb-\frac{n u}{p^k}\rb  \right]du\notag
\end{align*}
Here in the second equality we have swapped the order  of the sum in $n$ and the sum/integral in $y$/$u$, as the integral in $u$ is essentially a finite sum. 

\begin{lem}\label{Lem6.2:KcZ}
For $L=\frac{\sqrt{Z}}{c}$, we have
%$$K_{c,Z}\ll \lb 1+\frac{M_2}{\aa_pp^{2k-\cc(\pi)}}\frac{(1+L)^2}{L^2}\rb  p^{\frac{k-\cc(\pi)}{2}}\sqrt{\frac{Z}{M_2}}\frac{L}{(1+L)^{3/2}}\frac{L\sqrt{\frac{Z}{M_2}}}{\lb 1+L\sqrt{\frac{Z}{M_2}}\rb^{3/2}} (cMZ)^\epsilon.$$

$$K_{c,Z}\ll (cMZ)^\epsilon\begin{cases}
\lb 1+\frac{M_2}{\aa_pp^{2k-\cc(\pi)}}\rb  p^{\frac{k-\cc(\pi)}{2}}\sqrt{\frac{Z}{M_2}}\frac{1}{L}	&\text{when $L\gg 1$};\\
\lb 1+\frac{M_2}{\aa_pp^{2k-\cc(\pi)}}\frac{1}{L^2}\rb  p^{\frac{k-\cc(\pi)}{2}}\sqrt{\frac{Z}{M_2}}L	&\text{when $L\ll 1$}.
\end{cases}$$
\end{lem}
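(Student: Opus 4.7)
The plan is to apply the Vorono\"{i} summation formula to the sum in $n$ inside $K_{c,Z}$, reduce the resulting character sums modulo $c$ to a product of a Ramanujan sum modulo $\aa_p$ and the dual generalized Kloosterman sum $\widetilde{G}_p$ studied in Section \ref{Sec5}, and then estimate the Bessel integral in two regimes according to whether $L\gg 1$ or $L\ll 1$.

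First, I would combine the two additive characters into a single character modulo $c=\aa_p p^k$. Writing $n\overline{y}/\aa_p-nu/p^k\equiv nA/c \pmod 1$ with $A=\overline{y}p^k-u\aa_p$, one checks $(A,c)=1$, and by Lemma \ref{Lem:compositeinverse0} the multiplicative inverse $\overline A \pmod c$ decomposes by CRT as $y\overline{p^{k}}$ modulo $\aa_p$ and $-\overline{u}\,\overline{\aa_p}$ modulo $p^k$ (with inverses taken in the appropriate rings). With this identification, Theorem \ref{Theo6.1:Voronoi} applied with $a=A$ transforms $\sum_n\lambda_g(n)\eta_Z(n)n^{-1/2}V(n/N\D)J_{\kappa-1}(4\pi\sqrt n/c)\,e(nA/c)$ into the dual sum $\frac{2\pi\eta}{c\sqrt{M_2}}\sum_n\lambda_{g^\ast}(n)e(-\overline{AM_2}n/c)\,\mathcal I(n)$, where $\mathcal I(n)$ is the Bessel integral with kernel $J_{\kappa_g-1}(4\pi\sqrt{n\xi}/(c\sqrt{M_2}))$ against the original test function.

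Next, after swapping the dual $n$-sum with the $y$-sum and the $u$-integral, the $y$-sum becomes a complete character sum
\[
\sum_{y\in(\Z/\aa_p)^\times}e\!\left(\frac{(\overline{p}^{2k}-n\overline{p^k\,M_2})\,y}{\aa_p}\right),
\]
which is a Ramanujan-type sum; by standard bounds it is $O(\aa_p^\epsilon)$ times the divisor of its modulus, and crucially it forces a congruence $n\equiv \overline{M_2}\,p^{-k}\pmod{\aa_p/(\aa_p,\cdot)}$, so the effective $n$ are restricted to an arithmetic progression mod (roughly) $\aa_p$. The remaining $u$-integral together with the character $e(-n\overline u\,\overline{\aa_p M_2}/p^k)$ is, up to explicit unit constants, exactly the dual character sum $\widetilde{G}_p(n,1,\el,\theta,c^{-2})$ of Definition \ref{Defn5.3:Dualsum} with $\el=\overline{\aa_p M_2}$, and this is why Definition \ref{Defn5.3:Dualsum} was introduced. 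Invoking Lemma \ref{Lem5.3:dualsum} gives both the support condition $v_p(\mu+\el n/p^{2k})\geq -\cc$ (equivalently, a congruence on $n$ modulo $p^{2k-\cc}$) and the pointwise bound $\widetilde G_p\ll_p p^{(3k-\cc)/2}$.

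Combining the two congruence conditions via CRT restricts the dual variable $n$ to a single residue class modulo $\aa_p p^{2k-\cc}$, so that the number of admissible $n$ in the effective range (which, by the standard analysis of $\mathcal I(n)$, is of size $\ll c^2 M_2/Z$ when $L\gg 1$ and $\ll c^2 M_2 L^2/Z$ when $L\ll 1$) is bounded by $1+(c^2 M_2/Z)(\aa_p p^{2k-\cc})^{-1}=1+M_2/(\aa_p p^{2k-\cc})$ in the first case and a similar factor with an extra $L^{-2}$ in the second. The main obstacle, and the step requiring the most care, is the analysis of the Bessel integral $\mathcal I(n)$: when $L\gg 1$ one performs repeated integration by parts using the oscillation of $J_{\kappa-1}(4\pi\sqrt{\xi}/c)$ against $J_{\kappa_g-1}(4\pi\sqrt{n\xi}/(c\sqrt{M_2}))$ to gain the factor $L^{-1}$; when $L\ll 1$ one replaces $J_{\kappa-1}$ by its Taylor expansion near $0$ to obtain the factor $L$. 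Putting together the Ramanujan-sum factor, the bound from Lemma \ref{Lem5.3:dualsum}, the number of effective $n$, the Vorono\"\i\ normalization $1/(c\sqrt{M_2})$, Deligne's bound $\lambda_{g^\ast}(n)\ll n^\epsilon$, and the size $\sqrt Z$ of the integral $\int h$, yields the two bounds stated in the lemma after elementary algebraic simplification.
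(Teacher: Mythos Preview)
Your approach is essentially the same as the paper's: combine the two additive characters via CRT and Lemma~\ref{Lem:compositeinverse0}, apply the Vorono\"{i} formula of Theorem~\ref{Theo6.1:Voronoi}, identify the $y$-sum with a Ramanujan sum and the $u$-integral with $\widetilde{G}_p$, invoke Lemma~\ref{Lem5.3:dualsum} for the support and size of $\widetilde{G}_p$, and bound $I(n)$ in the two regimes via \cite[Lemma~2.1]{HT}.

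A few computational details in your sketch need correction. First, after CRT decomposition of $-\overline{AM_2}n/c$ using Lemma~\ref{Lem:compositeinverse0}, the $p$-part of the dual character is $e(\overline{M_2 u\aa_p^2}\,n/p^k)$, so the parameter in Definition~\ref{Defn5.3:Dualsum} is $\el=-\overline{M_2\aa_p^2}$ rather than $\overline{\aa_pM_2}$; the extra square on $\aa_p$ comes from the $\overline{n_i}^2$ factors in Lemma~\ref{Lem:compositeinverse0}. Second, the $y$-sum is $\sum_y e\big(\overline{p}^{2k}y(1-\overline{M_2}n)/\aa_p\big)$, not what you wrote; in particular it does not force a single congruence but is bounded by the gcd $\aa_{p,n}=(1-\overline{M_2}n,\aa_p)$, and one must pair this weight against the corresponding count of admissible $n$ (the paper does this slightly loosely but correctly). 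Third, the effective range of $n$ comes from $|L-Q|\ll 1$, i.e.\ $|1-\sqrt{n/M_2}|\ll L^{-1}$; when $L\gg 1$ this gives length $\asymp M_2/L$ (not $c^2M_2/Z=M_2/L^2$), and when $L\ll 1$ it gives $n\ll M_2/L^2$. With these fixes the algebra assembles to the stated bound.
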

\begin{proof}
Denote by $K_{c,Z}(y,u) $ the following expression: $$K_{c,Z}(y,u)=\sum\limits_{n} \frac{\lambda_g \lb n \rb \eta_Z(n)}{\sqrt{n}} V \lb \frac{n}{N\D} \rb J_{\kappa-1} \lb \frac{4{\pi}\sqrt{n}}{c} \rb e\lb\frac{n\overline{y}}{\aa_p } \rb e\lb-\frac{n u}{p^k}\rb.$$
 We shall apply the Voronoi summation formula in Theorem \ref{Theo6.1:Voronoi} for $K_{c,Z}(y,u)$. In particular
Lemma \ref{Lem:compositeinverse0} implies that the part
$$e\lb\frac{n\overline{y}}{\aa_p } \rb e\lb-\frac{n u}{p^k}\rb=e\lb \frac{\overline{y}p^k-u\aa_p }{c}n\rb$$
on the left-hand side of \eqref{Eq6.1:Voronoiformula} with $a=\overline{y}p^k-u\aa_p $ becomes the following on the right-hand side of \eqref{Eq6.1:Voronoiformula}:
$$e\lb-\frac{\overline{aM_2}n}{c}\rb=e\lb-\frac{\overline{M_2}y\overline{p}^{2k}}{\aa_p }n\rb e\lb\frac{\overline{M_2u\aa_p^2 }}{p^k}n\rb.$$
 Thus
\begin{align*}
K_{c,Z}(y,u)= 	\frac{2\pi \eta}{c\sqrt{M_2}}
\sum\limits_{n}\lambda_{g^*}(n)e\lb-\frac{\overline{M_2}y\overline{p}^{2k}}{\aa_p }n\rb e\lb\frac{\overline{M_2u\aa_p }^2}{p^k}n\rb I(n)	,					
\end{align*}
where
\begin{equation*}
I \lb n \rb =\int\limits_{0}^\infty \frac{V \lb \frac{x}{NM} \rb \eta_Z \lb x \rb }{\sqrt{x}}J_{\kappa-1} \lb \frac{4\pi\sqrt{x}}{c} \rb J_{\kappa_g-1} \lb \frac{4\pi\sqrt{nx}}{c\sqrt{M_2}} \rb dx.
\end{equation*}
Then 
\begin{equation}\label{Eq6.2:KcZnew}
K_{c,Z}=	\frac{2\pi \eta}{c\sqrt{M_2}}
\sum\limits_{n}\lambda_{g^*}(n)\widetilde{\KL}(1-\overline{M_2}n, \aa_p ) \widetilde{G}_p\lb n,1,-\overline{M_2\aa_p }^2,  \theta,\frac{1}{c^2} \rb  I(n).	
\end{equation}
Here $$\widetilde{\KL}(1-\overline{M_2}n,\aa_p )=\sum\limits_{y\in  \lb \Z/{\aa_p }\Z \rb ^\times} e \lb \frac{\overline{p}^{2k}y(1-\overline{M_2}n)}{\aa_p }\rb  =\sum\limits_{y\in  \lb \Z/{\aa_p }\Z \rb ^\times} e \lb \frac{y(1-\overline{M_2}n)}{\aa_p }\rb$$ is the Ramanujan sum. If \begin{equation}\label{Eq6.2:Congcp}
(1-\overline{M_2}n, \aa_p )=\aa_{p,n},
\end{equation} then \begin{equation}\label{Eq6.2:RamaSum}
|\widetilde{\KL}(1-\overline{M_2}n,\aa_p )|\ll \aa_{p,n}c^\epsilon.
\end{equation}

 $\widetilde{G}_p\lb n,1,-\overline{M_2\aa_p^2 },  \theta,\frac{1}{c^2} \rb $ is as in Definition \ref{Defn5.3:Dualsum},
which by Lemma \ref{Lem5.3:dualsum} is nonzero only when 
\begin{equation}\label{Eq6.2:Congpk}
v_p\lb\frac{1}{c^2}-\frac{\overline{M_2\aa_p^2} n}{p^{2k}}\rb=v_p(1-\overline{M_2}n)-2k\geq -\cc(\pi),
\end{equation} 
%as we have assumed $k<\cc(\pi)$,
 in which case
\begin{equation}\label{Eq6.2:GenRamasum}
|\widetilde{G}_p|\ll_p p^{\frac{3k-\cc(\pi)}{2}}.
\end{equation}

On the other hand, let $L=\frac{\sqrt{Z}}{c}$, $Q=\frac{\sqrt{nZ}}{c\sqrt{M_2}}$. 
The function $I \lb n \rb $ restricts the sum to essentially  (up to $ \lb cZM \rb ^\epsilon$)  \begin{equation}\label{Eq6.2:nrange}
|L-Q|\ll 1, \text{\ or equivalently }|1-\sqrt{\frac{n}{M_2}}|\ll L^{-1}.
\end{equation}  % \lb we believe there is a typo in \cite{HT}, which doesn't affect final result however \rb ,
In this range we have 
\begin{equation}\label{Eq6.2:Inbound}
I \lb n \rb \ll \sqrt{Z}\frac{L}{(1+L)^{3/2}}\frac{Q}{\lb 1+Q\rb^{3/2}}\ll \begin{cases}
\frac{\sqrt{Z}}{L}, &\text{\ if $L\gg 1$}\\
\sqrt{Z} L, &\text{\ if $L\ll 1$}
\end{cases}
\end{equation} by \cite[Lem2.1]{HT}. 
%\old{:}$$I \lb n \rb \ll \frac{\sqrt{Z}Z}{c^2L^3}$$ 

Now the number of $n$ satisfying the bound in \eqref{Eq6.2:nrange} with the congruence conditions \eqref{Eq6.2:Congcp}, \eqref{Eq6.2:Congpk} can be controlled by
$$\ll \lb 1+\frac{M_2}{\aa_{p,n}p^{2k-\cc(\pi)}}\frac{(1+L)^2}{L^2}\rb (cMZ)^\epsilon.$$
For each of these terms in \eqref{Eq6.2:KcZnew} we apply the bound $\lambda_{g^*}(n)\ll n^\epsilon$ and \eqref{Eq6.2:RamaSum}, \eqref{Eq6.2:GenRamasum} and \eqref{Eq6.2:Inbound}. The lemma is then clear.
%\begin{align}
%K_{c,Z}&\ll \sum\limits_{\aa_{p,n}|\aa_p }\frac{1}{c\sqrt{M_2}}\lb 1+\frac{M_2}{\aa_{p,n}p^{2k-\cc(\pi)}}\frac{(1+L)^2}{L^2}\rb \aa_{p,n} p^{\frac{3k-\cc(\pi)}{2}}\sqrt{Z}\frac{L}{(1+L)^{3/2}}\frac{Q}{\lb 1+Q\rb^{3/2}}(cMZ)^\epsilon\\
%&\ll \lb 1+\frac{M_2}{\aa_pp^{2k-\cc(\pi)}}\frac{(1+L)^2}{L^2}\rb  p^{\frac{k-\cc(\pi)}{2}}\sqrt{\frac{Z}{M_2}}\frac{L}{(1+L)^{3/2}}\frac{Q}{\lb 1+Q\rb^{3/2}}(cMZ)^\epsilon.	\notag
%\end{align}
\end{proof}

\begin{lem}\label{Lem6.2:Modupper}
For $M^{od}_g$ as in \eqref{Eq6.1:Moffdiag}, we have
$$M^{od}_g\ll_{p,\epsilon} (MN)^\epsilon\lb N^{1/4}p^{l/2}+N^{1/4}M^{1/2}p^{-l/2}\rb.$$
\end{lem}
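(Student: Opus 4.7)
The plan is to start from the expression \eqref{Eq6.2:Modg} for $M^{od}_g$, namely
\begin{equation*}
M^{od}_g \ll_p a_\pi \cdot \frac{C_\mathcal{F}[l]}{C_\mathcal{F}[l_0]} \sum_{c_l|c}\sum_{Z \ll (MN)^{1+\epsilon}} \frac{|K_{c,Z}|}{c},
\end{equation*}
and use $a_\pi \asymp_p p^{\cc/2} \asymp N^{1/2}$ from Lemma \ref{Lem5.1:twoApproach} together with $C_\mathcal{F}[l]/C_\mathcal{F}[l_0] \asymp p^{l-l_0}$ from \eqref{Eq4.6:CFl2}, so the overall prefactor is of order $N^{1/2}p^l$. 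The target bound then reduces to showing that the double sum is $\ll (MN)^\epsilon \bigl(N^{-1/4}p^{-l/2} + N^{-1/4}M^{1/2}p^{-3l/2}\bigr)$. To bound $|K_{c,Z}|/c$ I will plug in Lemma \ref{Lem6.2:KcZ}, keeping the two cases $L = \sqrt{Z}/c \geq 1$ and $L \leq 1$ as well as both pieces inside each parenthesis.

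For the $c$-sum I parameterize $c = M_1\beta p^k$, where $M_1 = (M,c)$ ranges over square-free divisors of $M$, $\beta$ is coprime to $pM_2$ with $M_2 = M/M_1$, and $k \geq v_p(c_l)$ with $v_p(c_l) \approx \cc/2 + l$; in the notation of Lemma \ref{Lem6.2:KcZ} one has $\aa_p = M_1\beta$. For the "$1$" piece of each regime ($L \gg 1$ or $L \ll 1$) the relevant estimate on $|K_{c,Z}|/c$ is $p^{(k-\cc)/2}/\sqrt{M_2}$ times, respectively, $1$ or $Z/c^2$; in both cases the $\beta$-sum gives back a factor of $\sqrt{Z}/(M_1 p^k)$, producing a net bound of $\sqrt{Z}/(M_1\sqrt{M_2}\,N^{1/2}p^{k/2})$. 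Summing over $k \geq v_p(c_l)$ (a geometric series dominated by $k = v_p(c_l)$, hence by $N^{-1/4}p^{-l/2}$), then over $M_1 | M$ using $\sum_{M_1|M}1/\sqrt{M_1M} \ll M^\epsilon/\sqrt{M}$, and finally over dyadic $Z \leq (MN)^{1+\epsilon}$, yields a total contribution $(MN)^\epsilon/(N^{1/4}p^{l/2})$, which after multiplying by the prefactor $N^{1/2}p^l$ is exactly the first term $N^{1/4}p^{l/2}$ of the bound.

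Next I handle the "$M_2/\aa_p p^{2k-\cc}$" piece in the $L \gg 1$ case and the "$M_2c^2/(\aa_p p^{2k-\cc}Z)$" piece in the $L \ll 1$ case. A direct computation shows that both reduce to the same form $\sqrt{M_2}/(\aa_p p^{(3k-\cc)/2})$. The $\beta$-sum now becomes $\sum 1/\beta \ll (MN)^\epsilon$, the $k$-sum (again dominated by $k = v_p(c_l)$) produces a factor $N^{-1/4}p^{-3l/2}$, and $\sum_{M_1|M}\sqrt{M_2}/M_1 = \sqrt{M}\sum_{M_1|M}M_1^{-3/2} \ll \sqrt{M}$. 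The net contribution is $(MN)^\epsilon \sqrt{M}/(N^{1/4}p^{3l/2})$, which after multiplying by $N^{1/2}p^l$ gives the second term $N^{1/4}M^{1/2}p^{-l/2}$. The residual case $k \geq \cc$ is strictly smaller, since there $\widetilde{G}_p$ reduces to a classical Ramanujan sum by Remark \ref{Rem:Gpbecomeskloosterman}/\ref{Rem4.4.2:isstandard} and the $k$-sum tails off geometrically beyond $v_p(c_l)$. The main obstacle is purely the dyadic bookkeeping — four sub-cases ($L$ large/small, piece 1/piece 2), the $M_1$-decomposition of $c$, and the need to verify that the transition $L \sim 1$ glues seamlessly — but no single step is more than a direct estimate once the dominant $k = v_p(c_l)$ is identified.
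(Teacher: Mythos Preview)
Your proposal is correct and follows essentially the same approach as the paper's proof: extract the prefactor $N^{1/2}p^l$, insert the bound of Lemma~\ref{Lem6.2:KcZ}, split into the four sub-cases (the ``$1$'' and ``$M_2/\aa_p p^{2k-\cc}$'' pieces in each of the regimes $L\gg 1$ and $L\ll 1$), and observe that the $k$-sum is geometric and dominated by $k=v_p(c_l)\approx \cc/2+l$, yielding the two contributions $N^{-1/4}p^{-l/2}$ and $M^{1/2}N^{-1/4}p^{-3l/2}$. The only cosmetic difference is that the paper organizes the $c$-sum via dyadic intervals $c\asymp C$ and counts admissible $c$'s in each, whereas you sum directly over $\beta$ in $c=M_1\beta p^k$; both routes produce the same estimates.
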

\begin{proof}

We shall focus on the parts where when $v_p(c_l)\leq k< \cc \lb \pi \rb $, as the parts where $k\geq \cc \lb \pi \rb $ will be easier to control (and one can use the argument in \cite{HT} with slight modifications).  For conciseness we drop all $\epsilon$-terms in our computations.

By \eqref{Eq6.2:Modg}, \eqref{Eq4.6:CFl2},  $a_\pi\asymp_p p^{\cc(\pi)/2}=N^{1/2}$ from Lemma \ref{Lem5.1:twoApproach}, and Lemma \ref{Lem6.2:KcZ}, we get
\begin{align}\label{Eq6.2:NewMgod}
M^{od}_g\ll  N^{1/2}p^l\sum\limits_{Z\ll (MN)^{1+\epsilon}}\sum\limits_{M_2|M}\sum\limits_{v_p(c_l)\leq k< \cc \lb \pi \rb}[&\sum\limits_{C\ll \sqrt{Z}}\sum\limits_{c=\aa_p p^k\asymp C,  (M/M_2)|\aa_p }
\frac{1}{c}\lb 1+\frac{M_2}{\aa_pp^{2k-\cc(\pi)}}\rb  p^{\frac{k-\cc(\pi)}{2}}\sqrt{\frac{Z}{M_2}}\frac{c}{\sqrt{Z}}\\
&+\sum\limits_{C\gg \sqrt{Z}}\sum\limits_{c}    \frac{1}{c}\lb 1+\frac{M_2}{\aa_pp^{2k-\cc(\pi)}}\frac{c^2}{Z}\rb  p^{\frac{k-\cc(\pi)}{2}}\sqrt{\frac{Z}{M_2}}\frac{\sqrt{Z}}{c}  ].\notag
\end{align}
Here the sum over $c\asymp C$ is over integers in dyadic ranges. 
We shall break up the terms in the square bracket into four parts, and control their sums in $Z,M_2,k,c$ first.
In particular we have
\begin{align*}
&\sum\limits_{Z,M_2,k}\sum\limits_{C\ll \sqrt{Z}}\sum\limits_{c}
\frac{1}{c}  p^{\frac{k-\cc(\pi)}{2}}\sqrt{\frac{Z}{M_2}}\frac{c}{\sqrt{Z}}\ll \sum\limits_{Z,M_2,k}\sum\limits_{C\ll \sqrt{Z}}\frac{C M_2}{p^kM}\frac{1}{c}  p^{\frac{k-\cc(\pi)}{2}}\sqrt{\frac{Z}{M_2}}\frac{c}{\sqrt{Z}}\\
\ll& 	\sum\limits_{Z,M_2,k}\sum\limits_{C\ll \sqrt{Z}}\frac{C\sqrt{M_2}}{M}\frac{1}{p^{k/2+\cc(\pi)/2}}\ll\sum\limits_{k}\frac{1}{p^{k/2}}\ll \frac{1}{N^{1/4}p^{l/2}}	.	\notag
\end{align*}
Here the ranges of the summations in $Z,M_2,k,c$ are the same as in \eqref{Eq6.2:NewMgod}.
In the last inequality we have used that $k\geq v_p(c_l)$ which is $\cc(\pi)/2+l+O(1)$, where $O(1)$ means an absolutely bounded constant. Similarly we have
\begin{align*}
\sum\limits_{Z,M_2,k}\sum\limits_{C\ll \sqrt{Z}}\sum\limits_{c}
\frac{1}{c} \frac{M_2}{\aa_pp^{2k-\cc(\pi)}} p^{\frac{k-\cc(\pi)}{2}}\sqrt{\frac{Z}{M_2}}\frac{c}{\sqrt{Z}}\ll \frac{M^{1/2}}{N^{1/4}p^{3l/2}},
\end{align*}
\begin{align*}
\sum\limits_{Z,M_2,k}\sum\limits_{C\gg \sqrt{Z}}\sum\limits_{c} \frac{1}{c} p^{\frac{k-\cc(\pi)}{2}}\sqrt{\frac{Z}{M_2}}\frac{\sqrt{Z}}{c} \ll \frac{1}{N^{1/4}p^{l/2}},
\end{align*}
\begin{align*}
\sum\limits_{Z,M_2,k}\sum\limits_{C\gg \sqrt{Z}}\sum\limits_{c} \frac{1}{c}\frac{M_2}{\aa_pp^{2k-\cc(\pi)}}\frac{c^2}{Z} p^{\frac{k-\cc(\pi)}{2}}\sqrt{\frac{Z}{M_2}}\frac{\sqrt{Z}}{c} \ll \frac{M^{1/2}}{N^{1/4}p^{3l/2}}.
\end{align*}
The lemma follows easily now.
\end{proof}
%Thus the sum in \eqref{Eq:Offdiagbeforecsum} is bounded as follows according to Lemma \ref{Lem:generalRamanujansumlike}:
%\begin{align}\label{Eq:BdKcZ}
%K_{c,Z}&\ll   \lb cZM \rb ^\epsilon \frac{1}{c\sqrt{M_2}}\sum\limits_{\aa_p ^0|\aa_p }\aa_p ^0 p^{3k/2-i_0}\sum\limits_{n\ll  \frac{c^2L^2M_2}{Z}, \aa_p ^0p^{2k-\cc \lb \pi \rb }| \lb 1-\overline{M_2}n \rb }\lambda_{g*} \lb n \rb \frac{\sqrt{Z}Z}{c^2L^3}\\
%&\ll  \lb cZM \rb ^\epsilon \frac{1}{c\sqrt{M_2}}\sum\limits_{\aa_p ^0|\aa_p }\aa_p ^0 p^{3k/2-i_0}  \lb 1+\frac{1}{\aa_p ^0p^{2k-\cc \lb \pi \rb }}\frac{M_2\sqrt{Z}}{L} \rb \notag
%\end{align}
%Here $\aa_p ^0= \lb \aa_p , 1-\overline{M_2}n \rb $% is the largest factor of $\aa_p $ which is coprime to $1-\overline{M_2}n$% in $\widetilde{G} \lb 1-\overline{M_2}n,\aa_p  \rb $ 
%, and we have used that $|\widetilde{G} \lb 1-\overline{M_2}n,\aa_p  \rb |\ll \aa_p ^\epsilon \aa_p ^0$ in the first inequality. In the second inequality we have used the Ramanujan conjecture $\lambda_{g*} \lb n \rb \ll n^\epsilon$, which is known to be true for holomorphic cusp forms over $\Q$.
%Then\footnote{\text{Note that the '1' term in $K_{c,Z}$ is controlled by the '1' term  (i.e. the diagonal term)  in $M_g$.}}
We can now prove Theorem \ref{Theo:subconv}. From %\eqref{Eq6.2:Mg=LM}, 
Lemma \ref{Lem6.2:Modupper} and \eqref{Eq6.2:Md},
\begin{align}
M_g\sim M^{od}_g+M^{d}_g\ll   \lb MN \rb ^\epsilon[N^{1/2}p^l+N^{1/4}M^{1/2}p^{-l/2}].
\end{align}
Here we have used that $C_\mathcal{F}[l]\asymp N^{1/2}p^l$.
Recall that $ 1\leq l< i_0$. We make different choices according to the relation between $N$ and $M$ as follows:
\begin{enumerate}
	\item When $N\leq \sqrt{M}$, we choose $l=i_0-1$, so $p^l\asymp N^{1/2}$ and  $$M_g\ll  \lb MN \rb ^\epsilon \sqrt{M}.$$
	\item When $\sqrt{M} \leq N\leq M^2$, we choose $1\leq l< i_0$ such that $p^l\asymp  \lb \frac{M}{\sqrt{N}} \rb ^{1/3}$, and 
	$$M_g\ll  \lb MN \rb ^{1/3+\epsilon} .$$
	\item When $N>M^2$, we choose $l=1$ and
	$$M_g\ll  \lb MN \rb ^\epsilon N^{1/2}.$$
\end{enumerate} 
%Since $f\in \mathcal{F}_{\theta}[l]$ and $g$ has trivial central characters, $L \lb f\times g, 1/2 \rb $ is always nonzero. 
Theorem \ref{Theo:subconv} now follows easily.

\begin{rem}\label{Rem:RamanujanConjNeeded}
If we work with the Maass forms instead of the  holomorphic modular forms, the Ramanujan conjecture does seem important for the bound in Lemma \ref{Lem6.2:KcZ}. It is unlikely that a Ramanujan-conjecture-on-average type of result would suffice. After all, the sum in $n$ in \eqref{Eq6.2:KcZnew} is over a thin arithmetic progression, especially when $N$ is large compared to $M$.
	
	On the other hand, a reasonable bound towards the Ramanujan conjecture can still give a slightly weaker hybrid subconvexity bound.
\end{rem}

\bibliographystyle{plain}

\end{document}